\def\squiggly{\bgroup \markoverwith{\textcolor{black}{\lower3.5\p@\hbox{\sixly \char58}}}\ULon}
\definecolor{codegreen}{rgb}{0,0.6,0}
\definecolor{codegray}{rgb}{0.5,0.5,0.5}
\definecolor{codepurple}{rgb}{0.58,0,0.82}
\definecolor{backcolour}{rgb}{0.95,0.95,0.92}
\lstdefinestyle{mystyle}{
  backgroundcolor=\color{backcolour}, commentstyle=\color{codegreen},
  keywordstyle=\color{magenta},
  numberstyle=\tiny\color{codegray},
  stringstyle=\color{codepurple},
  basicstyle=\ttfamily\footnotesize,
  breakatwhitespace=false,
  breaklines=true,
  captionpos=b,
  keepspaces=true,
  numbers=left,
  numbersep=5pt,
  showspaces=false,
  showstringspaces=false,
  showtabs=false,
  tabsize=2
}
\newtheorem{theorem}[subsection]{Theorem}
\newtheorem{proposition}[subsection]{Proposition}
\newtheorem{lemma}[subsection]{Lemma}
\newtheorem{corollary}[subsection]{Corollary}
\newtheorem{conjecture}[subsection]{Conjecture}
\newtheorem{definition}[subsection]{Definition}
\newtheorem{notation}[subsection]{Notation}
\theoremstyle{remark}
\newtheorem{claim}[subsection]{Claim}
\newtheorem{example}[subsection]{Example}
\newtheorem{remark}[subsection]{Remark}
\def\fa{{\mathfrak{a}}}
\def\fg{{\mathfrak{g}}}
\def\fh{{\mathfrak{h}}}
\def\fn{{\mathfrak{n}}}
\def\BC{{\mathbb{C}}}
\def\BZ{{\mathbb{Z}}}
\def\slaws{\text{standard Lyndon words}}
\def\aslaw{\text{affine standard Lyndon word}}
\def\aslaws{\text{affine standard Lyndon words}}
\def\SL{\mathrm{SL}}
\def\rk{\mathrm{rank}}
\def\wI{\widehat{I}}
\def\wQ{\widehat{Q}}
\def\wDelta{\widehat{\Delta}}
\def\imx{\wDelta^{+,\mathrm{imx}}}
\def\sb{\mathsf{b}}
\def\spanset{\mathcal{S}}
\def\convexsetim{C}
\def\convexsetre{O}
\def\leftfactorsset{\mathcal{L}}
\def\rightfactorsset{\mathcal{R}}
\newcommand{\ub}[2]{\underbrace{\SL_{#1} (\delta)}_{#2 \text{ times}}}
\def\hgt{\text{ht}}
\newcommand\iso{\,\vphantom{j^{X^2}}\smash{\overset{\sim}{\vphantom{\rule{0pt}{0.20em}}\smash{\longrightarrow}}}\,}
\DeclareMathOperator{\spn}{span}
\DeclareMathOperator{\chain}{ch}
\newcommand{\ol}{\overline}
\def\LL{{\mathrm{L}}}
\def\sb{{\mathsf{b}}}
\def\osb{\overline{\mathsf{b}}}
\def\re{\mathrm{re}}
\def\im{\mathrm{im}}
\def\ext{\mathrm{ext}}
\begin{document}

\title[Affine standard Lyndon words]
      {\Large{\textbf{Affine standard Lyndon words}}}
	
\author[Corbet Elkins and Alexander Tsymbaliuk]{Corbet Elkins and Alexander Tsymbaliuk}

\address{C.E.: Purdue University, Department of Mathematics, West Lafayette, IN, USA}
\email{cdelkins@purdue.edu}

\address{A.T.: Purdue University, Department of Mathematics, West Lafayette, IN, USA}
\email{sashikts@gmail.com}

\begin{abstract}
In this note, we establish the convexity and monotonicity for $\aslaws$ in all types, generalizing the $A$-type
results of \cite{AT}. We also derive partial results on the structure of imaginary standard Lyndon words and present
a conjecture for their general form. Additionally, we provide computer code in Appendix which, in particular,
allows to efficiently compute $\aslaws$ in exceptional types for all orders.
\end{abstract}

\maketitle

   %%%%%%%%%%%%%%%%%%%%%%%%%%%%%%%%%%%%%%%%%%%%%%%%%%%%%%%%%%%%%%%%%%%%%%%%%%%%%%%
   %%%%%%%%%%%%%%%%%%%%%%%%%%%%%%%%%%%% Section 0 %%%%%%%%%%%%%%%%%%%%%%%%%%%%%%%%
   %%%%%%%%%%%%%%%%%%%%%%%%%%%%%%%%%%%%%%%%%%%%%%%%%%%%%%%%%%%%%%%%%%%%%%%%%%%%%%%

\section{Introduction}\label{sec:intro}

   %%%%%%%%%%%%%%%%%%%%%%%%%%%%%%%%%%%%%%%%%%%%%%%%%%%%%%%%%%%%%%%%%%%%%%%%%%%%%%%%%%%%%

\subsection{Summary}\label{ssec:summary}
\

The free Lie algebras generated by a finite set $\{e_i\}_{i\in I}$ are known to have bases parametrized by
\textbf{Lyndon} words (see Definition~\ref{def:lyndon}, Theorem~\ref{thm:Lyndon.theorem}) for each order on $I$.
This was generalized to finitely generated Lie algebras $\fa$ in~\cite{LR}. Explicitly, if $\fa$ is generated
by $\{e_i\}_{i\in I}$, then any order on $I$ gives rise to the combinatorial basis parametrized by
\textbf{standard Lyndon} words (see Definition~\ref{def:standard}, Theorem~\ref{thm:standard Lyndon theorem}).

The key application of~\cite{LR} was to simple finite-dimensional $\fg$, or more precisely,
to their maximal nilpotent subalgebras $\fn^+$. Evoking the root space decomposition:
\begin{equation*}%\label{eqn:root vectors intro}
  \fn^+ = \bigoplus_{\alpha \in \Delta^+} \BC \cdot e_\alpha \,, \qquad
  \Delta^+=\big\{\mathrm{positive\ roots}\big\},
\end{equation*}
it can be easily shown that there is a natural \textbf{Lalonde-Ram bijection}
\begin{equation}\label{eqn:1-to-1 intro}
  \ell \colon \Delta^+ \iso \big\{\text{standard Lyndon words}\big\}.
\end{equation}
In this context, the bracketing $\sb[\ell]$ (Definition~\ref{bracketing}) is on par with the general rule
\begin{equation*}%\label{eqn:comm intro 1}
  [e_{\alpha} , e_{\beta}] = e_{\alpha} e_{\beta} - e_{\beta} e_{\alpha} \in \BC^\times \cdot e_{\alpha + \beta}
  \qquad \forall\, \alpha,\beta, \alpha+\beta\in \Delta^+.
\end{equation*}

A decade later, \cite{L} established an iterative \textbf{Leclerc algorithm} for~\eqref{eqn:1-to-1 intro}.
Moreover, the induced total order on $\Delta^+$ is \textbf{convex} (see Proposition~\ref{prop:fin.convex})
by earlier work~\cite{R}. This played the key role in~\cite{L}, where it was shown that natural $q$-deformations
of $\sb[\ell]$ give rise to a basis of the corresponding positive half $U_q(\fn^+)$ of Drinfeld-Jimbo quantum group,
recovering Lusztig's root generators, up to~scalars.

In the recent work of Avdieiev and the second author~\cite{AT}, the generalization to (untwisted) affine Lie algebras was initiated.
Let $\widehat{\fg}$ be the affinization of $\fg$, whose Dynkin diagram is obtained by extending the Dynkin diagram of $\fg$ with
a vertex~$0$. Thus, on the combinatorial side, we consider the alphabet $\wI=I\sqcup \{0\}$. The corresponding \emph{positive}
subalgebra $\widehat{\fn}^+\subset \widehat{\fg}$ still admits the root space decomposition
$\widehat{\fn}^+=\bigoplus_{\alpha\in \wDelta^+} \widehat{\fn}^+_{\alpha}$ with $\wDelta^+=\{\mathrm{positive\ affine\ roots}\}$.
The key difference is that:
\begin{equation*}
  \dim \widehat{\fn}^+_{\alpha}=1 \quad \forall\, \alpha\in \wDelta^{+,\re} \,, \qquad
  \dim \widehat{\fn}^+_{\alpha}=|I| \quad \forall\, \alpha\in \wDelta^{+,\im}.
\end{equation*}
Here, $\wDelta=\wDelta^{+,\re}\sqcup \wDelta^{+,\im}$ is the decomposition into real and imaginary
affine roots, with $\wDelta^{+,\im}=\{k\delta | k\geq 1\}$. It is therefore natural to consider an extended set
$\widehat{\Delta}^{+,\ext}$ of~\eqref{eq:extended-affine-roots}. Then, the degree reasoning as in~\cite{LR}
provides a natural analogue of~\eqref{eqn:1-to-1 intro}:
\begin{equation}\label{eqn:affine 1-to-1 intro}
  \SL \colon \wDelta^{+,\ext} \iso \big\{ \text{affine standard Lyndon words} \big\}.
\end{equation}
In~\cite{AT}, we established a \textbf{generalized Leclerc algorithm} describing this bijection.
As the key application, we then used it to inductively derive formulas for all $\aslaws$ in type $A$ with any order on $\wI$.

The above explicit formulas in affine type $A$ illustrated a stunning periodicity for $\aslaws$, expressing
all of them through $\SL(\alpha)$ with $|\alpha|<|\delta|$. Using the explicit formulas, we also established
the \textbf{pre-convexity}:
\begin{equation}\label{eq:pre-convex}
  \alpha<\alpha+\beta<\beta \quad \mathrm{or} \quad \beta<\alpha+\beta<\alpha
  \qquad \forall\ \alpha,\beta,\alpha+\beta\in \wDelta^{+,\re},
\end{equation}
as well as the \textbf{monotonicity}:
\begin{equation}\label{eq:monot}
  \alpha<\alpha+\delta<\alpha+2\delta<\cdots \quad \mathrm{or} \quad \alpha>\alpha+\delta>\alpha+2\delta>\cdots
  \qquad \forall\ \alpha\in \wDelta^{+,\re}.
\end{equation}

The present note arose from an attempt to generalize the above results of~\cite{AT} to all types. In particular,
our key results are the \textbf{convexity} (see Theorem~\ref{thm:convexity} and Remark~\ref{rem:convexity-rephrased})
and monotonicity (see Proposition~\ref{prop:monotonicity}), generalizing~\eqref{eq:pre-convex} and
improving~\eqref{eq:monot}. We also propose a conjecture on the structure of all imaginary $\aslaws$, proving it
for all orders in any type with $0\in \wI$ being the smallest letter (see Conjecture~\ref{conj:imaginary-SL} and
Theorem~\ref{thm:imaginary-SL-orderI}), thus rederiving $A$-type results. We note that out approach is completely
opposite to that of~\cite{AT}, as we establish convexity and monotonicity without having explicit formulas, and
then use them to get information on the explicit form of $\aslaws$.

   %%%%%%%%%%%%%%%%%%%%%%%%%%%%%%%%%%%%%%%%%%%%%%%%%%%%%%%%%%%%%%%%%%%%%%%%%%%%%%%%%%%%%

\subsection{Outline}\label{ssec:outline}
\

\noindent
The structure of the present paper is the following:
\begin{itemize}[leftmargin=0.5cm]

\item[$\bullet$]
In Section~\ref{sec:setup}, we recall classical results on Lyndon and standard Lyndon words,
as well as the generalization to the case of affine root systems from~\cite{AT}.

\item[$\bullet$]
In Section~\ref{sec:Aux}, we establish some basic properties of the standard, costandard, and other factorizations
of standard Lyndon words, instrumental for this note.

\item[$\bullet$]
In Section~\ref{sec:flag}, we investigate the behavior of the standard bracketing with respect to different splittings of words.
The key results are Propositions~\ref{prop:im.bracketing} and~\ref{prop:general.bracketing}, which also imply
that the $\aslaws$ are the same whether using standard or costandard factorizations, see Remark~\ref{rem:stan=costar-words}.
We also introduce the auxiliary sets $\convexsetim(\alpha)$ for imaginary $\alpha$ and $\convexsetre(\alpha)$ for real $\alpha$,
which are key for Section~\ref{sec:keyresults}, and establish some basic properties of min/max elements of $\convexsetre(\alpha)$.

\item[$\bullet$]
In Section~\ref{sec:keyresults}, we prove the key results of this note. The convexity of Theorem~\ref{thm:convexity} is stated
using the above sets $\convexsetim(\alpha)$ and $\convexsetre(\alpha)$, see Definition~\ref{def:left.and.right.factor.sets},
and generalizes the pre-convexity of~\eqref{eq:pre-convex}, see Remark~\ref{rem:convexity-rephrased}. Our second main result is
Proposition~\ref{prop:monotonicity} establishing~\eqref{eq:monot} and specifying which monotonicity occurs.

\item[$\bullet$]
In Section~\ref{sec:imaginary.words}, we explore the structure of imaginary $\aslaws$ and establish relations among those.
We start by showing the compatibility of complete flags~\eqref{eq:flag} in Proposition~\ref{prop:spanset.equiv} and use it to
deduce the monotonicity of Lemma~\ref{cor:imaginary.words.decreasing}. The main result, Conjecture~\ref{conj:imaginary-SL},
provides the structure of all $\SL_i(k\delta)$. In Theorem~\ref{thm:imaginary-SL-orderI}, we prove this result for the cases
when the smallest simple root appears once in $\delta$ (which includes any order for any type with $0\in \wI$ being the smallest
element as well as recovers the result of~\cite{AT} for any order in affine type $A$),
while in Proposition~\ref{prop:SL.1.form} we prove it for $i=1$ and any order.

\item[$\bullet$]
In Appendix~\ref{sec:app_code}, we provide the computer code that we heavily used to find the correct patterns for $\aslaws$
as well as to verify Conjecture~\ref{conj:imaginary-SL} (with $|I|\leq 6$ and $k\leq 8$) for all orders on $\wI$.

\item[$\bullet$]
In Appendix~\ref{sec:app-G}, we present the tables of all $\aslaws$ in affine type $G_2^{(1)}$ for all orders
on $\wI$, which were computed using the above code.

\end{itemize}

   %%%%%%%%%%%%%%%%%%%%%%%%%%%%%%%%%%%%%%%%%%%%%%%%%%%%%%%%%%%%%%%%%%%%%%%%%%%%%%%%%%%%%

\subsection{Acknowledgement}\label{ssec:acknowl}
\

This note represents a part of the year-long REU project at Purdue University. We are grateful to Purdue University for
support and for the opportunity to present these results at REU math conference in April~2025. C.E.\ is grateful for the
Joel Spira Undergraduate Summer Research Award. A.T.\ is deeply indebted to Andrei Negu\c{t} for numerous inspiring
discussions over the years.

The work of both authors was partially supported by NSF Grant DMS-$2302661$.

   %%%%%%%%%%%%%%%%%%%%%%%%%%%%%%%%%%%%%%%%%%%%%%%%%%%%%%%%%%%%%%%%%%%%%%%%%%%%%%%
   %%%%%%%%%%%%%%%%%%%%%%%%%%%%%%%%%%%% Section 1 %%%%%%%%%%%%%%%%%%%%%%%%%%%%%%%%
   %%%%%%%%%%%%%%%%%%%%%%%%%%%%%%%%%%%%%%%%%%%%%%%%%%%%%%%%%%%%%%%%%%%%%%%%%%%%%%%

\section{Setup and Notations}\label{sec:setup}

In this section, we recall the classical results of~\cite{LR,L} that provide a combinatorial construction of an important
basis of finitely generated Lie algebras, with the main application to the maximal nilpotent subalgebras of simple Lie algebras.
We further evoke the appropriate generalization of this to affine case, following~\cite{AT}.

   %%%%%%%%%%%%%%%%%%%%%%%%%%%%%%%%%%%%%%%%%%%%%%%%%%%%%%%%%%%%%%%%%%%%%%%%%%%%%%%

\subsection{Lyndon words}\label{ssec:L-words}
\

Let $I$ be a finite ordered alphabet, and let $I^*$ be the set of all finite length words in the alphabet $I$.
For $u = [i_1\ldots i_k] \in I^*$, we define its length by $|u| = k$. Moreover, we consider the lexicographical order
on $I^*$ defined as follows:
$$
    [i_1\ldots i_k] < [j_1\ldots j_l]\quad \text{if } \begin{cases}
        i_1 = j_1,\ldots,i_a=j_a,i_{a+1}<j_{a+1} \text{ for some }a \geq 0\\
          \ \  \text{or}\\
        i_1=j_1,\ldots,i_k=j_k \text{ and } k < l
    \end{cases}.
$$

\begin{definition}\label{def:lyndon}
A word $\ell=[i_1\dots i_k]$ is called \textbf{Lyndon} if it is smaller than all of its cyclic permutations:
\begin{equation*}
  [i_1 \dots i_{a-1} i_a \dots i_k] < [i_a \dots i_k i_1 \dots i_{a-1}] \qquad \forall\, a \in \{2,\dots,k\}.
\end{equation*}
\end{definition}

For a word $w = [i_1 \dots i_k]\in I^*$, the subwords:
\begin{equation*}
  w_{a|} =  [i_1 \dots i_a] \qquad \text{and} \qquad w_{|a} = [i_{k-a+1} \dots i_k]
\end{equation*}
with $0\leq a\leq k$ are usually called a \textbf{prefix} and a \textbf{suffix} of $w$, respectively. We call such a prefix or
a suffix proper if $0<a<k$. It is well-known that Definition~\ref{def:lyndon} is equivalent to the following one:

\begin{definition}\label{def:lyndon-2}
A word $w$ is Lyndon if it is smaller than all of its proper suffixes:
\begin{gather*}
   w < w_{| a} \qquad \forall\, 0 < a < |w|.
\end{gather*}
\end{definition}

As a corollary, we record the following basic property:

\begin{lemma}\label{lemma:lyndon}
If $\ell_1 < \ell_2$ are Lyndon, then $\ell_1\ell_2$ is also Lyndon, and so $\ell_1\ell_2 < \ell_2\ell_1$.
\end{lemma}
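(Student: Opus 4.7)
The plan is to verify Definition~\ref{def:lyndon-2} for $\ell_1\ell_2$: every proper suffix of $\ell_1\ell_2$ must be strictly larger. Such suffixes fall naturally into two groups according to whether their length~$c$ satisfies $c \leq |\ell_2|$ (in which case the suffix is a suffix of $\ell_2$, possibly $\ell_2$ itself) or $|\ell_2| < c < |\ell_1|+|\ell_2|$ (in which case the suffix has the form $(\ell_1)_{|b}\,\ell_2$ for some $0<b<|\ell_1|$). Handling these two cases, plus the final inequality $\ell_1\ell_2 < \ell_2\ell_1$, are the three tasks to carry out.

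First I would establish the auxiliary fact $\ell_1\ell_2 < \ell_2$. If $\ell_1$ is not a prefix of $\ell_2$, then $\ell_1<\ell_2$ forces a first disagreement at some position $j\leq |\ell_1|$ where the $\ell_1$-letter is smaller; appending $\ell_2$ on the left and nothing on the right preserves this disagreement and yields the claim. If $\ell_1$ is a proper prefix, write $\ell_2=\ell_1 w$ with $w$ nonempty; since $\ell_2$ is Lyndon, $\ell_2<w$, and comparing $\ell_1\ell_2=\ell_1\ell_1 w$ with $\ell_2=\ell_1 w$ reduces after the common $\ell_1$ to comparing $\ell_1 w=\ell_2$ against $w$, which is exactly what the Lyndon property of $\ell_2$ gives us. With this auxiliary fact in hand, the first group of suffixes is immediate: for $c=|\ell_2|$ the suffix is $\ell_2$ itself, and for $c<|\ell_2|$ the suffix is a proper suffix of $\ell_2$, hence $>\ell_2>\ell_1\ell_2$.

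For the second group, I would use that $\ell_1<(\ell_1)_{|b}$ (Lyndon property of $\ell_1$) together with the length observation $|(\ell_1)_{|b}|<|\ell_1|$, which rules out $\ell_1$ being a prefix of $(\ell_1)_{|b}$. Hence the first disagreement between $\ell_1$ and $(\ell_1)_{|b}$ occurs at some position $\leq|(\ell_1)_{|b}|$ where $\ell_1$ has the smaller letter; appending $\ell_2$ to both sides preserves this, giving $\ell_1\ell_2<(\ell_1)_{|b}\ell_2$. This finishes the proof that $\ell_1\ell_2$ is Lyndon.

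The concluding inequality $\ell_1\ell_2<\ell_2\ell_1$ is handled by the same dichotomy. If $\ell_1$ is not a prefix of $\ell_2$, the first disagreement of $\ell_1,\ell_2$ lies within positions $1,\dots,|\ell_1|$ and transfers directly to a disagreement between $\ell_1\ell_2$ and $\ell_2\ell_1$ at the same position. If $\ell_1$ is a proper prefix, with $\ell_2=\ell_1 w$, the two words agree on their first $|\ell_1|$ letters and comparison reduces to $\ell_2$ versus $w\ell_1$ of equal length $|\ell_1|+|w|$; using $\ell_2<w$ from the Lyndon property of $\ell_2$, one locates the disagreement at some position $\leq|w|$ and again reads off the inequality. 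I expect the main obstacle to be bookkeeping in the prefix case: whenever one Lyndon word is a prefix of another, naive lex-order reasoning is insufficient and one must invoke the Lyndon property of the longer word to compare its proper suffix with itself.
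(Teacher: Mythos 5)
Your proof is correct. The paper itself records this lemma as a standard fact (a corollary of the equivalence of Definitions~\ref{def:lyndon} and~\ref{def:lyndon-2}, cf.~\cite{Lo}) and gives no proof, so there is nothing to compare against; your argument is a complete and self-contained verification via Definition~\ref{def:lyndon-2}. The case split on whether a proper suffix of $\ell_1\ell_2$ lands inside $\ell_2$ or straddles into $\ell_1$ is the right one, the auxiliary inequality $\ell_1\ell_2<\ell_2$ is handled correctly in both the non-prefix and prefix subcases (the reduction ``strip the common prefix $\ell_1$ and compare $\ell_2$ with $w$'' is valid for the order used here), and the observation that $\ell_1<(\ell_1)_{|b}$ together with $|(\ell_1)_{|b}|<|\ell_1|$ forces a genuine letter disagreement at a position $\le b$ is exactly the point that makes the second group of suffixes work. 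One small economy you could have taken: once $\ell_1\ell_2$ is known to be Lyndon, the inequality $\ell_1\ell_2<\ell_2\ell_1$ is immediate from Definition~\ref{def:lyndon}, since $\ell_2\ell_1$ is a cyclic permutation of $\ell_1\ell_2$; your direct verification is also fine.
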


Let us now recall several basic facts from the theory of Lyndon words (cf.~\cite{Lo}).

\begin{proposition}\label{prop:cost.factor}
Any Lyndon word $\ell$ with $|\ell|>1$ has a factorization:
\begin{equation}\label{eqn:cost.factor}
  \ell = \ell_1 \ell_2
\end{equation}
defined by the property that $\ell_2$ is the longest proper suffix of $\ell$ which is also a Lyndon word.
Then, $\ell_1$ is also a Lyndon word.
\end{proposition}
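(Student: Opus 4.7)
The plan is to construct $\ell_2$ directly from the definition, then verify that the remaining prefix $\ell_1$ must be Lyndon by a short contradiction argument that manufactures a Lyndon proper suffix of $\ell$ strictly longer than $\ell_2$ out of any ``bad'' suffix of $\ell_1$. Existence of $\ell_2$ is immediate: every single letter is trivially Lyndon, so the last letter of $\ell$ witnesses that the set of Lyndon proper suffixes of $\ell$ is nonempty, and I take $\ell_2$ to be a longest element of this set, with $\ell_1$ the corresponding (nonempty) prefix.

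Before the main argument I would record two observations. \emph{First,} $\ell_1<\ell_2$: since $\ell$ is Lyndon and $\ell_2$ is a proper suffix, $\ell<\ell_2$; letting $p$ denote the length of their longest common prefix, the case $p=|\ell_2|$ is impossible (it would force $\ell>\ell_2$, as $\ell$ is longer with $\ell_2$ as a prefix), so the first disagreement occurs at some position $p+1\leq |\ell_2|$. If $p+1\leq |\ell_1|$, that position also witnesses $\ell_1<\ell_2$; otherwise $\ell_1$ is a prefix of $\ell_2$, and the equal-length case $|\ell_1|=|\ell_2|$ is ruled out by the same argument as before, leaving $|\ell_1|<|\ell_2|$ and hence $\ell_1<\ell_2$. \emph{Second,} the lexicographically smallest nonempty suffix $s^{*}$ of any word is automatically Lyndon: for any proper nonempty suffix $t$ of $s^{*}$, $t$ is also a suffix of the original word, so $s^{*}\leq t$, and $|t|<|s^{*}|$ forces the strict inequality $s^{*}<t$.

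With these in hand, suppose for contradiction that $\ell_1$ is not Lyndon. By Definition~\ref{def:lyndon-2}, some proper suffix $v$ of $\ell_1$ satisfies $v\leq \ell_1$; since $|v|<|\ell_1|$, in fact $v<\ell_1$ strictly. Let $v^{*}$ be the smallest nonempty suffix of $\ell_1$, which is Lyndon by the second observation above; from $v^{*}\leq v<\ell_1$ we see that $v^{*}$ is a proper suffix of $\ell_1$. Combining with the first observation yields $v^{*}<\ell_2$, so Lemma~\ref{lemma:lyndon} applies and $v^{*}\ell_2$ is Lyndon. But $v^{*}\ell_2$ is a proper suffix of $\ell$ (since $v^{*}$ is a proper suffix of $\ell_1$) strictly longer than $\ell_2$, contradicting the maximality in the choice of $\ell_2$. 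The only mildly subtle point in the plan is the reduction from an arbitrary bad suffix $v$ to the minimal $v^{*}$, which is automatically Lyndon by the general lemma on smallest suffixes and thereby unlocks the application of Lemma~\ref{lemma:lyndon}; everything else is purely formal.
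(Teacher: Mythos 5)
Your proof is correct. Note that the paper itself gives no proof of Proposition~\ref{prop:cost.factor}; it is quoted as a classical fact (cf.~\cite{Lo}), so there is no in-paper argument to compare against. Your route is the standard one and uses exactly the toolkit the authors deploy for neighbouring statements in Section~\ref{sec:Aux}: your second observation (the lexicographically smallest nonempty suffix of any word is Lyndon) is the mechanism behind Lemma~\ref{lemma:right.costfac.minimal}, and your final contradiction via Lemma~\ref{lemma:lyndon} --- manufacturing the Lyndon proper suffix $v^{*}\ell_2$ of $\ell$ that is strictly longer than $\ell_2$ --- mirrors the proof of Lemma~\ref{lemma:lr.geq.r}. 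The individual steps all check out: the case analysis for $\ell_1<\ell_2$ correctly disposes of the subcase where $\ell_1$ is a prefix of $\ell_2$ (using that a word having $\ell_2$ as a proper prefix is lexicographically larger than $\ell_2$, which would contradict $\ell<\ell_2$), the upgrades from $\leq$ to $<$ are justified by length comparisons, and the reduction from an arbitrary bad suffix $v$ to the minimal suffix $v^{*}$ is precisely what makes Lemma~\ref{lemma:lyndon} applicable.
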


Henceforth, we denote the longest proper Lyndon suffix of $\ell$ as $\ell^r$ and the remaining prefix as $\ell^l$.
The factorization $\ell = \ell^l\ell^r$ is called the \textbf{costandard factorization}.

\begin{notation}
In the next section, we shall be using iterated superscripts of $l,r$ that are chained left-to-right, e.g.\
$\ell^{lr} = (\ell^l)^r$, $\ell^{lrl} = ((\ell^{l})^r)^{l}$, $\ell^{llrr} = (((\ell^l)^l)^r)^r$.
\end{notation}

We have an analogous factorization with the longest proper Lyndon prefix:

\begin{proposition}\label{prop:stand.factor}
Any Lyndon word $\ell$ with $|\ell|>1$ has a factorization:
\begin{equation}\label{eqn:stand.factor}
   \ell = \ell_1\ell_2
\end{equation}
defined by the property that $\ell_1$ is the longest proper prefix of $\ell$ which is also a Lyndon word.
Then, $\ell_2$ is also a Lyndon word.
\end{proposition}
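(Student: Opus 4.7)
The strategy is proof by contradiction, paralleling the argument for the costandard factorization in Proposition~\ref{prop:cost.factor}. Let $\ell_1$ be the longest proper Lyndon prefix of $\ell$ (which exists since any single letter is Lyndon) and write $\ell=\ell_1\ell_2$. Suppose for contradiction that $\ell_2$ is not Lyndon. The idea is to produce a strictly longer Lyndon prefix of $\ell$, contradicting the maximality of $\ell_1$.

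Invoking the classical Lyndon factorization (see~\cite{Lo}), write $\ell_2=u_1u_2\cdots u_k$ with each $u_i$ Lyndon and $u_1\geq u_2\geq\cdots\geq u_k$; non-Lyndonness of $\ell_2$ forces $k\geq 2$. The key inequality I will establish is $\ell_1<u_k$: once this is in hand, $u_k\leq u_1$ gives $\ell_1<u_1$, so by Lemma~\ref{lemma:lyndon} the word $\ell_1u_1$ is Lyndon. Since $k\geq 2$, $\ell_1u_1$ is a proper prefix of $\ell$ with $|\ell_1u_1|>|\ell_1|$, contradicting the choice of $\ell_1$.

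To prove $\ell_1<u_k$, observe that $|u_k|\leq|\ell_2|<|\ell|$, so $u_k$ is a proper suffix of $\ell$, whence $\ell<u_k$ by the Lyndon property of $\ell$. In the lexicographic comparison of $\ell=\ell_1\ell_2$ with $u_k$, the scenario in which $\ell$ is a strict prefix of $u_k$ is impossible because $|\ell|>|u_k|$; hence there is a smallest index $j\leq|u_k|$ where $\ell[j]<u_k[j]$ and all earlier characters agree. If $j\leq|\ell_1|$, then $\ell_1[j]=\ell[j]<u_k[j]$ gives $\ell_1<u_k$ directly. If $j>|\ell_1|$, then $\ell_1$ matches the first $|\ell_1|$ characters of $u_k$, and $|u_k|\geq j>|\ell_1|$ forces $\ell_1$ to be a \emph{proper} prefix of $u_k$, again yielding $\ell_1<u_k$.

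The main subtlety I anticipate is the careful lexicographic case split in this last step: the strict length inequality $|\ell|>|u_k|$ both rules out the degenerate ``shorter-but-agreeing prefix'' scenario and, in the second subcase, guarantees that $\ell_1$ is a proper prefix of $u_k$ rather than equal to it (an equality $\ell_1=u_k$ would in fact contradict the Lyndon property of $\ell$ via a prefix-suffix coincidence). Everything else reduces to invoking the already-established Lemma~\ref{lemma:lyndon} together with the standard fact that every word factors uniquely as a non-increasing product of Lyndon words.
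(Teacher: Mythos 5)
Your proof is correct. The paper does not actually prove Proposition~\ref{prop:stand.factor}: it is recalled as a known fact from~\cite{Lo}, so there is no in-paper argument to compare against. Your reasoning is sound at every step: the existence of $\ell_1$ is immediate, non-Lyndonness of $\ell_2$ does force $k\geq 2$ in its canonical factorization, and the chain $\ell_1<u_k\leq u_1$ combined with Lemma~\ref{lemma:lyndon} produces the longer Lyndon prefix $\ell_1u_1$ that contradicts maximality. The key inequality $\ell_1<u_k$ is handled carefully: $u_k$ is a proper suffix of $\ell$, so $\ell<u_k$ by Definition~\ref{def:lyndon-2}, and since $|\ell|>|u_k|$ the comparison must be decided at a genuine character mismatch at some position $j\leq|u_k|$; both subcases ($j\leq|\ell_1|$ and $j>|\ell_1|$) correctly yield $\ell_1<u_k$, the latter because $\ell_1$ then becomes a proper prefix of $u_k$. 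Your parenthetical observation that $\ell_1=u_k$ is independently impossible (a proper prefix of a Lyndon word is lexicographically smaller than the word, while a proper suffix is larger) is also correct, though, as you note, not needed. This is essentially the classical argument for the standard (right) factorization, and it is a clean mirror image of the proofs the paper does spell out for the costandard side (Lemmas~\ref{lemma:right.costfac.minimal} and~\ref{lemma:lr.geq.r}).
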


We shall denote such longest proper Lyndon prefix of $\ell$ by $\ell^{ls}$ and the remaining suffix by $\ell^{rs}$.
The factorization $\ell = \ell^{ls}\ell^{rs}$ is called the \textbf{standard factorization}.

Let $\LL$ be the set of all Lyndon words. Any word can be canonically built from~$\LL$:

\begin{proposition}\label{prop:canon.factor}
Any word $w\in I^*$ has a unique factorization:
\begin{equation}\label{eqn:canon.factor}
  w = \ell_1 \dots \ell_k  \quad \text{with} \quad \ell_1 \geq \dots \geq \ell_k \in \LL.
\end{equation}
\end{proposition}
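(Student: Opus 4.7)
I would prove existence and uniqueness separately, both by induction on $|w|$.

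For existence, let $\ell_1$ be the longest prefix of $w$ that happens to be a Lyndon word; this is well defined since the very first letter of $w$ is itself Lyndon. Writing $w=\ell_1 w'$ and applying the inductive hypothesis to $w'$, one obtains a factorization $w' = \ell_2 \cdots \ell_k$ with $\ell_2 \geq \cdots \geq \ell_k$ all Lyndon (the base case where $w'$ is empty is immediate). The only point left to check is $\ell_1 \geq \ell_2$: if instead $\ell_1 < \ell_2$, then Lemma~\ref{lemma:lyndon} would force $\ell_1 \ell_2$ to be Lyndon, contradicting the maximality of $\ell_1$ as a Lyndon prefix of $w$.

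For uniqueness, my strategy is to characterize the last factor $\ell_k$ intrinsically as the lexicographically smallest nonempty suffix of $w$. Once this is established, $\ell_k$ is determined by $w$, hence so is the shorter word $\ell_1 \cdots \ell_{k-1}$, and induction on $|w|$ finishes the argument. The characterization requires two ingredients. First, the smallest nonempty suffix $s$ of any word is automatically Lyndon: every proper suffix of $s$ is another suffix of $w$, hence is $\geq s$ by minimality, and is in fact strictly greater because it has a different length -- this matches Definition~\ref{def:lyndon-2}. Second, I must show that in any factorization $w = \ell_1 \cdots \ell_k$ satisfying the stated conditions, every nonempty suffix $s \neq \ell_k$ of $w$ satisfies $s > \ell_k$. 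Write $s = s_p \ell_{p+1} \cdots \ell_k$, where $s_p$ is either all of $\ell_p$ (with $p < k$) or a proper suffix of $\ell_p$. In the first case $s_p = \ell_p \geq \ell_k$ by the given chain of inequalities; in the second case the Lyndon property of $\ell_p$ (Definition~\ref{def:lyndon-2}) gives $s_p > \ell_p \geq \ell_k$. A short case split -- based on whether $s_p$ and $\ell_k$ first disagree within the first $\min(|s_p|,|\ell_k|)$ letters or $\ell_k$ is a proper prefix of $s_p$ -- then shows $s > \ell_k$.

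The main obstacle is the last lexicographic comparison, because the order on $I^*$ mixes two distinct phenomena (first-differing-letter and strict-prefix). The key observation that carries the case analysis through is that an inequality $s_p > \ell_k$ is always witnessed within the first $|\ell_k|$ positions of $s_p$ -- either by a strictly larger letter in a common position, or by $\ell_k$ being a proper prefix of $s_p$ -- so appending $\ell_{p+1} \cdots \ell_k$ on the right of $s_p$ cannot possibly disturb the inequality.
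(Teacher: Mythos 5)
Your argument is correct. Note that the paper itself states Proposition~\ref{prop:canon.factor} without proof, citing it as classical (cf.~\cite{Lo}), so there is no in-paper argument to compare against; what you have written is essentially the standard Lothaire--Duval proof. The existence half (longest Lyndon prefix, with Lemma~\ref{lemma:lyndon} ruling out $\ell_1<\ell_2$) is fine, and the uniqueness half correctly isolates the key intrinsic characterization: $\ell_k$ is the lexicographically smallest nonempty suffix of $w$. Your two ingredients both check out --- the minimal suffix is Lyndon because its proper suffixes are suffixes of $w$ of different length, and any suffix $s=s_p\ell_{p+1}\cdots\ell_k$ with $s\neq\ell_k$ satisfies $s>\ell_k$ because either $s_p>\ell_k$ is witnessed within the first $|\ell_k|$ letters (larger letter at a common position, or $\ell_k$ a proper prefix of $s_p$) and hence survives right-concatenation, or $s_p=\ell_p=\ell_k$ with $p<k$, in which case $\ell_k$ is a proper prefix of $s$. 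It is worth observing that the same ``smallest suffix is Lyndon'' device is exactly what the paper uses in its proof of Lemma~\ref{lemma:right.costfac.minimal}, so your route is fully consistent with the toolkit developed in Section~\ref{sec:Aux}.
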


The factorization~\eqref{eqn:canon.factor} is called the \textbf{canonical factorization}.
The following result is well-known (see~\cite{M}):

\begin{lemma}\label{lem:Lyndon-vs-canonical}
If $\ell$ is Lyndon and $w\in I^*$ has the canonical factorization~\eqref{eqn:canon.factor}, then
  $$ \ell>w \Longleftrightarrow \ell>\ell_1. $$
\end{lemma}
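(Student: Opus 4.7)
The plan is to prove the two implications separately. The forward direction $(\Rightarrow)$ is purely combinatorial, whereas the backward direction $(\Leftarrow)$ requires a short induction on $|w|$ together with the suffix inequality inherent to the Lyndon property of $\ell$.

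For $(\Rightarrow)$, I would assume $\ell>w$ and compare $\ell$ with $\ell_1$ directly in lexicographic order. Since $\ell_1$ is a prefix of $w$, any scenario making $\ell\leq \ell_1$ forces $\ell\leq w$: if $\ell$ is a prefix of $\ell_1$ (hence of $w$), then $\ell\leq w$ by the length rule of Section~\ref{ssec:L-words}; and if $\ell$ and $\ell_1$ first disagree at some position $p\leq |\ell_1|$ with $\ell[p]<\ell_1[p]$, then $w[p]=\ell_1[p]>\ell[p]$ gives $\ell<w$. Both options contradict $\ell>w$, leaving $\ell>\ell_1$ as the only remaining possibility.

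For $(\Leftarrow)$, I would induct on $|w|$. The base case $k=1$ (so $w=\ell_1$) is trivial. For the step with $k\geq 2$, set $w':=\ell_2\cdots\ell_k$: this is the canonical factorization of a strictly shorter word whose leading factor satisfies $\ell>\ell_1\geq \ell_2$, so the inductive hypothesis yields $\ell>w'$. Now I would compare $\ell$ with $\ell_1$: if they first disagree at some position $p\leq |\ell_1|$ with $\ell[p]>\ell_1[p]$, then $w[p]=\ell_1[p]$ gives $\ell>w$ immediately. Otherwise $\ell_1$ is a proper prefix of $\ell$, so write $\ell=\ell_1\ell'$ with $\ell'$ nonempty. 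Since $\ell$ is Lyndon, Definition~\ref{def:lyndon-2} gives $\ell'>\ell$, and combined with $\ell>w'$ this yields $\ell'>w'$ by transitivity. Prepending the common prefix $\ell_1$ produces $\ell=\ell_1\ell'>\ell_1 w'=w$, as desired.

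The only non-routine point I anticipate is this last case, since $\ell'$ need not itself be Lyndon and the induction therefore cannot be applied to $\ell'$ directly. The trick is to apply the inductive hypothesis to $\ell$ against the shorter word $w'$, not to $\ell'$ against $w$, and to bring $\ell'$ in only at the very end via the Lyndon suffix inequality $\ell'>\ell$. Everything else reduces to elementary lexicographic bookkeeping.
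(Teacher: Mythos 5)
Your proof is correct and follows essentially the same route as the paper's: both directions rest on the prefix comparison of $\ell$ with $\ell_1$ plus the Lyndon suffix inequality $\ell'>\ell$. The only (harmless) difference is organizational — where the paper iteratively peels $\ell_2,\ell_3,\dots$ off the suffix $\ell^{(1)}$ and "repeats the argument $k$ times," you package this as a formal induction on the number of canonical factors, applying the hypothesis to the Lyndon word $\ell$ against the tail $\ell_2\cdots\ell_k$ and then transferring via transitivity through $\ell'>\ell>\ell_2\cdots\ell_k$, which neatly avoids having to reason about the non-Lyndon remainders directly.
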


\begin{proof}
The direction ``$\Rightarrow$'' is clear. Assume now that $\ell>\ell_1$. If $\ell_1$ is not a prefix of $\ell$,
then $\ell>w$. If $\ell_1$ is a prefix of $\ell$, then $\ell=\ell_1 \ell^{(1)}$ with $\ell^{(1)}\ne \emptyset$.
As $\ell$ is Lyndon, we get $\ell^{(1)}>\ell>\ell_1\geq \ell_2$. This implies $\ell^{(1)}>\ell_2\dots \ell_k$ unless
$\ell_2$ is a prefix of $\ell^{(1)}$, i.e.\ $\ell^{(1)}=\ell_2\ell^{(2)}$. We note that $\ell^{(2)}\ne \emptyset$
as $\ell=\ell_1\ell_2$ would contradict the uniqueness of the canonical factorization.
Repeating this argument $k$ times we obtain $\ell>w$.
\end{proof}

   %%%%%%%%%%%%%%%%%%%%%%%%%%%%%%%%%%%%%%%%%%%%%%%%%%%%%%%%%%%%%%%%%%%%%%%%%%%%%%%

\subsection{Standard bracketing}
\

Let $\fa$ be a Lie algebra generated by a finite set $\{e_i\}_{i \in I}$ labeled by the alphabet~$I$.

\begin{definition}\label{bracketing}
The \textbf{standard bracketing} of $\ell\in \LL$ is given inductively by:
\begin{itemize}[leftmargin=0.7cm]

\item[$\bullet$]
$\sb[i] = e_i \in \fa$ for $i \in I$,

\item[$\bullet$]
$\sb[\ell] = [\sb[\ell^l],\sb[\ell^r]] \in \fa$ if $|\ell|>1$.

\end{itemize}
\end{definition}

The major importance of this definition is due to the following result of Lyndon:

\begin{theorem}\label{thm:Lyndon.theorem}(\cite[Theorem 5.3.1]{Lo})
If $\fa$ is a free Lie algebra in the generators $\{e_i\}_{i\in I}$, then the set
$\big\{\sb[\ell] \,|\, \ell\in \LL \big\}$ provides a basis of $\fa$.
\end{theorem}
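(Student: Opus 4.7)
The plan is to work inside the universal enveloping algebra $U(\fa)$, which by the PBW theorem is identified with the tensor algebra $T(V)$ on $V = \bigoplus_{i\in I} \mathbb{C}\cdot e_i$. Under this identification each $\sb[\ell]$ becomes a specific $\mathbb{Z}$-linear combination of words, and the whole theorem reduces to a triangularity statement against the word basis of $T(V)$.

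The first step is a \textbf{leading-term lemma}: for every $\ell \in \LL$,
\[
  \sb[\ell] \;=\; \ell \;+\; \sum_{w \sim \ell,\, w > \ell} c^{\ell}_w \, w
\]
in $T(V)$, where $w \sim \ell$ means that $w$ is a rearrangement of the letters of $\ell$ and $>$ is lex order. I would prove this by induction on $|\ell|$ using the costandard factorization $\ell = \ell^l \ell^r$ (Proposition~\ref{prop:cost.factor}) and the expansion $[\sb[\ell^l],\sb[\ell^r]] = \sb[\ell^l]\sb[\ell^r] - \sb[\ell^r]\sb[\ell^l]$. The inductive hypothesis produces $\ell^l\ell^r = \ell$ with coefficient one from the first product, while Lemma~\ref{lemma:lyndon} yields $\ell^r\ell^l > \ell$ and hence the leading word of the subtracted product is strictly larger than $\ell$. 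The essential bookkeeping is to check that every other cross-term (say $uv$ with $u\sim\ell^l$, $u>\ell^l$, and $v\sim\ell^r$) also exceeds $\ell$ in lex order, which follows from prefix-by-prefix comparison.

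Linear independence is then immediate: in a putative relation $\sum_\ell a_\ell\,\sb[\ell]=0$, the lex-smallest $\ell_0$ with $a_{\ell_0}\neq 0$ cannot have its word $\ell_0$ cancelled by any $\sb[\ell]$ with $\ell>\ell_0$, since each such $\sb[\ell]$ only contributes permutations of $\ell$ which are $>\ell>\ell_0$. For spanning, I would upgrade the lemma to canonical factorizations $w=\ell_1\cdots\ell_k$ (Proposition~\ref{prop:canon.factor}), showing
\[
  \sb[\ell_1]\cdots\sb[\ell_k] \;=\; w \;+\; \sum_{w'\sim w,\, w'>w} d_{w'}\, w'
\]
by the same triangular induction. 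Since the canonical products $\ell_1\cdots\ell_k$ form a basis of $T(V)$ by Proposition~\ref{prop:canon.factor}, this unitriangular transition matrix shows that the ordered PBW-style monomials in $\{\sb[\ell]\}$ also form a basis of $T(V)=U(\fa)$. By the uniqueness half of PBW applied to the Lie subalgebra of $\fa$ generated by $\{\sb[\ell]\}$, this forces $\{\sb[\ell]\}_{\ell\in\LL}$ to be a basis of $\fa$ itself.

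The main obstacle is the inductive bookkeeping in the leading-term lemma: while the two explicit leading monomials $\ell$ and $\ell^r\ell^l$ fall out easily from Lemma~\ref{lemma:lyndon}, systematically ruling out the appearance of any word $\leq \ell$ (other than $\ell$ itself) in the expansion of $\sb[\ell]$ requires invoking the Lyndon property of \emph{both} $\ell^l$ and $\ell^r$ simultaneously, combined with the specific asymmetry of the costandard split. The rest is a formal consequence of PBW together with Proposition~\ref{prop:canon.factor}.
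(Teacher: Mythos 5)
The paper does not prove this statement at all: it is quoted verbatim from the cited reference \cite[Theorem 5.3.1]{Lo}, and your argument is essentially the standard proof given there — the unitriangularity of $\sb[\ell]$ (and, more generally, of decreasing products $\sb[\ell_1]\cdots\sb[\ell_k]$) against the word basis of $T(V)$, with the leading word supplied by the canonical factorization. Your triangularity lemma and the linear-independence step are correct as outlined (working multidegree by multidegree, so that all competing words are genuine rearrangements). The only place I would tighten is the very last sentence: ``the uniqueness half of PBW applied to the Lie subalgebra generated by $\{\sb[\ell]\}$'' does not quite do the job by itself, since a product of $k\geq 2$ elements of $\fa$ need not have vanishing $\mathrm{Sym}^1$-component under the PBW isomorphism; the standard way to close the argument is a dimension count in each multidegree $\alpha$ — the canonical-factorization bijection gives $\prod_{\beta}(1-x^{\beta})^{-N_{\beta}}=\big(1-\sum_i x_i\big)^{-1}=\prod_{\beta}(1-x^{\beta})^{-\dim\fa_{\beta}}$, hence $N_{\alpha}=\dim\fa_{\alpha}$, which together with the linear independence you already proved yields the basis claim.
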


   %%%%%%%%%%%%%%%%%%%%%%%%%%%%%%%%%%%%%%%%%%%%%%%%%%%%%%%%%%%%%%%%%%%%%%%%%%%%%%%

\subsection{Standard Lyndon words}\label{ssec:SL-words}
\

A generalization of Theorem~\ref{thm:Lyndon.theorem} to Lie algebras $\fa$ generated by $\{e_i\}_{i\in I}$
was provided in~\cite{LR}. To state the result, define ${_we}\in U(\fa)$ for any word $w\in I^*$:
\begin{equation}\label{eqn:word}
  _{[i_1 \dots i_k]}e = e_{i_1} \dots e_{i_k} \in U(\fa).
\end{equation}

Consider the following new order on $I^*$:
\begin{equation}\label{eq:LR-order}
    v\succeq w \quad \text{if }
    \begin{cases}
        |v|<|w| \\
          \  \text{or}\\
        |v|=|w| \text{ and } v \geq w
    \end{cases}.
\end{equation}
The following definition is due to \cite{LR}:

\begin{definition}\label{def:standard}
(a) A word $w$ is called \textbf{standard} if $_we\in U(\fa)$ cannot be expressed as a linear combination of
$_ve$ for various $v \succ w$, with $_we$ as in~\eqref{eqn:word}.

\medskip
\noindent
(b) A Lyndon word $\ell$ is called \textbf{standard Lyndon} if $\sb[\ell]\in \fa$ cannot be expressed as
a linear combination of $\sb[m]$ for various Lyndon words $m \succ \ell$.
\end{definition}

The following result is nontrivial and justifies the above terminology:

\begin{proposition}\label{prop:standard}(\cite{LR})
A Lyndon word is standard iff it is standard Lyndon.
\end{proposition}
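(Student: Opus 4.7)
The plan is to base both directions on the \emph{Lyndon triangularity} in $U(\fa)$: for every Lyndon word $\ell$,
\begin{equation*}
  \sb[\ell] \;=\; {_\ell e} \;+\; \sum_{v \succ \ell,\ |v|=|\ell|} c^{\ell}_v \cdot {_v e}
\end{equation*}
for some scalars $c^{\ell}_v \in \BC$. This is proved by induction on $|\ell|$: the base $|\ell|=1$ is tautological, while for $|\ell|>1$ one uses the costandard factorization $\ell = \ell^l \ell^r$ (Proposition~\ref{prop:cost.factor}) to expand $[\sb[\ell^l],\sb[\ell^r]] = \sb[\ell^l]\sb[\ell^r] - \sb[\ell^r]\sb[\ell^l]$, apply the inductive hypothesis in each factor, and observe that every resulting monomial is a length-$|\ell|$ word of the form $u_1 u_2$ or $u_2 u_1$ with $u_1 \succeq \ell^l$ and $u_2 \succeq \ell^r$. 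Only $_{\ell^l \ell^r}e = {_\ell e}$ recovers $\ell$ itself; all other contributions are lexicographically strictly greater than $\ell$ by Lemma~\ref{lemma:lyndon}, which in particular gives $\ell = \ell^l\ell^r < \ell^r \ell^l$ and bounds the shuffled cross-terms analogously.

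Given this triangularity, the direction ``$\Rightarrow$'' is immediate: if $\ell$ is standard as a word but $\sb[\ell] = \sum_{m \succ \ell} \alpha_m \sb[m]$ for various Lyndon $m$, substituting the triangularity in both $\sb[\ell]$ and each $\sb[m]$ solves for $_\ell e$ as a $\BC$-linear combination of $_v e$ with $v \succ \ell$, contradicting standardness of $\ell$.

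The plan for ``$\Leftarrow$'' is to combine the Lyndon triangularity with its companion \emph{product triangularity}: for any word $v$ with canonical Lyndon factorization $v = v_1 \cdots v_k$ (Proposition~\ref{prop:canon.factor}),
\begin{equation*}
  _v e \;=\; \sb[v_1]\cdots \sb[v_k] \;+\; \sum_{u \succ v,\ |u|=|v|} d^{v}_u \cdot {_u e},
\end{equation*}
which follows by an entirely parallel induction, using Lemma~\ref{lem:Lyndon-vs-canonical} to compare the lexicographic order of shuffles against $v$ itself. Suppose $\ell$ is standard Lyndon but not standard as a word, so $_\ell e = \sum_{v \succ \ell} \beta_v \cdot {_v e}$. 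Substituting the product triangularity on both sides and iterating over the higher error terms (which terminates by the well-ordering of $\succ$ on length-$|\ell|$ words) rewrites $\sb[\ell]$ as a $\BC$-linear combination of ordered products $\sb[v_1]\cdots \sb[v_k]$ with $v_1 \geq \cdots \geq v_k$ Lyndon and $v_1 \cdots v_k \succ \ell$. Since $\sb[\ell] \in \fa$ and the ordered products $\sb[v_1]\cdots \sb[v_k]$ are linearly independent in $U(\fa)$ (the simultaneously-proved PBW property), only the $k=1$ summands can survive, yielding $\sb[\ell] \in \spn\{\sb[m] : m \succ \ell,\ m \in \LL\}$ and contradicting standard Lyndon.

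The main obstacle is isolating, in the ``$\Leftarrow$'' direction, the $k=1$ contribution in $\fa$ from the higher $k \geq 2$ pieces lying in $U(\fa) \setminus \fa$. This step is morally the Poincar\'e-Birkhoff-Witt theorem applied in the Lyndon basis; making it rigorous requires a simultaneous induction establishing that the ordered products of standard Lyndon brackets form a basis of $U(\fa)$. This is the heart of the Lalonde-Ram argument and cannot be bypassed without some PBW-type input.
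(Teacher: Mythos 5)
The paper does not actually prove this proposition: it is quoted from \cite{LR} without proof, so there is no in-paper argument to compare yours against. Judged on its own, your ``$\Rightarrow$'' direction is correct and essentially complete: the triangularity $\sb[\ell] = {_\ell e} + \sum_{v>\ell,\ |v|=|\ell|} c^{\ell}_v\, {_v e}$ is the right tool, the induction via the costandard factorization goes through (every cross-term $_{u_1u_2}e$ with $u_1\succeq \ell^l$, $u_2\succeq \ell^r$, not both equal, is $>\ell^l\ell^r=\ell$, and every term of $\sb[\ell^r]\sb[\ell^l]$ is $\geq \ell^r\ell^l>\ell$ by Lemma~\ref{lemma:lyndon}), and substituting this into a hypothetical relation $\sb[\ell]=\sum_{m\succ\ell}\alpha_m\sb[m]$ does express $_\ell e$ through $_v e$ with $v\succ\ell$, contradicting standardness.

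The ``$\Leftarrow$'' direction, however, is a plan rather than a proof, and the gap you flag at the end is genuine, not a formality. Two things are missing. First, after applying the product triangularity, the factors $v_i$ of the canonical factorizations are Lyndon but need not be \emph{standard} Lyndon; before one can speak of a basis of ordered products of standard Lyndon brackets, each non-standard $\sb[v_i]$ must be rewritten through standard ones and the resulting products reordered, and one must verify that the commutators produced by this reordering stay supported on words $\succ\ell$ --- this bookkeeping is precisely where the Lalonde--Ram induction lives, and your sketch does not address it. Second, isolating the $k=1$ component requires the linear independence of $\{\sb[m] \mid m\in\SL\}$ in $\fa$ (so that PBW applies to the ordered monomials built from them); but that independence is part of Theorem~\ref{thm:standard Lyndon theorem}, which in \cite{LR} is established \emph{simultaneously} with the present proposition, so invoking it here without actually setting up the joint induction is circular. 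Your closing paragraph shows you see this; as written, though, the ``$\Leftarrow$'' half remains open.
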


We shall use $\SL$ to denote the set of all standard Lyndon words. The major importance of this definition
is due to the following result of Lalonde-Ram:

\begin{theorem}\label{thm:standard Lyndon theorem}(\cite[Theorem 2.1]{LR})
For any Lie algebra $\fa$ generated by a finite collection $\{e_i\}_{i\in I}$, the set
$\big\{\sb[\ell] \,|\, \ell\in \SL \big\}$ provides a basis of $\fa$.
\end{theorem}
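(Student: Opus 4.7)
The plan is to reduce the claim to Theorem~\ref{thm:Lyndon.theorem} applied to the free Lie algebra $\widetilde{\fa}$ on the generators $\{e_i\}_{i\in I}$, via a downward induction in the order $\succ$ of~\eqref{eq:LR-order}. Let $\pi\colon \widetilde{\fa}\twoheadrightarrow \fa$ denote the canonical surjection. By Theorem~\ref{thm:Lyndon.theorem}, $\{\sb[\ell]\,|\,\ell\in \LL\}$ is a basis of $\widetilde{\fa}$; its image under $\pi$ is therefore a spanning set of $\fa$, and throughout one identifies $\sb[\ell]$ with its image $\pi(\sb[\ell])$. It then suffices to verify separately that $\{\sb[\ell]\,|\,\ell\in \SL\}$ spans $\fa$ and that it is linearly independent.

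For spanning by $\{\sb[\ell]\,|\,\ell\in \SL\}$, the idea is to start from the larger spanning set $\{\sb[\ell]\,|\,\ell\in \LL\}$ and successively eliminate every $\ell\in \LL\setminus \SL$ by invoking Definition~\ref{def:standard}(b): such an $\sb[\ell]$ is, by definition, a linear combination in $\fa$ of $\sb[m]$ for various Lyndon words $m\succ \ell$. Because each such $m$ is either strictly shorter than $\ell$ or of the same length but lexicographically larger, and the alphabet $I$ is finite, the set of Lyndon words that are $\succ$-greater than a fixed one is finite. The iterative rewriting must therefore halt, and every $\sb[\ell]$ with $\ell\in\LL$, and a fortiori every element of $\fa$, becomes a $\BC$-linear combination of $\{\sb[m]\,|\,m\in\SL\}$.

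For linear independence, suppose $\sum_{\ell\in \SL}c_\ell\,\sb[\ell]=0$ in $\fa$ with not all $c_\ell=0$, and let $\ell_0$ be the $\succ$-smallest standard Lyndon word appearing with nonzero coefficient. Rearranging yields
\begin{equation*}
  \sb[\ell_0]\,=\,-c_{\ell_0}^{-1}\sum_{\ell\in \SL,\,\ell\succ \ell_0}c_\ell\,\sb[\ell]
  \quad \text{in}\ \fa,
\end{equation*}
which expresses $\sb[\ell_0]$ as a linear combination of $\sb[m]$ for Lyndon words $m\succ \ell_0$, directly contradicting the membership $\ell_0\in \SL$ in the sense of Definition~\ref{def:standard}(b).

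The only genuinely delicate point is the termination of the rewriting in the spanning step, which crucially exploits the length-first clause of~\eqref{eq:LR-order}: without that clause, the replacements could a priori introduce ever longer Lyndon words and the reduction would fail to terminate. Once termination is granted, both halves of the theorem amount to straightforward bookkeeping against the definitions.
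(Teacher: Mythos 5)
The paper does not prove this statement; it is imported verbatim from \cite[Theorem 2.1]{LR}, so there is no in-paper argument to compare against. Your proof is correct and is essentially the standard one: spanning follows from Lyndon's theorem for the free Lie algebra together with a rewriting that terminates because only finitely many words are $\succ$-greater than a given word (they all have length at most that of the given word over a finite alphabet), and linear independence follows by isolating the $\succ$-minimal word with nonzero coefficient and contradicting Definition~\ref{def:standard}(b). Both halves check out against the definitions, and the termination point — the only delicate step — is handled correctly.
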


   %%%%%%%%%%%%%%%%%%%%%%%%%%%%%%%%%%%%%%%%%%%%%%%%%%%%%%%%%%%%%%%%%%%%%%%%%%%%%%%%%%%%%

\subsection{Application to simple Lie algebras}\label{ssec:LR-bijection}
\

Let $\fg$ be a simple Lie algebra with a root system $\Delta=\Delta^+ \sqcup \Delta^-$ and simple roots
$\{\alpha_i\}_{i\in I}$. We endow the root lattice $Q=\bigoplus_{i\in I} \BZ\alpha_i$ with the symmetric
pairing $(\cdot,\cdot)$ so that the Cartan matrix $(a_{ij})_{i,j\in I}$ of $\fg$ is given by
$a_{ij} = 2(\alpha_i,\alpha_j)/(\alpha_i,\alpha_i)$. The Lie algebra $\fg$ admits the standard
\textbf{root space decomposition}:
\begin{equation}\label{eq:root.decomp}
  \fg=\fh \oplus \bigoplus_{\alpha \in \Delta} \fg_{\alpha},
  \qquad \fh\subset\fg -\mathrm{Cartan\ subalgebra},
\end{equation}
with $\dim(\fg_{\alpha})=1$ for all $\alpha\in \Delta$.
We pick root vectors $e_\alpha\in \fg_\alpha$ so that $\fg_\alpha=\BC\cdot e_\alpha$.

Consider the \emph{positive} Lie subalgebra $\fn^+=\bigoplus_{\alpha \in \Delta^+} \fg_{\alpha}$ of $\fg$. Explicitly,
$\fn^+$ is generated by $\{e_i\}_{i\in I}$ (where $e_i=e_{\alpha_i}$) subject to the classical \emph{Serre} relations:
\begin{equation}\label{eqn:Serre}
  \underbrace{[e_i,[e_i,\cdots,[e_i,e_j]\cdots]]}_{1-a_{ij} \text{ Lie brackets}}\, =\, 0 \qquad \forall\ i\neq j.
\end{equation}
Let $Q^+=\bigoplus_{i\in I} \BZ_{\geq 0}\alpha_i$. The Lie algebra $\fn^+$ is naturally $Q^+$-graded via $\deg\, e_i=\alpha_i$.

Fix any order on the set $I$. According to Theorem~\ref{thm:standard Lyndon theorem}, $\fn^+$ has a basis consisting
of $\{e_{\ell} \,|\, \ell\in \SL\}$. Evoking the above $Q^+$-grading of $\fn^+$, it is natural to define the grading
of words via $\deg [i_1 \dots i_k] = \alpha_{i_1} + \dots + \alpha_{i_k} \in Q^+$. Due to the
decomposition~\eqref{eq:root.decomp} and the fact that the root vectors $\{e_\alpha\}_{\alpha\in \Delta^+} \subset \fn^+$
all live in distinct degrees $\alpha \in Q^+$, we conclude that there exists a \textbf{Lalonde-Ram bijection}~\cite{LR}:
\begin{equation}\label{eqn:associated word}
  \ell \colon \Delta^+ \,\iso\, \big\{\slaws \big\}
  \quad \mathrm{with} \quad \deg \ell(\alpha) = \alpha.
\end{equation}
For degree reasons, we also note that one can presently replace $v \succ w$ of~\eqref{eq:LR-order} simply with
$v>w$ subject to $\deg v=\deg w$ (so that $|v|=|w|$) in Definition~\ref{def:standard}.

   %%%%%%%%%%%%%%%%%%%%%%%%%%%%%%%%%%%%%%%%%%%%%%%%%%%%%%%%%%%%%%%%%%%%%%%%%%%%%%%%%%%%%

\subsection{Results of Leclerc and Rosso}\label{ssec:L-and-R}
\

The Lalonde-Ram's bijection~\eqref{eqn:associated word} was described explicitly in~\cite{L}. We recall that for a root
$\gamma=\sum_{i\in I} n_i\alpha_i\in \Delta^+$, its height is $|\gamma|=\hgt(\gamma)=\sum_{i\in I} n_i\in \BZ_{>0}$.

\begin{proposition}\label{prop:Leclerc algorithm}(\cite[Proposition 25]{L})
The bijection $\ell$ is inductively given by:
\begin{itemize}[leftmargin=0.7cm]

\item[$\bullet$]
for simple roots $\ell(\alpha_i)=[i]$,

\item[$\bullet$]
for other positive roots, we have the following \textbf{Leclerc algorithm}:
\begin{equation}\label{eqn:inductively}
  \ell(\alpha) =
  \max\left\{ \ell(\gamma_1)\ell(\gamma_2) \,\Big|\,
               \alpha=\gamma_1+\gamma_2 \,,\, \gamma_1,\gamma_2\in \Delta^+ \,,\, \ell(\gamma_1) < \ell(\gamma_2) \right\}.
\end{equation}

\end{itemize}
\end{proposition}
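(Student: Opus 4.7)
The plan is to establish~\eqref{eqn:inductively} by induction on the height $|\alpha|=\hgt(\alpha)$, with the base case $|\alpha|=1$ being immediate since $\ell(\alpha_i)=[i]$ by construction. For the inductive step with $|\alpha|\geq 2$, I would prove the two inequalities $\ell(\alpha)\leq M$ and $\ell(\alpha)\geq M$ separately, where $M$ denotes the maximum on the right-hand side of~\eqref{eqn:inductively}. Throughout, a key background fact I would invoke is the \emph{triangularity} of the standard bracketing: for any Lyndon word $m$, the lex-largest monomial appearing in the PBW expansion of $\sb[m]\in U(\fn^+)$ equals $_{m}e$ with coefficient one.

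For the inequality $\ell(\alpha)\leq M$, I would show that $\ell(\alpha)$ itself arises as one of the concatenations competed over. Take the costandard factorization $\ell(\alpha)=\ell(\alpha)^l\,\ell(\alpha)^r$ from Proposition~\ref{prop:cost.factor}, and set $\gamma_1=\deg \ell(\alpha)^l$, $\gamma_2=\deg \ell(\alpha)^r$, so $\gamma_1+\gamma_2=\alpha$ and $\ell(\alpha)^l<\ell(\alpha)^r$ (a general property of costandard factorizations of Lyndon words). Because $\ell(\alpha)$ is standard Lyndon, $\sb[\ell(\alpha)]=[\sb[\ell(\alpha)^l],\sb[\ell(\alpha)^r]]$ is a nonzero element of $\fn^+_\alpha=\BC\cdot e_\alpha$, which forces both inner bracketings to be nonzero; hence $\fn^+_{\gamma_i}\neq 0$, so $\gamma_1,\gamma_2\in \Delta^+$. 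The identification $\ell(\alpha)^l=\ell(\gamma_1)$ and $\ell(\alpha)^r=\ell(\gamma_2)$ then follows by combining the triangularity property with the one-dimensionality of $\fn^+_{\gamma_i}$ and the inductive hypothesis: a hypothetical Lyndon word of degree $\gamma_1$ strictly lex-larger than $\ell(\alpha)^l$ would, via the one-dimensionality of $\fn^+_{\gamma_1}$, allow $\sb[\ell(\alpha)^l]$ to be rewritten as a multiple of a lex-larger bracketing, producing a strictly lex-larger leading monomial inside $\sb[\ell(\alpha)]$ and contradicting the standardness of $\ell(\alpha)$.

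For the reverse inequality $\ell(\alpha)\geq M$, let $M=\ell(\gamma_1)\ell(\gamma_2)$ attain the maximum. By Lemma~\ref{lemma:lyndon}, $M$ is Lyndon of degree $\alpha$, so $\sb[M]\in\fn^+_\alpha$. The crux is to show $\sb[M]\neq 0$: I would compare $\sb[M]$ (computed from $M$'s own costandard factorization, which in general may break $\ell(\gamma_2)$) with the Lie bracket $[\sb[\ell(\gamma_1)],\sb[\ell(\gamma_2)]]$, which is a nonzero multiple of $e_\alpha$ by the Chevalley-basis identity $[e_{\gamma_1},e_{\gamma_2}]\in \BC^\times\cdot e_\alpha$ (valid whenever $\gamma_1,\gamma_2,\gamma_1+\gamma_2\in \Delta^+$). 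A triangularity argument shows these differ only in strictly lex-smaller contributions within $\fn^+_\alpha$, so $\sb[M]\neq 0$. Since $\dim\fn^+_\alpha=1$ forces at most one standard Lyndon word of degree $\alpha$, the nonvanishing of $\sb[M]$ identifies $M$ as that standard Lyndon word, giving $M=\ell(\alpha)$.

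The main obstacle is the triangularity bookkeeping on both halves of the argument, as the interaction between lex order on words and the PBW structure on $U(\fn^+)$ must be tracked finely enough to (i) pin down $\ell(\alpha)^l=\ell(\gamma_1)$ rather than a merely lex-larger Lyndon word of the same degree, and (ii) ensure that the secondary terms in the costandard expansion of $\sb[M]$ do not cancel the nonzero contribution of $[\sb[\ell(\gamma_1)],\sb[\ell(\gamma_2)]]$. A related subtlety is that the costandard factorization of $M$ need not coincide with the split $\ell(\gamma_1)\cdot \ell(\gamma_2)$ prescribed by the maximum, so the link between $\sb[M]$ and $[\sb[\ell(\gamma_1)],\sb[\ell(\gamma_2)]]$ must be analyzed through the action on lex leading monomials rather than directly from the definition of standard bracketing.
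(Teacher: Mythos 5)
The paper does not prove this statement: it is quoted verbatim from \cite[Proposition 25]{L}, so there is no in-paper argument to compare against. Your outline is a reasonable reconstruction of the Leclerc/Lalonde--Ram proof: the inequality $\ell(\alpha)\leq M$ via the costandard factorization of Proposition~\ref{prop:cost.factor}, and $\ell(\alpha)\geq M$ via nonvanishing of the bracket of the two root vectors together with triangularity. Two remarks on the route. For the $\leq$ half, the clean way to see $\ell(\alpha)^l=\ell(\gamma_1)$ and $\ell(\alpha)^r=\ell(\gamma_2)$ is the Lalonde--Ram fact that any factor of a standard \emph{word} is standard (immediate from Definition~\ref{def:standard}(a), since a relation ${}_ue=\sum_{u'\succ u}c_{u'}\,{}_{u'}e$ multiplies on the right by ${}_ve$ to give one for ${}_{uv}e$), combined with Proposition~\ref{prop:standard}; your bracketing argument for this step secretly needs the same triangularity machinery as the $\geq$ half, because "producing a lex-larger leading monomial inside $\sb[\ell(\alpha)]$" requires exhibiting an actual Lyndon word of degree $\alpha$, lex-larger than $\ell(\alpha)$, with nonvanishing bracketing.

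The one place where the logic as written is broken is the conclusion of the $\geq$ half: "the nonvanishing of $\sb[M]$ identifies $M$ as that standard Lyndon word" is a non sequitur. Since $\dim\fn^+_\alpha=1$, a Lyndon word $w$ of degree $\alpha$ is standard iff it is the lex-\emph{largest} Lyndon word of that degree with $\sb[w]\neq 0$; nonvanishing alone only yields $M\leq\ell(\alpha)$. That inequality is in fact exactly what you need, and together with the $\leq$ half it closes the argument, but you should state it that way. Relatedly, the phrase "differ only in strictly lex-smaller contributions within $\fn^+_\alpha$" cannot be taken literally: $\fn^+_\alpha$ is one-dimensional, so the comparison of $\sb[M]$ with $[\sb[\ell(\gamma_1)],\sb[\ell(\gamma_2)]]$ has to be performed in $U(\fn^+)_\alpha$ relative to the Lalonde--Ram basis $\{{}_ue\}$ of standard words from Theorem~\ref{thm:standard Lyndon theorem} (monomials ${}_we$ for distinct words $w$ are \emph{not} linearly independent in $U(\fn^+)$), and the secondary terms are supported on lex-larger words of the same degree. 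With those corrections the plan goes through and agrees with the proof in the literature.
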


\noindent
The formula~\eqref{eqn:inductively} recovers $\ell(\alpha)$ once we know $\ell(\gamma)$ for all
$\{\gamma\in \Delta^+ \,|\, \mathrm{ht}(\gamma)<\mathrm{ht}(\alpha)\}$.

Let us also recall another fundamental property of $\ell$.

\begin{definition}\label{def:convex}
A total order on the set $\Delta^+$ of positive roots is called \textbf{convex} if:
\begin{equation}\label{eqn:convex}
  \alpha < \alpha+\beta < \beta
\end{equation}
for all $\alpha < \beta \in \Delta^+$ such that $\alpha+\beta$ is also a root.
\end{definition}

The following result is~\cite[Proposition 26]{L} (where it is attributed to Rosso~\cite{R}):

\begin{proposition}\label{prop:fin.convex}(\cite{L,R})
Consider a total order $<$ on $\Delta^+$ induced from the lexicographical order on $\slaws$:
\begin{equation}\label{eqn:induces}
  \alpha < \beta \quad \Longleftrightarrow \quad \ell(\alpha) < \ell(\beta)  \ \ \mathrm{ lexicographically}.
\end{equation}
This order is convex.
\end{proposition}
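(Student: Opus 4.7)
The plan is to establish the two inequalities $\ell(\alpha)<\ell(\alpha+\beta)$ and $\ell(\alpha+\beta)<\ell(\beta)$ separately, with the Leclerc algorithm of Proposition~\ref{prop:Leclerc algorithm} as the principal tool throughout.

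The first inequality will be immediate. Since $\alpha<\beta$ translates to $\ell(\alpha)<\ell(\beta)$, the pair $(\alpha,\beta)$ is an admissible decomposition in the Leclerc maximum~\eqref{eqn:inductively}, so $\ell(\alpha+\beta)\geq \ell(\alpha)\ell(\beta)$. As $\ell(\alpha)$ is a proper prefix of $\ell(\alpha)\ell(\beta)$, lexicographic comparison yields $\ell(\alpha)<\ell(\alpha)\ell(\beta)\leq \ell(\alpha+\beta)$, which is the first half.

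For the second inequality, I would proceed by induction on $\hgt(\alpha+\beta)$. The base case $\hgt(\alpha+\beta)=2$ reduces to $[ij]<[j]$ with $i<j$, which is a direct first-letter check. For the inductive step, it suffices to prove $\ell(\gamma_1)\ell(\gamma_2)<\ell(\beta)$ for \emph{every} admissible Leclerc decomposition $\gamma_1+\gamma_2=\alpha+\beta$ with $\ell(\gamma_1)<\ell(\gamma_2)$; taking the max over such decompositions then delivers $\ell(\alpha+\beta)<\ell(\beta)$. If the smallest simple root appearing in $\gamma_1$ is strictly smaller than the smallest one in $\beta$, then $\ell(\gamma_1)$ (and hence $\ell(\gamma_1)\ell(\gamma_2)$) begins with a smaller letter than $\ell(\beta)$ and the claim follows at once. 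Otherwise one must probe further letters.

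The main obstacle is the remaining proper-prefix situation: $\ell(\gamma_1)$ agrees with $\ell(\beta)$ on its full length, so $\ell(\beta)=\ell(\gamma_1)\,w$ for some nonempty word $w$, and the comparison between $\ell(\gamma_1)\ell(\gamma_2)$ and $\ell(\beta)$ reduces to comparing $\ell(\gamma_2)$ with $w$. I would exploit three ingredients here: (i) the Lyndon property of $\ell(\beta)$, which forces $w>\ell(\beta)$ since $w$ is a proper suffix; (ii) Lemma~\ref{lem:Lyndon-vs-canonical} applied to the canonical factorization of $w$, pinning down the Lyndon leading factor of $w$ relative to $\ell(\beta)$; and (iii) the inductive convexity hypothesis applied to the positive roots of strictly smaller height arising in this factorization (as well as in a costandard factorization $\ell(\beta)=\ell(\beta')\ell(\beta'')$ via Proposition~\ref{prop:cost.factor}, which is available when $\beta$ is not simple). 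Combining these to conclude $\ell(\gamma_2)<w$, and hence $\ell(\gamma_1)\ell(\gamma_2)<\ell(\beta)$, is the technical heart of the argument, and I anticipate the main difficulty will lie in setting up the induction so that each invocation really falls under a strictly smaller-height case and the reasoning does not become circular.
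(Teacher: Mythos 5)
First, a point of comparison: the paper does not prove Proposition~\ref{prop:fin.convex} at all — it is imported from \cite{L} (Proposition 26 there, attributed to \cite{R}) — so your attempt can only be judged on its own terms. Your first inequality is correct and complete: $(\alpha,\beta)$ is an admissible pair in~\eqref{eqn:inductively}, hence $\ell(\alpha+\beta)\geq \ell(\alpha)\ell(\beta)>\ell(\alpha)$.

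The second inequality, however, is not a proof but a plan, and the plan has two concrete holes. (i) Your case analysis of a candidate $\ell(\gamma_1)\ell(\gamma_2)$ against $\ell(\beta)$ covers only ``$\ell(\gamma_1)$ begins with a strictly smaller letter'' and ``$\ell(\gamma_1)$ is a proper prefix of $\ell(\beta)$''. The remaining possibility — that $\ell(\gamma_1)$ and $\ell(\beta)$ first differ with $\ell(\gamma_1)$ carrying the \emph{larger} letter, or that $\ell(\beta)$ is a proper prefix of $\ell(\gamma_1)$ — is never addressed, and in that case the target inequality $\ell(\gamma_1)\ell(\gamma_2)<\ell(\beta)$ is simply false. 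So one must prove this case cannot occur, i.e.\ that $\ell(\gamma_1)<\ell(\beta)$ whenever $\gamma_1+\gamma_2=\alpha+\beta$ are two decompositions of the same root with $\gamma_1<\gamma_2$ and $\alpha<\beta$. That is a comparison between two decompositions at the \emph{same} height, so induction on $\hgt(\alpha+\beta)$ gives no purchase on it; it is essentially the statement being proved. (ii) In the prefix case you explicitly defer the ``technical heart'': ingredients are listed, but $\ell(\gamma_2)<w$ is never derived, and ingredient (iii) is not pinned down — the canonical factors of the suffix $w$ need not have degrees that are roots, so it is unclear which smaller-height instances of convexity you intend to invoke. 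The known arguments (Leclerc's, and this paper's own affine generalization in Theorem~\ref{thm:convexity}) do not close this by letter-chasing: they relate the two decompositions $(\alpha,\beta)$ and $(\gamma_1,\gamma_2)$ through a root-system lemma of the shape of Lemma~\ref{lemma:gamma.difference} (cf.\ \cite[Claim 2.35]{NT}), producing a root $\gamma$ with, e.g., $\alpha+\gamma=\gamma_1$ and $\beta-\gamma=\gamma_2$, and then apply the inductive hypothesis to these strictly shorter sums to force a contradiction. Some such input from the root system, beyond Lyndon-word combinatorics, appears unavoidable, and your outline does not supply it. As written, the proposal establishes only half of the proposition.
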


   %%%%%%%%%%%%%%%%%%%%%%%%%%%%%%%%%%%%%%%%%%%%%%%%%%%%%%%%%%%%%%%%%%%%%%%%%%%%%%%%%%%%%

\subsection{Affine Lie algebras}\label{ssec:affineLie}
\

We now consider untwisted affine Kac-Moody algebras. Let $\fg$ be a simple finite dimensional Lie algebra,
$\{\alpha_i\}_{i\in I}$ be the simple roots, and $\theta\in \Delta^+$ be the highest root. We define
$\wI = I \sqcup \{0\}$. Consider the affine root lattice $\wQ=Q \times \BZ$ with the generators
$\{(\alpha_i,0)\}_{i\in I}$ and $\alpha_0:=(-\theta,1)$. We endow $\wQ$ with the symmetric pairing
\begin{equation*}%\label{eq:affine pairing}
  \big((\alpha,n),(\beta,m)\big)=(\alpha,\beta) \qquad \forall\ \alpha,\beta\in Q \,,\, n,m \in \BZ.
\end{equation*}
This leads to the affine Cartan matrix $(a_{ij})_{i,j\in \wI}$ and the \textbf{affine Lie algebra} $\widehat{\fg}$.
The associated affine root system $\wDelta=\wDelta^+ \sqcup \wDelta^-$ has the following explicit description:
\begin{equation}\label{eq:affine-roots}
  \wDelta^+ = \big\{ \Delta^+ \times \BZ_{\geq 0} \big\}
  \sqcup \big\{ 0 \times \BZ_{>0} \big\}
  \sqcup \big\{ \Delta^- \times \BZ_{>0} \big\}.
\end{equation}
Here, $\delta=\alpha_0+\theta=(0,1) \in Q \times \BZ$ is the \textbf{minimal imaginary root} of the
affine root system $\wDelta$. With this notation, we have the following root space decomposition:
\begin{equation}\label{eq:aff.root.decomp}
  \widehat{\fg}=\widehat{\fh} \oplus \bigoplus_{\alpha \in \wDelta} \widehat{\fg}_{\alpha} ,
  \qquad \widehat{\fh}\subset \widehat{\fg} -\mathrm{Cartan\ subalgebra}.
\end{equation}

Let us now recall another realization of $\widehat{\fg}$. To this end, consider the Lie algebra
\begin{equation}\label{eq:loops}
\begin{split}
  & \widetilde{\fg}=\fg\otimes \BC[t,t^{-1}]\oplus \BC\cdot \mathsf{c}
    \quad \mathrm{with\ a\ Lie\ bracket\ given\ by}\\
  & [x\otimes t^n,y\otimes t^m]=[x,y]\otimes t^{n+m}+n\delta_{n,-m}(x,y)\cdot \mathsf{c}
    \quad \mathrm{and} \quad [\mathsf{c},x\otimes t^n]=0,
\end{split}
\end{equation}
where $x,y\in \fg$, $m,n\in \BZ$, and $(\cdot,\cdot)\colon \fg\times\fg\to \BC$ is
a non-degenerate invariant pairing.

The rich theory of affine Lie algebras is mainly based on the following key result:

\begin{claim}
There exists a Lie algebra isomorphism:
\begin{equation*}%\label{eq:affine-iso}
  \widehat{\fg}\ \iso\ \widetilde{\fg}
\end{equation*}
determined on the generators by the following formulas:
\begin{align*}
  & e_i \mapsto e_i \otimes t^0 & & f_i \mapsto f_i \otimes t^0 & & h_i \mapsto h_i \otimes t^0 \qquad \forall\, i\in I \\
  & e_0 \mapsto e_{-\theta} \otimes t^1 & & f_0 \mapsto e_\theta \otimes t^{-1} &
  & h_0 \mapsto [e_{-\theta},e_\theta]\otimes t^0 + (e_{-\theta},e_{\theta}) \mathsf{c}.
\end{align*}
\end{claim}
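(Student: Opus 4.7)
The plan is to prove the isomorphism in two stages: first verify that the given assignment on generators extends to a well-defined Lie algebra homomorphism $\varphi\colon \widehat{\fg}\to \widetilde{\fg}$, and then show that $\varphi$ is bijective. Since $\widehat{\fg}$ is defined (as a Kac-Moody algebra) by the Chevalley-Serre presentation for the affine Cartan matrix $(a_{ij})_{i,j\in \wI}$, the first stage amounts to checking that the images of $\{e_i,f_i,h_i\}_{i\in \wI}$ satisfy all of those relations inside $\widetilde{\fg}$.

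For indices $i,j\in I$, the relations are immediate from the embedding $\fg\hookrightarrow \widetilde{\fg}$, $x\mapsto x\otimes t^0$. So everything reduces to the new relations involving the index $0$. The key computations, all done via the bracket~\eqref{eq:loops} and standard facts about the highest root, are:
\begin{itemize}[leftmargin=0.5cm]
\item[$\bullet$] $[e_0,f_0]=h_0$: we compute $[e_{-\theta}\otimes t,\, e_\theta\otimes t^{-1}]=[e_{-\theta},e_\theta]\otimes t^0+(e_{-\theta},e_\theta)\mathsf{c}$, which matches the image of $h_0$.
\item[$\bullet$] $[e_0,f_j]=0=[f_0,e_j]$ for $j\in I$: since $\theta$ is the highest root, $\theta+\alpha_j\notin \Delta$, hence $[e_\theta,e_j]=0$ and $[e_{-\theta},f_j]=0$ in $\fg$; the loop-bracket then vanishes (the central correction only appears when the $t$-powers add to $0$, which is not the case here).
\item[$\bullet$] $[h_0,e_j]=a_{0j}e_j$, $[h_0,f_j]=-a_{0j}f_j$, $[h_0,e_0]=2e_0$, $[h_0,f_0]=-2f_0$: these follow from $[[e_{-\theta},e_\theta],x]$ being computable inside $\fg$ as $[-h_\theta,x]$ (up to a normalization absorbed in the pairing), together with $a_{0j}=-(\theta^\vee)(\alpha_j)$ and $a_{00}=2$.
\item[$\bullet$] The Serre relations $(\mathrm{ad}\,e_0)^{1-a_{0j}}(e_j)=0$ and $(\mathrm{ad}\,f_0)^{1-a_{0j}}(f_j)=0$ for $j\in I$ reduce to identities $(\mathrm{ad}\,e_{-\theta})^{1-a_{0j}}(e_j)=0$ inside $\fg$, verified via representation theory of the $\fsl_2$-triple $(e_\theta,h_\theta,e_{-\theta})$ acting on the finite-dimensional $\fg$-module $\fg$.
\end{itemize}

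For the second stage, surjectivity is relatively direct: the image of the $i\in I$ generators is exactly $\fg\otimes t^0$; together with $e_0\mapsto e_{-\theta}\otimes t$ and the simplicity of $\fg$ (so that the $\fg\otimes t^0$-orbit of $e_{-\theta}\otimes t$ spans $\fg\otimes t$), one obtains all of $\fg\otimes t$, then all $\fg\otimes t^n$ with $n>0$ by iterated brackets, and similarly all $\fg\otimes t^n$ with $n<0$ using $f_0$; finally $\mathsf{c}$ lies in the image as $[e_0,f_0]-[e_{-\theta},e_\theta]\otimes t^0$. Injectivity is the more delicate part: the cleanest route is to compare the $\wQ$-gradings on both sides. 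Each root space $\widehat{\fg}_\alpha$ for $\alpha\in \wDelta^{+,\re}$ has dimension~$1$ and each imaginary root space $\widehat{\fg}_{k\delta}$ has dimension $|I|$ (by~\eqref{eq:aff.root.decomp} and the structure theorem for affine Kac-Moody algebras), and the same holds for $\widetilde{\fg}$ by direct inspection of the loop realization; since $\varphi$ preserves the grading and is nonzero on each graded piece, an equal-dimension argument forces bijectivity in each weight.

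The main obstacle I expect is the verification of the Serre relation $(\mathrm{ad}\,e_0)^{1-a_{0j}}(e_j)=0$ for the index $0$, since this requires knowing the precise vanishing height of $(\mathrm{ad}\,e_{-\theta})^k(e_j)$ in terms of the entry $a_{0j}$ of the affine Cartan matrix; in types where the highest root is long and $\alpha_j$ short (or vice versa) the $\fsl_2$-string through $e_j$ under the triple associated to $\theta$ must be analyzed with care. A clean way to package this is to observe that $e_j$ generates a finite-dimensional irreducible $\langle e_\theta,h_\theta,e_{-\theta}\rangle$-submodule of $\fg$ of highest weight $(\theta^\vee)(\alpha_j)-$something, and the desired vanishing follows from the weight bound; the precise matching with $1-a_{0j}$ is the combinatorial crux.
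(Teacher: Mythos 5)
The paper offers no proof of this Claim at all: it is stated as standard background (essentially Kac's loop realization of untwisted affine algebras, Theorem 7.4 in Kac's book), and the authors simply cite it as the foundation for the explicit description of the root subspaces in~\eqref{eq:aff-dim}. So there is nothing in the paper to compare your argument against line by line; what you have written is a sketch of the standard textbook proof, and the first stage (checking the Chevalley--Serre relations for the affine Cartan matrix, with the index-$0$ relations reduced to facts about the $\theta$-string through $\alpha_j$, namely that $q=0$ and $p=\langle\alpha_j,\theta^\vee\rangle=-a_{0j}$ force $(\mathrm{ad}\,e_{-\theta})^{1-a_{0j}}(e_j)=0$) together with the surjectivity argument is correct, modulo the normalization $(e_\theta,e_{-\theta})=2/(\theta,\theta)$ that you rightly flag as needed for $[h_0,e_j]=a_{0j}e_j$.

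The one genuine weak point is your injectivity step. You propose to compare graded dimensions, quoting that $\dim\widehat{\fg}_\alpha=1$ for real $\alpha$ and $\dim\widehat{\fg}_{k\delta}=|I|$ ``by the structure theorem for affine Kac--Moody algebras.'' But in this paper (and in the standard development) the multiplicities of the \emph{imaginary} root spaces of $\widehat{\fg}$ are themselves \emph{deduced from} this Claim --- see the sentence ``In view of this result, we can explicitly describe the root subspaces'' preceding~\eqref{eq:aff-dim} --- so your argument is circular as written: real root spaces are indeed $1$-dimensional by Weyl-group conjugacy to simple root spaces, but $\dim\widehat{\fg}_{k\delta}=|I|$ has no independent source here. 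The standard non-circular route is to use the defining property of the Kac--Moody algebra that every nonzero ideal intersects the Cartan subalgebra nontrivially: the kernel of your surjection $\varphi$ is an ideal, $\varphi$ restricted to $\widehat{\fh}$ is visibly injective (one checks $h_0,h_1,\dots,h_{|I|}$ and $\mathsf{c}$ map to linearly independent elements), hence the kernel is zero. Replacing your dimension count by this ideal argument closes the gap.
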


In view of this result, we can explicitly describe the root subspaces from~\eqref{eq:aff.root.decomp}:
\begin{align*}
  & \widehat{\fg}_{(\alpha,k)}=\fg_\alpha\otimes t^k \quad \forall\
    (\alpha,k)\in \wDelta^{+,\re}:=\big\{\Delta^+ \times \BZ_{\geq 0} \big\} \sqcup \big\{ \Delta^- \times \BZ_{>0} \big\},\\
  %\label{eqn:real-roots}\\
  & \widehat{\fg}_{k\delta}=\fh \otimes t^k \quad \mathrm{for} \
    k\delta\in \wDelta^{+,\im}:=\big\{ 0 \times \BZ_{>0} \big\}.
  %\label{eqn:imaginary-roots}
\end{align*}
As $\dim(\fg_\alpha)=1$ for any $\alpha\in \Delta$ and $\dim(\fh)=\rk(\fg)=|I|$, we thus obtain:
\begin{equation}\label{eq:aff-dim}
  \dim(\widehat{\fg}_\alpha)=1 \quad \forall\ \alpha\in \wDelta^{+,\re} \,, \qquad
  \dim(\widehat{\fg}_\alpha)=|I| \quad \forall\ \alpha\in \wDelta^{+,\im}.
\end{equation}
In what follows, we shall always write $xt^n$ instead of $x\otimes t^n$.

   %%%%%%%%%%%%%%%%%%%%%%%%%%%%%%%%%%%%%%%%%%%%%%%%%%%%%%%%%%%%%%%%%%%%%%%%%%%%%%%%%%%%%

\subsection{Affine standard Lyndon words}\label{ssec:aslaws}
\

Replacing $\fg$ of Subsection~\ref{ssec:LR-bijection} with $\widehat{\fg}$ of Subsection~\ref{ssec:affineLie}, we likewise
consider only the \emph{positive} subalgebra $\widehat{\fn}^+=\bigoplus_{\alpha\in \wDelta^+} \widehat{\fg}_{\alpha}$,
which is generated by $\{e_i\}_{i\in \wI}$ subject to the Serre relations~\eqref{eqn:Serre} for $i\ne j\in \wI$. Endowing
$\wI$ with any order allows us to introduce Lyndon and standard Lyndon words (with respect to $\widehat{\fn}^+$).
Henceforth, we shall often use the term \textbf{$\aslaws$} in the present setup.

The key difference is that some root subspaces are higher dimensional, see~\eqref{eq:aff-dim}. Thus, we no longer
have a bijection~\eqref{eqn:associated word}. However, the analogous degree reasoning implies that there is a unique
$\aslaw$ in each real degree $\alpha\in \wDelta^{+,\re}$, denoted by $\SL(\alpha)$, and $|I|$ $\aslaws$ in each
imaginary degree $\alpha\in \wDelta^{+,\im}$, denoted by $\SL_1(\alpha)>\dots>\SL_{|I|}(\alpha)$. These words can
be computed through the following \textbf{generalized Leclerc algorithm}:

\begin{proposition}\label{prop:generalized.Leclerc.algo} (\cite[Proposition 3.4]{AT})
The $\aslaws$ (with respect to $\widehat{\fn}^+$) are determined inductively by the following rules:

\medskip
\noindent
(a) For simple roots, we have $\SL(\alpha_i)=[i]$. For other real $\alpha\in \wDelta^{+,\re}$, we have:
\begin{equation}\label{eq:generalized Leclerc}
  \SL(\alpha) =
  \max\left\{\SL_*(\gamma_1)\SL_*(\gamma_2) \,\Big|\,
   \substack{\alpha=\gamma_1+\gamma_2,\, \gamma_1,\gamma_2\in \wDelta^+\\ \SL_*(\gamma_1)<\SL_*(\gamma_2)\\
             [\sb[\SL_*(\gamma_1)],\sb[\SL_*(\gamma_2)]]\neq 0} \right\},
\end{equation}
where $\SL_*(\gamma)$ denotes $\SL(\gamma)$ for $\gamma\in \wDelta^{+,\re}$
and any of $\{\SL_k(\gamma)\}_{k=1}^{|I|}$ for $\gamma\in \wDelta^{+,\im}$.

\medskip
\noindent
(b) For imaginary $\alpha\in \wDelta^{+,\im}$, the corresponding $|I|$ $\aslaws$ $\{\SL_k(\alpha)\}_{k=1}^{|I|}$ are
the $|I|$ lexicographically largest words from the list as in the right-hand side of~\eqref{eq:generalized Leclerc}
whose standard bracketings are linearly independent.
\end{proposition}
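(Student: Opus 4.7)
The plan is to proceed by induction on $\hgt(\alpha)$, closely following Leclerc's strategy from the finite-type case (Proposition~\ref{prop:Leclerc algorithm}) while accommodating the higher-dimensional imaginary root spaces from~\eqref{eq:aff-dim}. The base case $\hgt(\alpha)=1$ is immediate from the definition $\SL(\alpha_i)=[i]$. For the inductive step, the central structural ingredient is a \emph{splitting lemma}: for every standard Lyndon word $\ell$ of degree $\alpha\in\wDelta^+$ with $|\ell|>1$, the costandard factorization $\ell=\ell^l\ell^r$ has $\deg\ell^l,\deg\ell^r\in\wDelta^+$ and both $\ell^l,\ell^r$ are themselves of the form $\SL_*(\cdot)$. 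I would prove this by contradiction: if, say, $\ell^l$ were non-standard, I would expand $\sb[\ell^l]$ as a linear combination of $\sb[m]$ for Lyndon $m\succ\ell^l$ of the same degree, substitute into $\sb[\ell]=[\sb[\ell^l],\sb[\ell^r]]$, and run a Jacobi-plus-straightening argument to rewrite $\sb[\ell]$ in the span of $\sb[\ell']$ for various Lyndon $\ell'>\ell$, contradicting the standardness of $\ell$.

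Granting the splitting lemma, Part~(a) follows quickly. Let $v$ denote the maximum on the right-hand side of~\eqref{eq:generalized Leclerc}. By Lemma~\ref{lemma:lyndon}, $v$ is Lyndon, and a standard maximality argument (mirroring~\cite[Proposition~26]{L}) shows that its costandard factorization splits precisely at the seam $\SL_*(\gamma_1)|\SL_*(\gamma_2)$, so $\sb[v]=[\sb[\SL_*(\gamma_1)],\sb[\SL_*(\gamma_2)]]\ne 0$ in $\widehat{\fg}_\alpha$. Since $\dim\widehat{\fg}_\alpha=1$ for real $\alpha$, the unique standard Lyndon word $\SL(\alpha)$ in that degree is the lex-largest Lyndon word with nonzero bracketing, giving $v\le\SL(\alpha)$; conversely, applying the splitting lemma to the costandard factorization of $\SL(\alpha)$ itself exhibits $\SL(\alpha)$ as a legitimate concatenation in the RHS list, forcing $\SL(\alpha)\le v$.

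Part~(b) follows by the same reasoning in the $|I|$-dimensional setting: the splitting lemma places every standard Lyndon word of degree $k\delta$ onto the RHS list, while conversely every entry of that list has nonzero bracketing lying in $\widehat{\fg}_{k\delta}$. Since by definition the $|I|$ standard Lyndon words in degree $k\delta$ are the lex-largest Lyndon words whose $\sb$-values are linearly independent, they coincide exactly with the $|I|$ lex-largest entries of the RHS list with linearly independent bracketings. The main obstacle throughout is the splitting lemma, and in particular its straightening step: one must check that every new Lyndon word arising after applying the Jacobi identity to $[\sb[m],\sb[\ell^r]]$ is strictly lex-larger than $\ell$, a delicate bookkeeping that must also handle the distinction between real and imaginary degrees (where the $\SL_*(\cdot)$ notation allows a choice among $|I|$ words).
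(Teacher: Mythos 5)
The paper does not prove this proposition itself; it imports it verbatim from \cite[Proposition 3.4]{AT}, so your attempt has to be measured against the known argument (which follows \cite{LR}, \cite{L}, and \cite[Proposition 2.27]{NT}). Your skeleton matches that argument: induction on height, a factor-closure ("splitting") lemma giving $\SL(\alpha)\leq \max\{\cdots\}$, and a maximality step for the reverse inequality. The splitting lemma you isolate is genuine — it is Lalonde--Ram's closure of standard words under taking factors, used freely in the present paper (e.g.\ in the proof of Proposition~\ref{prop:general.bracketing}) — and proving it by a straightening argument is indeed how \cite{LR} proceed.

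The gap is in the reverse inequality. You claim that the costandard factorization of the maximal list element $v=\SL_*(\gamma_1)\SL_*(\gamma_2)$ "splits precisely at the seam," so that $\sb[v]=[\sb[\SL_*(\gamma_1)],\sb[\SL_*(\gamma_2)]]\neq 0$. This is circular: knowing that some maximizing decomposition of $v$ is its costandard factorization amounts to knowing that $v^l,v^r$ are standard with degrees summing to $\alpha$, i.e.\ that $v$ is itself standard — which is the conclusion. A priori the costandard cut of $v$ may land inside $\SL_*(\gamma_1)$ (Lemma~\ref{lemma:seq.to.costfac} only restricts $v^r$ to the set $P$), and then nothing forces $\sb[v]\neq 0$. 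The missing ingredient is the triangularity of bracketings: for any factorization of a Lyndon word $w=uv$ into Lyndon words $u<v$, one has $[\sb[u],\sb[v]]\in \sb[w]+\spn\{\sb[m]: m>w\ \mathrm{Lyndon},\ \deg m=\deg w\}$. With this, a list element $w$ with nonvanishing pseudo-bracketing forces some Lyndon word $\geq w$ of degree $\alpha$ to have nonzero standard bracketing, and since every Lyndon word of real degree $\alpha$ exceeding $\SL(\alpha)$ has vanishing bracketing (downward induction in the one-dimensional space $\widehat{\fg}_\alpha$), one gets $w\leq \SL(\alpha)$ for \emph{every} list element, not just by inspecting the maximum. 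The same omission is more serious in part~(b): to match the $|I|$ lex-largest list entries with linearly independent pseudo-bracketings against the $|I|$ standard Lyndon words of degree $k\delta$, one needs that the pseudo-bracketing of a list element $w$ lies in $\spn\{\sb[\SL_j(k\delta)] : \SL_j(k\delta)\geq w\}$; this is precisely the kind of statement the present paper proves as Proposition~\ref{prop:im.bracketing}, via a nontrivial Jacobi-identity induction, and it does not follow from the observation that all these brackets land in $\widehat{\fg}_{k\delta}$. (Minor: Leclerc's algorithm is \cite[Proposition 25]{L}; Proposition 26 there is the convexity.)
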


We shall call $u=\SL_{*}(\gamma)$ real (resp.\ imaginary) if $\gamma\in \wDelta^{+,\re}$
(resp.\ $\gamma\in \wDelta^{+,\im}$).

\begin{remark}\label{rem:bracketing-irrelevance}
We note that the condition $[\sb[\SL_*(\gamma_1)],\sb[\SL_*(\gamma_2)]]\neq 0$ implies that $\gamma_1,\gamma_2$ must
be real in (b). Moreover, this condition always holds if $\alpha,\gamma_1,\gamma_2$ are real.
\end{remark}

We conclude this section with the notation that will be used through this note:

\begin{notation}\label{notation:h}
For any $\alpha  = (\alpha',k)\in \wDelta^{+,\re}$, we set $h_\alpha:=[e_{\alpha'},e_{-\alpha'}] \in \fh$.
For any $w\in \wI^*$ with $\deg\, w=\alpha\in \wDelta^{+,\re}$, we set $h_w=h_\alpha$. We note that
$h_\alpha,h_w$ depend on the choice of root vectors and thus are defined up to nonzero constants.
To address this ambiguity, we shall write $x\sim y$ if $x=cy$ for some $c\in \BC\backslash\{0\}$.
\end{notation}

   %%%%%%%%%%%%%%%%%%%%%%%%%%%%%%%%%%%%%%%%%%%%%%%%%%%%%%%%%%%%%%%%%%%%%%%%%%%%%%%
   %%%%%%%%%%%%%%%%%%%%%%%%%%%%%%%%%%%% Section 2 %%%%%%%%%%%%%%%%%%%%%%%%%%%%%%%%
   %%%%%%%%%%%%%%%%%%%%%%%%%%%%%%%%%%%%%%%%%%%%%%%%%%%%%%%%%%%%%%%%%%%%%%%%%%%%%%%

\section{Properties of standard factorization}\label{sec:Aux}

In this section, we establish some properties of the standard, costandard, and other factorizations of standard Lyndon words.
The following result is well-known:

\begin{lemma}\label{lemma:right.costfac.minimal}
For a Lyndon word $w$, the smallest proper suffix is $w^r$.
\end{lemma}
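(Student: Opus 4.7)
The plan is to characterize $w^r$ as the lexicographically smallest proper suffix of $w$ by combining two observations: (i) the smallest proper suffix of any word is automatically a Lyndon word, and (ii) the costandard factorization singles out $w^r$ as the \emph{longest} proper Lyndon suffix. No induction on length is required.

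First, I would let $s$ denote the smallest proper suffix of $w$ (which exists since $|w|>1$) and show that $s$ is Lyndon via the criterion of Definition~\ref{def:lyndon-2}. Indeed, if $s'$ is any proper suffix of $s$, then $|s'|<|s|<|w|$, so $s'$ is also a proper suffix of $w$. By the minimality of $s$ we get $s\leq s'$, and since $|s'|<|s|$ forbids equality, actually $s<s'$. Thus every proper suffix of $s$ is strictly larger than $s$, so $s$ is Lyndon.

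Next, since $w^r$ is by definition the longest proper Lyndon suffix of $w$, the Lyndonness of $s$ forces $|s|\leq |w^r|$. It remains to rule out $|s|<|w^r|$: in that case $s$ would be a proper suffix of the Lyndon word $w^r$, giving $w^r<s$ by Definition~\ref{def:lyndon-2}. But $w^r$ is itself a proper suffix of $w$, so by the minimality of $s$ one has $s\leq w^r$, contradicting $w^r<s$. Therefore $|s|=|w^r|$, and as both are suffixes of $w$ of the same length, $s=w^r$.

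The only mildly subtle step is the first one -- recognizing that the minimum proper suffix of any word is itself Lyndon -- and the rest is immediate from the defining properties of $w^r$ and of Lyndon words. I do not anticipate genuine obstacles; the result essentially says that the two natural notions of "smallest suffix" (lexicographic minimum vs.\ longest Lyndon) coincide for a Lyndon word.
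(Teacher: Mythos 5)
Your proof is correct and follows essentially the same route as the paper's: both show that the minimal proper suffix is automatically Lyndon, conclude it must be a suffix of $w^r$ (the longest proper Lyndon suffix), and then use the Lyndon property of $w^r$ to rule out its being a \emph{proper} suffix. The only cosmetic difference is that you phrase the last step via lengths rather than via the suffix relation, but the content is identical.
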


\begin{proof}
Denote the smallest proper suffix of $w$ as $u$, that is, $u = \min\{w_{|a}\}_{0<a<|w|}$. First, we note that $u$ is Lyndon
as any proper suffix of $u$ is lexicographically larger than $u$. Thus $u$ is a suffix of $w^r$. But if $u$ was a proper
suffix of $w^r$, then $w^r < u$ as $w^r$ is Lyndon, yielding a contradiction with the minimality of $u$. Hence $u = w^r$.
\end{proof}

\begin{lemma}\label{lemma:lr.geq.r}
For any Lyndon word $\ell$ with $|\ell|,|\ell^{l}| > 1$, we have $\ell^{lr} \geq \ell^r$.
\end{lemma}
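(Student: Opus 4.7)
The plan is to argue by contradiction, exploiting the maximality property defining the costandard suffix together with Lemma~\ref{lemma:lyndon}. Under the hypotheses $|\ell|,|\ell^l| > 1$, both costandard factorizations $\ell = \ell^l \ell^r$ and $\ell^l = \ell^{ll}\ell^{lr}$ exist, with $\ell^{lr}$ being a proper (nonempty) Lyndon suffix of $\ell^l$. Hence $\ell^{lr}\ell^r$ is a proper suffix of $\ell$ (its length is $|\ell^{lr}| + |\ell^r| < |\ell^l|+|\ell^r| = |\ell|$), and both $\ell^{lr}$ and $\ell^r$ are themselves Lyndon.

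Now I would suppose, for the sake of contradiction, that $\ell^{lr} < \ell^r$. Then Lemma~\ref{lemma:lyndon} applied to the two Lyndon words $\ell^{lr} < \ell^r$ would imply that their concatenation $\ell^{lr}\ell^r$ is again a Lyndon word. But this concatenation is a proper Lyndon suffix of $\ell$ of length strictly greater than $|\ell^r|$, contradicting the defining property of $\ell^r$ in Proposition~\ref{prop:cost.factor} as the \emph{longest} proper Lyndon suffix of $\ell$.

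Thus the inequality $\ell^{lr} \geq \ell^r$ must hold. I do not expect any real obstacle here: the whole argument rests on correctly identifying $\ell^{lr}\ell^r$ as a legitimate proper Lyndon suffix of $\ell$, and the only subtle point is checking that $|\ell^{lr}\ell^r| < |\ell|$, which follows from $|\ell^l|>1$ forcing $\ell^{lr}$ to be a strict suffix of $\ell^l$.
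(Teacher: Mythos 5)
Your proof is correct and follows essentially the same route as the paper: both assume $\ell^{lr}<\ell^r$ and apply Lemma~\ref{lemma:lyndon} to conclude that $\ell^{lr}\ell^r$ is a Lyndon proper suffix of $\ell$. The only (harmless) difference is the final contradiction: you contradict the defining \emph{maximality} of $\ell^r$ as the longest proper Lyndon suffix, whereas the paper notes that $\ell^{lr}\ell^r<\ell^r$ and contradicts Lemma~\ref{lemma:right.costfac.minimal}, the \emph{minimality} of $\ell^r$ among all proper suffixes.
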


\begin{proof}
If $\ell^{lr} < \ell^r$, then $\ell^{lr}\ell^r$ would be Lyndon by Lemma~\ref{lemma:lyndon}. The latter implies
$\ell^{lr}\ell^r < \ell^r$, a contradiction with Lemma~\ref{lemma:right.costfac.minimal}. This proves $\ell^{lr} \geq \ell^{r}$.
\end{proof}

We note that the above lemmas admit the following ``prefix'' counterpart:

\begin{lemma}\label{lemma:left.standfac.maximal}
(a) For any Lyndon word $\ell$ with $|\ell|,|\ell^{rs}| > 1$, we have $(\ell^{rs})^{ls} \leq \ell^{ls}$.

\medskip
\noindent
(b) For a Lyndon word $w$ with $|w| > 1$, the biggest proper Lyndon prefix is $w^{ls}$.
\end{lemma}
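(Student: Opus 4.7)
The plan is to mirror the arguments for Lemmas~\ref{lemma:right.costfac.minimal} and~\ref{lemma:lr.geq.r}, interchanging the roles of ``suffix/smallest'' and ``prefix/largest''. I would handle part (b) first, since the resulting characterization of $w^{ls}$ makes part (a) a quick contradiction argument.

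For part (b), I would take an arbitrary proper Lyndon prefix $u$ of $w$. Since $w^{ls}$ is by definition the longest proper Lyndon prefix (Proposition~\ref{prop:stand.factor}), the length of $u$ is at most $|w^{ls}|$; being a shorter prefix of $w$, it is itself a prefix of $w^{ls}$, which forces $u \leq w^{ls}$ lexicographically, with equality iff $u = w^{ls}$. This shows that $w^{ls}$ is simultaneously the longest and the lexicographically largest proper Lyndon prefix of $w$.

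For part (a), I would proceed by contradiction. Assume $(\ell^{rs})^{ls} > \ell^{ls}$. Both are Lyndon (by Proposition~\ref{prop:stand.factor} applied to $\ell$ and to $\ell^{rs}$, which is legitimate thanks to the hypotheses $|\ell|>1$ and $|\ell^{rs}|>1$), so Lemma~\ref{lemma:lyndon} yields that $\ell^{ls}(\ell^{rs})^{ls}$ is Lyndon. But $(\ell^{rs})^{ls}$ is a proper prefix of $\ell^{rs}$, so $\ell^{ls}(\ell^{rs})^{ls}$ is a proper prefix of $\ell = \ell^{ls}\ell^{rs}$ strictly longer than $\ell^{ls}$. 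This contradicts the maximality of $\ell^{ls}$ as the longest proper Lyndon prefix of $\ell$, giving $(\ell^{rs})^{ls} \leq \ell^{ls}$ as desired.

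I do not anticipate a genuine obstacle here: the only care required is in tracking the hypotheses $|\ell|>1$ and $|\ell^{rs}|>1$ to ensure that $\ell^{ls}$ and $(\ell^{rs})^{ls}$ are well-defined and nonempty, so that the contradictory Lyndon concatenation in~(a) is indeed strictly longer than $\ell^{ls}$.
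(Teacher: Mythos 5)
Your proof is correct and follows essentially the same route as the paper: part (b) is immediate from the definition of the standard factorization (a proper Lyndon prefix is a prefix of $\ell^{ls}$, hence lexicographically smaller or equal), and part (a) is the same contradiction via Lemma~\ref{lemma:lyndon}, producing the Lyndon proper prefix $\ell^{ls}(\ell^{rs})^{ls}$ that violates the maximality of $\ell^{ls}$. The only cosmetic difference is that you invoke length-maximality directly where the paper cites part (b); both close the argument equally well.
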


\begin{proof}
Part (b) is obvious from the definition of the standard factorization.
As per part (a), if $\ell^{rs,ls}:=(\ell^{rs})^{ls} > \ell^{ls}$, then $\ell^{ls}\ell^{rs,ls}$ would be
Lyndon by Lemma~\ref{lemma:lyndon}, thus contradicting part (b) as $\ell^{ls}<\ell^{ls}\ell^{rs,ls}$.
\end{proof}

Our next several results relate any factorization to the costandard one:

\begin{lemma}\label{lemma:seq.to.costfac}
Consider any factorization of a Lyndon word $\ell = \ell_1\ell_2$ with $\ell_1,\ell_2\in \LL$. Then, $\ell^r$ belongs
to the set $P:= \{\ell_2,\ell_1^r\ell_2,\ell_1^{lr}\ell_1^r\ell_2,\ell_1^{llr}\ell_1^{lr}\ell_1^r\ell_2, \dots\}$.
\end{lemma}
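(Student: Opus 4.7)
The plan is to induct on $|\ell_1|$, using at each stage the rebracketing $\ell=\ell_1^l\cdot(\ell_1^r\ell_2)$, which produces a Lyndon factorization whose first factor is strictly shorter than $\ell_1$. Throughout, I rely on Lemma~\ref{lemma:right.costfac.minimal}, which identifies $\ell^r$ with the \emph{smallest} proper suffix of $\ell$ in lexicographic order.

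For the base case $|\ell_1|=1$, every proper suffix of $\ell$ is a suffix of $\ell_2$, and Lyndonness of $\ell_2$ forces each proper suffix of $\ell_2$ to lexicographically exceed $\ell_2$. Hence the smallest proper suffix of $\ell$ is $\ell_2$ itself, so $\ell^r=\ell_2\in P$.

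For the inductive step with $|\ell_1|\geq 2$, I split into two cases. If $\ell^r=\ell_2$, we are done. Otherwise $\ell^r=s\ell_2$ for some non-empty proper suffix $s$ of $\ell_1$. The key claim I need to establish is $\ell_1^r<\ell_2$, since then Lemma~\ref{lemma:lyndon} guarantees that $\ell_1^r\ell_2$ is Lyndon. For the claim, Lemma~\ref{lemma:right.costfac.minimal} applied to $\ell_1$ gives $s\geq \ell_1^r$, while $\ell_2$ being a proper suffix of the Lyndon word $\ell^r$ gives $s\ell_2=\ell^r<\ell_2$. A short lexicographic check then rules out $s\geq \ell_2$: if $\ell_2$ is a proper prefix of $s$ then $\ell_2$ is also a proper prefix of $s\ell_2$, so $s\ell_2>\ell_2$; if $s$ and $\ell_2$ first differ within both, the inequality carries over to $s\ell_2$ vs.\ $\ell_2$; and $s=\ell_2$ yields $s\ell_2=\ell_2\ell_2>\ell_2$. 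All three scenarios contradict $s\ell_2<\ell_2$, giving $\ell_1^r\leq s<\ell_2$ as desired.

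With $\ell_1^r\ell_2$ known to be Lyndon, I consider the alternative Lyndon factorization $\ell=\ell_1^l\cdot(\ell_1^r\ell_2)$, where both factors are Lyndon (the first by Proposition~\ref{prop:cost.factor}) and the first has length $|\ell_1^l|<|\ell_1|$. The inductive hypothesis applied to this factorization yields
\[
  \ell^r\in \Big\{\ell_1^r\ell_2,\ (\ell_1^l)^r(\ell_1^r\ell_2),\ (\ell_1^l)^{lr}(\ell_1^l)^r(\ell_1^r\ell_2),\ldots\Big\},
\]
and rewriting $(\ell_1^l)^r=\ell_1^{lr}$, $(\ell_1^l)^{lr}=\ell_1^{llr}$, etc., identifies this set with $P\setminus\{\ell_2\}\subset P$.

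The main obstacle is pinning down the strict inequality $\ell_1^r<\ell_2$ in the nontrivial case; once that is in hand, the induction runs mechanically because $P$ is designed precisely to absorb one further peeling of an ``$l$'' from the left side of $\ell_1$. Everything else is a routine unwinding of the definitions of the costandard factorization and of the lexicographic order.
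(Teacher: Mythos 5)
Your proof is correct, but it takes a genuinely different route from the paper's. The paper argues by a single contradiction: assuming $\ell^r\notin P$, it takes the \emph{longest} $u\in P$ that is a suffix of $\ell^r$, writes $\ell^r=vu$ with $v\neq\emptyset$, and notes that the next element of $P$ has the form $wu$ with $w=\ell_1^{l\cdots lr}$ Lyndon and $v$ a proper suffix of $w$; then $w<v$ forces $wu<vu=\ell^r$, contradicting Lemma~\ref{lemma:right.costfac.minimal}. You instead induct on $|\ell_1|$, and in the nontrivial case first prove $\ell_1^r<\ell_2$ (via $\ell_1^r\leq s$ and $s\ell_2=\ell^r<\ell_2$, plus the lexicographic check that $s\ell_2<\ell_2$ forces $s<\ell_2$), so that $\ell=\ell_1^l\cdot(\ell_1^r\ell_2)$ is again a factorization into two Lyndon words with strictly shorter first factor; the inductive hypothesis then lands $\ell^r$ in $P\setminus\{\ell_2\}$. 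Both arguments rest on the same two ingredients (Lemmas~\ref{lemma:right.costfac.minimal} and~\ref{lemma:lyndon}). Yours is somewhat longer but makes the recursive structure of $P$ explicit and yields as a byproduct that $\ell_1^r\ell_2$ is Lyndon whenever $\ell^r\neq\ell_2$ --- which is essentially the content the paper extracts separately in Lemma~\ref{lemma:lyndon.seq.lyndon}, Corollary~\ref{cor:three.way.lyndon}, and one direction of the corollary $\ell_2=\ell^r\iff\ell_1^r\geq\ell_2$; the paper's version is the more economical one-shot argument.
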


\begin{proof}
Suppose that $\ell^r \not \in P$ and let $u \in P$ be the longest suffix of $\ell^r$ (such $u$ exists as $\ell_2$ is a
suffix of $\ell^r$), so that $\ell^r = vu$. But there exists a Lyndon word $w$ of the form $\ell^{l\ldots lr}$ that has
$v$ as a proper suffix and $wu \in P$. Thus, we have  $w < v$ and so $wu < vu = \ell^{r}$, which contradicts
Lemma~\ref{lemma:right.costfac.minimal}. Therefore, $\ell^r\in P$.
\end{proof}

\begin{lemma}\label{lemma:lyndon.seq.lyndon}
For any factorization of a Lyndon word $\ell = \ell_1\ell_2$ with $\ell_1,\ell_2\in \LL$, every element of the set
$\bar{P} = \{\ell_2,\ell_1^r\ell_2,\ell_1^{lr}\ell_1^r\ell_2,\ell_1^{llr}\ell_1^{lr}\ell_1^r\ell_2, \ldots,\ell^r\}$
is Lyndon.
\end{lemma}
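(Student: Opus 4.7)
The plan is to index the elements of $\bar P$ as $u_0, u_1, \ldots, u_K$, where $u_0 = \ell_2$, $u_{j+1} = w_j u_j$ with $w_j := \ell_1^{l^j r}$, and $u_K = \ell^r$ (using Lemma~\ref{lemma:seq.to.costfac} to realize $\ell^r$ as one of these), and then prove by induction on $k$ from $0$ to $K$ that each $u_k$ is Lyndon. The base case $u_0 = \ell_2 \in \LL$ is part of the hypothesis. For the inductive step with $k < K$, I will apply Lemma~\ref{lemma:lyndon} to the factorization $u_{k+1} = w_k u_k$: the word $w_k$ is Lyndon as an iterated costandard factor of $\ell_1$ (repeated application of Proposition~\ref{prop:cost.factor}), and $u_k$ is Lyndon by the inductive hypothesis, so the entire step reduces to verifying the inequality $w_k < u_k$.

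The main obstacle is precisely this inequality $w_k < u_k$, and I would establish it by contradiction. Suppose $w_k \geq u_k$. Iterating Lemma~\ref{lemma:lr.geq.r} on the Lyndon words $\ell_1^{l^j}$ for $0 \leq j \leq K-2$ yields the non-decreasing chain $w_0 \leq w_1 \leq \cdots \leq w_{K-1}$. Stringing this together with the hypothetical $w_k \geq u_k$ produces a non-increasing chain of Lyndon words
\[
  w_{K-1} \,\geq\, w_{K-2} \,\geq\, \cdots \,\geq\, w_k \,\geq\, u_k.
\]
By the uniqueness clause of Proposition~\ref{prop:canon.factor}, the concatenation $u_K = w_{K-1} w_{K-2} \cdots w_k u_k$ \emph{is} then the canonical factorization of $u_K$. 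Since $k < K$, this factorization contains $K - k + 1 \geq 2$ Lyndon factors, so $u_K$ cannot itself be Lyndon; this contradicts $u_K = \ell^r$ being Lyndon by Proposition~\ref{prop:cost.factor}.

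Hence $w_k < u_k$, Lemma~\ref{lemma:lyndon} concludes that $u_{k+1}$ is Lyndon, and the induction carries through every element of $\bar P$, completing the proof.
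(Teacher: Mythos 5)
Your proof is correct and follows essentially the same route as the paper's: both induct along $\bar P$, use Lemma~\ref{lemma:lyndon} to reduce the step to the inequality $w_k < u_k$ (equivalently, the paper's $u_1\geq u_2$), and derive that inequality's failure by iterating Lemma~\ref{lemma:lr.geq.r} into a monotone chain that contradicts $\ell^r$ being Lyndon. The only divergence is cosmetic: where you invoke the uniqueness clause of Proposition~\ref{prop:canon.factor} to exhibit a multi-factor canonical factorization of $\ell^r$, the paper instead observes that the chain forces $\ell^r$ to exceed its proper suffix $u_2$ (via Lemma~\ref{lemma:right.costfac.minimal}'s underlying definition), and both endgames are valid.
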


\begin{proof}
We prove this by induction on the length. The base case $\ell_2$ is obvious.

For the step of induction, fix $u \in \bar{P}$ and assume that the induction hypothesis holds for any $v \in \bar{P}$
with $|v| < |u|$. Suppose that $u$ is not Lyndon.  Split $u$ into $u = u_1u_2$ with $u_2 \in \bar{P}$ and
$u_1 = \ell_1^{l\cdots lr}$, and let $k$ denote the number of $l$'s in the above superscript. Similarly,
we split $\ell^r = v_1v_2$ with $v_2 \in \bar{P}$ and $v_1 = \ell_1^{l\cdots lr}$, and let $p$ be the number
of $l$'s in the above superscript, so that $p > k$ (the case $p=k$ is obvious).
As $u_1$ is Lyndon and $u_2$ is Lyndon by the induction hypothesis, we note that $u_1 \geq u_2$,
as otherwise $u_1u_2$ would be Lyndon by Lemma~\ref{lemma:lyndon}, a contradiction.
Also $u_1 \leq v_1$, due to a repeated application of Lemma~\ref{lemma:lr.geq.r}. Hence, we obtain:
  $$ u_2 \leq u_1 \leq v_1 < v_1v_2=\ell^r. $$
Thus, $\ell^r$ is larger than its proper suffix $u_2$, a contradiction with $\ell^r$ being Lyndon.
Therefore $u$ is a Lyndon word, which completes the step of induction.
\end{proof}

\begin{corollary}\label{cor:three.way.lyndon}
For any factorization of a Lyndon word $\ell = \ell_1\ell_2$ with $\ell_1,\ell_2\in \LL$ and $\ell_2 \neq \ell^r$,
there exists a factorization $\ell_1 = uv$ with $u,v\in \LL$ such that $v\ell_2\in \LL$.
Moreover, one can choose the costandard factorization $u=\ell_1^l, v=\ell_1^r$.
\end{corollary}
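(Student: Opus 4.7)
The plan is to apply Lemmas \ref{lemma:seq.to.costfac} and \ref{lemma:lyndon.seq.lyndon} directly, taking $u = \ell_1^l$ and $v = \ell_1^r$ as the ``moreover'' clause suggests. The work then reduces to two observations: that the costandard factorization of $\ell_1$ exists (i.e.\ $|\ell_1| > 1$), and that $\ell_1^r \ell_2$ appears in the chain $\bar{P}$ of Lemma \ref{lemma:lyndon.seq.lyndon}.

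First, I would rule out the degenerate case $|\ell_1| = 1$. If $|\ell_1| = 1$, then $\ell_1^r$ is undefined, so the only admissible member of the set $P$ in Lemma \ref{lemma:seq.to.costfac} is $\ell_2$ itself. That forces $\ell^r = \ell_2$, contradicting the hypothesis $\ell_2 \neq \ell^r$. Thus $|\ell_1| > 1$, and the costandard factorization $\ell_1 = \ell_1^l \ell_1^r$ exists with $\ell_1^l, \ell_1^r \in \LL$.

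Next, I would invoke Lemma \ref{lemma:seq.to.costfac} again: since $\ell^r \neq \ell_2$, the lemma gives
\begin{equation*}
  \ell^r = \ell_1^{l\cdots l r} \cdots \ell_1^{lr}\ell_1^{r}\ell_2
\end{equation*}
with at least one prefix of the form $\ell_1^{l\cdots l r}$ preceding $\ell_2$. In particular, $\ell_1^r\ell_2$ is a suffix of $\ell^r$ and occurs as an intermediate element in the sequence $\bar{P} = \{\ell_2, \ell_1^r\ell_2, \ell_1^{lr}\ell_1^r\ell_2, \ldots, \ell^r\}$. Applying Lemma \ref{lemma:lyndon.seq.lyndon} then yields that $\ell_1^r\ell_2 \in \LL$, completing the proof with $u = \ell_1^l$ and $v = \ell_1^r$.

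There is no real obstacle here: once the two preceding lemmas have been set up, the corollary is a direct bookkeeping consequence, with the only subtlety being the verification that $|\ell_1| > 1$ so that the costandard factorization of $\ell_1$ is actually available.
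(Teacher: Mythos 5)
Your proof is correct and follows essentially the same route as the paper's: since $\ell_2 \neq \ell^r$, Lemma~\ref{lemma:seq.to.costfac} forces $\ell^r$ to lie strictly beyond $\ell_2$ in the chain $P$, hence $\ell_1^r\ell_2 \in \bar{P}$, and Lemma~\ref{lemma:lyndon.seq.lyndon} gives that it is Lyndon. Your additional check that $|\ell_1| > 1$ (so the costandard factorization of $\ell_1$ exists) is a minor point the paper leaves implicit, but it is handled correctly.
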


\begin{proof}
Since $\ell^r \neq \ell_2$, we have $\ell_1^r\ell_2\in \bar{P}$ and hence it is a Lyndon word by
Lemma~\ref{lemma:lyndon.seq.lyndon}. Therefore, Lyndon words $u = \ell_1^l$ and $v = \ell_1^r$ satisfy both conditions.
\end{proof}

\begin{corollary}\label{cor:rotate}
For any factorization of a Lyndon word $\ell = \ell_1\ell_2$ with $\ell_1,\ell_2\in \LL$ and $\ell_2 \neq \ell^r$,
there exists another factorization $\ell=uv\ell_2$ such that $u\ell_2v,vu\ell_2>\ell$ and $u\ell_2$ is Lyndon.
Moreover, one can choose the costandard factorization $u=\ell_1^l, v=\ell_1^r$.
\end{corollary}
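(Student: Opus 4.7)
The plan is to use Corollary~\ref{cor:three.way.lyndon} applied to the costandard factorization $u=\ell_1^l$, $v=\ell_1^r$, which immediately gives that $v\ell_2=\ell_1^r\ell_2$ is Lyndon. All three of the required properties (that $u\ell_2$ is Lyndon, that $u\ell_2 v>\ell$, and that $vu\ell_2>\ell$) then follow by combining this with standard manipulations of Lyndon words.

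First I would extract the inequality $\ell_1^r<\ell_2$ from the facts that $\ell_1^r\ell_2\in \LL$ and $\ell_1^r,\ell_2\in \LL$. The cleanest route is via uniqueness of the canonical factorization (Proposition~\ref{prop:canon.factor}): if $\ell_1^r\geq \ell_2$ held, then $(\ell_1^r,\ell_2)$ would already be a canonical factorization of $\ell_1^r\ell_2$ with two pieces, contradicting the fact that the Lyndon word $\ell_1^r\ell_2$ has canonical factorization of length one. Combined with the Lyndon property $\ell_1^l<\ell_1^r$ of $\ell_1=\ell_1^l\ell_1^r$, this yields the chain $\ell_1^l<\ell_1^r<\ell_2$, and then Lemma~\ref{lemma:lyndon} shows that $u\ell_2=\ell_1^l\ell_2$ is Lyndon.

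For the inequality $u\ell_2v>\ell$, I would observe that the words $\ell=\ell_1^l\ell_1^r\ell_2$ and $u\ell_2v=\ell_1^l\ell_2\ell_1^r$ share the prefix $\ell_1^l$, so the comparison reduces to $\ell_1^r\ell_2$ versus $\ell_2\ell_1^r$; the pair $\ell_1^r<\ell_2$ of Lyndon words then yields $\ell_1^r\ell_2<\ell_2\ell_1^r$ by the second clause of Lemma~\ref{lemma:lyndon}. For the inequality $vu\ell_2>\ell$, appending the common suffix $\ell_2$ reduces the comparison to $\ell_1=\ell_1^l\ell_1^r$ versus $\ell_1^r\ell_1^l$; since $\ell_1^r\ell_1^l$ is a non-trivial cyclic rotation of the Lyndon word $\ell_1$, the inequality $\ell_1<\ell_1^r\ell_1^l$ is Definition~\ref{def:lyndon} itself.

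I do not anticipate a serious obstacle: the statement is essentially a repackaging of Corollary~\ref{cor:three.way.lyndon} together with a cyclic-rotation observation. The only step requiring some care is the extraction $\ell_1^r<\ell_2$, which I would dispatch via canonical factorization uniqueness rather than a direct lexicographic analysis, thereby avoiding any converse of Lemma~\ref{lemma:lyndon} that has not been explicitly recorded in the excerpt.
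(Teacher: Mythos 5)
Your proof is correct and follows essentially the same route as the paper: both invoke Corollary~\ref{cor:three.way.lyndon} with the costandard factorization $u=\ell_1^l$, $v=\ell_1^r$, deduce $u<v<\ell_2$ to get $u\ell_2\in\LL$ via Lemma~\ref{lemma:lyndon}, and obtain the two inequalities by comparing after stripping the common prefix $u$ (resp.\ common suffix $\ell_2$). The only cosmetic differences are that you extract $\ell_1^r<\ell_2$ via uniqueness of the canonical factorization rather than the suffix property of the Lyndon word $v\ell_2$, and you phrase $uv<vu$ as a cyclic-rotation statement rather than citing the Lyndon property of $uv$ directly; both are equally valid.
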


\begin{proof}
By Corollary~\ref{cor:three.way.lyndon}, there is a splitting $\ell=uv\ell_2$ with $u,v,uv,v\ell_2\in \LL$.
Thus $u<v<\ell_2$, and so $u\ell_2$ is Lyndon by Lemma~\ref{lemma:lyndon}. As $v\ell_2\in \LL$, we have $v\ell_2 < \ell_2v$,
hence, $\ell=uv\ell_2 < u\ell_2v$. As $uv\in \LL$, we have $uv < vu$, so that $\ell=uv\ell_2 < vu\ell_2$.
\end{proof}

As another interesting application of Lemma~\ref{lemma:lyndon.seq.lyndon}, we have:

\begin{corollary}
For a factorization of a Lyndon word $\ell = \ell_1\ell_2$ with $\ell_1,\ell_2\in \LL$:
\begin{equation*}
   \ell_2 = \ell^r \iff \ell_1^{r} \geq \ell_2.
\end{equation*}
\end{corollary}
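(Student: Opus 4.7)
The plan is to establish both implications by contradiction, relying primarily on Lemma~\ref{lemma:lyndon.seq.lyndon} together with the defining property of the costandard factorization. Throughout, I implicitly assume $|\ell_1|>1$ so that $\ell_1^r$ is defined (otherwise the right-hand side of the equivalence is vacuous).

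For the forward direction, suppose $\ell_2=\ell^r$ and, for contradiction, that $\ell_1^r<\ell_2$. Since both $\ell_1^r,\ell_2\in\LL$ with $\ell_1^r<\ell_2$, Lemma~\ref{lemma:lyndon} gives that $\ell_1^r\ell_2$ is itself Lyndon. But $\ell_1^r\ell_2$ is a proper suffix of $\ell=\ell_1\ell_2$ with $|\ell_1^r\ell_2|>|\ell_2|=|\ell^r|$, contradicting the defining property of $\ell^r$ as the longest proper Lyndon suffix of $\ell$ (Proposition~\ref{prop:cost.factor}).

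For the reverse direction, suppose $\ell_1^r\geq\ell_2$ and, for contradiction, that $\ell_2\neq\ell^r$. Then $\ell_1^r\ell_2$ belongs to the set $\bar{P}$ of Lemma~\ref{lemma:lyndon.seq.lyndon}, hence is Lyndon. A short lexicographic check shows that $\ell_1^r\geq\ell_2$ forces $\ell_1^r\ell_2>\ell_2$: the equality case $\ell_1^r=\ell_2$ yields $\ell_1^r\ell_2=\ell_2\ell_2>\ell_2$, while the strict case $\ell_1^r>\ell_2$ rules out $\ell_1^r$ being a proper prefix of $\ell_2$ (which would contradict $\ell_1^r>\ell_2$), so the first differing position between $\ell_1^r\ell_2$ and $\ell_2$ lies within $\ell_1^r$ and matches the comparison $\ell_1^r>\ell_2$. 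On the other hand, $\ell_1^r\ell_2$ being Lyndon forces $\ell_1^r\ell_2<\ell_2$ by Definition~\ref{def:lyndon-2}, since $\ell_2$ is a proper suffix of $\ell_1^r\ell_2$. This is the desired contradiction.

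No serious obstacle arises, since Lemma~\ref{lemma:lyndon.seq.lyndon} packages the nontrivial content that $\ell_1^r\ell_2$ is Lyndon whenever $\ell_2\neq\ell^r$. The only mildly subtle point is the elementary lexicographic verification $\ell_1^r\geq\ell_2\Rightarrow\ell_1^r\ell_2>\ell_2$, which relies on the observation that $\ell_1^r$ cannot be a proper prefix of $\ell_2$ under the hypothesis $\ell_1^r\geq\ell_2$.
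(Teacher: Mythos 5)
Your proof is correct and follows essentially the same route as the paper: the reverse direction is exactly the paper's argument (use Lemma~\ref{lemma:lyndon.seq.lyndon} to see that $\ell_1^r\ell_2$ would be Lyndon if $\ell_2\neq\ell^r$, then derive a lexicographic contradiction with $\ell_1^r\geq\ell_2$), and your forward direction simply inlines the proof of Lemma~\ref{lemma:lr.geq.r}, which the paper cites directly. The only cosmetic imprecision is in your lexicographic check: when $\ell_1^r>\ell_2$ because $\ell_2$ is a proper prefix of $\ell_1^r$, there is no ``first differing position,'' but then $\ell_2$ is a proper prefix of $\ell_1^r\ell_2$ and the conclusion $\ell_1^r\ell_2>\ell_2$ still holds by the length rule.
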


\begin{proof}
The ``$\Rightarrow$'' direction follows from Lemma~\ref{lemma:lr.geq.r}. For the ``$\Leftarrow$'' direction,
if $\ell_1^r \geq \ell_2$, then $\ell_1^r\ell_2\notin \LL$. Evoking Lemma~\ref{lemma:lyndon.seq.lyndon}, we
get $\bar{P}=\{\ell_2\}$ so that $\ell_2 = \ell^r$.
\end{proof}

We also note that Lemmas~\ref{lemma:seq.to.costfac}--\ref{lemma:lyndon.seq.lyndon} admit ``prefix'' counterparts:

\begin{lemma}\label{lem:prefix-aux-1}
Consider any factorization of a Lyndon word $\ell = \ell_1\ell_2$ with $\ell_1,\ell_2 \in \LL$.
Define $u_i$ and $v_i$ inductively via $u_1 = \ell_1, v_1 = \ell_2$ and $u_i = u_{i-1}v_{i-1}^{ls},v_i = v_{i-1}^{rs}$,
as long as $|v_{i-1}|>1$. Then, every element in $\{u_1,u_2,\ldots,u_n\}$ is Lyndon, where $n$ is the smallest integer
such that $v^{ls}_{n} \leq u_n$ or $|v_n|=1$.
\end{lemma}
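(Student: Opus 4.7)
The plan is to induct on $i$, exploiting the clean consequence of Lemma~\ref{lemma:lyndon}: concatenating two Lyndon words $a<b$ automatically produces a Lyndon word $ab$. As a preliminary, I would record by a parallel induction that each $v_i$ is itself Lyndon. Indeed, $v_1=\ell_2$ is Lyndon by hypothesis, and if $v_{i-1}$ is Lyndon with $|v_{i-1}|>1$, then Proposition~\ref{prop:stand.factor} guarantees that the standard factorization $v_{i-1}=v_{i-1}^{ls}v_{i-1}^{rs}$ has Lyndon parts, so $v_i=v_{i-1}^{rs}$ is Lyndon. The hypothesis $|v_{i-1}|>1$ needed to run this step for $i\leq n$ is ensured by the minimality of $n$.

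The main induction shows that $u_i$ is Lyndon for each $i\in\{1,\dots,n\}$. The base case $u_1=\ell_1$ is Lyndon by hypothesis. For the inductive step, fix $i\in\{2,\dots,n\}$ and assume $u_{i-1}$ is Lyndon. Since $i-1<n$, the minimality of $n$ forces both $|v_{i-1}|>1$ and $v_{i-1}^{ls}>u_{i-1}$. The first condition, together with the fact that $v_{i-1}$ is Lyndon, makes $v_{i-1}^{ls}$ a well-defined (nonempty) Lyndon prefix by Proposition~\ref{prop:stand.factor}. The strict inequality $u_{i-1}<v_{i-1}^{ls}$ between two Lyndon words then licenses Lemma~\ref{lemma:lyndon} to conclude that $u_i=u_{i-1}v_{i-1}^{ls}$ is Lyndon, closing the induction.

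I do not anticipate any serious obstacle in this argument. Unlike the costandard (suffix-side) Lemma~\ref{lemma:lyndon.seq.lyndon}, whose proof had to compare two nested chains of suffixes via Lemma~\ref{lemma:lr.geq.r} and rule out the possibility that some intermediate word fails to be Lyndon, the prefix-side statement is more direct because extending a Lyndon word on the right by a strictly larger Lyndon word is automatically Lyndon. The only care needed is the index bookkeeping around the terminating index $n$, confirming that the inductive step is legitimate precisely for $i\leq n$ and that both of the inequalities $|v_{i-1}|>1$ and $v_{i-1}^{ls}>u_{i-1}$ needed to invoke Lemma~\ref{lemma:lyndon} follow from the definition of $n$ as the \emph{smallest} index violating them.
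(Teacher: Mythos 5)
Your argument is correct and is essentially identical to the paper's own proof: both proceed by induction on the index, using the minimality of $n$ to secure $|v_{i-1}|>1$ and $v_{i-1}^{ls}>u_{i-1}$, and then invoke Lemma~\ref{lemma:lyndon} to conclude that $u_i=u_{i-1}v_{i-1}^{ls}$ is Lyndon. The only addition is your explicit auxiliary induction that each $v_i$ is Lyndon, which the paper leaves implicit via Proposition~\ref{prop:stand.factor}.
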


\begin{proof}
We prove that $u_k$ is Lyndon for $1 \leq k \leq n$ by induction on $k$. The base case $k=1$ is obvious. For the inductive
step, if $u_{k-1}$ is Lyndon and $u_k = u_{k-1}v_{k-1}^{ls}$ with $v_{k-1}^{ls} > u_{k-1}$, then $u_k$ is also Lyndon by
Lemma~\ref{lemma:lyndon}.
\end{proof}

\begin{lemma}
Consider any factorization $\ell = \ell_1\ell_2$ with $\ell,\ell_1,\ell_2 \in \LL$.
Then, $\ell^{ls}$ is equal to $u_n$, as defined in Lemma~\ref{lem:prefix-aux-1}.
\end{lemma}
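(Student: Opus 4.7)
My plan is to verify that $u_n$ is indeed the longest proper Lyndon prefix of $\ell$, in three easy steps. First, observing that $v_{k-1} = v_{k-1}^{ls} v_{k-1}^{rs}$ yields the telescoping identity $u_k v_k = u_{k-1} v_{k-1} = \cdots = \ell$ for all $k \leq n$, so each $u_k$ is a prefix of $\ell$. Combined with the Lyndonness provided by Lemma~\ref{lem:prefix-aux-1} and the non-emptiness of $v_n$, this already shows $u_n$ is a proper Lyndon prefix, and hence $|u_n| \leq |\ell^{ls}|$; it remains to exclude the strict inequality.

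If $|v_n|=1$, then $|u_n|=|\ell|-1$, which combined with $|\ell^{ls}|<|\ell|$ forces $|u_n| = |\ell^{ls}|$, and thus $u_n = \ell^{ls}$. So the heart of the argument is the other stopping clause $v_n^{ls} \leq u_n$ with $|v_n|>1$. The key claim I would establish here is that no Lyndon prefix $w$ of $\ell$ satisfies $|u_n|<|w|<|\ell|$. Assuming such a $w$ exists, pick one of minimal length and consider its standard factorization $w = w^{ls} w^{rs}$ from Proposition~\ref{prop:stand.factor}. Since $u_n$ and $w$ are both Lyndon prefixes of $\ell$, nestedness makes $u_n$ a Lyndon prefix of $w$, so $|w^{ls}| \geq |u_n|$. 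A strict inequality here would make $w^{ls}$ itself a Lyndon prefix of $\ell$ sandwiched strictly between $u_n$ and $w$, contradicting the minimality of $w$. Hence $w^{ls} = u_n$, and setting $t := w^{rs}$ I get $w = u_n t$ with $t$ Lyndon and $t > u_n$ coming from the standard-factorization inequality $w^{ls} < w^{rs}$.

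The final contradiction will use the stopping hypothesis: since $w$ is a proper prefix of $\ell = u_n v_n$, the Lyndon word $t$ is a proper Lyndon prefix of $v_n$, and by nestedness of the Lyndon prefixes of $v_n$ it is a prefix of the maximal one, namely $v_n^{ls}$. This yields $t \leq v_n^{ls} \leq u_n$, which clashes with $t > u_n$. With the claim proved, since $\ell^{ls}$ is itself a proper Lyndon prefix satisfying $|u_n| \leq |\ell^{ls}| < |\ell|$, we must have $|\ell^{ls}| = |u_n|$, so $\ell^{ls} = u_n$. The main subtlety, as I see it, is justifying the minimal-length reduction, which depends crucially on the fact that the Lyndon prefixes of a given word form a chain under inclusion.
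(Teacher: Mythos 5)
Your proof is correct. The skeleton is the same as the paper's: both arguments reduce to producing, under the stopping hypothesis $v_n^{ls}\leq u_n$, a Lyndon word that is simultaneously $>u_n$ (because it extends $u_n$ to a longer Lyndon prefix of $\ell$) and $\leq v_n^{ls}\leq u_n$ (because it is a proper Lyndon prefix of $v_n$). The difference is how that word is produced. The paper writes $\ell^{ls}=u_n w$, takes the canonical factorization $w=w_1\cdots w_k$, and invokes Lemma~\ref{lemma:seq.right.word.Lyndon} to get $u_n<w_1$ together with Lemma~\ref{lemma:left.standfac.maximal} to get $w_1\leq v_n^{ls}$. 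You instead pick a \emph{minimal-length} Lyndon prefix $w$ of $\ell$ with $|u_n|<|w|<|\ell|$, argue via minimality that its standard factorization must be $w=u_n t$, and use $w^{ls}<w<w^{rs}$ to get $t>u_n$ while $t\leq v_n^{ls}$. Your route is slightly more self-contained (it needs only Proposition~\ref{prop:stand.factor} and the trivial nestedness of prefixes, not the canonical factorization machinery), at the cost of an extra minimality reduction; you also handle the $|v_n|=1$ stopping clause explicitly, which the paper leaves implicit. All steps check out, including the justifications that $|w|>1$ so the standard factorization exists, and that $|v_n|>1$ so $v_n^{ls}$ is defined in the relevant case.
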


\begin{proof}
Assume the contrary: $\ell^{ls} \neq u_n$. Then $u_n$ must be a prefix of $\ell^{ls}$, so that
$\ell^{ls} = u_n w$ with $w\ne \emptyset$. Consider the canonical factorization $w=w_1 \ldots w_k$. According to
Lemma~\ref{lemma:seq.right.word.Lyndon}, the word $u_nw_1$ is Lyndon, so that $u_n<w_1$.
On the other hand, we have $w_1\leq v^{ls}_{n}$ by Lemma~\ref{lemma:left.standfac.maximal},
which thus contradicts to our choice of $n$.
\end{proof}

The next two results will be needed later:

\begin{lemma}\label{lemma:inc.seq.right.word.lyndon}
For $u \in \LL$, consider any splitting $u=vw$ with $v \in \LL$, and let $w =w_1w_2\ldots w_N$ be
the canonical factorization. Then $w_1\geq w_2\geq \cdots \geq w_N > v$.
\end{lemma}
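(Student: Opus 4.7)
The weak inequalities $w_1 \geq w_2 \geq \cdots \geq w_N$ are immediate from the definition of the canonical factorization in Proposition~\ref{prop:canon.factor}, so the only substantive content of the lemma is the strict inequality $w_N > v$ (when $N \geq 1$; the case $N = 0$ is vacuous).

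My plan is to derive $w_N > v$ directly from the Lyndon property of $u$ applied to its proper suffix $w_N$. Observe first that $w_N$ is a nontrivial proper suffix of $u$: it is nonempty as a Lyndon word, and it is proper because the prefix $v w_1 \cdots w_{N-1}$ of $u$ contains the nonempty word $v$. Hence Definition~\ref{def:lyndon-2} yields $u < w_N$ in the lexicographic order. On the other hand, $u$ has $v$ as a proper prefix, since $w \neq \emptyset$. I will argue by contradiction: assume $v \geq w_N$ and derive a violation of $u < w_N$.

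Suppose $v \geq w_N$. Two subcases exhaust this assumption. First, if $w_N$ is a prefix of $v$ (including the possibility $w_N = v$), then $w_N$ is also a prefix of $u$; since $|u| > |w_N|$, the lex-order convention from the definition in Section~\ref{sec:setup} forces $u > w_N$, contradicting $u < w_N$. Otherwise, $v$ and $w_N$ first differ at some position $j \leq \min(|v|, |w_N|)$ with the $j$-th letter of $v$ strictly larger than that of $w_N$; since $u$ agrees with $v$ on its first $|v|$ letters, the same position $j$ witnesses $u > w_N$, again a contradiction. Hence $v < w_N$, as required. I do not foresee any serious obstacle beyond this routine lex-order bookkeeping, entirely in the spirit of the earlier lemmas in this section.
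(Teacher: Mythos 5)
Your proof is correct and rests on exactly the same key fact as the paper's: the proper suffix $w_N$ of the Lyndon word $u$ satisfies $w_N > u$ by Definition~\ref{def:lyndon-2}. The paper merely compresses your contradiction-and-case analysis into the one-line chain $w_N > u = vw > v$, the last inequality holding because $v$ is a proper prefix of $vw$.
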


\begin{proof}
As $u=vw$ is Lyndon, we have $w_N > vw$, which implies $w_N > v$.
% The other stated inequalities are obvious.
\end{proof}

\begin{lemma}\label{lemma:seq.right.word.Lyndon}
For $u \in \LL$, consider any splitting $u=vw$ with $v\in \LL$, and let $w = w_1w_2 \ldots w_N$ be the canonical factorization.
Then $vw_1$,$vw_1w_2, \ldots ,vw_1\ldots w_N\in \LL$.
\end{lemma}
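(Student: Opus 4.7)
The plan is to prove, by induction on $m \in \{1, \ldots, N\}$, that the word $\ell_m := v w_1 \cdots w_m$ is Lyndon. The base case $\ell_0 = v \in \LL$ holds by hypothesis. For the inductive step, assuming $\ell_{m-1} \in \LL$, Lemma~\ref{lemma:lyndon} produces $\ell_m = \ell_{m-1} w_m \in \LL$ from the single inequality $\ell_{m-1} < w_m$. The proof therefore reduces to verifying $\ell_{m-1} < w_m$ for every $1 \leq m \leq N$.

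I would derive this in two stages. First, I would establish $u < w_m$: since $u$ is Lyndon, $u$ is strictly less than its proper suffix $w_m w_{m+1} \cdots w_N$, and the latter word is already in canonical form (being a tail of the canonical factorization of $w$). Applying Lemma~\ref{lem:Lyndon-vs-canonical} to the Lyndon word $u$ and this canonical factorization translates $u < w_m w_{m+1} \cdots w_N$ into $u \leq w_m$; the length estimate $|u| \geq |v| + |w_m| > |w_m|$ then upgrades this to $u < w_m$.

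Second, I would transfer $u < w_m$ down to the proper prefix $\ell_{m-1}$ of $u$. Since $|u| > |w_m|$, the word $u$ cannot be a prefix of $w_m$, so $u$ and $w_m$ first disagree at some position $j \leq |w_m|$ with $u[j] < w_m[j]$. If $j \leq |\ell_{m-1}|$, then $\ell_{m-1}[j] = u[j] < w_m[j]$ gives $\ell_{m-1} < w_m$ directly; if $j > |\ell_{m-1}|$, then $\ell_{m-1}$ coincides with the first $|\ell_{m-1}|$ letters of $w_m$ and is strictly shorter, making it a proper prefix of $w_m$ and hence again lexicographically smaller.

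The main subtlety is this last descent, as strict lex-order inequalities need not pass automatically to prefixes of the smaller word; however, the case split above handles both possibilities cleanly once the length bookkeeping is in place. With $\ell_{m-1} < w_m$ established, the inductive step closes immediately via Lemma~\ref{lemma:lyndon}, and the lemma follows.
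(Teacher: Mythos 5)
Your proof is correct. The overall skeleton coincides with the paper's: both induct on $m$, reduce the step to the single inequality $\ell_{m-1}<w_m$, and close via Lemma~\ref{lemma:lyndon}. Where you diverge is in how that inequality is obtained. The paper proves only the base case $v<w_1$ directly, by a one-line contradiction: if $v\geq w_1$ then $w_N\leq w_1\leq v<vw=u$, while $w_N$ is a proper suffix of the Lyndon word $u$ and hence must exceed $u$; the inductive step is then just the base case reapplied to the splitting $u=(vw_1\cdots w_{n-1})(w_n\cdots w_N)$. You instead first establish the stronger statement $u<w_m$ for every $m$, via Lemma~\ref{lem:Lyndon-vs-canonical} applied to the proper suffix $w_m\cdots w_N$ together with the length bound $|u|>|w_m|$, and then descend to the prefix $\ell_{m-1}$ by a first-disagreement case analysis. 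Both routes are sound; the paper's is shorter and avoids invoking Lemma~\ref{lem:Lyndon-vs-canonical}, whereas yours yields the extra intermediate fact $u<w_m$ and makes the (genuinely nonautomatic) passage from $u<w_m$ to its prefix $\ell_{m-1}<w_m$ explicit. One small point in your descent: after excluding ``$u$ is a prefix of $w_m$'' by length, you should also note that ``$w_m$ is a prefix of $u$'' is excluded because it would force $w_m<u$, contradicting $u<w_m$; only then is the first-disagreement position $j\leq|w_m|$ guaranteed to exist. This is implicit in the definition of the lexicographic order but worth a clause.
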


\begin{proof}
We prove that $vw_1\ldots w_n$ is Lyndon for all $1\leq n\leq N$ by induction on $n$.
For the base case $n=1$, it suffices to show that $v<w_1$, due to Lemma~\ref{lemma:lyndon}.
If not, then $w_N \leq w_1\leq v < vw=u$, which contradicts the condition that $u$ is Lyndon.
For the step of induction, assume that $vw_1\ldots w_{n-1} \in \LL$. Then applying the base case to
$v'=vw_1 \ldots w_{n-1}$, $w'=w_nw_{n+1}\ldots w_N$, we get $vw_1\ldots w_{n}=v'w_n \in \LL$.
\end{proof}

The above result admits a natural ``prefix'' counterpart:

\begin{lemma}\label{lemma:seq.left.word.Lyndon}
For $u\in \LL$, consider any splitting $u=vw$ with $w\in \LL$ and the canonical factorization $v = v_1v_2\ldots v_N$.
Then $v_Nw,v_{N-1}v_Nw, \ldots ,v_1\ldots v_Nw\in \LL$.
\end{lemma}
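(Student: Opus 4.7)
The plan is to establish the lemma by reverse induction on $n$, showing that each word $u_n := v_n v_{n+1} \cdots v_N w$ is Lyndon, starting with $n = N$ and descending to $n = 1$. This mirrors the forward induction in the proof of Lemma~\ref{lemma:seq.right.word.Lyndon}, exchanging the roles of prefix and suffix. The central trick in both arguments is that a Lyndon word admits only the trivial canonical factorization (itself alone), so any decomposition of $u$ into a non-increasing sequence of Lyndon words must consist of a single piece, by uniqueness in Proposition~\ref{prop:canon.factor}.

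For the base case $n = N$, I would show $v_N < w$ by contradiction. If instead $v_N \geq w$, then $v_1 \geq v_2 \geq \cdots \geq v_N \geq w$ is a non-increasing sequence of Lyndon words whose concatenation equals $u$. Uniqueness of the canonical factorization would then force this to be the canonical factorization of $u$, yielding $N + 1 \geq 2$ pieces and contradicting the hypothesis $u \in \LL$. Hence $v_N < w$, and Lemma~\ref{lemma:lyndon} immediately gives $u_N = v_N w \in \LL$.

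For the inductive step, suppose $u_{n+1}$ is Lyndon for some $1 \leq n \leq N - 1$. Exactly the same reasoning applies: if $v_n \geq u_{n+1}$, then $v_1 \geq v_2 \geq \cdots \geq v_n \geq u_{n+1}$ is a non-increasing sequence of Lyndon words (using the inductive hypothesis to know that $u_{n+1}$ is Lyndon, and the definition of the canonical factorization of $v$ for the other inequalities) whose concatenation is $u$. This would again be a canonical factorization of $u$ with $n + 1 \geq 2$ pieces, contradicting $u \in \LL$. Therefore $v_n < u_{n+1}$, and Lemma~\ref{lemma:lyndon} yields $u_n = v_n u_{n+1} \in \LL$, closing the induction. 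The case $n = 1$ is just the given hypothesis $u = u_1 \in \LL$.

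There is no real obstacle here beyond identifying the right invariant to maintain; the argument is essentially a mirror image of the one already recorded for Lemma~\ref{lemma:seq.right.word.Lyndon}. The only point requiring any care is verifying that the inductive hypothesis indeed turns the candidate chain into a legitimate non-increasing sequence of \emph{Lyndon} words, which is precisely what makes the contradiction with $u \in \LL$ via Proposition~\ref{prop:canon.factor} go through.
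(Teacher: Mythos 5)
Your proposal is correct and follows essentially the same route as the paper: induction on the number of factors absorbed from the right end of $v$, with the key step being that a violation $v_n \geq u_{n+1}$ would produce a second non-increasing factorization of $u$ into Lyndon words, contradicting the uniqueness in Proposition~\ref{prop:canon.factor} since $u \in \LL$ has the one-piece canonical factorization. The only cosmetic difference is that you run the induction downward from $n = N$ while the paper reindexes to run it upward, and the paper phrases the inductive step as "apply the base case to $v' = v_1 \cdots v_{N-n+1}$ and $w' = v_{N-n+2} \cdots v_N w$," which is exactly your argument.
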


\begin{proof}
We prove that $v_{N-n+1}\ldots v_Nw \in \LL$ for $1\leq n\leq N$ by induction on $n$. For the base case $n=1$,
it suffices to show that $v_N < w$, due to Lemma~\ref{lemma:lyndon}. If not, then $v_1\geq \ldots \geq v_N\geq w$,
hence a contradiction with $u\in \LL$, due to the uniqueness of the canonical factorization. For the step of induction,
assume that $v_{N-n+2}\ldots v_Nw \in \LL$. Applying the base case to $v'=v_1v_2\ldots v_{N-n+1}$ and $w'=v_{N-n+2}\ldots v_Nw$,
we get $v_{N-n+1}\ldots v_Nw=v_{N-n+1}w'\in \LL$.
\end{proof}

We conclude this section with the following two results on cyclic permutations:

\begin{lemma}\label{lemma:word.to.Lyndon}
For any word $w\in I^*$, consider its canonical factorization~\eqref{eqn:canon.factor}: $w = w_1w_2\ldots w_n$.
If there is a cyclic permutation of $w$ that is Lyndon, then it must be of the form
$w_iw_{i+1} \ldots w_n w_{1} \ldots w_{i-1}$ for some $i \in \{1,2,\ldots,n\}$.
\end{lemma}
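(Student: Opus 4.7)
The plan is to argue by contradiction and rule out all ``bad'' cyclic permutations, namely those that cut $w$ in the middle of some block $w_j$ rather than at a boundary between $w_{j-1}$ and $w_j$. Any cyclic permutation of $w = w_1 w_2 \ldots w_n$ is determined by a cut position; if that position lies on a block boundary, the resulting permutation is exactly of the stated form $w_i w_{i+1} \ldots w_n w_1 \ldots w_{i-1}$. So it suffices to show that when the cut falls strictly inside some $w_j$, the resulting word cannot be Lyndon.

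Suppose therefore that $w_j = uv$ with $u,v$ both non-empty, and consider
$$ w' = v \cdot w_{j+1} \cdots w_n \cdot w_1 \cdots w_{j-1} \cdot u. $$
I would first use the Lyndon property of $w_j$ to sandwich $u$ and $v$. Since $u$ is a proper prefix of $w_j$ and $v$ is non-empty, one has $u < w_j$ lexicographically; and since $v$ is a proper suffix of the Lyndon word $w_j$, Definition~\ref{def:lyndon-2} gives $w_j < v$. Hence $u < v$.

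Next, the word $v$ is a proper prefix of $w'$ (which continues with at least $u$ at the end), so $v < w'$. Combining these, $u < v < w'$. On the other hand, $u$ is a proper suffix of $w'$, so if $w'$ were Lyndon then again by Definition~\ref{def:lyndon-2} we would have $w' < u$, forcing the chain $u < v < w' < u$, a contradiction. Thus a Lyndon cyclic permutation cannot arise from an interior cut, and the lemma follows.

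The argument is essentially three short lexicographic comparisons, so I do not expect a serious obstacle. The only point to double-check is the degenerate case $n = 1$, where the ``middle part'' $w_{j+1} \cdots w_{j-1}$ is empty; here the argument specializes to the well-known fact that no non-trivial cyclic permutation of a Lyndon word is Lyndon, and the inequalities $v < w'$ (from $u$ still being appended) and $w' < u$ (from $w'$ Lyndon) remain valid, so the same contradiction goes through.
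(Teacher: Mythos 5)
Your proposal is correct and follows essentially the same route as the paper: both arguments cut inside a block $w_j = uv$, use the Lyndon property of $w_j$ to get $u < v$, and then derive the contradiction from $v$ being a proper prefix (hence smaller) and $u$ a proper suffix (hence larger) of the would-be Lyndon cyclic permutation. Your write-up is just a slightly more explicit version of the paper's three-line comparison $\ell > v > u$ versus $u > \ell$.
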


\begin{proof}
Let $\ell$ be a cyclic permutation of $w$ that is Lyndon (such $\ell$ is unique, if it exists). If $\ell$ is not
of the stated form, then there is some $w_i = uv$ such that $v$ is a prefix of $\ell$ and $u$ is a suffix.
But then $\ell > v > u$, which contradicts to $\ell\in \LL$.
\end{proof}

\begin{corollary}\label{cor:word.to.Lyndon}
For any word $w\in I^*$, consider its canonical factorization~\eqref{eqn:canon.factor}: $w = w_1w_2\ldots w_n$.
If $\ell\in \LL$ is a cyclic permutation of $w$, then there is $i$ such that
\begin{equation*}
  \underbrace{w}_{k\text{ times}} = w_1w_2\ldots w_{i-1}\underbrace{\ell}_{k -1 \text{ times}}w_i w_{i+1}\ldots w_n
  \qquad \forall\, k\in \BZ_{>0}.
\end{equation*}
\end{corollary}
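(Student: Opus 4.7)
The plan is to reduce the corollary almost immediately to Lemma~\ref{lemma:word.to.Lyndon} and then verify the displayed identity by direct concatenation. Since $\ell \in \LL$ is a cyclic permutation of $w$, Lemma~\ref{lemma:word.to.Lyndon} forces
$$\ell = w_i w_{i+1} \ldots w_n w_1 \ldots w_{i-1}$$
for some $i \in \{1,2,\ldots,n\}$. This $i$ is the index promised by the statement, and everything that follows is a purely combinatorial manipulation of words.

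Next I would substitute this expression for $\ell$ into the right-hand side and regroup: the string $w_1 \ldots w_{i-1}\,\ell^{k-1}\,w_i \ldots w_n$ becomes
$$w_1 \ldots w_{i-1} \,\bigl(w_i \ldots w_n\, w_1 \ldots w_{i-1}\bigr)^{k-1}\, w_i \ldots w_n,$$
and inserting the obvious parentheses $(w_1\ldots w_{i-1}\,w_i\ldots w_n)$ repeated $k$ times yields exactly $w^k$. One can formalize this either by a single regrouping of the concatenation or, if one prefers, by a trivial induction on $k$ (the base $k=1$ is the identity $w = w_1\ldots w_n$, and the inductive step multiplies by one further copy of $w$ on the left).

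There is no real obstacle here: the whole content of the corollary lies in Lemma~\ref{lemma:word.to.Lyndon}, which pins down the unique cyclic rotation that can be Lyndon. The only thing to be a bit careful with is the edge cases $i=1$ (where the prefix $w_1\ldots w_{i-1}$ is empty and $\ell = w$) and $i = n$ (where the suffix $w_i\ldots w_n$ reduces to $w_n$), but in both cases the parenthesization above remains valid with the understanding that empty products equal the empty word.
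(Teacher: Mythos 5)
Your proof is correct and is exactly the argument the paper intends (the corollary is stated without proof as an immediate consequence of Lemma~\ref{lemma:word.to.Lyndon}): writing $A=w_1\ldots w_{i-1}$, $B=w_i\ldots w_n$, the identity $(AB)^k=A(BA)^{k-1}B$ is the regrouping you describe. Nothing further is needed.
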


   %%%%%%%%%%%%%%%%%%%%%%%%%%%%%%%%%%%%%%%%%%%%%%%%%%%%%%%%%%%%%%%%%%%%%%%%%%%%%%%
   %%%%%%%%%%%%%%%%%%%%%%%%%%%%%%%%%%%% Section 3 %%%%%%%%%%%%%%%%%%%%%%%%%%%%%%%%
   %%%%%%%%%%%%%%%%%%%%%%%%%%%%%%%%%%%%%%%%%%%%%%%%%%%%%%%%%%%%%%%%%%%%%%%%%%%%%%%

\section{Full flags and auxiliary sets}\label{sec:flag}

In this section, we investigate the behavior of the standard bracketing with respect to different splittings of words.
For imaginary roots, it will be crucially important to consider not individual standard bracketings $\sb[\SL_i(k\delta)]$
but rather the induced complete flags:
\begin{equation}\label{eq:flag}
\begin{split}
  & 0=\spanset_0^k \subset \spanset_1^k \subset \cdots \subset\spanset_{|I|}^k = \fh t^k \qquad \forall\, k\in \BZ_{>0},\\
  & \mathrm{with}\quad  \spanset_i^k := \spn\big\{\sb[\SL_1(k\delta)],\ldots,\sb[\SL_i(k\delta)]\big\}.
\end{split}
\end{equation}

   %%%%%%%%%%%%%%%%%%%%%%%%%%%%%%%%%%%%%%%%%%%%%%%%%%%%%%%%%%%%%%%%%%%%%%%%%%%%%%%

\subsection{Extended set of roots}
\

Following~\cite[(5.1)]{AT}, let us consider the following upgrade of~\eqref{eq:affine-roots}:
\begin{equation}\label{eq:extended-affine-roots}
  \wDelta^{+,\ext} = \wDelta^{+,\re} \cup \imx  \quad \mathrm{with} \quad
  \imx=\big\{(k\delta,r) \,|\, k \geq 1, 1 \leq r \leq |I| \big\},
\end{equation}
counting imaginary roots with multiplicities. We can thus naturally generalize~\eqref{eqn:associated word}:
\begin{equation}\label{eqn:associated word affine}
  \SL \colon \wDelta^{+,\ext} \,\iso\, \big\{\aslaws \big\},
  \quad \SL((k\delta,r)) = \SL_r(k\delta).
\end{equation}
We also consider the induced order on $\wDelta^{+,\ext}$, in analogy with~\eqref{eqn:induces}:
\begin{equation}\label{eq:order.ext}
    \alpha < \beta \iff \SL(\alpha) < \SL(\beta) \qquad \forall\, \alpha,\beta \in \wDelta^{+,\ext}.
\end{equation}

\begin{lemma}\label{lemma:equiv.to.standard.fac}
Let $w$ be a Lyndon word and $uv$ be the standard factorization of a Lyndon word, with $|w| < |uv|$.
Then $w > u \iff w > uv$.
\end{lemma}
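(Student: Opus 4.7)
The plan is to handle the two implications separately: the ($\Leftarrow$) direction is immediate from lex order, and the ($\Rightarrow$) direction is proved by induction on $|uv|$. Since $u$ is a proper prefix of $uv$, we have $u<uv$ lexicographically, so $w>uv$ gives $w>u$ automatically.

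For ($\Rightarrow$), I would induct on $|uv|$, with base case $|uv|=2$ a one-letter check. For the inductive step, assume $w$ is Lyndon with $|w|<|uv|$ and $w>u$. If $u$ is not a prefix of $w$, then the first position where $w$ and $u$ differ has $w$'s letter larger, and since $u$ is a prefix of $uv$, the same position witnesses $w>uv$. Otherwise, write $w=uw'$ with $w'$ nonempty (else $w=u$) and $|w'|<|v|$; the goal reduces to proving $w'>v$ lexicographically.

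Let $w'=w_1'\cdots w_k'$ be the canonical factorization. By Lemma~\ref{lemma:seq.right.word.Lyndon}, $uw_1'$ is Lyndon, and by Lemma~\ref{lemma:inc.seq.right.word.lyndon}, $w_1'>u$. Since $u$ is the \emph{longest} proper Lyndon prefix of $uv$ and $|uw_1'|\le|w|<|uv|$, the word $w_1'$ cannot be a prefix of $v$ (else $uw_1'$ would be a longer Lyndon prefix of $uv$), so $w_1'$ and $v$ first differ at some position $p\le|w_1'|$. If $w_1'>v$ at that position, then $uw_1'>uv$; since $uw_1'$ is a prefix of $w$, this yields $w\ge uw_1'>uv$.

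The remaining case $w_1'<v$ is where the induction is invoked. By Proposition~\ref{prop:stand.factor}, $v$ is itself Lyndon; the sub-case $|v|=1$ is vacuous since then $|w_1'|<1$ forces $w_1'=\emptyset$. So $v$ has its own standard factorization $v=v^{ls}\cdot v^{rs}$, and Lemma~\ref{lemma:left.standfac.maximal}(a) yields $v^{ls}\le u$. Applying the inductive hypothesis to the Lyndon word $v$ (of length $<|uv|$) with its standard factorization $v^{ls}\cdot v^{rs}$, and to the Lyndon word $w_1'$ (of length $<|v|$), the equivalence forces $w_1'\le v^{ls}\le u$, contradicting $w_1'>u$. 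The main subtlety I anticipate is orchestrating this last step: the induction must be applied to $v$ rather than to $uv$, which is possible only after reducing from the possibly non-Lyndon suffix $w'$ to the strictly shorter Lyndon word $w_1'$; it is precisely the word ``longest'' in ``longest proper Lyndon prefix''---entering through Lemma~\ref{lemma:left.standfac.maximal}(a)---that closes the argument.
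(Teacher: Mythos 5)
Your proof is correct and takes essentially the same route as the paper: induction on $|uv|$, reduction to the case $w=uw'$, passing to the first canonical factor $w_1'$ of $w'$ (so that $w_1'>u$ by Lemma~\ref{lemma:inc.seq.right.word.lyndon}), and applying the inductive hypothesis to the standard factorization of $v$ together with $v^{ls}\leq u$. The only cosmetic differences are that the paper runs the step as a proof by contradiction (assuming $u<w<uv$) and verifies $v^{ls}\leq u$ directly via Lemma~\ref{lemma:lyndon} rather than citing Lemma~\ref{lemma:left.standfac.maximal}(a).
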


\begin{proof}
We prove the ``$\Rightarrow$'' direction by induction on the length of $|uv|$. The base case $|uv| = 2$ is clear.
As per the step of induction, let us assume the contrary, that is $u < w < uv$. Then $w = uy$ with $y=y_1\ldots y_N$
being a canonical factorization.
Due to Lemma~\ref{lemma:inc.seq.right.word.lyndon}, the Lyndon word $y_1$ is $>u$ and has length less than $|v|$.
Let $v=zt$ be the standard factorization of $v$. We claim that $z\leq u$. If not, then $uz$ would be Lyndon by
Lemma~\ref{lemma:lyndon}, contradicting the fact that $u$ is the longest Lyndon prefix. Thus, $z\leq u<y_1\leq y<v=zt$,
which cannot occur due to the induction assumption. This yields a contradiction, thus establishing the step of induction.

The ``$\Leftarrow$'' direction is a consequence of the inequalities $w \geq uv > u$.
\end{proof}

   %%%%%%%%%%%%%%%%%%%%%%%%%%%%%%%%%%%%%%%%%%%%%%%%%%%%%%%%%%%%%%%%%%%%%%%%%%%%%%%

\subsection{$W$-set and pseudo-bracketing}
\

The key difficulty in extending the convexity~\eqref{eqn:convex} to affine root systems $\wDelta^{+}$ lies
in the treatment of imaginary roots. For example, while~\eqref{eq:generalized Leclerc} guarantees that
  $$\alpha + \beta > \min\{\alpha,\beta\}$$
if $\alpha,\beta,\alpha +\beta \in \wDelta^{+,\re}$ (cf.~Remark~\ref{rem:bracketing-irrelevance}),
the generalization of this to the case when some roots
are imaginary is not obvious, and will be established in Corollary~\ref{cor:im.upper.limit},
cf.~Remark~\ref{remark:convex.im.set}. To this end, we start with the following definition:

\begin{definition}\label{def:lyndon.set}
(a) Define the set $W$ as follows:
\begin{equation*}
  W = \big\{w=(u,v) \,\big|\, u,v \in \SL, u < v \big\},
\end{equation*}
whereas we often write $w_1 = u,w_2 = v$.
We endow $W$ with the following ordering:
\begin{equation}\label{eq:W-order}
  (u,v) < (u',v') \ \ \textit{if} \
  \begin{cases}
      |uv| < |u'v'|\\
       \ \text{or}\\
      |uv| = |u'v'| \text{ and } uv>u'v'\\
       \ \text{or}\\
      uv=u'v' \text{ and } u <u'
  \end{cases}.
\end{equation}
Finally, for any $w \in W$, we define its \textbf{pseudo-bracketing} $\osb[w]\in \fa$ via:
  $$ \osb[w] = [\sb[w_1],\sb[w_2]]. $$

\noindent
(b) For any $\alpha \in \wDelta^{+}$, define the subset $W_\alpha$ of $W$ via:
\begin{equation*}
  W_\alpha = \big\{(u,v) \,\big|\, u,v \in \SL, u < v , \deg(u) + \deg(v) = \alpha \big\}.
\end{equation*}

\noindent
(c) Define the subset $\ol{W}$ of $W$ as follows:
\begin{equation*}
  \ol{W} = \big\{ (u,v) \,\big|\, u,v \in \SL, u < v, uv \in \SL \big\}.
\end{equation*}

\noindent
(d) For any $\alpha \in \wDelta^{+,\ext}$, we define the subset $\ol{W}_\alpha$ of $\ol{W}$ via:
\begin{equation*}
  \ol{W}_\alpha = \big\{ (u,v) \,\big|\, u,v \in \SL, u<v ,  uv=\SL(\alpha) \big\}.
\end{equation*}
\end{definition}

\begin{remark}\label{rem:W-minmax}
For any standard Lyndon word $w$, the set $\ol{W}$ contains the costandard factorization $(w^l,w^r)$, the standard
factorization $(w^{ls},w^{rs})$, as well as possibly some more $(\ell_1,\ell_2)$ arising from factorizations
$w=\ell_1\ell_2$ into two standard Lyndon words. Moreover, $(w^l,w^r)$ is the smallest and $(w^{ls},w^{rs})$
is the biggest among all of those.
\end{remark}

\begin{proposition}\label{prop:im.bracketing}
Let $w_1,\dots,w_N$ be the elements of $W_{k\delta}$ listed in increasing order. Then for any $1\leq n\leq N$, we have:
\begin{equation}\label{eq:Wspan-vs-flag}
  \spn\Big\{\osb[w_1],\osb[w_2],\ldots,\osb[w_n]\Big\} = \spanset_m^k,
\end{equation}
where $m = \max\{i \,|\, (w_n)_1(w_n)_2 \leq \SL_i(k\delta)\}$ and $m = 0$ if $(w_n)_1(w_n)_2 > \SL_1(k\delta)$.
\end{proposition}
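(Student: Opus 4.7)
My plan is to induct on $n$, using an affine analogue of Leclerc's triangularity as the main technical input.

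First, I would establish the preliminary fact that $\osb[w] \in \widehat{\fn}^+_{k\delta} = \fh t^k$ for every $w=(u,v)\in W_{k\delta}$. The degree constraint $\deg u + \deg v = k\delta$ combined with the loop realization~\eqref{eq:loops} forces either (i) both $u,v$ real (with $\deg u = (\gamma,m_1)$, $\deg v = (-\gamma,m_2)$, $m_1+m_2=k$) or (ii) both imaginary, since $\wDelta^{+,\im}=\BZ_{>0}\delta$ cannot be decomposed as real $+$ imaginary. Case (ii) yields $\osb[w]=0$ (as $\fh$ is abelian in the loop realization and the central term in~\eqref{eq:loops} vanishes for $k>0$), while case (i) yields $\osb[w]\sim h_\gamma t^k$ by direct computation. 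Note also that by Lemma~\ref{lemma:lyndon}, the hypothesis $u<v$ with both Lyndon forces $uv$ to itself be Lyndon, so no non-Lyndon concatenation ever arises.

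The key technical step is the triangularity claim
\begin{equation*}
\osb[w] \in \spn\bigl\{\sb[\SL_i(k\delta)]\,\big|\,\SL_i(k\delta)\geq uv\bigr\}
\qquad \forall\, w=(u,v)\in W_{k\delta}.
\end{equation*}
I would derive this from a rewriting identity $[\sb[u],\sb[v]] = c\,\sb[uv] + \sum_{m\in\LL,\,|m|=k|\delta|,\,m>uv}c_m\,\sb[m]$ (with scalar coefficients), established by a suitable descending induction: if $(u,v)$ is already the costandard factorization of $uv$ the identity is immediate; otherwise, splitting $u=u^l u^r$ via Proposition~\ref{prop:cost.factor} and applying the Jacobi identity rewrites $[\sb[u],\sb[v]]$ in terms of bracketings on pairs whose concatenations are Lyndon and strictly lex-larger than $uv$ (by Corollary~\ref{cor:rotate}), letting the induction close. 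Given the rewriting, for any non-standard Lyndon $m$ of degree $k\delta$, Definition~\ref{def:standard} and Proposition~\ref{prop:standard} express $\sb[m]$ as a combination of $\sb[m']$ with $m'>m$ lex of the same degree, so iterating puts every $\sb[m]$ — and hence $\osb[w]$ — in $\spn\{\sb[\SL_i(k\delta)]:\SL_i(k\delta)\geq uv\}$, yielding the claim.

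With the triangularity, the $\subseteq$ direction of~\eqref{eq:Wspan-vs-flag} is immediate. For the reverse $\supseteq$, I would track when the index $m$ increases as $n$ grows: this happens precisely when $(w_n)_1(w_n)_2$ first equals some $\SL_m(k\delta)$. By Remark~\ref{rem:W-minmax}, the $W$-smallest such $w_n$ is the costandard factorization $(\SL_m(k\delta)^l,\SL_m(k\delta)^r)$, for which $\osb[w_n]=\sb[\SL_m(k\delta)]$ by the very definition of the standard bracketing. Since the generalized Leclerc algorithm (Proposition~\ref{prop:generalized.Leclerc.algo}(b)) selected $\SL_m(k\delta)$ precisely because $\sb[\SL_m(k\delta)]$ was linearly independent from $\sb[\SL_1(k\delta)],\dots,\sb[\SL_{m-1}(k\delta)]$, the cumulative span grows by exactly one dimension to $\spanset_m^k$, as required. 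The hard part is the triangularity rewriting — the affine analogue of Leclerc's classical triangularity lemma — where the combinatorial rotation results of Section~\ref{sec:Aux}, especially Corollary~\ref{cor:rotate}, play the central role.
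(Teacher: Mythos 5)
Your proof is correct in its overall architecture, but it is organized quite differently from the paper's. You reduce the proposition to the classical triangularity of standard bracketings, $[\sb[u],\sb[v]]\in \BC\,\sb[uv]+\spn\{\sb[m]\,|\,m\in\LL,\ \deg m=k\delta,\ m>uv\}$, combined with Definition~\ref{def:standard} to push every non-standard contribution up into $\spn\{\sb[\SL_i(k\delta)]\,|\,\SL_i(k\delta)\geq uv\}=\spanset_m^k$; the reverse inclusion then comes from the costandard factorizations of $\SL_1(k\delta),\dots,\SL_m(k\delta)$ all sitting inside $\{w_1,\dots,w_n\}$ via Remark~\ref{rem:W-minmax}. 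The paper never isolates this triangularity lemma: it runs a single induction along the $W$-order and, for a pair $w_n$ that is not a costandard factorization, applies the Jacobi identity to the costandard splitting of $(w_n)_1$ and uses the generalized Leclerc algorithm~\eqref{eq:generalized Leclerc} together with Corollaries~\ref{cor:three.way.lyndon} and~\ref{cor:rotate} to place the two resulting pairs strictly earlier in $W_{k\delta}$. The two arguments are close cousins (both ultimately rest on Jacobi plus the rotation estimates of Section~\ref{sec:Aux}); yours buys a cleaner separation between free-Lie-algebra combinatorics and span bookkeeping, while the paper's buys a self-contained treatment whose real-versus-imaginary case analysis is reused almost verbatim in Proposition~\ref{prop:general.bracketing}.

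One imprecision in your sketch of the triangularity lemma: after splitting $u=u^l u^r$, the Jacobi identity does \emph{not} produce only pairs with concatenation strictly lex-larger than $uv$. The term $[\sb[u^l],[\sb[u^r],\sb[v]]]$ has leading part $[\sb[u^l],\sb[u^r v]]$, whose concatenation is again exactly $uv$ --- only the first factor has shortened. So the descending induction must be two-layered: downward on the lexicographic order of the concatenation (for fixed degree), and, for fixed concatenation, downward on the length of the first factor, terminating at the costandard factorization (well-foundedness of this inner recursion is exactly what Lemmas~\ref{lemma:seq.to.costfac} and~\ref{lemma:lyndon.seq.lyndon} provide). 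As literally written, your induction step would erase the $c\,\sb[uv]$ term from your own identity. This is a fixable bookkeeping slip rather than a conceptual gap; with it repaired (and with the standard fact that Lyndon factors of standard Lyndon words are standard, needed so that the costandard factorizations actually lie in $W_{k\delta}$), your argument goes through.
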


\begin{proof}
We prove~\eqref{eq:Wspan-vs-flag} by induction on $n$.

In the base case $n=1$, either $w_1 = (\SL^l_1(k\delta),\SL^r_1(k\delta))$ or $(w_1)_1(w_1)_2 >\SL_1(k\delta)$.
In the first case, we have $\osb[w_1] = \sb[\SL_1(k\delta)]$ and $m=1$. In the second case, $m=0$ while $\osb[w_1] = 0$
as $w_1$ represents the costandard factorization of a non-standard Lyndon word (cf.~Remark~\ref{rem:W-minmax}).
Thus, the equality~\eqref{eq:Wspan-vs-flag} holds for $n=1$.

For the inductive step, we shall assume that~\eqref{eq:Wspan-vs-flag} holds for $n'=n-1$ with the right-hand side
$\spanset_{m'}^k$. We shall now consider three cases:

$\bullet$
If $w_n=(\SL^l_i(k\delta),\SL^r_i(k\delta))$, then $m=i$ and $\osb[w_n] = \sb[\SL_i(k\delta)]$, while the inductive
hypothesis yields $\spn\{\osb[w_1],\osb[w_2],\ldots,\osb[w_{n-1}]\} = \spanset_{i-1}^k$. This implies
that $\spn\{\osb[w_1],\osb[w_2],\ldots,\osb[w_{n}]\} = \spanset_{i}^k$, as claimed.

$\bullet$
If $w_n$ represents the costandard factorization of a non-standard Lyndon word, then
$\osb[w_n]=\sb[(w_n)_1(w_n)_2]\in \spn\{\osb[w_1],\osb[w_2],\ldots,\osb[w_{n-1}]\}$. It thus remains
to show that $m=m'$, that is, $\SL_{m'+1}(k\delta) < (w_n)_1(w_n)_2 < \SL_{m'}(k\delta)$.
But if not, then we would actually have $\SL_{m'+1}(k\delta) = (w_n)_1(w_n)_2$, due to the ordering~\eqref{eq:W-order}
and $\SL_{m'+1}(k\delta) < (w_{n-1})_1(w_{n-1})_2 \leq \SL_{m'}(k\delta)$, thus contradicting $(w_n)_1(w_n)_2\notin \SL$.

$\bullet$
If $w_n$ does not represent the costandard factorization of any Lyndon word, then $(w_n)_1(w_n)_2 = (w_{n-1})_1(w_{n-1})_2$
(as the costandard factorization of $(w_n)_1(w_n)_2$ is among $\{w_j\}_{j=1}^{n-1}$). Therefore, we have $m=m'$. It thus
remains to show:
\begin{equation}\label{eq:aux-n}
  \osb[w_n]\in \spn\big\{\osb[w_1],\osb[w_2],\ldots,\osb[w_{n-1}]\big\}.
\end{equation}
Using Corollary~\ref{cor:three.way.lyndon}, let us split $(w_n)_1(w_n)_2 = uv(w_n)_2$ with
$u=(w_n)^l_1, v=(w_n)^r_1$, so that $u,v,v(w_n)_2 \in \LL$. We note that $\sb[(w_n)_1]=[\sb[u],\sb[v]]$.
%We note that $u,v \in \SL$, as factors of a standard Lyndon word $(w_n)_1$.
For the notation simplicity, let $y = (w_n)_2$, so that $u<v<y$. By the Jacobi identity, we have:
\begin{gather*}
  [\sb[u],[\sb[v],\sb[y]]] + [\sb[v],[\sb[y],\sb[u]]] + [\sb[y],[\sb[u],\sb[v]]] = 0.
\end{gather*}
We shall assume that $y$ is not imaginary, as otherwise $\osb[w_n] = 0$, implying~\eqref{eq:aux-n}.
Hence, it suffices to show:
  $$[\sb[u],[\sb[v],\sb[y]]], [\sb[v],[\sb[y],\sb[u]]] \in \spn\{\osb[w_1],\osb[w_2],\ldots,\osb[w_{n-1}]\}. $$

If $u$ is imaginary or $[\sb[v],\sb[y]]=0$, then $[\sb[u],[\sb[v],\sb[y]]]=0$. Otherwise, $[\sb[u],[\sb[v],\sb[y]]]$
is a multiple of $[\sb[u],\sb[\SL(\deg(v) + \deg(y))]]$ and $u\ne \SL(\deg(v) + \deg(y))$. We note that
$\SL(\deg(v) + \deg(y))\geq vy$ due to~\eqref{eq:generalized Leclerc}, as $[\sb[v],\sb[y]]\ne 0$. Therefore,
either $(u,\SL(\deg(v) + \deg(y))$ or $(\SL(\deg(v) + \deg(y)), u)$ is in $W_{k\delta}$, and the one that is
in $W_{k\delta}$ is smaller than $w_n$, that is, belongs to $\{w_1,\ldots,w_{n-1}\}$. This implies that
$[\sb[u],[\sb[v],\sb[y]]]\in \spn\{\osb[w_1],\osb[w_2],\ldots,\osb[w_{n-1}]\}$.

If $v$ is imaginary or $[\sb[y],\sb[u]]=0$, then $[\sb[v],[\sb[y],\sb[u]]]=0$. Otherwise, $[\sb[v],[\sb[y],\sb[u]]]$
is a multiple of $[\sb[v],\sb[\SL(\deg(y) + \deg(u))]]$ and also $v \ne \SL(\deg(y) + \deg(u))$.
Let $\ell\in \mathrm{L}$ denote the appropriate concatenation of $v$ and $\SL(\deg(y) + \deg(u))$.
As $[\sb[y],\sb[u]] \neq 0$ and $u<y$, we get $\SL(\deg(u) + \deg(y)) \geq uy$ due to~\eqref{eq:generalized Leclerc}.
Combining this with Corollary~\ref{cor:rotate}, we obtain $\ell>uvy$. Hence, either $(v,\SL(\deg(y) + \deg(u))$ or
$(\SL(\deg(y) + \deg(u),v)$ is in $W_{k\delta}$ and is smaller than $w_n$, that is, belongs to $\{w_1,\ldots,w_{n-1}\}$.
This implies that $[\sb[v],[\sb[y],\sb[u]]]\in \spn\{\osb[w_1],\osb[w_2],\ldots,\osb[w_{n-1}]\}$.

This completes the proof of the inductive step, and hence of the proposition.
\end{proof}

Let us illustrate this proposition with a couple of examples (cf.\ Notation~\ref{notation:h}):

\begin{example}
Consider affine type $F_4^{(1)}$ with the order $3<4<0<2<1$. Using the code (see
Listing~\ref{lst:w.set} of Appendix~\ref{sec:app_code}), the set $W_\delta$, written in increasing order,
with the pairs corresponding to the costandard factorization of $\SL$-words highlighted in bold, is as follows:
\begin{align*}
  & \mathbf{(3432104321, 32)}, \mathbf{(3432104, 32321)}, (343210432, 321), (34321043232, 1), \\
  & \mathbf{(343210321, 324)},(34321032132, 4), (343210, 324321), (34321032, 3214),\\
  & \mathbf{(343214, 323210)}, (34321432, 3210), (34321432321, 0), (3432143232, 01), \\
  & (34321, 3243210), (3432132, 32104), (34321343210, 2), (34, 3243210321), \\
  & (34324, 3213210), (3432, 32143210), (343234321, 012), (3432343210, 21), \\
  & (33210, 3243214), (3321, 32432104), (3, 32432104321), (332, 321432104).
\end{align*}
Then, $\osb[(343210432, 321)]\sim h_{321}t$ and $\osb[(34321043232,1)]\sim h_1t$ are in the span of
$\sb[\SL_1(\delta)]=\sb[343210432132]\sim h_{32}t$ and $\sb[\SL_2(\delta)]=\sb[343210432321]\sim h_{32321}t$.
Likewise, for non-highlighted elements in the second line:
$\osb[(34321032132, 4)]\sim h_4t$, $\osb[(343210, 324321)]\sim h_{324321}t$, $\osb[(34321032, 3214)]\sim h_{3214}t$,
all of which are in the span of above $\sb[\SL_1(\delta)]$, $\sb[\SL_2(\delta)]$, and $\sb[\SL_3(\delta)]=\sb[343210321324]\sim h_{324}t$.
\end{example}

\begin{example}
Consider affine type $E_6^{(1)}$ with the order $3<0<1<5<4<6<2$. Using the code (see
Listing~\ref{lst:w.set} of Appendix~\ref{sec:app_code}), the set $W_\delta$, written in increasing order,
with the pairs corresponding to the costandard factorization of $\SL$-words highlighted in bold, is as follows:
\begin{align*}
  & \mathbf{(3645032641, 32)}, \mathbf{(364503264, 321)}, (36450326432, 1), \mathbf{(364503261, 324)},\\
  & (36450326132, 4), (36450326, 3241), \mathbf{(36450, 3241326)}, (364503241, 326), \\
  & (36450324132, 6), (36450324, 3261), (36450321, 3264), (3645032, 32641), \\
  & \mathbf{(3645, 32413260)}, (36453241, 3260), (36453241326, 0), (3645324132, 06),\\
  & (3645324, 32610), (3645321, 32640), (364532, 326410), \mathbf{(36403261, 3245)}, \\
  & (36403261324, 5), (3640326132, 54), (3640326, 32451), (3640, 32451326),\\
  & (3640321, 32645), (36403213645, 2), (364032, 326451), (3640323645, 12),\\
  & (364, 324513260), (364321, 326450), (36432, 3264510), (360, 324513264),\\
  & (36, 3245132640), (345, 326103264), (34, 3261032645), (3, 32641032645).
\end{align*}
Then, for example, the pseudo-bracketing of all elements in the third line are
$\osb[(36450324132,6)]\sim h_6t$,  $\osb[(36450324, 3261)]\sim h_{3261}t$,
$\osb[(36450321, 3264)]\sim h_{3264}t$, $\osb[(3645032, 32641)]\sim h_{32641}t$, which are
in the linear span of $\sb[\SL_1(\delta)]=\sb[364503264132]\sim h_{32}t$,
$\sb[\SL_2(\delta)]=\sb[364503264321]\sim h_{321}t$, $\sb[\SL_3(\delta)]=\sb[364503261324]\sim h_{324}t$,
and $\sb[\SL_4(\delta)]=\sb[364503241326]\sim h_{3241326}t$.
\end{example}

As an important corollary of Proposition~\ref{prop:im.bracketing}, we obtain:

\begin{corollary}\label{cor:im.upper.limit}
Consider two standard Lyndon words $u,v$ such that $u<v$ and $\deg(u) + \deg(v) = k\delta$.
Then $\SL_i(k\delta) < uv \implies [\sb[u],\sb[v]] \in \spanset_{i-1}^k$.
\end{corollary}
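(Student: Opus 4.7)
The plan is to deduce this as an essentially immediate consequence of Proposition~\ref{prop:im.bracketing}. The hypotheses on $u,v$ exactly say that the pair $w := (u,v)$ is an element of $W_{k\delta}$, so I would begin by listing the elements of $W_{k\delta}$ in increasing order as $w_1 < w_2 < \cdots < w_N$ and letting $n$ denote the unique index with $w_n = (u,v)$.

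Next, I would invoke Proposition~\ref{prop:im.bracketing} directly at this $n$ to get
\[
  [\sb[u], \sb[v]] = \osb[w_n] \in \spn\big\{\osb[w_1], \ldots, \osb[w_n]\big\} = \spanset_m^k,
\]
where $m$ is the maximal index $j$ with $uv \leq \SL_j(k\delta)$ (and $m=0$ if no such $j$ exists).

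The only remaining task is to translate the hypothesis $\SL_i(k\delta) < uv$ into the bound $m \leq i-1$. Since the imaginary standard Lyndon words in degree $k\delta$ satisfy $\SL_1(k\delta) > \SL_2(k\delta) > \cdots > \SL_{|I|}(k\delta)$, any index $j$ with $uv \leq \SL_j(k\delta)$ must then satisfy $\SL_j(k\delta) \geq uv > \SL_i(k\delta)$, which forces $j < i$. Hence $m \leq i-1$, giving $[\sb[u],\sb[v]] \in \spanset_m^k \subseteq \spanset_{i-1}^k$, as desired. There is no real obstacle here: the corollary is essentially a repackaging of Proposition~\ref{prop:im.bracketing}, with the content of the argument sitting entirely in that proposition.
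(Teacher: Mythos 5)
Your proof is correct and follows exactly the same route as the paper: apply Proposition~\ref{prop:im.bracketing} to the pair $(u,v)\in W_{k\delta}$ to land in $\spanset_m^k$ with $m=\max\{j \mid uv\leq \SL_j(k\delta)\}$, then use $\SL_i(k\delta)<uv$ together with the decreasing order of the $\SL_j(k\delta)$ to conclude $m\leq i-1$. The paper's own proof is just a two-sentence compression of this argument.
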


\begin{proof}
As $(u,v) \in W_{k\delta}$, we get $[\sb[u],\sb[v]] \in \spanset_{m}^k$ with
$m=\max\{j \,|\, uv\leq \SL_j(k\delta)\}$ by Proposition~\ref{prop:im.bracketing}.
As $\SL_i(k\delta) < uv$, we see that $m\leq i-1$, implying the result.
\end{proof}

   %%%%%%%%%%%%%%%%%%%%%%%%%%%%%%%%%%%%%%%%%%%%%%%%%%%%%%%%%%%%%%%%%%%%%%%%%%%%%%%

\subsection{$O$-sets and their properties}
\

We can now introduce a set of roots that will be key to our notion of convexity:

\begin{definition}\label{def:convex.set.im}
For any $\alpha = (k\delta,i) \in \imx$, define the following set:
\begin{equation*}%\label{eq:C-set}
    \convexsetim(\alpha) =
    \left\{\beta \,\Bigg\vert\,
         \substack{
            \beta \in \wDelta^{+,\re}, \beta = \beta' + p\delta\ with \ \beta'\in \wDelta^{+,\re}, |\beta'| < |\delta|, p\in \BZ_{\geq 0}\\
            [\sb[\SL(\beta')],\sb[\SL(k\delta-\beta')]] \not\in \spanset_{i-1}^k\\
            \exists\, j \leq i\ s.t.\ [\sb[\SL(\beta')],\sb[\SL_j(k\delta)]] \neq 0
         }
    \right\}.
\end{equation*}
\end{definition}

\begin{remark}\label{remark:convex.im.set}
The importance of the second line in the right-hand side is that for $u,v \in \convexsetim(\alpha)$ with
$\deg(u) + \deg(v) = k\delta$, we have $\SL(\alpha) > \min\{u,v\}$ by Corollary~\ref{cor:im.upper.limit}.
\end{remark}

\begin{definition}\label{def:convexsetre}
For any $\alpha \in \wDelta^{+,\re}$, define the following set:
\begin{equation*}%\label{eq:O-set}
  \convexsetre(\alpha) =
  \Big\{\beta \,\big\vert\, \beta \in \imx, \alpha \in \convexsetim(\beta) \Big\}.
\end{equation*}
\end{definition}

The max and min of $O(\alpha)\cap \{(k\delta,\bullet)\}$ are of special interest:

\begin{definition}\label{def:min.max.real.convex}
For any $\alpha \in \wDelta^{+,\re}$ and $k\in \BZ_{>0}$, we define
\begin{equation}\label{eq:Mm}
\begin{split}
  M_k(\alpha) = \max\big\{\beta \in \convexsetre(\alpha) \,\big|\, |\beta| = |k\delta|\big\},\\
  m_k(\alpha) = \min\big\{\beta \in \convexsetre(\alpha) \,\big|\, |\beta| = |k\delta|\big\}.
\end{split}
\end{equation}
\end{definition}

In what follows, we shall often use the following \emph{segmental} property of $\convexsetre(\alpha)$:
\begin{equation*}
  \Big\{ \beta\in \convexsetre(\alpha) \,\Big|\, |\beta| = |k\delta| \Big\} =
  \Big\{ \beta\in \imx \,\Big|\, |\beta| = |k\delta| ,  m_k(\alpha)\leq \beta\leq M_k(\alpha) \Big\},
\end{equation*}
which follows from the fact that
\begin{equation*}
  [\sb[\SL(\beta')],\sb[\SL(k\delta-\beta')]] \not\in \spanset_{i-1}^k
\end{equation*}
implies the same for any $i'<i$, while
\begin{equation*}
  \exists\, j \leq i\ \mathrm{such\ that}\ [\sb[\SL(\beta')],\sb[\SL_j(k\delta)]] \neq 0
\end{equation*}
implies the same for any $i'>i$. This also provides more explicit description of~\eqref{eq:Mm}:

\begin{lemma}\label{lemma:Mk}
For any $\alpha \in \wDelta^{+,\re}$ and $k\in \BZ_{>0}$, we have:
\begin{equation*}
  M_k(\alpha) = \max \big\{\beta \in \imx \,\big|\, |\beta| = |k\delta|,\, [\sb[\SL(\beta)],\sb[\SL(\alpha)]] \neq 0 \big\}.
\end{equation*}
\end{lemma}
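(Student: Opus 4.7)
The plan is to unpack $M_k(\alpha)$ via the segmental description of $\convexsetre(\alpha)\cap\{|\beta|=|k\delta|\}$ stated right before the lemma, and then translate conditions (A) and (B) of Definition~\ref{def:convex.set.im} into bracketings with $\sb[\SL_j(k\delta)]$ using the loop realization $\widehat{\fg}\cong \widetilde{\fg}$ of Subsection~\ref{ssec:affineLie}. Write $\alpha = \alpha' + p\delta$ uniquely with $\alpha' \in \wDelta^{+,\re}$, $|\alpha'|<|\delta|$, $p\in \BZ_{\geq 0}$, and further $\alpha' = \gamma' + k'\delta$ with $\gamma'\in \Delta\setminus\{0\}$ and $k'\in\{0,1\}$. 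Since each real root subspace is one-dimensional, we have $\sb[\SL(\alpha')] \sim e_{\gamma'}t^{k'}$, $\sb[\SL(\alpha)] \sim e_{\gamma'}t^{k'+p}$, and $\sb[\SL(k\delta-\alpha')]\sim e_{-\gamma'}t^{k-k'}$. Moreover, writing $\sb[\SL_j(k\delta)] = h_j t^k$, the complete flag~\eqref{eq:flag} ensures that $\{h_j\}_{j=1}^{|I|}$ is a basis of $\fh$.

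By the segmental property, $M_k(\alpha) = (k\delta, i^*)$, where $i^*$ is the smallest index $i$ for which both (A) and (B) hold, and (B) is the binding condition at the top end of the segment. Condition (B) at $i$ requires the existence of $j\leq i$ with $[\sb[\SL(\alpha')],\sb[\SL_j(k\delta)]]\neq 0$. Using the loop bracket $[e_{\gamma'}t^{k'}, h_j t^k] = -\gamma'(h_j) e_{\gamma'}t^{k'+k}$ (the central term is absent since $k\geq 1$), the smallest such $i$ is $j^* := \min\{j \mid \gamma'(h_j)\neq 0\}$, which exists since $\gamma'|_\fh \neq 0$ and $\{h_j\}$ spans $\fh$. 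The same computation with $k'$ replaced by $k'+p$ gives $[\sb[\SL(\alpha)], \sb[\SL_j(k\delta)]] = -\gamma'(h_j)e_{\gamma'}t^{k'+p+k}$, yielding the equivalence
\begin{equation*}
  [\sb[\SL(\alpha')], \sb[\SL_j(k\delta)]] \neq 0 \iff [\sb[\SL_j(k\delta)], \sb[\SL(\alpha)]] \neq 0.
\end{equation*}
Therefore, $(k\delta, j^*)$ is precisely the right-hand side of the claimed equality.

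It remains to verify that (A) holds at $i=j^*$, so that the segment is non-empty and $i^*=j^*$. A direct computation in the loop algebra yields
\begin{equation*}
  [\sb[\SL(\alpha')], \sb[\SL(k\delta-\alpha')]] \sim [e_{\gamma'}t^{k'}, e_{-\gamma'}t^{k-k'}] = h_{\gamma'}t^k,
\end{equation*}
since the central contribution vanishes as $k>0$. On the other hand, $\spanset_{j^*-1}^k = \spn\{h_j t^k \mid j<j^*\}$, and every $h_j$ appearing satisfies $\gamma'(h_j)=0$ by the minimality of $j^*$. Since $\gamma'(h_{\gamma'}) = (\gamma',\gamma')\neq 0$ for any finite-type root $\gamma'$ (with the standard normalization of $h_{\gamma'}$), we conclude $h_{\gamma'} \notin \spn\{h_j\}_{j<j^*}$, so $[\sb[\SL(\alpha')], \sb[\SL(k\delta-\alpha')]] \notin \spanset_{j^*-1}^k$ and (A) holds at $i=j^*$. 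This identifies $M_k(\alpha) = (k\delta,j^*)$ with the right-hand side.

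The main obstacle is verifying that (A) is not only compatible with but automatically implied at the index $j^*$ where (B) first becomes active; this rests on the non-degeneracy $\gamma'(h_{\gamma'})\neq 0$ combined with the explicit loop-bracket formula that strips out the central term whenever $k>0$. Everything else is a direct unwinding of the definitions.
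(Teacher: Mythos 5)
Your proof is correct and follows essentially the same route as the paper's: identify the right-hand side as $(k\delta,j^*)$ with $j^*$ the smallest index $j$ for which $[\sb[\SL_j(k\delta)],\sb[\SL(\alpha)]]\neq 0$, check that $(k\delta,j^*)\in \convexsetre(\alpha)$ (condition (B) being immediate and condition (A) following from the non-isotropy $(\gamma',\gamma')\neq 0$, which is exactly the paper's one-line orthogonality argument for $h_\alpha t^k\notin\spanset_{j^*-1}^k$), and conclude by the minimality of $j^*$. Your version merely makes the loop-realization computations explicit where the paper compresses them.
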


\begin{proof}
Let $M'_k(\alpha)=(k\delta,i)$ denote the right-hand side above. First, we claim that $M'_k(\alpha) \in \convexsetre(\alpha)$.
Indeed, $h_\alpha t^k \notin \spanset^k_{i-1}$ follows from $h_\alpha t^k$ being orthogonal to the spanning set of
$\spanset_{i-1}^k$. Thus $M_k(\alpha)\geq M'_k(\alpha)$. But $M_k(\alpha)>M'_k(\alpha)$ would contradict the definition
of $M'_k(\alpha)$. This establishes the result: $M_k(\alpha) = M'_k(\alpha)$.
\end{proof}

\begin{corollary}\label{cor:im.span.vanish}
If $M_k(\alpha) = (k\delta,i)$ for $\alpha \in \wDelta^{+,\re}$, then $[h,\SL(\alpha)] = 0\ \forall\, h \in \spanset_{i-1}^k$.
\end{corollary}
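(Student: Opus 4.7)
The plan is to unwind the definitions and reduce the corollary to a one-line linearity argument, using Lemma~\ref{lemma:Mk} as the key input. The main observation is that the ordering on imaginary elements of $\wDelta^{+,\ext}$ is the \emph{reverse} of the natural index order: since $\SL_1(k\delta)>\SL_2(k\delta)>\dots>\SL_{|I|}(k\delta)$ by convention, the induced order~\eqref{eq:order.ext} on $\{(k\delta,j)\}_{j=1}^{|I|}$ satisfies $(k\delta,1)>(k\delta,2)>\dots>(k\delta,|I|)$. Consequently, the max in Lemma~\ref{lemma:Mk} is attained at the \emph{smallest} index.

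Concretely, I would proceed as follows. First, invoke Lemma~\ref{lemma:Mk} to rewrite
\[
  M_k(\alpha) \,=\, \max\big\{(k\delta,j) \,\big|\, 1\leq j\leq |I|,\, [\sb[\SL_j(k\delta)],\sb[\SL(\alpha)]]\neq 0\big\}.
\]
Then, by the reversal observation, the hypothesis $M_k(\alpha)=(k\delta,i)$ is equivalent to saying that $i$ is the smallest index in $\{1,\dots,|I|\}$ for which $[\sb[\SL_i(k\delta)],\sb[\SL(\alpha)]]\neq 0$. In particular,
\[
  [\sb[\SL_j(k\delta)],\sb[\SL(\alpha)]] \,=\, 0 \qquad \forall\, 1\leq j\leq i-1.
\]
Since $\spanset_{i-1}^k$ is spanned by $\{\sb[\SL_j(k\delta)]\}_{j=1}^{i-1}$ by~\eqref{eq:flag}, any $h\in \spanset_{i-1}^k$ may be written as $h=\sum_{j=1}^{i-1} c_j \sb[\SL_j(k\delta)]$ with $c_j\in \BC$, and bilinearity of the Lie bracket yields $[h,\sb[\SL(\alpha)]]=\sum_{j=1}^{i-1} c_j [\sb[\SL_j(k\delta)],\sb[\SL(\alpha)]]=0$, as required.

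There is no genuine obstacle here; the whole content of the corollary is that Lemma~\ref{lemma:Mk} characterizes $M_k(\alpha)$ purely in terms of non-vanishing of brackets against $\sb[\SL(\alpha)]$, so that maximality automatically forces vanishing on the entire flag below it. The only point worth double-checking is the direction of the induced order on imaginary roots, which is why I would make that comparison explicit at the start.
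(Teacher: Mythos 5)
Your proof is correct and matches the argument the paper intends: the corollary is stated without proof as an immediate consequence of Lemma~\ref{lemma:Mk}, and your unwinding (the induced order reverses the index order since $\SL_1(k\delta)>\dots>\SL_{|I|}(k\delta)$, so maximality of $M_k(\alpha)=(k\delta,i)$ forces $[\sb[\SL_j(k\delta)],\sb[\SL(\alpha)]]=0$ for all $j<i$, whence the claim by linearity over the spanning set of $\spanset_{i-1}^k$) is exactly the intended reasoning. Making the direction of the order on $\{(k\delta,j)\}_j$ explicit is a reasonable precaution and does not constitute a departure from the paper's approach.
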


\begin{lemma}\label{lemma:mk}
For any $\alpha \in \wDelta^{+,\re}$ and $k\in \BZ_{>0}$, we have:
\begin{equation*}
  m_k(\alpha) = (k\delta,i)\quad \mathrm{with} \quad  h_\alpha t^k \in \spanset_{i}^k \backslash\spanset_{i-1}^k.
\end{equation*}
\end{lemma}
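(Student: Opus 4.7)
The plan is to unwind the definition of $m_k(\alpha)$ via the segmental structure of $\convexsetre(\alpha) \cap \{|\beta|=|k\delta|\}$ noted right after Definition~\ref{def:min.max.real.convex}. Under the order $(k\delta,1) > (k\delta,2) > \cdots > (k\delta,|I|)$ induced by~\eqref{eq:order.ext}, the minimum $m_k(\alpha)$ equals $(k\delta,i_1)$, where $i_1$ is the \emph{largest} index for which both conditions of Definition~\ref{def:convex.set.im} hold (with $\alpha$ playing the role of $\beta$). I will show that this $i_1$ is exactly the flag level appearing in the statement.

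Write $\alpha = \alpha' + p\delta$ uniquely with $\alpha' \in \wDelta^{+,\re}$ of height $|\alpha'| < |\delta|$ and $p \in \BZ_{\geq 0}$, and let $\alpha_\circ \in \Delta$ denote the common $\fg$-part of $\alpha$ and $\alpha'$, so that $h_\alpha = h_{\alpha'} = [e_{\alpha_\circ}, e_{-\alpha_\circ}]$ by Notation~\ref{notation:h}. Using $\dim \widehat{\fg}_\gamma = 1$ for real $\gamma$ from~\eqref{eq:aff-dim}, the nonzero elements $\sb[\SL(\alpha')]$ and $\sb[\SL(k\delta-\alpha')]$ are scalar multiples of $e_{\alpha_\circ} t^{p'}$ and $e_{-\alpha_\circ} t^{k-p'}$ respectively, where $p'$ is the level of $\alpha'$; a direct computation in the loop realization~\eqref{eq:loops} then yields
\begin{equation*}
  \big[\sb[\SL(\alpha')],\sb[\SL(k\delta-\alpha')]\big] \;\sim\; h_\alpha t^k.
\end{equation*}
Substituting into condition~(1) of Definition~\ref{def:convex.set.im} turns it into $h_\alpha t^k \notin \spanset_{i-1}^k$; since $\spanset_\bullet^k$ is an increasing flag, this cuts out an initial segment $\{1,\ldots,i_1\}$, where $i_1$ is uniquely determined by $h_\alpha t^k \in \spanset_{i_1}^k \setminus \spanset_{i_1-1}^k$.

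It remains to verify condition~(2) at $i=i_1$, which then forces $m_k(\alpha) = (k\delta,i_1)$. Expanding $h_\alpha t^k = \sum_{j=1}^{i_1} c_j\, \sb[\SL_j(k\delta)]$ with $c_{i_1} \neq 0$ and bracketing against $\sb[\SL(\alpha')] \sim e_{\alpha_\circ} t^{p'}$, one computes
\begin{equation*}
  \big[\sb[\SL(\alpha')],\, h_\alpha t^k\big] \;\sim\; e_{\alpha_\circ}\, t^{p'+k}
\end{equation*}
with a nonvanishing proportionality constant, since $h_\alpha$ is a nonzero scalar multiple of the coroot $h_{\alpha_\circ}^\vee$ and hence $\alpha_\circ(h_\alpha) \neq 0$ by standard $\fsl_2$-theory. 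Therefore $\sum_j c_j\, [\sb[\SL(\alpha')], \sb[\SL_j(k\delta)]] \neq 0$, so some $j\leq i_1$ satisfies $[\sb[\SL(\alpha')], \sb[\SL_j(k\delta)]] \neq 0$, establishing condition~(2). The conceptual crux, dual to Lemma~\ref{lemma:Mk} in which condition~(2) constrains $M_k(\alpha)$, is that for $m_k(\alpha)$ the binding constraint is condition~(1); the $\fsl_2$-style computation above is the main (though brief) obstacle, as it is what guarantees condition~(2) is never the binding constraint at $m_k(\alpha)$.
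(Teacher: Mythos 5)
Your proof is correct and follows essentially the same route as the paper: pick $i$ with $h_\alpha t^k \in \spanset_i^k\setminus\spanset_{i-1}^k$, verify $(k\delta,i)\in\convexsetre(\alpha)$ --- where the only nontrivial point is producing $j\le i$ with $[\sb[\SL(\alpha')],\sb[\SL_j(k\delta)]]\ne 0$, which you obtain from $\alpha_\circ(h_\alpha)\ne 0$ while the paper phrases it as $h_\alpha t^k$ not being orthogonal to a spanning set of a space containing it --- and then rule out anything smaller via the $\spanset_{i-1}^k$ condition. The explicit loop-realization computations you include are implicit in the paper's terser argument, so the two proofs coincide in substance.
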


\begin{proof}
Pick $i$ such that $h_\alpha t^{k} \in \spanset_{i}^k \backslash\spanset_{i-1}^k$. First, we claim that
$(k\delta,i) \in \convexsetre(\alpha)$. To do so, it suffices to prove that there exists a $j\leq i$ with
$[\sb[\SL_j(k\delta)],\sb[\SL(\alpha)]] \neq 0$. The latter follows immediately by noting that $h_\alpha t^k$
can not be orthogonal to the spanning set of $\spanset_{i}^k$ which contains $h_\alpha t^k$. Thus
$m_k(\alpha) \leq (k\delta,i)$. But $m_k(\alpha) < (k\delta,i)$ would contradict $\alpha\in C(m_k(\alpha))$.
This completes the proof.
\end{proof}

\begin{proposition}\label{prop:general.bracketing}
For any factorization $u = u_1u_2$ with $u,u_1,u_2 \in \SL$, we have:

\medskip
\noindent
(a) $[\sb[u_1],\sb[u_2]] \neq 0$;

\medskip
\noindent
(b) if $u$ is imaginary and $u = \SL_i(k\delta)$, then $[\sb[u_1],\sb[u_2]] \in \spanset^{k}_i \backslash\spanset_{i-1}^k$.
\end{proposition}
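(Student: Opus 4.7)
I would prove parts (a) and (b) simultaneously by induction on $|u|$. For the base case $|u|=2$, we have $u=[ij]$ with $i<j\in\wI$, and the only factorization is $u_1=[i]$, $u_2=[j]$; the Serre relation~\eqref{eqn:Serre} gives $[e_i,e_j]=0\iff a_{ij}=0$, in which case $_{ij}e={}_{ji}e$ in $U(\widehat{\fn}^+)$, forcing $[ij]$ to be non-standard as $[ji]\succ[ij]$ under~\eqref{eq:LR-order}; this contradicts $u\in\SL$, so $[\sb[u_1],\sb[u_2]]\ne 0$. Part (b) can only trigger at length $2$ in type $A_1^{(1)}$ and is verified directly.

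For the inductive step with $|u|>2$, if $(u_1,u_2)$ is the costandard factorization $(u^l,u^r)$, then $[\sb[u_1],\sb[u_2]]=\sb[u]$ by Definition~\ref{bracketing}: for real $u$, this is a nonzero root vector of the $1$-dimensional space $\widehat{\fg}_{\deg u}$ by Theorem~\ref{thm:standard Lyndon theorem}; for $u=\SL_i(k\delta)$, it is by construction the $i$-th vector of the flag~\eqref{eq:flag}, hence in $\spanset_i^k\setminus\spanset_{i-1}^k$. Otherwise $u_2\ne u^r$; Corollary~\ref{cor:three.way.lyndon} writes $u_1=ab$ with $a=u_1^l$, $b=u_1^r$ and $bu_2\in\LL$, while Corollary~\ref{cor:rotate} yields $au_2\in\LL$. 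Applying the Jacobi identity,
\begin{equation*}
  [\sb[u_1],\sb[u_2]] = [\sb[a],[\sb[b],\sb[u_2]]] - [\sb[b],[\sb[a],\sb[u_2]]].
\end{equation*}
Each inner bracket vanishes, equals a nonzero scalar multiple of $\sb[\SL(\gamma)]$ for $\gamma$ real, or lies in an imaginary flag subspace. In the real case, the generalized Leclerc algorithm~\eqref{eq:generalized Leclerc} yields $\SL(\deg b+\deg u_2)\ge bu_2$ and $\SL(\deg a+\deg u_2)\ge au_2$ lexicographically, so the resulting outer brackets correspond to pairs in $W_{\deg u}$ strictly smaller in the ordering~\eqref{eq:W-order} than $(u_1,u_2)$, allowing us to invoke Proposition~\ref{prop:im.bracketing} to bound their contribution.

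The main obstacle is part (b): showing $[\sb[u_1],\sb[u_2]]\notin\spanset_{i-1}^k$ rather than merely $\in\spanset_i^k$ (the latter being given by Proposition~\ref{prop:im.bracketing}). I would address this via a secondary induction on the $W$-position of $(u_1,u_2)$ in the collection of pairs with $u_1u_2=\SL_i(k\delta)$, proving the sharper statement
\begin{equation*}
  [\sb[u_1],\sb[u_2]] \equiv c\cdot\sb[\SL_i(k\delta)] \pmod{\spanset_{i-1}^k}
  \quad \text{for some } c\in\BC^\times.
\end{equation*}
The costandard factorization starts this with $c=1$, and each non-costandard factorization reduces modulo $\spanset_{i-1}^k$ to strictly smaller $W$-pairs via the Jacobi decomposition above, combined with tracking the finite-root part of $\deg u_1$ (which determines $[\sb[u_1],\sb[u_2]]$ as a scalar multiple of some $h_{\beta'}t^k$, cf.\ Lemmas~\ref{lemma:Mk} and~\ref{lemma:mk}). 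Finally, one must rule out the pathological case where both $u_1$ and $u_2$ are imaginary: then $\sb[u_1]\in\fh t^{k_1}$ and $\sb[u_2]\in\fh t^{k_2}$ with $k_1+k_2=k$, and since $[\fh,\fh]=0$ in $\fg$, the bracket in $\widetilde{\fg}$ retains only a central component, which vanishes unless $k_1+k_2=0$; this is impossible for $k\ge 1$, so $[\sb[u_1],\sb[u_2]]=0$, contradicting part (a) and thereby excluding this case.
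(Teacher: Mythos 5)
Your skeleton --- induction over factorizations, the costandard splitting $u_1=ab$ from Corollary~\ref{cor:three.way.lyndon}, and the Jacobi identity $[\sb[u_1],\sb[u_2]]=[\sb[a],[\sb[b],\sb[u_2]]]-[\sb[b],[\sb[a],\sb[u_2]]]$ --- is the same as the paper's, but the proof stops exactly where the real work begins: you never show that the two Jacobi terms cannot cancel. Asserting that they ``correspond to pairs in $W_{\deg u}$ strictly smaller in the ordering'' and invoking Proposition~\ref{prop:im.bracketing} to ``bound their contribution'' does not yield part (a): Proposition~\ref{prop:im.bracketing} only concerns $W_{k\delta}$ (imaginary total degree), and even there it gives containment in a span, not nonvanishing; when $\deg u$ is real the root space $\widehat{\fg}_{\deg u}$ is one-dimensional, so both terms are multiples of the same vector and could a priori sum to zero. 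The paper's proof instead establishes two precise facts that are absent from yours: (i) $[\sb[b],[\sb[a],\sb[u_2]]]=0$ exactly --- if it were nonzero then $au_2\le \SL(\deg a+\deg u_2)$ (or $au_2\leq\SL(M_k(\deg b))$ in the imaginary sub-case), and Corollary~\ref{cor:rotate} produces a concatenation of $b$ with that word which is $>u$ and has nonzero bracketing, contradicting the maximality in the generalized Leclerc algorithm~\eqref{eq:generalized Leclerc}; and (ii) $[\sb[a],[\sb[b],\sb[u_2]]]\neq 0$, which in the sub-case where $\deg(bu_2)$ is imaginary requires first identifying $bu_2=\SL(M_k(\deg a))$ (via Corollary~\ref{cor:im.span.vanish} and Lemma~\ref{lemma:Mk}) before part (b) of the inductive hypothesis can be applied to $(b,u_2)$. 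The same omission undermines your part (b): the claim that non-costandard factorizations ``reduce modulo $\spanset_{i-1}^k$ to strictly smaller $W$-pairs'' presupposes the inclusion $[\sb[b],[\sb[a],\sb[u_2]]]\in\spanset_{i-1}^k$, which is precisely what must be proved (the paper derives it from Corollary~\ref{cor:rotate} combined with Corollary~\ref{cor:im.upper.limit}).

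A smaller but genuine defect: your exclusion of the case where both $u_1,u_2$ are imaginary is circular. You compute $[\sb[u_1],\sb[u_2]]=0$ and declare this to ``contradict part (a),'' but part (a) for this very pair is what is being proved; in the paper this exclusion is a \emph{consequence} of the proposition (Corollary~\ref{cor:imaginary.no.im.splitting}), and inside the proof the reality of the relevant degrees is extracted by applying the inductive hypothesis to the strictly smaller pair $(a,bu_2)$, not assumed for the pair at hand.
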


\begin{remark}\label{rem:stan=costar-words}
This result immediately implies that if we were to use the standard factorization
instead of the costandard one in Definition~\ref{bracketing}, we would still get exactly the same affine
standard Lyndon words as well as the same flags $\spanset_\bullet^k$.
\end{remark}

\begin{proof}[Proof of Proposition~\ref{prop:general.bracketing}]
Let $w_1,w_2, \ldots, w_n,\ldots$ be the elements of $\ol{W}$ ordered in increasing order. Then,
$[\sb[u_1],\sb[u_2]]=\osb[w_n]$ if $w_n=(u_1,u_2)$. We shall prove both parts by induction on $n$.
The base case $n=1$ is clear, since $w_1$ represents the costandard factorization of an affine
standard Lyndon word. For the inductive step, we assume that the result holds for all $\{w_m\}_{m<n}$.

First, we consider the case when
$(w_n)_1(w_n)_2$ is real. Part (a) is clear if $w_n$ represents the costandard factorization. Let us now assume that
$w_n$ does not represent the costandard factorization. Evoking Corollary~\ref{cor:three.way.lyndon}, consider the
costandard factorization $(w_n)_1 = uv$, so that $u,v,vy \in \LL$, where we use $y=(w_n)_2$.
Being factors of a standard Lyndon word, we actually have $u,v,vy \in \SL$. By the Jacobi identity:
\begin{equation}\label{eq:jacobi-proof}
    [\sb[u],[\sb[v],\sb[y]]] + [\sb[v],[\sb[y],\sb[u]]] + [\sb[y],[\sb[u],\sb[v]]] = 0.
\end{equation}
Therefore, it suffices to show that $[\sb[u],[\sb[v],\sb[y]]]\ne 0$ and $[\sb[v],[\sb[y],\sb[u]]]=0$.

First, let us show that $[\sb[u],[\sb[v],\sb[y]]]\ne 0$. This is clear if $\deg(vy)$ is real, since we have $[\sb[v],\sb[y]]\ne 0$
(by the inductive hypothesis applied to $(v,y)$) and so $[\sb[u],[\sb[v],\sb[y]]]$ is a nonzero multiple of $[\sb[u],\sb[vy]]$
which is nonzero (by the inductive hypothesis applied to $(u,vy)$). If $\deg(vy)$ is imaginary, then we claim that actually
$vy = \SL(M_k(\deg(u)))$ for $k=|vy|/|\delta|$. Indeed, if $vy > \SL(M_k(\deg(u)))$, then $[\sb[u],\sb[vy]] = 0$ by
Corollary~\ref{cor:im.span.vanish}, which contradicts the inductive hypothesis applied to $(u,vy)$. If $vy < \SL(M_k(\deg(u)))$,
then we would get a contradiction with the generalized Leclerc algorithm~\eqref{eq:generalized Leclerc}, since $uvy<u\SL(M_k(\deg(u)))$
and $[\sb[u],\sb[\SL(M_k(\deg(u)))]] \neq 0 $ by Lemma~\ref{lemma:Mk}. This proves the claimed equality $vy = \SL(M_k(\deg(u)))$.
Applying the induction hypothesis (part (b)) to the pair $(v,y)$, we then have
$[\sb[v],\sb[y]] \in \spanset_i^k \backslash\spanset_{i-1}^{k}$ whereas $M_k(\deg(u)) = (k\delta,i)$.
Hence $[\sb[u],[\sb[v],\sb[y]]] \neq 0$ by Lemma~\ref{lemma:Mk}.

Next, let us prove that $[\sb[v],[\sb[y],\sb[u]]]=0$. Assuming the contradiction, we see that $[\sb[y],\sb[u]] \neq 0$,
and hence $\deg(y) + \deg(u) \in \wDelta^{+}$. If $\deg(y) + \deg(u) \in \wDelta^{+,\re}$, then
$uy \leq \SL(\deg(u) + \deg(y))=:z$ by the generalized Leclerc algorithm~\eqref{eq:generalized Leclerc}.
%, and we set $z:=\SL(\deg(u) + \deg(y))$.
If $\deg(y) + \deg(u) = k\delta$, then we have $uy \leq \SL(M_k(\deg(v)))=:z$ by Corollary~\ref{cor:im.span.vanish}.
%, and we set $z:=\SL(M_k(\deg(v)))$.
Evoking Corollary~\ref{cor:rotate}, in both cases we see that the appropriate concatenation of $v$ and $z$ is bigger than
$uvy$ and $[\sb[v],\sb[z]] \neq 0$. This implies that $uvy$ is not standard Lyndon, due to the generalized Leclerc algorithm,
a contradiction. Therefore, indeed we have $[\sb[v],[\sb[y],\sb[u]]] = 0$.

Let us now consider the case when $(w_n)_1(w_n)_2$ is imaginary, that is, equal $\SL_i(k\delta)$ for some $i,k$.
It suffices to prove part (b) only. The result is clear if $w_n$ represents the costandard factorization of $\SL_i(k\delta)$.
Let us now assume that $w_n$ does not represent the costandard factorization of $\SL_i(k\delta)$. Using the same notations
$u,v,y$ as above, and evoking the equality~\eqref{eq:jacobi-proof}, it thus suffices to prove:
  $$[\sb[u],[\sb[v],\sb[y]]] \in \spanset_i^k \backslash \spanset_{i-1}^k, \qquad [\sb[v],[\sb[y],\sb[u]]] \in \spanset_{i-1}^k.$$

By the inductive hypothesis applied to $(u,vy)$, we see that $\deg(u),\deg(vy) \in \wDelta^{+,\re}$, and furthermore
$[\sb[u],\sb[vy]] \in \spanset_i^k \backslash \spanset_{i-1}^k$. On the other hand, by the inductive hypothesis applied
to $(v,y)$, we know that $[\sb[v],\sb[y]]$ is a nonzero multiple of $\sb[vy]$. This implies the first inclusion above:
$[\sb[u],[\sb[v],\sb[y]]] \in \spanset_i^k \backslash \spanset_{i-1}^k$.

If $[\sb[v],[\sb[y],\sb[u]]]\ne 0$, then $\deg(v), \deg(u) + \deg(y) \in \wDelta^{+,\re}$. As $u < v < y$, we then have
$uy \leq \SL(\deg(u) + \deg(y))$ by~\eqref{eq:generalized Leclerc}. Evoking Corollary~\ref{cor:rotate} once again, we see
that the appropriate concatenation of $v$ and $\SL(\deg(u) + \deg(y))$ is bigger than $uvy=\SL_i(k\delta)$. Therefore,
$[\sb[v],\sb[\SL(\deg(u) + \deg(y))]] \in \spanset_{i-1}^k$ by Corollary~\ref{cor:im.upper.limit}. Since $[\sb[u],\sb[y]]$
is a multiple of $\sb[\SL(\deg(u) + \deg(y))]$, we thus get the second inclusion above:
$[\sb[v],[\sb[y],\sb[u]]] \in \spanset_{i-1}^k$.
\end{proof}

\begin{corollary}\label{cor:imaginary.no.im.splitting}
For any $1\leq i\leq |I|$ and $k\in \BZ_{>0}$, we cannot have $\SL_i(k\delta) = uv$ with $u,v \in \SL$ being imaginary words.
\end{corollary}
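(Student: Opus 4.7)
The plan is to argue by contradiction using Proposition~\ref{prop:general.bracketing}(a) together with the explicit Lie bracket structure on imaginary root spaces coming from the loop realization of $\widehat{\fg}$.

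Suppose, for the sake of contradiction, that we have a factorization $\SL_i(k\delta) = uv$ with $u,v \in \SL$ both imaginary. Write $\deg(u) = a\delta$ and $\deg(v) = b\delta$ with $a,b \in \BZ_{>0}$ and $a + b = k$. By Proposition~\ref{prop:general.bracketing}(a) applied to this factorization of the standard Lyndon word $\SL_i(k\delta)$, we must have $[\sb[u],\sb[v]] \neq 0$ in $\widehat{\fn}^+$.

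On the other hand, since $u$ and $v$ are imaginary with degrees $a\delta$ and $b\delta$, the standard bracketings $\sb[u]$ and $\sb[v]$ lie in the imaginary root spaces $\widehat{\fg}_{a\delta} = \fh t^a$ and $\widehat{\fg}_{b\delta} = \fh t^b$, respectively, by the explicit description recalled in Subsection~\ref{ssec:affineLie}. Using the loop-algebra bracket~\eqref{eq:loops}, for any $h,h' \in \fh$ we compute
\begin{equation*}
    [h t^a, h' t^b] = [h,h']\, t^{a+b} + a\, \delta_{a,-b}\, (h,h')\, \mathsf{c} = 0,
\end{equation*}
since $\fh$ is abelian and $a, b > 0$ forces $\delta_{a,-b} = 0$. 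Hence $[\sb[u],\sb[v]] = 0$, contradicting the conclusion of Proposition~\ref{prop:general.bracketing}(a).

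There is no real obstacle here: the entire content of the corollary is already packaged in the combination of Proposition~\ref{prop:general.bracketing}(a) with the abelianness of $\fh$ plus the fact that positive imaginary degrees cannot cancel out the central term in~\eqref{eq:loops}. The only thing to be careful about is invoking the correct realization of the imaginary root spaces, which is precisely what Subsection~\ref{ssec:affineLie} provides.
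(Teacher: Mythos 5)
Your proof is correct and is essentially identical to the paper's own argument: both derive a contradiction between Proposition~\ref{prop:general.bracketing}(a) and the vanishing of $[\fh t^a,\fh t^b]$ for $a,b>0$. You simply spell out the latter computation from~\eqref{eq:loops} in more detail than the paper does.
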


\begin{proof}
If $u,v$ were imaginary, we would have $[\sb[u],\sb[v]] = 0$ as $[\fh t^a, \fh t^b]=0$ for all $a,b>0$,
a contradiction with Proposition~\ref{prop:general.bracketing}(a).
\end{proof}

\begin{corollary}\label{cor:invariance.to.im.bracketing}
For any $\alpha \in \wDelta^{+,\re}$ and $k\in \BZ_{>0}$, consider any factorization $\SL(M_k(\alpha)) = u_1u_2$ with $u_1,u_2 \in \SL$.
Then $[[\sb[u_1],\sb[u_2]],\sb[\SL(\alpha)]] \neq 0$.
\end{corollary}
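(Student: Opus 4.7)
The plan is to directly compute $[[\sb[u_1],\sb[u_2]],\sb[\SL(\alpha)]]$ by expanding $[\sb[u_1],\sb[u_2]]$ along the flag~\eqref{eq:flag} and then using the maximality built into the definition of $M_k(\alpha)$.

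Write $M_k(\alpha) = (k\delta, i)$, so that $\SL(M_k(\alpha)) = \SL_i(k\delta) = u_1 u_2$. First I would unpack the meaning of this index $i$ using Lemma~\ref{lemma:Mk}. Recall that on $\imx$, the order is induced from lexicographic order on words via~\eqref{eq:order.ext}, so that $\SL_1(k\delta) > \SL_2(k\delta) > \cdots > \SL_{|I|}(k\delta)$ translates into $(k\delta,1) > (k\delta,2) > \cdots > (k\delta,|I|)$. Consequently, $M_k(\alpha)$ being the \emph{maximum} of $\{\beta \in \imx : |\beta|=|k\delta|,\ [\sb[\SL(\beta)],\sb[\SL(\alpha)]]\neq 0\}$ means exactly that $i$ is the \emph{smallest} index with $[\sb[\SL_i(k\delta)],\sb[\SL(\alpha)]]\neq 0$, while
\begin{equation*}
  [\sb[\SL_j(k\delta)],\sb[\SL(\alpha)]] = 0 \qquad \forall\, 1 \leq j < i.
\end{equation*}

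Next I would apply Proposition~\ref{prop:general.bracketing}(b) to the factorization $\SL_i(k\delta) = u_1 u_2$, which gives
\begin{equation*}
  [\sb[u_1],\sb[u_2]] \in \spanset_i^k \setminus \spanset_{i-1}^k.
\end{equation*}
Expanding in the basis $\{\sb[\SL_1(k\delta)],\ldots,\sb[\SL_i(k\delta)]\}$ of $\spanset_i^k$ yields an expression
\begin{equation*}
  [\sb[u_1],\sb[u_2]] = \sum_{j=1}^{i} c_j\, \sb[\SL_j(k\delta)] \qquad \mathrm{with}\ c_i \neq 0.
\end{equation*}

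Finally, bracketing with $\sb[\SL(\alpha)]$ and invoking the vanishing for $j < i$ just noted,
\begin{equation*}
  [[\sb[u_1],\sb[u_2]],\sb[\SL(\alpha)]]
  = \sum_{j=1}^{i} c_j\, [\sb[\SL_j(k\delta)],\sb[\SL(\alpha)]]
  = c_i\, [\sb[\SL_i(k\delta)],\sb[\SL(\alpha)]],
\end{equation*}
which is nonzero since $c_i \neq 0$ and $[\sb[\SL_i(k\delta)],\sb[\SL(\alpha)]] \neq 0$ by the definition of $M_k(\alpha)$. There is no real obstacle here; the whole proof is just a matter of correctly combining Lemma~\ref{lemma:Mk} with Proposition~\ref{prop:general.bracketing}(b). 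The one subtle point is getting the direction of the order on $\imx$ right (smallest $i$ corresponds to the largest word), so that the maximality of $M_k(\alpha)$ kills the lower terms of the flag expansion rather than the top one.
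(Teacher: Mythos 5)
Your proof is correct and is essentially identical to the paper's: both apply Proposition~\ref{prop:general.bracketing}(b) to write $[\sb[u_1],\sb[u_2]]=\sum_{j=1}^{i}c_j\sb[\SL_j(k\delta)]$ with $c_i\neq 0$, and then use Lemma~\ref{lemma:Mk} to see that bracketing with $\sb[\SL(\alpha)]$ kills the terms with $j<i$ while the $j=i$ term survives. Your explicit unpacking of the order on $\imx$ (maximal root corresponds to smallest index) is a correct elaboration of what the paper leaves implicit.
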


\begin{proof}
Let $M_k(\alpha) = (k\delta,i)$. Due to Proposition~\ref{prop:general.bracketing}(b), we have
$[\sb[u_1],\sb[u_2]]=\sum_{j=1}^i a_j\sb[\SL_j(k\delta)]$ with $a_j\in \BC$ and $a_i\ne 0$. But
$[\sb[\SL_j(k\delta)],\sb[\SL(\alpha)]]$ vanishes for $j<i$ and is nonzero for $j=i$ by Lemma~\ref{lemma:Mk}.
The result follows.
\end{proof}

\begin{corollary}\label{cor:mk.im.factor}
For any $1\leq i\leq |I|$, $k\in \BZ_{>0}$, and any splitting $\SL_i(k\delta) = uv$ with $u,v \in \SL$, we have
$m_k(\deg(u)) = (k\delta,i)=m_k(\deg(v))$.
\end{corollary}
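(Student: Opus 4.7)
The plan is to read off the conclusion directly from Proposition~\ref{prop:general.bracketing}(b) and Lemma~\ref{lemma:mk}, once I verify that $m_k(\deg(u))$ and $m_k(\deg(v))$ are even defined, i.e.\ that $\deg(u),\deg(v)\in\wDelta^{+,\re}$.

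First I would rule out the imaginary case for the factors. Since $\deg(u)+\deg(v)=k\delta$, if one of $\deg(u),\deg(v)$ lay in $\wDelta^{+,\im}$, the other would also lie in $\wDelta^{+,\im}$ (its finite part would be forced to be $0$). This is exactly the situation forbidden by Corollary~\ref{cor:imaginary.no.im.splitting}, so both $\deg(u)$ and $\deg(v)$ must be real.

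Next I would identify the bracket $[\sb[u],\sb[v]]$ explicitly. Writing $\deg(u)=(\alpha',a)$ and $\deg(v)=(-\alpha',b)$ with $a+b=k$, the element $\sb[u]$ lies in $\widehat{\fg}_{\deg(u)}=\fg_{\alpha'}t^a$ and $\sb[v]$ lies in $\fg_{-\alpha'}t^b$. Consequently
\begin{equation*}
  [\sb[u],\sb[v]]\in[\fg_{\alpha'},\fg_{-\alpha'}]\,t^{k}=\BC\cdot h_{\deg(u)}t^k,
\end{equation*}
and in fact $[\sb[u],\sb[v]]\sim h_{\deg(u)}t^k\sim h_{\deg(v)}t^k$ in the sense of Notation~\ref{notation:h} (the two differ only by a sign, as $h_{\deg(u)}=[e_{\alpha'},e_{-\alpha'}]=-[e_{-\alpha'},e_{\alpha'}]=-h_{\deg(v)}$).

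Finally I would assemble the pieces. By Proposition~\ref{prop:general.bracketing}(b), $[\sb[u],\sb[v]]\in\spanset^k_i\setminus\spanset^k_{i-1}$, so by the previous step both $h_{\deg(u)}t^k$ and $h_{\deg(v)}t^k$ lie in $\spanset^k_i\setminus\spanset^k_{i-1}$. Lemma~\ref{lemma:mk} then identifies the index: $m_k(\deg(u))=(k\delta,i)=m_k(\deg(v))$, as required. There is no real obstacle here; the only subtlety is the initial reduction to real degrees, which is handled by Corollary~\ref{cor:imaginary.no.im.splitting}.
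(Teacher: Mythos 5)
Your proof is correct and follows essentially the same route as the paper: reduce to real degrees via Corollary~\ref{cor:imaginary.no.im.splitting}, apply Proposition~\ref{prop:general.bracketing}(b) to place $[\sb[u],\sb[v]]\sim h_{\deg(u)}t^k$ in $\spanset_i^k\setminus\spanset_{i-1}^k$, and conclude with Lemma~\ref{lemma:mk}. Your explicit verification that the bracket lands in $\BC\cdot h_{\deg(u)}t^k$ is a detail the paper leaves implicit, but the argument is the same.
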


\begin{proof}
By Corollary~\ref{cor:imaginary.no.im.splitting} we know that $\deg(u)\in \wDelta^{+,\re}$ and so
$h_{\deg(u)}t^k \in \spanset_i^{k} \backslash \spanset_{i-1}^k$ by Proposition~\ref{prop:general.bracketing}(b).
Therefore, $m_k(\deg(u)) = (k\delta,i)$ by Lemma~\ref{lemma:mk}. The proof of $m_k(\deg(v)) = (k\delta,i)$ is analogous.
\end{proof}

\begin{corollary}\label{cor:imaginary.suffix.prefix}
For any $\alpha \in \wDelta^{+,\re}$ consider any splitting $\SL(\alpha) = uv$ with $u,v \in \SL$.
If $\deg(u) = k\delta$, then $u = \SL(M_k(\alpha))$. If $\deg(v) = k\delta$, then $v = \SL(M_k(\alpha))$.
\end{corollary}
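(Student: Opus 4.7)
The plan is to reduce the corollary to a comparison of two characterizations of an imaginary index: one obtained from the generalized Leclerc maximum~\eqref{eq:generalized Leclerc}, the other from Lemma~\ref{lemma:Mk}. By symmetry we treat only the case $\deg(u) = k\delta$. Since $\alpha$ is real, $\deg(v) = \alpha - k\delta$ must also be real, so the uniqueness of real standard Lyndon words gives $v = \SL(\alpha - k\delta)$; likewise $u = \SL_j(k\delta)$ for some $j \in \{1,\ldots,|I|\}$. Writing $M_k(\alpha) = (k\delta, i^*)$, Lemma~\ref{lemma:Mk} identifies $i^*$ as the smallest index with $[\sb[\SL_{i^*}(k\delta)], \sb[\SL(\alpha)]] \ne 0$. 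Our goal thus reduces to showing $j = i^*$.

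First, I would characterize $j$ via the generalized Leclerc algorithm. As $\SL(\alpha) = uv$ is Lyndon, a standard argument gives $u < v$ (exploiting $uv < v$, since $v$ is a proper suffix of the Lyndon word $uv$). For each index $i < j$, $\SL_i(k\delta) > u$ while $\SL_i(k\delta) \ne v$ (the former being imaginary, the latter real). If $\SL_i(k\delta) < v$, then $(\SL_i(k\delta), v)$ is a Leclerc candidate yielding $\SL_i(k\delta)\cdot v > u\cdot v$; if instead $\SL_i(k\delta) > v$, then $(v,\SL_i(k\delta))$ yields $v\cdot\SL_i(k\delta) > v > uv$. In either subcase, the maximality of $\SL(\alpha) = uv$ forces $[\sb[\SL_i(k\delta)],\sb[v]] = 0$. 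Combined with $[\sb[u],\sb[v]] \ne 0$ from Proposition~\ref{prop:general.bracketing}(a), this pins down $j$ as the smallest $i$ with $[\sb[\SL_i(k\delta)],\sb[v]] \ne 0$.

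Finally, I would match this characterization with $i^*$ via the loop realization~\eqref{eq:loops} of $\widehat{\fg}$. Writing $\alpha = (\alpha^\circ, m)$ with $\alpha^\circ \in \Delta$, the root $\alpha - k\delta = (\alpha^\circ, m-k)$ shares the same finite-type part $\alpha^\circ$. Consequently $\sb[\SL_i(k\delta)] = h_i t^k$ for some $h_i \in \fh$, while $\sb[v] \sim e_{\alpha^\circ} t^{m-k}$ and $\sb[\SL(\alpha)] \sim e_{\alpha^\circ} t^m$, so that both
\begin{equation*}
    [\sb[\SL_i(k\delta)], \sb[v]] \sim \alpha^\circ(h_i)\, e_{\alpha^\circ} t^m \qquad \mathrm{and} \qquad
    [\sb[\SL_i(k\delta)], \sb[\SL(\alpha)]] \sim \alpha^\circ(h_i)\, e_{\alpha^\circ} t^{m+k}
\end{equation*}
vanish precisely when $\alpha^\circ(h_i) = 0$. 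This yields $j = i^*$, so $u = \SL(M_k(\alpha))$, as required. The main subtlety I anticipate is the case split in the Leclerc step, particularly ensuring that the subcase $\SL_i(k\delta) > v$ also produces a strictly larger Leclerc candidate; this rests squarely on the Lyndon property $uv < v$.
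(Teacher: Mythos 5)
Your proof is correct and follows essentially the same route as the paper: the generalized Leclerc maximality rules out any imaginary prefix (or suffix) larger than $u$ having nonzero bracket with $v$, while Proposition~\ref{prop:general.bracketing}(a) together with Lemma~\ref{lemma:Mk} pins down the index exactly. Your explicit loop-realization computation at the end is just the paper's implicit use of $M_k(\alpha)=M_k(\alpha-k\delta)$ (equivalently, Corollary~\ref{cor:im.span.vanish}) made concrete, so the two arguments differ only in presentation.
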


\begin{proof}
First, let us assume that $\deg(u) = k\delta$. If $u < \SL(M_k(\alpha))$, then we would get a contradiction to the generalized
Leclerc algorithm~\eqref{eq:generalized Leclerc}. Indeed, for $\SL(M_k(\alpha))<v$ we have $\SL(\alpha)=uv<\SL(M_k(\alpha))v$,
while for $\SL(M_k(\alpha))>v$ we have $\SL(\alpha)=uv<vu<v\SL(M_k(\alpha))$, as well as $[\sb[\SL(M_k(\alpha))],\sb[v]] \neq 0$.
On the other hand, if $u > \SL(M_k(\alpha))=\SL(M_k(\alpha-k\delta))$, then $[\sb[u],\sb[v]] = 0$ by Corollary~\ref{cor:im.span.vanish},
a contradiction to Proposition~\ref{prop:general.bracketing}(a). Therefore, $u=\SL(M_k(\alpha))$.

The proof of the claim for the case $\deg(v) = k\delta$ is analogous.
\end{proof}

   %%%%%%%%%%%%%%%%%%%%%%%%%%%%%%%%%%%%%%%%%%%%%%%%%%%%%%%%%%%%%%%%%%%%%%%%%%%%%%%
   %%%%%%%%%%%%%%%%%%%%%%%%%%%%%%%%%%%% Section 4 %%%%%%%%%%%%%%%%%%%%%%%%%%%%%%%%
   %%%%%%%%%%%%%%%%%%%%%%%%%%%%%%%%%%%%%%%%%%%%%%%%%%%%%%%%%%%%%%%%%%%%%%%%%%%%%%%

\section{Convexity and Monotonicity}\label{sec:keyresults}

In this section, we prove two key results of the present note, which generalize the results of~\cite[\S5]{AT} to
all types and do not rely on the explicit formulas of SL-words.

   %%%%%%%%%%%%%%%%%%%%%%%%%%%%%%%%%%%%%%%%%%%%%%%%%%%%%%%%%%%%%%%%%%%%%%%%%%%%%%%

\subsection{Statement of convexity}
\

For any $\alpha\in \wDelta^{+,\ext}$, let us introduce sets $\leftfactorsset_\alpha, \rightfactorsset_\alpha$ that
are key for the convexity:

\begin{definition}\label{def:left.and.right.factor.sets}
(a) For $\alpha \in \wDelta^{+,\re}$, we define:
\begin{gather*}
        \leftfactorsset_\alpha = \left\{
        \beta \,\bigg\vert\,
        \substack{
            \beta \in \wDelta^{+,\re} \\
            \gamma:=\alpha-\beta \in \wDelta^{+,\re} \\
            \beta < \gamma}
        \right\} \cup
        \Big\{ \beta \,\Big\vert\,
        \substack{
            \beta \in \convexsetre(\alpha), |\beta| = |k\delta| < |\alpha| \\
            M_k(\alpha) < \alpha - k\delta}
        \Big\} \cup
        \Big\{ \alpha - k\delta \,\Big\vert \,
          \substack{0<|k\delta| < |\alpha| \\
          M_k(\alpha) > \alpha - k\delta}
        \Big\}, \\
        \rightfactorsset_\alpha =
        \left\{ \gamma \,\bigg\vert \,
        \substack{
            \gamma \in \wDelta^{+,\re} \\
            \beta:=\alpha-\gamma \in \wDelta^{+,\re} \\
            \beta < \gamma}
        \right\} \cup
        \Big\{ \beta \,\Big\vert\,
        \substack{
            \beta \in \convexsetre(\alpha), |\beta| = |k\delta| < |\alpha|\\
            M_k(\alpha)  > \alpha- k\delta}
        \Big\} \cup
        \Big\{ \alpha - k\delta \,\Big\vert\,
        \substack{0 < |k\delta| < |\alpha|\\
        M_k(\alpha) < \alpha - k\delta}\Big\}.
    \end{gather*}

\noindent
(b) For $\alpha \in \imx$, we define the corresponding sets
$\leftfactorsset_\alpha, \rightfactorsset_\alpha\subset \convexsetim(\alpha)$ via:
\begin{align*}
  \leftfactorsset_\alpha &=
    \big\{\beta \,\big|\, \beta \in \convexsetim(\alpha), \beta < \alpha - \beta, |\beta| < |\alpha|\big\},\\
  \rightfactorsset_\alpha &=
    \big\{\alpha - \beta \,\big|\, \beta \in \leftfactorsset_\alpha\big\}.
\end{align*}
\end{definition}

\begin{remark}
While the above usage of $M_k(\alpha)$ in (a) may look strange, we will see in Lemma~\ref{lemma:weak.monotonicity}
that $m_k(\alpha) < \alpha - k\delta \iff M_k(\alpha) < \alpha- k\delta$ and
$m_k(\alpha) > \alpha - k\delta \iff M_k(\alpha) > \alpha - k\delta$. Thus, one can use any $\beta\in O(\alpha)$,
$|\beta|=|k\delta|$, instead of $M_k(\alpha)$.
\end{remark}

The first main result of this section is the following convexity:

\begin{theorem}\label{thm:convexity}
For any $\alpha \in \wDelta^{+,\ext}$, we have:
  $$ \beta < \alpha < \gamma \qquad \forall\, \beta \in \leftfactorsset_{\alpha}, \gamma\in \rightfactorsset_{\alpha}. $$
Equivalently:
\begin{gather}\label{eqn:convex.2}
    \max(\leftfactorsset_\alpha) < \alpha < \min(\rightfactorsset_\alpha) \qquad \forall\, \alpha \in \wDelta^{+,\ext}.
\end{gather}
\end{theorem}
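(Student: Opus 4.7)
The plan is to prove the theorem by strong induction on the height $|\alpha|$ in the extended root system $\wDelta^{+,\ext}$. The base case consists of simple roots $\alpha_i$ for $i\in\wI$ and is vacuous since $\leftfactorsset_{\alpha_i}=\rightfactorsset_{\alpha_i}=\emptyset$. For the inductive step, I would split into cases according to whether $\alpha$ is real or imaginary, and whether the relevant decomposition of the form $\alpha=\beta+\gamma$ (or a variant involving $k\delta$) is real-real or mixed; by Corollary~\ref{cor:imaginary.no.im.splitting} an imaginary-imaginary splitting never occurs. The shape of the argument is uniform: the inequality $\beta<\alpha$ is extracted from the generalized Leclerc algorithm~\eqref{eq:generalized Leclerc}, which guarantees $\SL(\alpha)\geq\SL_*(\gamma_1)\SL_*(\gamma_2)>\SL_*(\gamma_1)$ for a suitable pair $(\gamma_1,\gamma_2)$, while the inequality $\alpha<\gamma$ is extracted from the costandard factorization $\SL(\alpha)=uv$ (with $u,v\in\SL$) combined with the Lyndon property (Definition~\ref{def:lyndon-2}), which yields $\SL(\alpha)<v$ lexicographically.

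In the simplest case ($\alpha,\beta,\gamma$ all real, $\alpha=\beta+\gamma$, $\beta<\gamma$), the non-vanishing $[\sb[\SL(\beta)],\sb[\SL(\gamma)]]\neq 0$ is immediate from one-dimensionality of $\widehat{\fg}_\alpha$, so~\eqref{eq:generalized Leclerc} gives $\alpha>\beta$ at once. For $\alpha<\gamma$, having reduced to showing $\deg v\leq\gamma$, I would argue by contradiction: if $\deg v>\gamma$, a Jacobi-identity shuffle of the type crystallized in the proof of Proposition~\ref{prop:general.bracketing}---concatenating the costandard pieces of $v$ with pieces of $u$ and invoking Corollary~\ref{cor:rotate}---produces a concatenation of degree $\alpha$ exceeding $\SL(\alpha)$ lexicographically, contradicting Leclerc maximality. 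The inductive hypothesis applied to $\deg v$ (of strictly smaller height) controls the intermediate lex-comparisons. The mixed real-imaginary cases for real $\alpha$ follow the same template, but now the non-vanishing of Leclerc witnesses and the lex comparisons both land inside the flag $\spanset_\bullet^k$ of $\fh t^k$: here Corollary~\ref{cor:im.upper.limit} provides the essential ``bracket vanishes above the wrong flag level'' input, and the detailed conditions built into Definitions~\ref{def:convex.set.im}--\ref{def:convexsetre} are tailored exactly so that the relevant pseudo-bracketings reach the prescribed level $r$ of the flag; the dichotomy on whether $M_k(\alpha)$ is smaller or larger than $\alpha-k\delta$ in Definition~\ref{def:left.and.right.factor.sets} simply records which of the pair $\{(k\delta,\bullet),\,\alpha-k\delta\}$ ends up smaller in $\wDelta^{+,\ext}$.

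For imaginary $\alpha=(k\delta,i)$, any decomposition splits as real-real by Corollary~\ref{cor:imaginary.no.im.splitting}, and the same two-sided strategy applies: the membership $\beta\in\convexsetim(\alpha)$ is engineered precisely so that Corollary~\ref{cor:im.upper.limit} allows one to compare $\SL_i(k\delta)$ with the concatenations $\SL(\beta)\SL(\alpha-\beta)$ at the correct flag level, and Proposition~\ref{prop:general.bracketing}(b) ensures that the costandard factorization $\SL_i(k\delta)=uv$ lands in $\spanset_i^k\setminus\spanset_{i-1}^k$ rather than being swallowed by a lower flag step. I expect the main obstacle to be precisely the contradictory Jacobi-shuffle step inside the $\deg v\leq\gamma$ reductions: a naive shuffle may produce an intermediate bracketing sitting at the wrong flag level, so one must interleave Propositions~\ref{prop:im.bracketing} and~\ref{prop:general.bracketing} with Corollary~\ref{cor:rotate} carefully to ensure each rearrangement preserves the flag membership prescribed by $\convexsetim$ and $\convexsetre$. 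Once this flag-bookkeeping is in place, the proof should assemble the ingredients from Sections~\ref{sec:Aux}--\ref{sec:flag} without requiring new techniques.
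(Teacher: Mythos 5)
Your proposal gets the outer scaffolding right (induction on height, the easy inequality $\max(\leftfactorsset_\alpha)<\alpha$ via Leclerc maximality and the longest Lyndon prefix, the reduction of the hard inequality through Lemma~\ref{lemma:equiv.to.standard.fac}, and the role of Corollary~\ref{cor:im.upper.limit} for the imaginary flag levels), but it is missing the central mechanism of the actual proof, and the mechanism you substitute for it would not work. The Jacobi-identity/rotation arguments of Section~\ref{sec:flag} (Propositions~\ref{prop:im.bracketing} and~\ref{prop:general.bracketing}, Corollary~\ref{cor:rotate}) apply only when $u_1u_2$ is a genuine \emph{word-level} factorization of a standard Lyndon word, since Corollary~\ref{cor:rotate} rotates subwords inside that word. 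For the convexity statement one must compare $\alpha$ with an \emph{arbitrary} decomposition $\mu=\alpha'+\beta'$ with $\alpha'\in\leftfactorsset_\mu$, $\beta'\in\rightfactorsset_\mu$; the concatenation $\SL(\alpha')\SL(\beta')$ is in general not a factorization of the word $\SL(\mu)$ at all, so there is nothing for the shuffle to act on. Your sentence about producing ``a concatenation of degree $\alpha$ exceeding $\SL(\alpha)$'' is exactly the step that has no justification.

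What the paper does instead is compare the arbitrary decomposition $\mu=\alpha'+\beta'$ with the standard factorization $\mu=\alpha+\beta$ using the root-theoretic Lemma~\ref{lemma:gamma.difference}: the two decompositions differ by adding/subtracting a single $\gamma\in\wDelta^{+}\cup\{0\}$ in one of four patterns. Each pattern is then killed by applying the inductive hypothesis to the lower-height sums $\alpha'=\alpha\pm\gamma$, $\beta'=\beta\mp\gamma$, etc. (the analysis modeled on \cite[Proof of Proposition 2.34]{NT}). The cases $\gamma=k\delta$ and the cases where some of $\alpha,\alpha',\beta,\beta'$ are imaginary require Claims~\ref{claim:imaginary.claim.delta.movement}--\ref{claim:imaginary.claim.two}, which in turn rest on Lemma~\ref{lemma:weak.monotonicity}; that monotonicity lemma is itself conditional on the convexity at lower heights, so the two statements must be run through the \emph{same} induction. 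Neither Lemma~\ref{lemma:gamma.difference} nor this interleaving appears in your plan, and without them the inductive step does not close.
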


Similarly to the convexity for finite type of Proposition~\ref{prop:fin.convex}, the inequality
  $$ \max(\leftfactorsset_\alpha) < \alpha $$
is easy, as it follows from Lemma~\ref{lemma:maxleftset}.
Thus, the key is to prove $\alpha < \min(\rightfactorsset_\alpha)$.

\begin{remark}
We note that~\eqref{eqn:convex.2} implies the pre-convexity~\eqref{eq:pre-convex}, see Remark~\ref{rem:convexity-rephrased}.
\end{remark}

   %%%%%%%%%%%%%%%%%%%%%%%%%%%%%%%%%%%%%%%%%%%%%%%%%%%%%%%%%%%%%%%%%%%%%%%%%%%%%%%

\subsection{Auxiliary results}
\

In preparation for the general monotonicity of Proposition~\ref{prop:monotonicity}, let us establish:

\begin{lemma}\label{lemma:weak.monotonicity}
Fix $\alpha \in \wDelta^{+,\re}$ and $k\in \BZ_{>0}$. If Theorem~\ref{thm:convexity} holds for all roots of height
$\leq |\alpha + k\delta|$, then the following properties are equivalent:
\begin{enumerate}

\item
$\alpha < \alpha+\delta < \cdots < \alpha + k\delta < M_1(\alpha)$
(resp.\ $\alpha > \alpha+\delta > \cdots > \alpha + k\delta > M_1(\alpha)$),

\item
$\alpha < M_1(\alpha)$
(resp.\ $\alpha > M_1(\alpha))$,

\item
$\alpha$ is less than (resp.\ greater than) all elements in $\convexsetre(\alpha)$ of height $< |\alpha + k\delta|$.

\end{enumerate}
\end{lemma}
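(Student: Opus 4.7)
By symmetry under reversing every ``$<$'' to ``$>$'', only the ``less than'' side needs treatment. The implications $(1)\Rightarrow (2)$ and $(3)\Rightarrow (2)$ are immediate: (1) literally contains $\alpha<M_1(\alpha)$, and $M_1(\alpha)\in \convexsetre(\alpha)$ has height $|\delta|<|\alpha+k\delta|$, so (3) yields $\alpha<M_1(\alpha)$. The substance is $(2)\Rightarrow (1)$ and $(2)\Rightarrow (3)$. Both rest on the \emph{classical-part invariance}
\begin{equation*}
   M_j(\alpha+p\delta)=M_j(\alpha),\qquad \convexsetre(\alpha+p\delta)=\convexsetre(\alpha)\qquad \forall\, p\geq 0,
\end{equation*}
which in turn follows because $\sb[\SL_r(j\delta)]\in \fh t^j$ brackets with $\sb[\SL(\beta)]\in \fg_{\beta'}t^q$ (for $\beta=(\beta',q)\in \wDelta^{+,\re}$) to a scalar multiple of $e_{\beta'}t^{j+q}$ whose vanishing depends only on $\beta'$.

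For $(2)\Rightarrow (1)$ I would induct on $j\in \{0,\ldots,k-1\}$ with inductive hypothesis $\alpha<\alpha+\delta<\cdots<\alpha+j\delta<M_1(\alpha)$; the base $j=0$ is (2). In the step, I apply Theorem~\ref{thm:convexity} at $\gamma:=\alpha+(j+1)\delta$ using $k'=1$ in Definition~\ref{def:left.and.right.factor.sets}(a): the inductive hypothesis gives $M_1(\gamma)=M_1(\alpha)>\alpha+j\delta=\gamma-\delta$, which simultaneously places $\gamma-\delta$ in the third subset of $\leftfactorsset_\gamma$ and every $\mu\in \convexsetre(\alpha)$ of height $|\delta|$ in the second subset of $\rightfactorsset_\gamma$. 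Convexity then yields $\gamma-\delta<\gamma<\mu$, and specialization to $\mu=M_1(\alpha)$ closes the induction. The same step simultaneously settles $(2)\Rightarrow (3)$ for $\mu\in \convexsetre(\alpha)$ of height $|\delta|$.

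Higher-height elements $\mu\in \convexsetre(\alpha)$ of height $|k'\delta|$ with $k'\geq 2$ are handled by reusing the template at $\alpha+k'\delta$ with matching $k'$: if $M_{k'}(\alpha)>\alpha$, then $\mu$ sits in the second subset of $\rightfactorsset_{\alpha+k'\delta}$ and $\alpha$ in the third subset of $\leftfactorsset_{\alpha+k'\delta}$, giving $\alpha<\alpha+k'\delta<\mu$. The main obstacle is to deduce $M_{k'}(\alpha)>\alpha$ from $M_1(\alpha)>\alpha$ alone. I would do this by invoking Theorem~\ref{thm:convexity} at $M_{k'}(\alpha)$ itself — available since $|M_{k'}(\alpha)|=|k'\delta|\leq |\alpha+k\delta|$ — and writing $\alpha=\alpha_0+p\delta$ with $|\alpha_0|<|\delta|$: the base $\alpha_0$ lies in $\convexsetim(M_{k'}(\alpha))$ with $|\alpha_0|<|k'\delta|$ and falls into exactly one of $\leftfactorsset_{M_{k'}(\alpha)},\rightfactorsset_{M_{k'}(\alpha)}$. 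Excluding the $\rightfactorsset$ branch via the already-established $(2)\Rightarrow (1)$ chain applied to $\alpha_0$ (which inherits hypothesis (2) since $M_1(\alpha_0)=M_1(\alpha)$) gives $\alpha_0<M_{k'}(\alpha)$, after which the same chain transports the comparison to $\alpha=\alpha_0+p\delta<M_{k'}(\alpha)$. Cleanly ruling out the wrong branch without circularity is the most delicate point of the argument.
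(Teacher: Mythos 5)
Your treatment of $(2)\Rightarrow(1)$, and of $(2)\Rightarrow(3)$ for elements of height $|\delta|$, is correct and essentially identical to the paper's (the paper places $\alpha\in\leftfactorsset_{\alpha+\delta}$ and $M_1(\alpha)\in\rightfactorsset_{\alpha+\delta}$ and iterates, using $M_1(\alpha+p\delta)=M_1(\alpha)$). The problem is your handling of $\mu\in\convexsetre(\alpha)$ of height $|k'\delta|$ with $k'\geq 2$. You propose to apply Theorem~\ref{thm:convexity} at the root $\alpha+k'\delta$, but the hypothesis only grants convexity for roots of height $\leq|\alpha+k\delta|$, and $|\alpha+k'\delta|>|\alpha+k\delta|$ whenever $k'>k$. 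Statement (3) ranges over all $\mu$ with $|k'\delta|<|\alpha+k\delta|$, i.e.\ $(k'-k)|\delta|<|\alpha|$, so $k'>k$ genuinely occurs as soon as $|\alpha|>|\delta|$; in that regime your application of convexity is not covered by the inductive hypothesis, and since this lemma is invoked \emph{inside} the height-induction proving Theorem~\ref{thm:convexity}, you cannot simply assume convexity at greater heights without circularity. A secondary gap: even where your argument runs, the final step transporting $\alpha_0<M_{k'}(\alpha)$ to $\alpha=\alpha_0+p\delta<M_{k'}(\alpha)$ is unjustified — your chain bounds $\alpha$ only by $M_1(\alpha)$, and comparing $M_1(\alpha)$ with $M_{k'}(\alpha)$ requires Corollary~\ref{cor:equiv.mk.Mk} and Lemma~\ref{cor:imaginary.words.decreasing}, which are proved later and depend on the present lemma.

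The paper avoids both issues by never going above the heights already covered. For $|k'\delta|<|\alpha|$ it applies convexity at $\alpha$ itself via the decomposition $\alpha=(\alpha-k'\delta)+k'\delta$: the dichotomy $M_{k'}(\alpha)\gtrless\alpha-k'\delta$ forces $\alpha-k'\delta\in\leftfactorsset_\alpha$ and $\mu\in\rightfactorsset_\alpha$ (the other branch contradicts the downward chain $\alpha-k'\delta<\alpha$), giving $\alpha<\mu$ directly. For $|k'\delta|>|\alpha|$ it reduces $\alpha$ modulo $\delta$ to $\ol\alpha$ with $|\ol\alpha|<|\delta|$, obtains $\ol\alpha<M_1(\alpha)<\delta-\ol\alpha$ from convexity at $\delta$, and then applies convexity at the imaginary root $\mu$ itself using the decomposition $\mu=(\alpha+t\delta)+(\delta-\ol\alpha+\cdots)$ with $t$ chosen so that $|\alpha+t\delta|<|k'\delta|$; every root involved has height $<|\alpha+k\delta|$. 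You should restructure the higher-height case along these lines.
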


\begin{proof}
First, let us prove the ``$(2) \Rightarrow (1)$'' implication. If $\alpha < M_1(\alpha)$, then
$\alpha \in \leftfactorsset_{\alpha + \delta}$ and $M_1(\alpha) \in \rightfactorsset_{\alpha + \delta}$,
so that $\alpha < \alpha + \delta < M_1(\alpha)$ by the convexity~\eqref{eqn:convex.2}. But
$M_1(\alpha+p\delta)=M_1(\alpha)$ for any $p\in \BZ_{>0}$, as $\sb[\SL(\alpha+p\delta)]\sim \sb[\SL(\alpha)]t^p$.
Thus, repeating the above argument with $\alpha+\delta$ instead of $\alpha$, then with $\alpha+2\delta$ and
so on, we eventually get the desired chain
  $\alpha < \alpha + \delta < \alpha + 2\delta < \cdots < \alpha + k\delta < M_1(\alpha)$.
Similarly, if $M_1(\alpha) < \alpha$, then $M_1(\alpha) \in \leftfactorsset_{\alpha + \delta}$ and
$\alpha \in \rightfactorsset_{\alpha + \delta}$, so that $M_1(\alpha)< \alpha + \delta < \alpha$
by the convexity~\eqref{eqn:convex.2}. Then, following the same logic as above, we obtain
  $\alpha > \alpha + \delta > \alpha + 2\delta > \cdots > \alpha + k\delta  > M_1(\alpha)$.

Let us prove the ``$(1) \Rightarrow (3)$'' implication for
$\alpha < \alpha+\delta < \cdots < \alpha + k\delta < M_1(\alpha)$. Pick any $\beta \in \convexsetre(\alpha)$
with $|\beta| = |p\delta| < |\alpha+k\delta|$. We consider two cases:
\begin{itemize}[leftmargin=0.5cm]

\item
If $|\beta| < |\alpha|$, then set $\alpha':=\alpha - p\delta$. Considering $\alpha'$ and $M_p(\alpha)$, we see
that either $\alpha'\in \leftfactorsset_{\alpha}, M_p(\alpha) \in \rightfactorsset_{\alpha}$ or
$\alpha'\in \rightfactorsset_{\alpha}, M_p(\alpha) \in \leftfactorsset_{\alpha}$. However, it must be the first
of these options, due to Theorem~\ref{thm:convexity} and the assumption $\alpha' < \alpha$. Hence,
$\beta \in \rightfactorsset_{\alpha}$, and we obtain $\alpha < \beta$ by applying Theorem~\ref{thm:convexity} once again.

\item
If $|\beta| > |\alpha|$, let us choose $s\in \BZ_{\geq 0}$ so that
$\ol{\alpha} = \alpha - s\delta\in \wDelta^{+,\re}$ satisfies $0 < |\overline{\alpha}| < |\delta|$.
As $\alpha<M_1(\alpha)$, we have $\alpha - \delta < \alpha < M_1(\alpha)$ by Theorem~\ref{thm:convexity}.
Repeating this argument, we get $\ol{\alpha} = \alpha-s\delta < \alpha < M_1(\alpha)$. Define
$\ol{\alpha}':=\delta-\ol{\alpha}$. Applying Theorem~\ref{thm:convexity} to $\delta = \ol{\alpha} + \ol{\alpha}'$,
we obtain $\ol{\alpha} < M_1(\alpha) < \ol{\alpha}'$. Pick $t\in \BZ_{\geq 0}$ so that
$|\alpha + t\delta| < |p\delta| < |\alpha + (t+1)\delta|$. Combining the assumption with $M_1(\alpha) < \ol{\alpha}'$
proved above, we get $\alpha + t\delta < M_1(\alpha) < \ol{\alpha}'$. But then we have
$\alpha + t\delta \in \leftfactorsset_{\beta}$ and $\overline{\alpha}' \in \rightfactorsset_{\beta}$, so that
$\alpha + t\delta < \beta$ by Theorem~\ref{thm:convexity}. Combining this with $\alpha \leq \alpha + t\delta$,
we ultimately get $\alpha < \beta$.

\end{itemize}

The proof of ``$(1) \Rightarrow (3)$'' implication for $\alpha > \alpha+\delta > \cdots > \alpha + k\delta > M_1(\alpha)$
is similar. Pick $\beta \in\convexsetre(\alpha)$ with $\beta = |p\delta| < |\alpha+k\delta|$. We consider two cases:
\begin{itemize}[leftmargin=0.5cm]

\item
If $|\beta| < |\alpha|$, then consider $\alpha' = \alpha - p\delta$, so that $\alpha' > \alpha$ by the assumption.
Considering $\alpha'$ and $M_p(\alpha)$, we likewise deduce from Theorem~\ref{thm:convexity} that
$\alpha' \in \rightfactorsset_{\alpha}$, $M_p(\alpha) \in \leftfactorsset_{\alpha}$.
Therefore, we have $\beta \in \leftfactorsset_{\alpha}$ and so $\beta < \alpha$ by Theorem~\ref{thm:convexity}.

\item
If $|\beta| > |\alpha|$, then consider $\ol{\alpha}, \ol{\alpha}'$ defined as before.
As $M_1(\alpha)<\alpha$, we have $M_1(\alpha) < \alpha < \alpha - \delta$ by Theorem~\ref{thm:convexity}.
Repeating this argument, we get $M_1(\alpha) < \alpha < \alpha - s\delta = \ol{\alpha}$. Applying Theorem~\ref{thm:convexity}
to $\delta=\ol{\alpha}+\ol{\alpha}'$, we then obtain $\ol{\alpha}' < M_1(\alpha) < \ol{\alpha}$. Pick $t$ as above, and note
that $\ol{\alpha}' < M_1(\alpha) < \alpha + t\delta$. Thus  $\alpha + t\delta \in \rightfactorsset_{\beta}$ and
$\ol{\alpha}' \in \leftfactorsset_{\beta}$, and so $\beta < \alpha + t\delta$ by Theorem~\ref{thm:convexity}.
Combining this with $\alpha + t\delta\leq \alpha$, we get $\beta < \alpha$.

\end{itemize}

Finally, the implication ``$(3) \Rightarrow (2)$'' is obvious. This completes the proof.
\end{proof}

\begin{remark}\label{rem:convexity-rephrased}
The above result provides a more natural version of Theorem~\ref{thm:convexity}:
\begin{itemize}[leftmargin=0.5cm]

\item
if $\alpha < \beta$ with $\alpha,\beta,\alpha + \beta \in \wDelta^{+,\re}$, then $\alpha < \alpha + \beta < \beta$;

\item
if $\alpha < \beta$ with $\alpha \in \wDelta^{+,\re}, \beta \in \convexsetre(\alpha)$, then
$\alpha < \alpha + \beta < \beta$;

\item
if $\alpha < \beta$ with $\beta \in \wDelta^{+,\re}, \alpha \in \convexsetre(\beta)$, then
$\alpha < \alpha + \beta < \beta$;

\item
if $\alpha < \beta$ with $\alpha,\beta \in \wDelta^{+,\re}$ and $\alpha + \beta = k\delta$, then
$\alpha < \gamma < \beta$ for all $\gamma \in \convexsetre(\alpha)$ satisfying $|\gamma| = |k\delta|$.

\end{itemize}
\end{remark}

The next result indicates the importance of the set $\leftfactorsset_\alpha$:

\begin{lemma}\label{lemma:maxleftset}
For $\alpha \in \wDelta^{+,\ext}$, we have:
\begin{equation*}
  \max(\leftfactorsset_\alpha) = \deg(\SL^{ls}(\alpha)).
\end{equation*}
\end{lemma}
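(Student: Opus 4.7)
The plan is to verify two inclusions separately: first that $\deg(\SL^{ls}(\alpha)) \in \leftfactorsset_\alpha$, and second that $\SL(\beta) \leq \SL^{ls}(\alpha)$ for every $\beta \in \leftfactorsset_\alpha$. Both statements are phrased with respect to the order on $\wDelta^{+,\ext}$ induced via~\eqref{eq:order.ext}, and together they deliver the asserted equality.

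For the membership direction, I would consider the standard factorization $\SL(\alpha) = \SL^{ls}(\alpha)\SL^{rs}(\alpha)$ and split into cases according to which of the two factors has imaginary degree; by Corollary~\ref{cor:imaginary.no.im.splitting}, both cannot be imaginary simultaneously. If both are real, then the Lyndon property of $\SL(\alpha)$ gives $\SL^{ls}(\alpha) < \SL^{rs}(\alpha)$, which immediately places $\deg(\SL^{ls}(\alpha))$ in the first, ``real--real'', component of $\leftfactorsset_\alpha$ for any $\alpha \in \wDelta^{+,\ext}$. If exactly one factor has imaginary degree $k\delta$ (which can only occur for real $\alpha$), then Corollary~\ref{cor:imaginary.suffix.prefix} identifies that factor as $\SL(M_k(\alpha))$: when it is the left factor, $\deg(\SL^{ls}(\alpha)) = M_k(\alpha)$ with $M_k(\alpha) < \alpha - k\delta$, matching the second component of $\leftfactorsset_\alpha$; when it is the right factor, $\deg(\SL^{ls}(\alpha)) = \alpha - k\delta$ with $M_k(\alpha) > \alpha - k\delta$, matching the third component. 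Finally, when $\alpha = (k\delta,i)$ is imaginary, both factor degrees are real; the required membership $\deg(\SL^{ls}(\alpha)) \in \convexsetim(\alpha)$ then follows because the two bracketing conditions of Definition~\ref{def:convex.set.im} are encoded by Proposition~\ref{prop:general.bracketing}(b), which guarantees $[\sb[\SL^{ls}(\alpha)],\sb[\SL^{rs}(\alpha)]] \in \spanset_i^k \setminus \spanset_{i-1}^k$.

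For the maximality direction, fix $\beta \in \leftfactorsset_\alpha$; since $|\beta| < |\alpha|$, Lemma~\ref{lemma:equiv.to.standard.fac} reduces the target inequality $\SL(\beta) \leq \SL^{ls}(\alpha)$ to $\SL(\beta) \leq \SL(\alpha)$. When $\alpha$ is real, I treat the three defining subsets of $\leftfactorsset_\alpha$ by invoking the generalized Leclerc algorithm~\eqref{eq:generalized Leclerc}: in the first and third subsets the appropriate splitting of $\alpha$ is directly available (with nonvanishing bracket guaranteed by Remark~\ref{rem:bracketing-irrelevance} or Lemma~\ref{lemma:Mk}), yielding $\SL(\alpha) \geq \SL(\beta)\SL(\alpha-\deg\beta)$ and hence $\SL(\beta) < \SL(\alpha)$ as a proper prefix; in the second subset I route through $M_k(\alpha)$, first proving $\SL(M_k(\alpha)) \leq \SL^{ls}(\alpha)$ via Leclerc applied to the pair $(M_k(\alpha), \alpha - k\delta)$, and then concluding $\SL(\beta) \leq \SL(M_k(\alpha)) \leq \SL^{ls}(\alpha)$ using $\beta \leq M_k(\alpha)$. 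When $\alpha = (k\delta,i)$ is imaginary, the word $\SL(\beta)\SL(\alpha-\beta)$ is Lyndon (since $\beta < \alpha-\beta$) and the condition inherited from $\beta \in \convexsetim(\alpha)$ yields $[\sb[\SL(\beta)],\sb[\SL(\alpha-\beta)]] \notin \spanset_{i-1}^k$; Proposition~\ref{prop:im.bracketing} then forces $\SL(\beta)\SL(\alpha-\beta) \leq \SL_i(k\delta) = \SL(\alpha)$, and $\SL(\beta)$ being a proper prefix gives $\SL(\beta) < \SL(\alpha)$.

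The main technical obstacle is the careful translation of the bracketing hypotheses packaged inside $\convexsetim$ and $\convexsetre$ into the nonvanishing conditions that the Leclerc algorithm actually requires. Specifically, in the second subset for real $\alpha$ one must convert ``$\beta \in \convexsetre(\alpha)$'' into the usable statement $[\sb[\SL(M_k(\alpha))], \sb[\SL(\alpha - k\delta)]] \neq 0$, which ultimately reduces, via the identification $\sb[\SL_j(k\delta)] \sim h_j t^k$, to the same Cartan pairing that drives Lemma~\ref{lemma:Mk}. A parallel bookkeeping is needed in the imaginary case to match condition~(2) in Definition~\ref{def:convex.set.im} with the pseudo-bracketing input of Proposition~\ref{prop:im.bracketing}. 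Once these compatibilities are in place, the case analysis assembles cleanly into the desired equality $\max(\leftfactorsset_\alpha) = \deg(\SL^{ls}(\alpha))$.
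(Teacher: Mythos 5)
Your proposal is correct and follows essentially the same route as the paper: membership of $\deg(\SL^{ls}(\alpha))$ via the standard factorization together with Corollary~\ref{cor:imaginary.suffix.prefix} (resp.\ Proposition~\ref{prop:general.bracketing}(b) for imaginary $\alpha$), and maximality via Lemma~\ref{lemma:equiv.to.standard.fac}, the generalized Leclerc algorithm~\eqref{eq:generalized Leclerc} with Lemma~\ref{lemma:Mk}, and Corollary~\ref{cor:im.upper.limit} in the imaginary case. The only difference is presentational: the paper phrases the second half as a contradiction starting from $\max(\leftfactorsset_\alpha)>\deg(\SL^{ls}(\alpha))$, while you verify $\SL(\beta)\leq\SL^{ls}(\alpha)$ for each $\beta$ directly, which is the same argument.
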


\begin{proof}
First, note $\SL^{ls}(\alpha)<\SL(\alpha)<\SL^{rs}(\alpha)$. We claim that $\deg(\SL^{ls}(\alpha)) \in \leftfactorsset_\alpha$.
This is clear if $\deg(\SL^{ls}(\alpha)),\deg(\SL^{rs}(\alpha)) \in \wDelta^{+,\re}$. If $\deg(\SL^{rs}(\alpha)) \in \imx$, then
$\deg(\SL^{rs}(\alpha))=M_k(\alpha)$ for some $k$ by Corollary~\ref{cor:imaginary.suffix.prefix}, and the claim follows. Likewise,
if $\deg(\SL^{ls}(\alpha)) \in \imx$, then $\deg(\SL^{ls}(\alpha))=M_k(\alpha)$ for some $k$.

Let us now assume the contrary, that is, $\max(\leftfactorsset_\alpha) > \deg(\SL^{ls}(\alpha))$. This implies
$\max (\leftfactorsset_\alpha) > \alpha$, due to Lemma~\ref{lemma:equiv.to.standard.fac}. We shall now consider two cases:
\begin{itemize}

\item[(1)] For $\alpha \in \wDelta^{+,\re}$, we claim that we get a contradiction with the generalized Leclerc
algorithm~\eqref{eq:generalized Leclerc}. This is clear if $\beta=\max(\leftfactorsset_\alpha)$ is real. On the other hand,
if $\beta\in \imx$, then we must have $\beta=M_k(\alpha)$ for some $k$ by Corollary~\ref{cor:imaginary.suffix.prefix},
and so $M_k(\alpha) < \alpha -k\delta$. Combining this with $[\sb[\SL(M_k(\alpha))],\sb[\SL(\alpha-k\delta)]] \neq 0$,
we get $\SL(M_k(\alpha))\SL(\alpha - k\delta)>\SL(\alpha)$, a contradiction with the generalized Leclerc
algorithm~\eqref{eq:generalized Leclerc}.

\item[(2)] For $\alpha = (k\delta,i)\in \imx$, let $\beta = \max( \leftfactorsset_\alpha)$. Then $\gamma:=\alpha-\beta$
satisfies $\gamma \in \rightfactorsset_\alpha\subset \convexsetim(\alpha)$ and $\alpha < \beta < \gamma$.
This contradicts Remark~\ref{remark:convex.im.set}.

\end{itemize}
This completes the proof, as we get contradiction in both cases.
\end{proof}

We conclude this subsection with several simple results:

\begin{lemma}\label{lemma:convex.sum.rule}
For $\alpha, \beta, \alpha+\beta \in \wDelta^{+,\re}$, $\gamma \in \imx$ if $[\sb[\SL(\gamma)],\sb[\SL(\alpha + \beta)]] \neq 0$,
then $\sb[\SL(\gamma)]$ commutes with at most one of $\sb[\SL(\alpha)]$, $\sb[\SL(\beta)]$.
\end{lemma}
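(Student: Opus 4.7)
My plan is to reduce the claim to a trivial linear-algebra observation using the explicit description of root subspaces from Subsection~\ref{ssec:affineLie}. Write $\gamma = (k\delta,r)$ with $k \in \BZ_{>0}$, so that $\sb[\SL(\gamma)] = \sb[\SL_r(k\delta)] \in \widehat{\fg}_{k\delta} = \fh t^k$, and hence $\sb[\SL(\gamma)] = h t^k$ for some $h \in \fh$. Meanwhile, for the real roots $\alpha = (\alpha',a)$, $\beta = (\beta',b)$, $\alpha+\beta = (\alpha'+\beta', a+b)$, the corresponding root spaces $\widehat{\fg}_\alpha, \widehat{\fg}_\beta, \widehat{\fg}_{\alpha+\beta}$ are one-dimensional and spanned by $e_{\alpha'}t^a$, $e_{\beta'}t^b$, $e_{\alpha'+\beta'}t^{a+b}$ respectively; since $\sb[\SL(\bullet)]$ is a basis vector of $\widehat{\fn}^+$ by Theorem~\ref{thm:standard Lyndon theorem}, each of $\sb[\SL(\alpha)]$, $\sb[\SL(\beta)]$, $\sb[\SL(\alpha+\beta)]$ is a nonzero scalar multiple of the corresponding loop element.

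Next, I would invoke the loop-algebra bracket~\eqref{eq:loops}. Since $a+k$, $b+k$, and $(a+b)+k$ are all strictly positive, the central term vanishes in each relevant commutator, and one obtains
\begin{equation*}
  [h t^k, e_\mu t^c] \,=\, \mu(h)\, e_\mu t^{c+k} \qquad \forall\, \mu \in \Delta,\ c \in \BZ,
\end{equation*}
where $\mu(h) = (\mu,h)$ via the invariant pairing on $\fh$. Therefore the three vanishing conditions translate into the scalar identities
\begin{equation*}
  [\sb[\SL(\gamma)],\sb[\SL(\alpha)]] = 0 \iff \alpha'(h)=0, \qquad
  [\sb[\SL(\gamma)],\sb[\SL(\beta)]] = 0 \iff \beta'(h)=0,
\end{equation*}
and analogously for $\alpha+\beta$ with $(\alpha'+\beta')(h)$.

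Finally, the hypothesis $[\sb[\SL(\gamma)],\sb[\SL(\alpha+\beta)]] \neq 0$ gives $(\alpha'+\beta')(h) \neq 0$. If $\sb[\SL(\gamma)]$ commuted with both $\sb[\SL(\alpha)]$ and $\sb[\SL(\beta)]$, we would have $\alpha'(h) = 0 = \beta'(h)$, whence $(\alpha'+\beta')(h) = 0$ by linearity, a contradiction. Hence $\sb[\SL(\gamma)]$ commutes with at most one of $\sb[\SL(\alpha)]$, $\sb[\SL(\beta)]$, as claimed. There is no real obstacle here: once one recognizes that imaginary standard bracketings live in $\fh t^k$ and act on real root spaces by a single scalar, the statement becomes the evident linearity of $h$ on $Q$; the only point requiring verification is that the central contribution in~\eqref{eq:loops} plays no role, which is immediate from positivity of the powers of $t$ involved.
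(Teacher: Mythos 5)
Your proof is correct, but it takes a genuinely different route from the paper. The paper's argument is a one-line application of the Jacobi identity inside the abstract Lie algebra: writing out
\begin{equation*}
  [\sb[\SL(\gamma)],[\sb[\SL(\alpha)],\sb[\SL(\beta)]]] + [\sb[\SL(\alpha)],[\sb[\SL(\beta)],\sb[\SL(\gamma)]]] + [\sb[\SL(\beta)],[\sb[\SL(\gamma)],\sb[\SL(\alpha)]]] = 0,
\end{equation*}
it observes that if $\sb[\SL(\gamma)]$ commuted with both $\sb[\SL(\alpha)]$ and $\sb[\SL(\beta)]$ then the first term would vanish, contradicting the hypothesis via $[\sb[\SL(\alpha)],\sb[\SL(\beta)]]\sim \sb[\SL(\alpha+\beta)]$ (the nonvanishing here being the standard fact of Remark~\ref{rem:bracketing-irrelevance} for three real roots). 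You instead pass to the explicit loop realization of Subsection~\ref{ssec:affineLie}, identify $\sb[\SL(\gamma)]$ with an element $ht^k$ of $\fh t^k$ and the three real bracketings with nonzero multiples of $e_{\alpha'}t^a$, $e_{\beta'}t^b$, $e_{\alpha'+\beta'}t^{a+b}$, and reduce everything to the linearity of $\mu\mapsto \mu(h)$ on the root lattice; your check that the central term of~\eqref{eq:loops} cannot contribute (the powers of $t$ never sum to zero) is the right one, and in fact $(\fh,\fg_\mu)=0$ kills it regardless. What each approach buys: the paper's proof is realization-free and shorter, but leans on the nonvanishing of $[\sb[\SL(\alpha)],\sb[\SL(\beta)]]$; yours only needs that $\sb[\SL(\alpha+\beta)]$ itself is nonzero (Theorem~\ref{thm:standard Lyndon theorem}) and makes the mechanism completely transparent, at the cost of invoking the specific isomorphism $\widehat{\fg}\iso\widetilde{\fg}$. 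Both are complete proofs.
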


\begin{proof}
According to the Jacobi identity, we have:
\begin{multline*}
  [\sb[\SL(\gamma)],[\sb[\SL(\alpha)],\sb[\SL(\beta)]]] + [\sb[\SL(\alpha)],[\sb[\SL(\beta)],\sb[\SL(\gamma)]]] + \\
  [\sb[\SL(\beta)],[\sb[\SL(\gamma)],\sb[\SL(\alpha)]]] = 0.
\end{multline*}
Thus, if $\sb[\SL(\gamma)]$ commuted with both $\sb[\SL(\alpha)]$ and $\sb[\SL(\beta)]$, then we would have
\begin{equation*}
  [\sb[\SL(\gamma)],[\sb[\SL(\alpha)],\sb[\SL(\beta)]]] = 0.
\end{equation*}
The latter contradicts the assumption, since $[\sb[\SL(\alpha)],\sb[\SL(\beta)]] \sim \sb[\SL(\alpha + \beta)]$.
\end{proof}

\begin{corollary}\label{cor:max.im.rule}
Let $\alpha,\beta \in \wDelta^{+,\re}$ satisfy $\alpha + \beta \in \wDelta^{+,\re}$. If $M_k(\alpha) > M_k(\beta)$
then $M_k(\alpha) = M_k(\alpha + \beta)$, and if $M_k(\alpha) = M_k(\beta)$ then $M_k(\alpha + \beta) \leq M_k(\alpha)$.
\end{corollary}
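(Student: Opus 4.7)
The plan is to combine an upper bound derived from Lemma \ref{lemma:convex.sum.rule} with a matching lower bound in the strict case coming from a direct Jacobi computation. Throughout, the characterization of $M_k$ from Lemma \ref{lemma:Mk}, namely as the largest imaginary $\gamma$ of height $|k\delta|$ with $[\sb[\SL(\gamma)],\sb[\SL(\cdot)]]\ne 0$, together with Corollary \ref{cor:im.span.vanish} giving vanishing above $M_k$, will do most of the work.

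First I would prove the universal upper bound $M_k(\alpha+\beta) \leq \max(M_k(\alpha), M_k(\beta))$, which immediately settles Case 2. Indeed, for any $\gamma \in \imx$ with $|\gamma| = |k\delta|$ and $\gamma > \max(M_k(\alpha), M_k(\beta))$, Corollary \ref{cor:im.span.vanish} forces
\[
  [\sb[\SL(\gamma)],\sb[\SL(\alpha)]] = 0 = [\sb[\SL(\gamma)],\sb[\SL(\beta)]],
\]
and then the contrapositive of Lemma \ref{lemma:convex.sum.rule} gives $[\sb[\SL(\gamma)],\sb[\SL(\alpha+\beta)]] = 0$. By Lemma \ref{lemma:Mk} this yields $M_k(\alpha+\beta) \leq \max(M_k(\alpha), M_k(\beta))$.

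For Case 1, assume $M_k(\alpha) > M_k(\beta)$ and set $\gamma = M_k(\alpha)$. The upper bound above gives $M_k(\alpha+\beta) \leq \gamma$, so it remains to show $[\sb[\SL(\gamma)],\sb[\SL(\alpha+\beta)]] \ne 0$. Since $\sb[\SL(\alpha+\beta)]$ is a nonzero multiple of $[\sb[\SL(\alpha)], \sb[\SL(\beta)]]$, I would expand via the Jacobi identity
\[
  [\sb[\SL(\gamma)],[\sb[\SL(\alpha)],\sb[\SL(\beta)]]]
  = [[\sb[\SL(\gamma)],\sb[\SL(\alpha)]],\sb[\SL(\beta)]]
  + [\sb[\SL(\alpha)],[\sb[\SL(\gamma)],\sb[\SL(\beta)]]].
\]
The second summand vanishes because $\gamma > M_k(\beta)$ forces $[\sb[\SL(\gamma)], \sb[\SL(\beta)]] = 0$ by Corollary \ref{cor:im.span.vanish}. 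For the first summand, $[\sb[\SL(\gamma)], \sb[\SL(\alpha)]]$ is a nonzero element of the one-dimensional real root space $\widehat{\fg}_{\alpha + k\delta}$ by the defining property of $M_k(\alpha)$, hence a nonzero scalar multiple of $\sb[\SL(\alpha + k\delta)]$.

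The main obstacle is then verifying that the surviving summand is genuinely nonzero. For this, one uses the standard fact that for real roots $\mu, \nu$ with $\mu+\nu \in \wDelta^{+,\re}$ one has $[\widehat{\fg}_\mu, \widehat{\fg}_\nu] = \widehat{\fg}_{\mu+\nu} \ne 0$; applied to $\mu = \alpha + k\delta$ and $\nu = \beta$, noting $(\alpha + k\delta) + \beta = (\alpha+\beta)+k\delta \in \wDelta^{+,\re}$, this gives $[\sb[\SL(\alpha+k\delta)], \sb[\SL(\beta)]] \ne 0$. Thus $[\sb[\SL(\gamma)],\sb[\SL(\alpha+\beta)]] \ne 0$, so $M_k(\alpha+\beta) \geq M_k(\alpha)$, and combining with the upper bound yields the desired equality $M_k(\alpha+\beta) = M_k(\alpha)$.
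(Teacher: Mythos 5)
Your proposal is correct and follows essentially the same route as the paper: both arguments use the Jacobi identity with the second summand killed by $M_k(\alpha)>M_k(\beta)$ via Corollary~\ref{cor:im.span.vanish}, the nonvanishing of the surviving term via $[\sb[\SL(\alpha+k\delta)],\sb[\SL(\beta)]]\ne 0$ (which holds since $(\alpha+k\delta)+\beta$ is a real root), and Lemma~\ref{lemma:convex.sum.rule} for the upper bound $M_k(\alpha+\beta)\leq\max\{M_k(\alpha),M_k(\beta)\}$. The only cosmetic difference is that you establish the upper bound first and uniformly for both cases, whereas the paper derives the lower bound first in the strict case.
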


\begin{proof}
If $M_k(\alpha) > M_k(\beta)$, then $\sb[\SL(M_k(\alpha))]$ commutes with $\sb[\SL(\beta)]$. Combining this with
$[\sb[\SL(\alpha+k\delta)],\sb[\SL(\beta)]]\ne 0$ (due to $\alpha + \beta \in \wDelta^{+,\re}$) and the fact that
$[\sb[\SL(M_k(\alpha))],\sb[\SL(\alpha)]]$ is a nonzero multiple of $\sb[\SL(\alpha)]t^k$, we obtain
$[\sb[\SL(M_k(\alpha))],\sb[\SL(\alpha + \beta)]] \neq 0$ by the Jacobi identity. Hence, $M_k(\alpha + \beta) \geq M_k(\alpha)$
by Lemma~\ref{lemma:Mk}. But the strict inequality $M_k(\alpha + \beta) > M_k(\alpha)$ would contradict
Lemma~\ref{lemma:convex.sum.rule}. Therefore, we have $M_k(\alpha + \beta ) = M_k(\alpha)$.

Likewise, if $M_k(\alpha) = M_k(\beta)$, then $M_k(\alpha + \beta) \leq M_k(\alpha)$ by Lemma~\ref{lemma:convex.sum.rule}.
\end{proof}

\begin{lemma}\label{lemma:min.im.rule}
Let $\alpha,\beta \in \wDelta^{+,\re}$ satisfy $\alpha + \beta \in \wDelta^{+,\re}$. If $m_k(\alpha) < m_k(\beta)$ then
$m_k(\alpha + \beta) = m_k(\alpha)$, and if $m_k(\alpha) = m_k(\beta)$ then $m_k(\alpha + \beta) \geq m_k(\alpha)$.
\end{lemma}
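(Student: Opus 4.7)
The plan is to reduce the statement to a linear-algebra identity inside $\fh t^k \subset \widehat{\fg}$. First I would invoke Lemma~\ref{lemma:mk} to rephrase the $m_k$'s positionally: $m_k(\gamma) = (k\delta, i_\gamma)$ where $i_\gamma$ is the unique index with $h_\gamma t^k \in \spanset_{i_\gamma}^k \setminus \spanset_{i_\gamma - 1}^k$. Under the ordering~\eqref{eq:order.ext}, $(k\delta, i) < (k\delta, j) \iff i > j$ since $\SL_1(k\delta) > \SL_2(k\delta) > \cdots$. Therefore $m_k(\alpha) < m_k(\beta) \iff i_\alpha > i_\beta$ and $m_k(\alpha) = m_k(\beta) \iff i_\alpha = i_\beta$, and the lemma is a statement about the position of $h_{\alpha+\beta} t^k$ in the flag~\eqref{eq:flag}.

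The key input is the identity
\begin{equation*}
  h_{\alpha+\beta} \,=\, a\cdot h_\alpha + b\cdot h_\beta \qquad \mathrm{for\ some}\ a,b \in \BC^\times
\end{equation*}
in the finite Cartan $\fh$. Indeed, writing $\alpha = (\alpha', n)$ and $\beta = (\beta', m)$, the hypothesis $\alpha + \beta \in \wDelta^{+,\re}$ forces $\alpha'+\beta' \in \Delta$. Since no two roots of a simple Lie algebra are proportional unless opposite, $\alpha'$ and $\beta'$ are linearly independent, and $\alpha'+\beta'$ is proportional to neither. Under the Killing-form identification $\fh \cong \fh^*$, each $h_\gamma$ is a nonzero scalar multiple of $\gamma'$, so the three elements $h_\alpha, h_\beta, h_{\alpha+\beta}$ all lie in a $2$-dimensional subspace of $\fh$ spanned by $h_\alpha$ and $h_\beta$, and the expansion of $h_{\alpha+\beta}$ in this basis must have both coefficients nonzero. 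Multiplying by $t^k$ transfers this identity to $\fh t^k$.

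For the first case $i_\alpha > i_\beta$, we have $h_\beta t^k \in \spanset_{i_\beta}^k \subseteq \spanset_{i_\alpha - 1}^k$ while $h_\alpha t^k \notin \spanset_{i_\alpha - 1}^k$. Reducing $h_{\alpha+\beta} t^k = a h_\alpha t^k + b h_\beta t^k$ modulo $\spanset_{i_\alpha - 1}^k$ gives a nonzero residue (namely $a \cdot h_\alpha t^k \pmod{\spanset_{i_\alpha - 1}^k}$), while the right-hand side still lies in $\spanset_{i_\alpha}^k$. Thus $i_{\alpha+\beta} = i_\alpha$, i.e.\ $m_k(\alpha+\beta) = m_k(\alpha)$. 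For the second case $i_\alpha = i_\beta$, both summands already lie in $\spanset_{i_\alpha}^k$, so their linear combination does too, yielding $i_{\alpha+\beta} \leq i_\alpha$, i.e.\ $m_k(\alpha+\beta) \geq m_k(\alpha)$.

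The only step appealing to external Lie theory is the identity $h_{\alpha+\beta} = a h_\alpha + b h_\beta$ with $a, b \neq 0$; I do not foresee it as a serious obstacle since it is a direct consequence of the standard theory of simple Lie algebras. The remainder is a clean compatibility check between the root space decomposition~\eqref{eq:aff.root.decomp} and the flag~\eqref{eq:flag}, paralleling the proof of Corollary~\ref{cor:max.im.rule} but with the max-criterion replaced by the min-criterion of Lemma~\ref{lemma:mk}.
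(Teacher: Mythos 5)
Your proof is correct and follows essentially the same route as the paper, whose entire argument is that both claims follow from Lemma~\ref{lemma:mk} together with the linearity $h_{\alpha+\beta}=h_\alpha+h_\beta$; you simply spell out the details (the translation of the order on $\imx$ into flag indices, and the fact that both coefficients in $h_{\alpha+\beta}=a h_\alpha+b h_\beta$ are nonzero, which the paper leaves implicit in its normalization convention).
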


\begin{proof}
Both results follow immediately from Lemma~\ref{lemma:mk}, since $h_{\alpha+\beta}=h_\alpha+h_\beta$.
\end{proof}

The following result is standard, cf.~\cite[Claim 2.35]{NT}:

\begin{lemma}\label{lemma:gamma.difference}
Assume that roots $\alpha, \beta, \alpha', \beta' \in \wDelta^{+,\re}$ satisfy $\alpha + \beta = \alpha' + \beta'$ and

$\mathrm{(a)}$ $\alpha + \beta \not\in \wDelta^{+,\im}$

\noindent
or

$\mathrm{(b)}$ $\alpha + \beta \in \wDelta^{+,\im}$, but $(\alpha,\beta') \neq 0$.

\noindent
Then one of the following four cases must hold:
$$
  \alpha + \gamma = \alpha' \quad \text{and} \quad  \beta - \gamma = \beta'
$$
or
$$
  \alpha - \gamma = \alpha' \quad \text{and} \quad \beta + \gamma = \beta'
$$
or
$$
  \alpha + \gamma = \beta' \quad \text{and} \quad \beta - \gamma = \alpha'
$$
or
$$
  \alpha - \gamma = \beta' \quad \text{and} \quad \beta + \gamma = \alpha'
$$
with $\gamma \in \wDelta^{+}\cup \{0\}$.
\end{lemma}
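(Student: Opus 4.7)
The plan is to show that at least one of the differences $\alpha' - \alpha$ or $\beta' - \alpha$ lies in $\wDelta \cup \{0\}$, at which point a sign choice produces one of the four listed cases with $\gamma \in \wDelta^+ \cup \{0\}$. If $\alpha = \alpha'$ then automatically $\beta = \beta'$ and Case 1 holds with $\gamma = 0$, so I will assume all four roots are pairwise distinct in the nontrivial matching.

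First I would invoke the $\fsl_2$-triple structure attached to any real root: for $\alpha \in \wDelta^{+,\re}$, the elements $(e_\alpha, f_\alpha, h_\alpha)$ generate a copy of $\fsl_2 \subset \widehat{\fg}$, and for any $\mu \in \wDelta$ the $\alpha$-string through $\mu$ is an unbroken chain of roots whose length is governed by $\langle \mu, \alpha^\vee \rangle = 2(\mu,\alpha)/(\alpha,\alpha)$. In particular $\mu - \langle \mu, \alpha^\vee \rangle \alpha \in \wDelta$, and whenever $(\alpha, \mu) \neq 0$ at least one of $\mu - \alpha, \mu + \alpha$ is a root. One caveat to keep in mind is that an imaginary root $m\delta$ may appear inside such a string, so the chain is ``real except possibly at multiples of $\delta$''.

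Under hypothesis (a), $\alpha + \beta$ is real and hence all four roots lie in the real-root subsystem $\wDelta^{\re}$, which modulo $\BZ\delta$ is isomorphic to the finite root system $\Delta$. A finite root system has the property that any two decompositions of a fixed root into a sum of two roots are related by adding or subtracting another root, as one verifies by inspection in each rank-$\leq 2$ irreducible subsystem ($A_1 \times A_1$, $A_2$, $B_2$, $G_2$). Lifting back to $\wDelta^{\re}$ establishes $\alpha' - \alpha \in \wDelta^{\re} \cup \{0\}$ or $\beta' - \alpha \in \wDelta^{\re} \cup \{0\}$, yielding one of the four cases with $\gamma = |\alpha' - \alpha|$ or $\gamma = |\beta' - \alpha|$.

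Under hypothesis (b), where $\alpha + \beta = k\delta$ is imaginary, the crucial input is $(\alpha, \beta') \neq 0$. By the string argument above applied to the $\alpha$-string through $\beta'$, the non-vanishing of $(\alpha,\beta')$ forces at least one of $\beta' \pm \alpha$ to be a genuine \emph{real} root (rather than falling on $\BZ\delta$), and a short computation using the relations $\alpha' = k\delta - \beta'$ and $\beta = k\delta - \alpha$ determines which of Cases~3 or 4 occurs. The main obstacle is precisely this imaginary case: if one dropped the non-degeneracy condition, the difference $\beta' - \alpha$ could in principle be a nonzero multiple of $\delta$ — an imaginary root which would spoil the required conclusion $\gamma \in \wDelta^+ \cup \{0\}$ — and one must verify carefully that $(\alpha,\beta') \neq 0$ rules out this pathology by placing $\beta' - \alpha$ strictly inside the real part of the $\alpha$-string.
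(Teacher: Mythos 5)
Your overall strategy---produce $\gamma$ by showing that one of the differences among $\{\alpha,\beta\}\times\{\alpha',\beta'\}$ lies in $\wDelta\cup\{0\}$ via root strings---is the same as the paper's, but two of your steps have genuine gaps. In case (a) you reduce to the claim that in a finite root system any two decompositions $\mu=\alpha+\beta=\alpha'+\beta'$ into roots are related by adding or subtracting a root, and you propose to verify this ``by inspection in each rank-$\leq 2$ irreducible subsystem.'' That inspection does not suffice: the four roots satisfy only one linear relation and in general span a rank-$3$ sublattice (e.g.\ $\alpha_1+(\alpha_2+\alpha_3)=(\alpha_1+\alpha_2)+\alpha_3$ in $A_3$), so a rank-$2$ case check proves nothing about the configurations that actually occur. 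The paper instead notes that $(\alpha,\alpha')+(\alpha,\beta')+(\beta,\alpha')+(\beta,\beta')=(\alpha+\beta,\alpha+\beta)>0$, so at least one of the four pairings is positive, and for that pair---say $(\alpha,\alpha')>0$---the reflection $s_\alpha(\alpha')=\alpha'-k\alpha$ with $k\in\BZ_{>0}$ forces $\alpha'-\alpha\in\wDelta\sqcup\{0\}$. This one-line positivity argument is the missing ingredient; it is what actually handles the rank-$3$ configurations.

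In case (b) you analyze only the $\alpha$-string through $\beta'$ and assert the outcome is ``Case 3 or 4.'' That is correct only when $(\alpha,\beta')>0$. When $(\alpha,\beta')<0$ the string argument gives that $\beta'+\alpha$ is a root, which is not one of the four listed differences; the fix (implicit in the paper's ``the above argument can be applied without changes'') is that $(\alpha+\beta,\beta')=(k\delta,\beta')=0$ then forces $(\beta,\beta')>0$, so $\beta'-\beta=\alpha-\alpha'\in\wDelta\sqcup\{0\}$, landing you in Case 1 or 2, not 3 or 4. Separately, your stated worry that $\gamma\in\BZ_{>0}\delta$ would ``spoil the required conclusion $\gamma\in\wDelta^{+}\cup\{0\}$'' misreads the statement: positive multiples of $\delta$ lie in $\wDelta^{+}$, so imaginary $\gamma$ is explicitly allowed here (that case is only separated out later, in the proof of Theorem~\ref{thm:convexity}, via Claims~\ref{claim:imaginary.claim.delta.movement} and~\ref{claim:imaginary.claim.delta.movement.imaginary}). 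The true role of hypothesis (b) is to guarantee that not all four pairings vanish when $(\alpha+\beta,\alpha+\beta)=0$; without it the lemma fails, e.g.\ $\alpha_1+(\delta-\alpha_1)=\alpha_3+(\delta-\alpha_3)$ in type $A_3^{(1)}$ admits no such $\gamma$.
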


\begin{proof}
Assume first that $\alpha + \beta \not\in \wDelta^{+,\im}$. If $(\alpha,\alpha') > 0$, then the reflection
$s_{\alpha}(\alpha') = \alpha' - k\alpha$ is a root, for some $k\in \BZ_{>0}$. This implies that
$\alpha' - \alpha\in \Delta\sqcup \{0\}$, proving the claim. The same argument applies if $(\alpha,\beta')>0$,
$(\beta,\alpha')>0$, or $(\beta,\beta')>0$. However, one of these inequalities must hold as
$(\alpha+\beta, \alpha'+\beta') = (\alpha+\beta,\alpha+\beta)>0$.

In the case (b), we have $(\alpha+ \beta,\alpha' + \beta') = (\alpha + \beta, \alpha + \beta) = 0$,
but $(\alpha,\beta') \neq 0$. Hence, the above argument can be applied without changes.
\end{proof}

   %%%%%%%%%%%%%%%%%%%%%%%%%%%%%%%%%%%%%%%%%%%%%%%%%%%%%%%%%%%%%%%%%%%%%%%%%%%%%%%

\subsection{Proof of convexity}
\

In this subsection, we establish the convexity~\eqref{eqn:convex.2}.

\begin{proof}[Proof of Theorem~\ref{thm:convexity}]
As noted before, Lemma~\ref{lemma:maxleftset} implies that $\max(\leftfactorsset_{\mu}) < \mu$ for any
$\mu \in \wDelta^{+,\ext}$. Thus it remains to show that $\mu < \min(\rightfactorsset_\mu)$. Evoking
Lemma~\ref{lemma:equiv.to.standard.fac} and Lemma~\ref{lemma:maxleftset}, the latter is equivalent to
\begin{equation}\label{eq:convexity-rephrased}
  \max(\leftfactorsset_{\mu}) < \min(\rightfactorsset_{\mu}).
\end{equation}
We show this by induction on the height of $\mu$ (with the conventions $|(k\delta,i)|=|k\delta|$).
The base case $|\mu|=2$ is obvious. For the inductive step, let $\SL(\mu) = \SL(\alpha)\SL(\beta)$ be the standard factorization, so that
$\alpha = \max(\leftfactorsset_\mu)$ by Lemma~\ref{lemma:maxleftset}. Pick any
$\alpha' \in \leftfactorsset_\mu, \beta' \in \rightfactorsset_\mu$ with $\alpha' + \beta' = \alpha + \beta$.
We can assume that $\alpha' < \alpha$ and also that $\beta' < \beta$ as otherwise $\alpha < \mu < \beta \leq \beta'$.
Let us assume the contrary to~\eqref{eq:convexity-rephrased}:
  $$\beta' < \alpha.$$

We start with several general results:

\begin{claim}\label{claim:imaginary.claim.delta.movement}
For any $\alpha,\beta,\alpha + \beta \in \wDelta^{+,\re}$ and $k\in \BZ_{>0}$ such that
$\alpha'=\alpha + k\delta$ and $\beta'=\beta - k\delta$ are also positive roots, if
Theorem~\ref{thm:convexity} holds for all roots of height $<|\alpha + \beta|$, then
we cannot have $\alpha',\beta'<\alpha,\beta$.
\end{claim}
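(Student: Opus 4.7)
The plan is to assume, for contradiction, that $\alpha', \beta' < \alpha, \beta$; without loss of generality $\alpha < \beta$, so both $\alpha', \beta' < \alpha$.

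\emph{Step 1 (monotonicity).} I would apply Lemma~\ref{lemma:weak.monotonicity} twice; its hypothesis holds since the relevant heights $\leq |\alpha + k\delta|$ and $\leq |\beta|$ are both strictly less than $|\alpha + \beta|$. From $\alpha > \alpha + k\delta$ the $\alpha$-chain is strictly decreasing, so by part~(3) of the lemma, $M_j(\alpha) < \alpha$ for all relevant $j$; from $\beta' < \beta' + k\delta = \beta$ the $\beta$-chain is strictly increasing, so $M_j(\beta) > \beta$. In particular, $M_k(\alpha) < \alpha < \beta < M_k(\beta)$, and Corollary~\ref{cor:max.im.rule} yields $M_k(\nu) = M_k(\beta)$, where $\nu := \alpha + \beta$.

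\emph{Step 2 (convexity at $\alpha + \beta'$).} A short case check on the signs of $\alpha_0, \beta_0, \nu_0 \in \Delta$ (using that $\beta' \in \wDelta^+$ forces $n_\beta \geq k$) verifies that $\alpha + \beta' = \nu - k\delta \in \wDelta^{+,\re}$ with $|\alpha + \beta'| < |\nu|$, so convexity applies at $\alpha + \beta'$. The real-real decomposition $\alpha + \beta' = \beta' + \alpha$ places $\beta' \in \leftfactorsset_{\alpha + \beta'}$ and $\alpha \in \rightfactorsset_{\alpha + \beta'}$, yielding $\beta' < \alpha + \beta' < \alpha$. Since $\alpha + \beta'$ and $\nu$ share finite part $\nu_0$, $M_k(\alpha + \beta') = M_k(\nu) = M_k(\beta)$. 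Analogous reasoning applied to the intermediate real roots $\beta' + j\delta$ for $j = 1, \ldots, k-1$ (each of height strictly less than $|\nu|$) produces a chain of convexity inequalities to be combined with Step~1.

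\emph{Step 3 (contradiction).} The contradiction is to come from the generalized Leclerc algorithm at $\nu$ (Proposition~\ref{prop:generalized.Leclerc.algo}), comparing the real-real candidate $\SL(\alpha)\SL(\beta)$ with the real-imaginary candidate $\SL(\alpha + \beta')\SL_{i_0}(k\delta)$, where $(k\delta, i_0) = M_k(\beta)$ (ensuring nonzero bracket, and correctly ordered since $\alpha + \beta' < M_k(\beta)$). Combining $\alpha + \beta' < \alpha$ with $M_k(\beta) > \beta$ and the chained convexity inequalities from Step~2, I aim to force an ordering inconsistency with Leclerc maximality. The principal obstacle I foresee is this chaining across the intermediate roots $\beta' + j\delta$: the relative position of each $\beta' + j\delta$ to $\alpha$ is not immediately pinned down, so a careful case analysis on the transition index $j_0$ where $\beta' + j\delta$ crosses $\alpha$ — likely aided by a Jacobi-identity rewriting in the spirit of the proof of Proposition~\ref{prop:im.bracketing} — will be needed to extract the strict contradiction.
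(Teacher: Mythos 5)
Your Step 1 matches the paper's opening: Lemma~\ref{lemma:weak.monotonicity} gives $m_p(\alpha)\leq M_p(\alpha)<\alpha',\beta'<\alpha,\beta<m_p(\beta)\leq M_p(\beta)$ for all $0<p<|\alpha+\beta|/|\delta|$, whence $m_p(\alpha+\beta)=m_p(\alpha)$ and $M_p(\alpha+\beta)=M_p(\beta)$ by Lemma~\ref{lemma:min.im.rule} and Corollary~\ref{cor:max.im.rule}; the first half of your Step 2 (convexity at $\alpha+\beta'$, giving $\beta'<\alpha+\beta'<\alpha$) is also exactly what the paper does. But the proof is not finished: your Step 3 is an announced strategy rather than an argument, and you yourself flag the unresolved ``principal obstacle.'' Moreover, the strategy as stated is unlikely to close. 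The generalized Leclerc algorithm at $\nu=\alpha+\beta$ only yields \emph{lower} bounds $\SL(\nu)\geq \SL(\gamma_1)\SL(\gamma_2)$ for each admissible pair, and both of your candidates $\SL(\alpha)\SL(\beta)$ and $\SL(\alpha+\beta')\SL_{i_0}(k\delta)$ are perfectly consistent lower bounds --- there is no maximality clash to exploit without an upper bound on $\SL(\nu)$, and the convexity statement at $\nu$ itself (which would supply one) is precisely what is not available at height $|\alpha+\beta|$. The proposed chaining over the intermediate roots $\beta'+j\delta$ is likewise not needed in the paper and does not visibly terminate in a contradiction.

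The paper's actual finish is a short trick you are missing: pick a real root $\eta$ with $|\eta|<|\delta|$ and $\eta+(\alpha+\beta')=p\delta$, where $|p\delta|<|\alpha+\beta|$. Since $\alpha+\beta'>\beta'>m_p(\alpha)=m_p(\alpha+\beta)=m_p(\alpha+\beta')$, applying Theorem~\ref{thm:convexity} to the decomposition $p\delta=\eta+(\alpha+\beta')$ at the imaginary root $m_p(\alpha+\beta)$ forces $\eta<m_p(\alpha+\beta)<\alpha+\beta'$; symmetrically, $\alpha+\beta'<\alpha<\beta<M_p(\beta)=M_p(\alpha+\beta)=M_p(\alpha+\beta')$ forces $\alpha+\beta'<M_p(\alpha+\beta)<\eta$. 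Together these give $\eta<m_p(\alpha+\beta)\leq M_p(\alpha+\beta)<\eta$, the desired contradiction --- obtained using convexity only at roots of height $<|\alpha+\beta|$, as the hypothesis permits.
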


\begin{proof}
Assume the contrary: $\alpha',\beta' < \alpha,\beta$. According to Lemma~\ref{lemma:weak.monotonicity}, we have:
\begin{equation*}
  m_p(\alpha) \leq M_p(\alpha) < \alpha', \beta'< \alpha, \beta < m_p(\beta) \leq M_p(\beta)
  \qquad \forall\, 0<p<|\alpha+\beta|/|\delta|.
\end{equation*}
Hence, we have $m_p(\alpha + \beta) = m_p(\alpha)$ and $M_p(\alpha + \beta) = M_p(\beta)$, due to
Lemma~\ref{lemma:min.im.rule} and Corollary~\ref{cor:max.im.rule}. Pick $\eta\in \wDelta^{+,\re}$ with $|\eta|<|\delta|$
and such that $\alpha + \beta' + \eta=p\delta$ with $|p\delta| < |\alpha + \beta|$.
Applying Theorem~\ref{thm:convexity} to $\alpha,\beta',\alpha+\beta'$, we obtain
  $m_p(\alpha + \beta) < \beta' < \alpha + \beta' < \alpha < M_p(\alpha + \beta)$.
Applying Theorem~\ref{thm:convexity} to $\eta,\alpha + \beta',m_p(\alpha + \beta')=m_p(\alpha + \beta)$, we get
  $\eta <m_p(\alpha + \beta) = m_p(\alpha) < \alpha + \beta'$,
while applying Theorem~\ref{thm:convexity} to $\eta,\alpha + \beta',M_p(\alpha + \beta')=M_p(\alpha + \beta)$, we get
  $\alpha + \beta' < M_p(\alpha + \beta) = M_p(\beta) < \eta$.
The resulting $\eta<m_p(\alpha + \beta)\leq M_p(\alpha + \beta) < \eta$ yields a contradiction.
\end{proof}

\begin{claim}\label{claim:imaginary.claim.delta.movement.imaginary}
If $\alpha + \beta \in \wDelta^{+,\im}$, $\alpha' = \alpha + k\delta$, $\beta' = \beta - k\delta$ with
$\alpha,\beta,\alpha+k\delta,\beta - k\delta\in \wDelta^{+,\re}$, $k\ne 0$, and Theorem~\ref{thm:convexity} holds
for all roots of height $<|\alpha + \beta|$, then we cannot have $\alpha',\beta' < \alpha,\beta$.
\end{claim}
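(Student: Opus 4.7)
The plan is to mirror the proof of Claim~\ref{claim:imaginary.claim.delta.movement}, while exploiting a rigidity specific to the imaginary case: since $\alpha+\beta = N\delta$ for some $N\in \BZ_{>0}$, the real roots $\alpha,\beta\in \wDelta^{+,\re}$ are forced to have opposite finite parts. This will replace the use of Corollary~\ref{cor:max.im.rule} and Lemma~\ref{lemma:min.im.rule}, which were the decisive tools in the previous claim but are unavailable here since they require $\alpha+\beta \in \wDelta^{+,\re}$.

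Assume for contradiction that $\alpha',\beta' < \alpha,\beta$; without loss of generality $k>0$, the case $k<0$ following by swapping the roles of $\alpha$ and $\beta$. Applying Lemma~\ref{lemma:weak.monotonicity} to $\alpha$ with $\alpha+k\delta = \alpha'<\alpha$ (``greater than'' case) yields the chain $\alpha > \alpha+\delta > \cdots > \alpha' > M_1(\alpha)$, and applying it to $\widetilde\beta := \beta-k\delta$ with $\widetilde\beta < \widetilde\beta+k\delta = \beta$ (``less than'' case) yields $\beta < M_1(\widetilde\beta) = M_1(\beta)$; both applications are justified by the induction hypothesis on height. The key identity is $M_1(\alpha) = M_1(\beta)$: writing $\alpha = (\alpha_f, a)$, so that $\beta = (-\alpha_f, N-a)$, and $\sb[\SL_j(\delta)] = h_j t$ with $h_j \in \fh$ for each $j$, Lemma~\ref{lemma:Mk} identifies $M_1(\alpha)$ with the largest $(\delta, j)$ in the $\SL$-order such that $[\sb[\SL_j(\delta)],\sb[\SL(\alpha)]] \neq 0$; a direct commutator computation shows this bracket is a nonzero scalar multiple of $\alpha_f(h_j)\, e_{\alpha_f} t^{a+1}$, while its analogue for $\beta$ is a multiple of $-\alpha_f(h_j)\, e_{-\alpha_f} t^{N-a+1}$, vanishing simultaneously. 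Combining, $\beta < M_1(\beta) = M_1(\alpha) < \alpha'$, contradicting the assumption $\alpha' < \beta$.

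The main obstacle, compared to Claim~\ref{claim:imaginary.claim.delta.movement}, is that the previous proof's final contradiction leveraged $m_p(\alpha+\beta)$ and $M_p(\alpha+\beta)$, which are not defined when $\alpha+\beta$ is imaginary; the application of Corollary~\ref{cor:max.im.rule} and Lemma~\ref{lemma:min.im.rule} that computed them is likewise unavailable. The resolution is that the imaginary constraint itself imposes the direct symmetry $M_1(\alpha) = M_1(\beta)$ at the level of the Cartan pairings $\alpha_f(h_j)$, and this identity alone suffices to close the argument, bypassing any reference to $M_p(\alpha+\beta)$ entirely.
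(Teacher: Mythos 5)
Your proposal is correct and follows essentially the same route as the paper: apply Lemma~\ref{lemma:weak.monotonicity} to get $M_1(\alpha)<\alpha'<\alpha$ and $\beta'<\beta<M_1(\beta)$, then contradict these via $M_1(\alpha)=M_1(\beta)$, which holds because $\alpha+\beta\in\wDelta^{+,\im}$ forces opposite finite parts. The only differences are cosmetic: you reduce to $k>0$ by symmetry where the paper treats both signs directly, and you spell out the Cartan-pairing computation behind $M_1(\alpha)=M_1(\beta)$ that the paper leaves implicit.
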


\begin{proof}
Assume the contrary: $\alpha',\beta' < \alpha,\beta$. If $k>0$, then according to Lemma~\ref{lemma:weak.monotonicity},
we have $M_1(\alpha) < \alpha',\beta'<\alpha,\beta < M_1(\beta)$, a contradiction with
$M_1(\alpha) = M_1(\beta)$ due to $\alpha + \beta \in \wDelta^{+,\im}$. If $k<0$, then we likewise have
$M_1(\beta) < \alpha',\beta'<\alpha,\beta < M_1(\alpha)$, which again contradicts the equality $M_1(\alpha) = M_1(\beta)$.
\end{proof}

The next two claims cover the cases when exactly one of $\alpha,\alpha',\beta,\beta'$ is in $\imx$.

\begin{claim}\label{claim:imaginary.claim.one}
If $\alpha,\beta,\alpha + \beta \in \wDelta^{+,\re}$ and Theorem~\ref{thm:convexity} holds for all roots of height
$<|\alpha + \beta|$, then we cannot have $\alpha + \beta - k\delta,m_k(\alpha+\beta) < \alpha,\beta$ whenever
$|k\delta| < |\alpha + \beta|$.
\end{claim}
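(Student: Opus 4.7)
The plan is to derive a contradiction by combining two opposing bounds on $\gamma := \alpha + \beta$: a lower bound $\gamma > \min(\alpha,\beta)$ coming from the generalized Leclerc algorithm, and an upper bound $\gamma < \min(\alpha,\beta)$ obtained from the monotonicity of Lemma~\ref{lemma:weak.monotonicity} via a case split on the direction of $\gamma$ relative to $M_1(\gamma)$.

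First I would observe that, since $\alpha+\beta \in \wDelta^{+,\re}$ and all real affine root spaces are one-dimensional, the bracket $[\sb[\SL(\alpha)], \sb[\SL(\beta)]]$ is a nonzero multiple of $\sb[\SL(\gamma)]$. Assuming without loss of generality that $\alpha < \beta$, the generalized Leclerc algorithm~\eqref{eq:generalized Leclerc} then yields $\SL(\gamma) \geq \SL(\alpha)\SL(\beta) > \SL(\alpha)$, and hence $\gamma > \alpha$.

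Next I would invoke Lemma~\ref{lemma:weak.monotonicity} for $\gamma$ and consider its two monotonicity alternatives. In the first case $\gamma < M_1(\gamma)$: then $\gamma$ is smaller than every element of $\convexsetre(\gamma)$ of height less than $|\gamma|$, so in particular $\gamma < m_k(\gamma)$; combined with the standing assumption $m_k(\gamma) < \alpha$ this yields $\gamma < \alpha$, contradicting the previous paragraph. In the opposite case $\gamma > M_1(\gamma)$: the monotonicity chain $\gamma < \gamma - \delta < \gamma - 2\delta < \cdots < \gamma - k\delta$ forces $\gamma < \gamma - k\delta$, and the assumption $\gamma - k\delta < \alpha$ again produces $\gamma < \alpha$, the same contradiction.

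The main obstacle is justifying the application of Lemma~\ref{lemma:weak.monotonicity} to $\gamma$ itself: that lemma formally demands Theorem~\ref{thm:convexity} at heights up to $|\gamma + k\delta|$, while the hypothesis of the present claim only supplies convexity at heights strictly below $|\gamma|$. I would circumvent this by instead applying Lemma~\ref{lemma:weak.monotonicity} to $\gamma - k\delta$ (which has height $<|\gamma|$) with parameter $k-1$, thereby obtaining the monotonicity chain as far as $\gamma - \delta$, and then extending the last step to $\gamma$ by invoking the sum rules of Corollary~\ref{cor:max.im.rule} and Lemma~\ref{lemma:min.im.rule} together with the convexity of $\alpha$ and $\beta$ — all of which live at heights $<|\gamma|$ — in close parallel with the bootstrapping used in the proof of Claim~\ref{claim:imaginary.claim.delta.movement}.
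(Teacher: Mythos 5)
Your opening observation is fine: $\gamma:=\alpha+\beta>\min\{\alpha,\beta\}$ does follow from the generalized Leclerc algorithm. The problem is everything after that. Both branches of your dichotomy rest on applying Lemma~\ref{lemma:weak.monotonicity} to $\gamma$ itself --- in Case A you need ``(2)$\Rightarrow$(3)'' for the base point $\gamma$ to get $\gamma<m_k(\gamma)$, and in Case B you need ``(2)$\Rightarrow$(1)'' to get $\gamma<\gamma-k\delta$. Either application requires Theorem~\ref{thm:convexity} for roots of height up to $|\gamma+k'\delta|>|\gamma|$, whereas the hypothesis of the claim only supplies it for heights strictly below $|\gamma|$. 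You correctly flag this obstacle, but the proposed repair does not close it: applying the lemma to $\gamma-k\delta$ with parameter $k-1$ only orders the chain up through $\gamma-\delta$ against $M_1(\gamma)$ and against elements of $\convexsetre(\gamma)$ of height $<|\gamma-\delta|$; it says nothing about where $\gamma$ itself sits. The ``last step'' you defer --- placing $\gamma$ relative to $m_k(\gamma)$ or to $\gamma-k\delta$ --- is exactly an instance of the hard inequality $\gamma<\min(\rightfactorsset_\gamma)$ at height $|\gamma|$, i.e.\ the very statement the induction is in the middle of proving and for which this claim is a stepping stone. Neither Corollary~\ref{cor:max.im.rule} nor Lemma~\ref{lemma:min.im.rule} helps here: they transfer $m_k/M_k$ data between $\alpha$, $\beta$, and $\alpha+\beta$, but never order $\gamma$ against members of $\convexsetre(\gamma)$; and the bootstrapping in Claim~\ref{claim:imaginary.claim.delta.movement} likewise only ever compares the summands (and $\alpha+\beta'$, of smaller height) with such elements, never $\alpha+\beta$ itself. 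The only unconditional information at height $|\gamma|$ is the easy half $\max(\leftfactorsset_\gamma)<\gamma$ from Lemma~\ref{lemma:maxleftset}, which gives inequalities in the wrong direction for your argument.

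The paper avoids this circularity by never comparing $\gamma$ with anything through monotonicity. After reducing to $m_k(\alpha)\leq m_k(\alpha+\beta)<\alpha$ via Lemma~\ref{lemma:min.im.rule}, it splits on $|k\delta|$ versus $|\alpha|$ and introduces the auxiliary root $\bar\alpha=k\delta-\alpha$ (resp.\ $\alpha-k\delta$), deriving the contradiction from Theorem~\ref{thm:convexity} applied to the decompositions $\beta=(\alpha+\beta-k\delta)+\bar\alpha$ and $k\delta=\alpha+\bar\alpha$ (resp.\ $\alpha+\beta-k\delta=(\alpha-k\delta)+\beta$) together with Lemma~\ref{lemma:weak.monotonicity} applied to $\alpha$ --- all at heights strictly below $|\alpha+\beta|$. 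If you want to salvage your plan, you must similarly push every comparison down to $\alpha$, $\beta$, $k\delta-\alpha$, or $\alpha-k\delta$; as written, the proof has a genuine gap.
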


\begin{proof}
Assume the contrary, that is, $\alpha + \beta - k\delta ,m_k(\alpha + \beta) < \alpha,\beta$ for some $k$ with
$|k\delta| < |\alpha + \beta|$. According to Lemma~\ref{lemma:min.im.rule}, we have
$m_k(\alpha + \beta)\geq \min\{m_k(\alpha),m_k(\beta)\}$. We can assume without loss of
generality that $m_{k}(\alpha) \leq m_k(\beta)$, so that $m_k(\alpha)<\alpha$.

Let us first consider the case $|k\delta| > |\alpha|$. Applying Theorem~\ref{thm:convexity} to
$\bar{\alpha} = k\delta - \alpha$ and $\alpha$, we get $\bar{\alpha} < m_k(\alpha)< \alpha$, and so
$\bar{\alpha} < m_k(\alpha) \leq m_k(\alpha + \beta) < \beta$. But then $(\alpha + \beta - k\delta) + \bar{\alpha} = \beta$
while $(\alpha + \beta - k\delta), \bar{\alpha} < \beta$, which contradicts Theorem~\ref{thm:convexity}.

Let us now consider the case $|k\delta| < |\alpha|$. Applying Theorem~\ref{thm:convexity} to $\alpha - k\delta, \beta$
and using $\alpha+\beta-k\delta<\beta$, we get $\alpha - k\delta < \alpha + \beta -k\delta$. Thus,
$\alpha-k\delta<\alpha+\beta-k\delta<\alpha$, and we get $\alpha - k\delta < \alpha < m_k(\alpha)$ by
Lemma~\ref{lemma:weak.monotonicity}. But then $m_k(\alpha)>\alpha>m_k(\alpha+\beta)$, a contradiction
with $m_k(\alpha + \beta)\geq \min\{m_k(\alpha),m_k(\beta)\}=m_k(\alpha)$.
\end{proof}

\begin{claim}\label{claim:imaginary.claim.two}
If $\alpha,\beta, \alpha + \beta \in \wDelta^{+,\re}$ and Theorem~\ref{thm:convexity} holds for all roots of height
$<|\alpha + \beta|$, then we cannot have $\alpha,\beta < M_k(\alpha + \beta) ,\alpha + \beta - k\delta$ whenever
$|k\delta| < |\alpha + \beta|$.
\end{claim}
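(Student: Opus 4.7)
\medskip
\noindent
\textbf{Proof plan.} This claim is the $M_k$-counterpart of Claim~\ref{claim:imaginary.claim.one}, so my plan is to mirror that argument with $m_k$ replaced by $M_k$ and the inequalities reversed throughout. I would start by assuming the contrary: there exists $k$ with $|k\delta| < |\alpha+\beta|$ such that $\alpha,\beta < M_k(\alpha+\beta),\alpha+\beta-k\delta$. Corollary~\ref{cor:max.im.rule} applied to the real-root decomposition $\alpha+\beta$ gives $M_k(\alpha+\beta) \leq \max\{M_k(\alpha),M_k(\beta)\}$; without loss of generality I take $M_k(\alpha) \geq M_k(\beta)$, so that $M_k(\alpha) \geq M_k(\alpha+\beta) > \alpha,\beta$, and in particular $M_k(\alpha) > \alpha$.

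The argument then splits according to whether $|k\delta|$ exceeds $|\alpha|$ or not. When $|k\delta| > |\alpha|$, the root $\bar{\alpha} := k\delta - \alpha$ lies in $\wDelta^{+,\re}$, and applying the convexity (the fourth bullet of Remark~\ref{rem:convexity-rephrased}) to the decomposition $\alpha + \bar{\alpha} = k\delta$ pins the position of $M_k(\alpha)$ strictly between $\alpha$ and $\bar{\alpha}$. Since $M_k(\alpha) > \alpha$, this forces $\alpha < M_k(\alpha) < \bar{\alpha}$, so $\bar{\alpha} > M_k(\alpha) \geq M_k(\alpha+\beta) > \beta$. But $(\alpha+\beta-k\delta) + \bar{\alpha} = \beta$ is then a real-root decomposition of $\beta$ in which both summands strictly exceed $\beta$ (using the hypothesis $\alpha+\beta-k\delta > \beta$), contradicting the first bullet of Remark~\ref{rem:convexity-rephrased}.

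When $|k\delta| < |\alpha|$, I would instead apply convexity to the decomposition $(\alpha-k\delta) + \beta = \alpha+\beta-k\delta$: the sum must lie strictly between the summands in the ordering, and since $\alpha+\beta-k\delta > \beta$ by hypothesis, this forces $\alpha-k\delta > \alpha+\beta-k\delta > \alpha$, and in particular $\alpha-k\delta > \alpha$. Invoking Lemma~\ref{lemma:weak.monotonicity} on $\alpha-k\delta$, together with the identity $M_j(\alpha-k\delta) = M_j(\alpha)$ for all $j$ (the two roots have identical Cartan content), converts this inequality into $\alpha > M_1(\alpha)$; the equivalence (2)$\iff$(3) of the same lemma (applied to $\alpha$ with a sufficiently small parameter so as to remain within the range of the inductive hypothesis) then yields $M_k(\alpha) < \alpha$, contradicting the conclusion $M_k(\alpha) > \alpha$ obtained at the outset.

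The main obstacle I anticipate is the careful height-bookkeeping, so that every invocation of Lemma~\ref{lemma:weak.monotonicity} stays within the range covered by the inductive hypothesis on Theorem~\ref{thm:convexity}, and the verification that $\bar{\alpha}$ and $\alpha-k\delta$ are genuine positive real roots in the respective cases. The rest is an essentially mechanical dualisation of the proof of Claim~\ref{claim:imaginary.claim.one}.
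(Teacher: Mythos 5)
Your proposal is correct and follows essentially the same route as the paper's proof: the same reduction via Corollary~\ref{cor:max.im.rule} to the case $M_k(\alpha)\geq M_k(\beta)$, the same case split on $|k\delta|$ versus $|\alpha|$, the same use of $\bar{\alpha}=k\delta-\alpha$ and the decomposition $(\alpha+\beta-k\delta)+\bar{\alpha}=\beta$ in the first case, and the same appeal to Lemma~\ref{lemma:weak.monotonicity} to derive $M_k(\alpha)<\alpha$ in the second. The only difference is that you unpack the monotonicity lemma through $M_1(\alpha)$ a bit more explicitly than the paper does, which is harmless.
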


\begin{proof}
Assume the contrary: $\alpha,\beta < M_k(\alpha + \beta) ,\alpha + \beta - k\delta$ for some
$k$ with $|k\delta| < |\alpha + \beta|$. According to Corollary~\ref{cor:max.im.rule}, we have
$M_k(\alpha + \beta)\leq \max\{M_k(\alpha),M_k(\beta)\}$. We can assume without loss of
generality that $M_{k}(\alpha) \geq M_k(\beta)$, so that $\alpha<M_k(\alpha)$.

Let us first consider the case $|k\delta| > |\alpha|$. Applying Theorem~\ref{thm:convexity} to
$\bar{\alpha} = k\delta - \alpha$ and $\alpha$, we get $\alpha < M_k(\alpha)< \bar{\alpha}$, and so
$\alpha < M_k(\alpha + \beta) \leq M_k(\alpha) < \bar{\alpha}$. But then $(\alpha + \beta - k\delta) + \bar{\alpha} = \beta$,
while $(\alpha + \beta - k\delta), \bar{\alpha} > \beta$, which contradicts Theorem~\ref{thm:convexity}.

Let us now consider the case $|k\delta| < |\alpha|$. Applying Theorem~\ref{thm:convexity} to
$\alpha-k\delta,\beta$ and using $\alpha+\beta-k\delta>\beta$, we get $\alpha - k\delta > \alpha + \beta - k\delta$.
Thus, $\alpha-k\delta>\alpha+\beta-k\delta>\alpha$, and we get $\alpha - k\delta > \alpha > M_k(\alpha)$
by Lemma~\ref{lemma:weak.monotonicity}. But then $M_k(\alpha)<\alpha<M_k(\alpha+\beta)$, a contradiction
with $M_k(\alpha + \beta)\leq \max\{M_k(\alpha),M_k(\beta)\}=M_k(\alpha)$.
\end{proof}

We cannot have more than two of $\alpha,\alpha',\beta,\beta'$ in $\imx$. If two of these roots are imaginary,
then we claim that $\alpha < \beta'$. To this end, we consider four cases:
\begin{itemize}[leftmargin=0.5cm]

\item $\alpha,\alpha'\in\wDelta^{+,\re}$, $\beta,\beta' \in \imx$.

Since $\beta,\beta' \in \convexsetre(\alpha)=\convexsetre(\alpha')$ and $\alpha < \beta$, we then have
$\alpha < \beta'$ by Lemma~\ref{lemma:weak.monotonicity}.

\item $\alpha,\alpha'\in\imx$, $\beta,\beta' \in \wDelta^{+,\re}$.

As $\alpha,\alpha' \in \convexsetre(\beta')=\convexsetre(\beta)$ and $\alpha' < \beta'$, we must have
$\alpha < \beta'$ by Lemma~\ref{lemma:weak.monotonicity}.

\item $\alpha,\beta' \in \wDelta^{+,\re}$, $\beta,\alpha' \in \imx$.

If $\beta' < \alpha$, then we would have $\alpha' < \beta' < \alpha < \beta$, which contradicts
Lemma~\ref{lemma:weak.monotonicity} as $\alpha',\beta \in \convexsetre(\beta') = \convexsetre(\alpha)$.
Therefore, we must have $\beta' > \alpha$.

\item $\alpha',\beta \in \wDelta^{+,\re}$, $\beta',\alpha \in \imx$.

If $\beta' < \alpha$, then we would have $\alpha' < \beta' < \alpha < \beta$, which contradicts
to Lemma~\ref{lemma:weak.monotonicity} as $\alpha,\beta' \in \convexsetre(\beta) = \convexsetre(\alpha')$.
Therefore, we must have $\beta' > \alpha$.

\end{itemize}

Returning to the proof of Theorem~\ref{thm:convexity}, if $\alpha+\beta=\alpha'+\beta'$ is real, then we can
apply Lemma~\ref{lemma:gamma.difference}(a). On the other hand, if $\alpha'+\beta'=k\delta$ with $k>0$ and
$\alpha'<\beta'$ are real roots, then it suffices to show that $\mu=M_k(\beta')$ is $<\beta'$.
Then, we have $[[\sb[\SL(\alpha)],\sb[\SL(\beta)]],\sb[\SL(\beta')]] \neq 0$ by Corollary~\ref{cor:invariance.to.im.bracketing}.
Therefore, $[h_{\alpha}t^k,e_{\beta'}] \neq 0$, which implies that $(\alpha,\beta') \neq 0$, hence,
we meet the requirements of Lemma~\ref{lemma:gamma.difference}(b).

Applying Lemma~\ref{lemma:gamma.difference}, we get the four different cases to consider. If $\gamma=k\delta$ with $k>0$,
then we cannot have $\alpha',\beta' < \alpha,\beta$ in all cases, due to Claim~\ref{claim:imaginary.claim.delta.movement}
if $\alpha+\beta\in \wDelta^{+,\re}$ or Claim~\ref{claim:imaginary.claim.delta.movement.imaginary} if $\alpha+\beta\in \wDelta^{+,\im}$.
If $\gamma \in \wDelta^{+,\re}$, then evoking Claims~\ref{claim:imaginary.claim.one}--\ref{claim:imaginary.claim.two}
and the above four bullets,
we can further assume that $\alpha,\beta,\alpha',\beta'\in \wDelta^{+,\re}$.
The analysis in these cases is analogous to that of~\cite[Proof of Proposition 2.34]{NT}:
\begin{itemize}[leftmargin=0.5cm]

\item $\alpha + \gamma = \alpha'$, $\beta - \gamma = \beta'$.

Applying the inductive hypothesis to $\alpha'=\gamma+\alpha$ and $\beta=\gamma+\beta'$ and using
$\alpha'<\alpha$, $\beta'<\beta$, we get $\gamma < \alpha' < \alpha$ and $\beta' < \beta < \gamma$,
a contradiction with $\alpha < \beta$.

\item $\alpha - \gamma = \alpha'$, $\beta + \gamma = \beta'$.

Applying the inductive hypothesis to $\alpha=\gamma+\alpha'$ and $\beta'=\gamma+\beta$ and using
$\alpha'<\alpha$, $\beta'<\beta$, we get $\alpha' < \alpha < \gamma$ and $\gamma < \beta' < \beta$,
a contradiction with $\beta' < \alpha$.

\item $\alpha + \gamma = \beta'$, $\beta - \gamma = \alpha'$.

Applying the inductive hypothesis to $\beta'=\gamma+\alpha$ and $\beta=\gamma+\alpha'$ and using
$\beta'<\alpha$, $\alpha'<\beta$, we get $\gamma < \beta' < \alpha$ and $\alpha' < \beta < \gamma$,
a contradiction with $\alpha < \beta$.

\item $\alpha - \gamma = \beta'$, $\beta + \gamma = \alpha'$.

Applying the inductive hypothesis to $\alpha=\gamma+\beta'$ and $\alpha'=\gamma+\beta$ and using
$\beta'<\alpha$, $\alpha'<\beta$, we get $\beta' < \alpha < \gamma$ and $\gamma < \alpha' < \beta$,
a contradiction with $\alpha' < \beta'$.

\end{itemize}
Thus, in all cases, we get a contradiction with the assumed inequality $\beta'<\alpha$.
\end{proof}

   %%%%%%%%%%%%%%%%%%%%%%%%%%%%%%%%%%%%%%%%%%%%%%%%%%%%%%%%%%%%%%%%%%%%%%%%%%%%%%%

\subsection{Monotonicity}\label{ssec:monotonicity}
\

This subsection generalizes~\cite[\S5.3]{AT}.

\begin{definition}
For $\alpha  = (\alpha',k)\in \wDelta^{+,\re}$, we define the \textbf{chain} of $\alpha$ as follows:
\begin{equation*}
  \chain(\alpha) = (\alpha',\alpha'+\delta,\alpha'+2\delta,\ldots).
\end{equation*}
\end{definition}

Combining Lemma~\ref{lemma:weak.monotonicity} with Theorem~\ref{thm:convexity}, we get our second key result:

\begin{proposition}\label{prop:monotonicity}
For any $\alpha \in \wDelta^{+,\re}$, the following properties are equivalent:
\begin{enumerate}

\item $\chain(\alpha)$ is monotone increasing (resp.\ decreasing),

\item $\alpha < M_1(\alpha)$ (resp.\ $\alpha > M_1(\alpha))$,

\item $\alpha$ is less than (resp.\ greater than) all elements in $\convexsetre(\alpha)$.

\end{enumerate}
In particular, each chain $\chain(\alpha)$ is monotonous.
\end{proposition}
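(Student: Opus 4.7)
The plan is to derive Proposition~\ref{prop:monotonicity} as a direct corollary of Theorem~\ref{thm:convexity} combined with Lemma~\ref{lemma:weak.monotonicity}. The latter was stated conditionally on convexity holding up to some height bound $|\alpha+k\delta|$, but now that Theorem~\ref{thm:convexity} has been established without any restriction, its hypotheses are automatically satisfied for every $k\in \BZ_{>0}$. So the entire content of this proposition is essentially a removal of the height truncation in Lemma~\ref{lemma:weak.monotonicity}.

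For the implication $(2)\Rightarrow(1)$ in the increasing case, I would fix $\alpha \in \wDelta^{+,\re}$ with $\alpha < M_1(\alpha)$ and invoke Lemma~\ref{lemma:weak.monotonicity} for each $k\in \BZ_{>0}$ to obtain the bounded chain
\[
    \alpha \,<\, \alpha+\delta \,<\, \cdots \,<\, \alpha + k\delta \,<\, M_1(\alpha).
\]
Since $k$ is arbitrary, the entire infinite chain $\chain(\alpha)$ is monotone increasing, yielding $(1)$; the decreasing case is symmetric. The reverse implication $(1)\Rightarrow(2)$ is already contained in the $k=1$ instance of the lemma. For $(2)\Leftrightarrow(3)$, the direction $(3)\Rightarrow(2)$ is immediate since $M_1(\alpha)\in \convexsetre(\alpha)$ by Definition~\ref{def:min.max.real.convex}, while $(2)\Rightarrow(3)$ follows by choosing, for any $\beta \in \convexsetre(\alpha)$ with $|\beta|=|p\delta|$, an integer $k$ satisfying $|p\delta|<|\alpha+k\delta|$, and then invoking Lemma~\ref{lemma:weak.monotonicity} at that $k$ to conclude $\alpha < \beta$ (respectively $\alpha > \beta$).

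For the ``in particular'' clause, I would observe that $\alpha$ and $M_1(\alpha)$ are distinct elements of $\wDelta^{+,\ext}$ (the former real, the latter imaginary in the sense of~\eqref{eq:extended-affine-roots}), so they correspond under the bijection~\eqref{eqn:associated word affine} to distinct $\aslaws$; hence they are strictly comparable in the total order~\eqref{eq:order.ext}, and exactly one of $\alpha<M_1(\alpha)$ or $\alpha>M_1(\alpha)$ must hold. Applying the appropriate case of $(2)\Rightarrow(1)$ then forces $\chain(\alpha)$ to be monotonous. I do not anticipate any serious obstacle here, as all the hard work is already encoded in Theorem~\ref{thm:convexity} and Lemma~\ref{lemma:weak.monotonicity}; the present proposition is merely their unbounded reformulation.
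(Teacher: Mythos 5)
Your proposal is correct and matches the paper exactly: the paper gives no separate proof, simply stating that the proposition follows by combining Lemma~\ref{lemma:weak.monotonicity} with the now-unconditional Theorem~\ref{thm:convexity}, which is precisely the reduction you carry out. The only slight imprecision is that the implication $(1)\Rightarrow(2)$ does not follow from the $k=1$ instance of the lemma alone (the lemma's item (1) already contains the bound by $M_1(\alpha)$, whereas the proposition's does not), but the dichotomy argument in your final paragraph supplies exactly the missing step, so the proof is complete as written.
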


\begin{remark}
This monotonicity heavily relies on Lemma~\ref{lemma:weak.monotonicity}, which was established
at the same time as the convexity of Theorem~\ref{thm:convexity} by induction on the height.
\end{remark}

\begin{corollary}\label{cor:monotonicity}
For $\alpha \in \wDelta^{+,\re}$ with $|\alpha| < |\delta|$, the chain $\chain(\alpha)$ is monotone increasing
(resp.\ decreasing) iff $\chain(\delta - \alpha)$ is monotone decreasing (resp.\ increasing).
\end{corollary}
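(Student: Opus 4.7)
My plan is to combine Proposition~\ref{prop:monotonicity} with the convexity of Theorem~\ref{thm:convexity} applied to the decomposition $\delta = \alpha + (\delta-\alpha)$. By Proposition~\ref{prop:monotonicity}, the chain $\chain(\alpha)$ is monotone increasing (resp.\ decreasing) iff $\alpha < M_1(\alpha)$ (resp.\ $\alpha > M_1(\alpha)$), and likewise for $\chain(\delta-\alpha)$. So it suffices to relate $M_1(\alpha)$ to $M_1(\delta-\alpha)$, and then to locate this common imaginary root relative to both $\alpha$ and $\delta-\alpha$ using convexity.

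First I would show $M_1(\alpha) = M_1(\delta-\alpha)$ via Lemma~\ref{lemma:Mk}. Writing $\alpha = (\alpha',k)$ in the loop realization of Subsection~\ref{ssec:affineLie}, we have $\sb[\SL(\alpha)] \sim e_{\alpha'} t^k$ and $\sb[\SL(\delta-\alpha)] \sim e_{-\alpha'} t^{1-k}$, while $\sb[\SL((\delta,i))] \sim h_i t$ for some $h_i \in \fh$. The bracket of $h_i t$ with either word contributes $\pm\alpha'(h_i)$ times a nonzero loop vector, so
\begin{equation*}
[\sb[\SL((\delta,i))],\sb[\SL(\alpha)]] \neq 0 \iff [\sb[\SL((\delta,i))],\sb[\SL(\delta-\alpha)]] \neq 0.
\end{equation*}
Thus the maximizing sets in Lemma~\ref{lemma:Mk} coincide, yielding $M_1(\alpha)=M_1(\delta-\alpha)$. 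By the same lemma, $M_1(\alpha)$ lies in $\convexsetre(\alpha)$ and in $\convexsetre(\delta-\alpha)$.

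Next I would apply the convexity. Since $\alpha,\delta-\alpha \in \wDelta^{+,\re}$, $\alpha+(\delta-\alpha)=\delta \in \wDelta^{+,\im}$, and $M_1(\alpha) \in \convexsetre(\alpha)$ with $|M_1(\alpha)|=|\delta|$, the fourth bullet of Remark~\ref{rem:convexity-rephrased} gives
\begin{equation*}
\alpha < \delta-\alpha \ \Longrightarrow\ \alpha < M_1(\alpha) < \delta-\alpha,
\qquad
\delta-\alpha < \alpha \ \Longrightarrow\ \delta-\alpha < M_1(\delta-\alpha) < \alpha.
\end{equation*}
Combining with $M_1(\alpha)=M_1(\delta-\alpha)$ and Proposition~\ref{prop:monotonicity}: in the first scenario, $\alpha < M_1(\alpha)$ makes $\chain(\alpha)$ increasing while $M_1(\delta-\alpha) < \delta-\alpha$ makes $\chain(\delta-\alpha)$ decreasing; the second scenario yields the reverse. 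This proves the desired equivalence.

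I do not anticipate a genuine obstacle; the only mildly delicate points are verifying $M_1(\alpha)=M_1(\delta-\alpha)$ from the loop realization (which reduces to the elementary observation $h_\alpha$ and $h_{\delta-\alpha}$ are proportional) and checking that $M_1(\alpha)$ indeed belongs to $\convexsetre(\alpha)$, so that the fourth bullet of Remark~\ref{rem:convexity-rephrased} legitimately applies.
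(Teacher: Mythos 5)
Your proposal is correct and follows essentially the same route as the paper: Proposition~\ref{prop:monotonicity} reduces everything to comparing $\alpha$ and $\delta-\alpha$ with $M_1(\alpha)$, and Theorem~\ref{thm:convexity} applied to the decomposition $\delta=\alpha+(\delta-\alpha)$ pins down where $M_1(\alpha)$ sits. The only cosmetic differences are that you make the identity $M_1(\alpha)=M_1(\delta-\alpha)$ explicit via the loop realization (the paper just observes $\alpha,\delta-\alpha\in \convexsetim(M_1(\alpha))$) and you organize the case split by comparing $\alpha$ with $\delta-\alpha$ rather than with $M_1(\alpha)$.
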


\begin{proof}
If $\chain(\alpha)$ is monotone increasing, then $\alpha < M_1(\alpha)$ by Proposition~\ref{prop:monotonicity}. Since
$\alpha,\delta-\alpha\in C(M_1(\alpha))$, we have $\alpha < M_1(\alpha) < \delta - \alpha$ by Theorem~\ref{thm:convexity},
and so $\chain(\delta - \alpha)$ is monotone decreasing due to Proposition~\ref{prop:monotonicity}. Similarly,
if $\chain(\alpha)$ is monotone decreasing, then $M_1(\alpha) < \alpha$, so that  $\delta - \alpha<M_1(\alpha) < \alpha$
by Theorem~\ref{thm:convexity}, which implies that the chain $\chain(\delta - \alpha)$ is monotone increasing,
due to Proposition~\ref{prop:monotonicity}.
\end{proof}

\begin{remark}
We note that Proposition~\ref{prop:monotonicity} and Corollary~\ref{cor:monotonicity} are natural generalizations of the
$A$-type results from~\cite[Proposition 5.4, Remark 5.5]{AT}, where they were rather derived from the explicit formulas
for $\aslaws$.
\end{remark}

\begin{lemma}\label{lemma:chain.sum.monotone}
If $\alpha,\beta, \alpha + \beta \in \wDelta^{+,\re}$, and both chains $\chain(\alpha),\chain(\beta)$ are increasing
(resp.\ decreasing), then the chain $\chain(\alpha + \beta)$ is also increasing (resp.\ decreasing).
\end{lemma}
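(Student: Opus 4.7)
The plan is to translate the chain monotonicity into a single inequality via Proposition~\ref{prop:monotonicity} and then combine the convexity of Theorem~\ref{thm:convexity} with the rules controlling $M_1$ under sums (Corollary~\ref{cor:max.im.rule}). By Proposition~\ref{prop:monotonicity}, the hypothesis reads $\alpha<M_1(\alpha)$ and $\beta<M_1(\beta)$, and the goal reduces to $\alpha+\beta<M_1(\alpha+\beta)$. Applying Theorem~\ref{thm:convexity} to the real roots $\alpha,\beta,\alpha+\beta$ and assuming without loss of generality $\alpha<\beta$ yields $\alpha<\alpha+\beta<\beta$.

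I then split into cases based on Corollary~\ref{cor:max.im.rule}. If $M_1(\alpha)\neq M_1(\beta)$, say $M_1(\alpha)>M_1(\beta)$, then $M_1(\alpha+\beta)=M_1(\alpha)$, and the string of inequalities
\[
  \alpha+\beta<\beta<M_1(\beta)<M_1(\alpha)=M_1(\alpha+\beta)
\]
concludes the proof; the symmetric sub-case $M_1(\alpha)<M_1(\beta)$ is identical. When $M_1(\alpha)=M_1(\beta)=:\mu$, Corollary~\ref{cor:max.im.rule} only gives $M_1(\alpha+\beta)\leq\mu$; if equality holds the same chain of inequalities works. The proof for the decreasing case proceeds by the same argument with inequalities reversed.

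The hard part will be the residual sub-case, in which $M_1(\alpha)=M_1(\beta)=\mu$ but $M_1(\alpha+\beta)<\mu$ strictly. Here I would switch to characterization~(1) of Proposition~\ref{prop:monotonicity} and aim to prove $\alpha+\beta<\alpha+\beta+\delta$ directly. Convexity applied to $\alpha+\beta+\delta=(\alpha+\delta)+\beta$ gives
\[
  \min\{\alpha+\delta,\beta\}<\alpha+\beta+\delta<\max\{\alpha+\delta,\beta\},
\]
so in the branch $\alpha+\delta>\beta$ one immediately obtains $\alpha+\beta<\beta<\alpha+\beta+\delta$. The genuinely difficult sub-case is $\alpha+\delta<\beta$: here the natural imaginary witness $\mu$ is killed on $\alpha+\beta$ via the $h$-pairing, and bare convexity estimates do not separate $\alpha+\beta$ from $\alpha+\beta+\delta$. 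I expect to resolve it by induction on $|\beta|-|\alpha|$, iterating the same comparison with the pair $(\alpha+\delta,\beta)$, whose chains share the monotonicity of the original pair, and using Lemma~\ref{lemma:min.im.rule} in parallel to guarantee $m_1(\alpha+\beta)\geq\min\{m_1(\alpha),m_1(\beta)\}>\alpha$ as a sanity check that the chain of $\alpha+\beta$ cannot reverse.
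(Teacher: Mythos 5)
Your reduction of the statement to the single inequality $\alpha+\beta<M_1(\alpha+\beta)$ via Proposition~\ref{prop:monotonicity}, and your treatment of the case $M_1(\alpha)\neq M_1(\beta)$ via Corollary~\ref{cor:max.im.rule}, coincide with the paper's argument and are correct. The genuine gap is exactly where you flag it: the sub-case $M_1(\alpha)=M_1(\beta)=\mu$. There Corollary~\ref{cor:max.im.rule} only gives $M_1(\alpha+\beta)\leq\mu$, which points the \emph{wrong} way for an increasing chain, and your proposed escape does not close the gap. The dichotomy $\alpha+\delta\gtrless\beta$ is a lexicographic comparison of standard Lyndon words, not a height comparison, so an induction on $|\beta|-|\alpha|$ has no reachable base case: after replacing $\alpha$ by $\alpha+k\delta$ the height difference becomes negative, but nothing forces $\alpha+k\delta>\beta$ ever to occur --- both $\alpha+k\delta$ and $\beta$ are merely bounded above by the common value $\mu$, and the increasing sequence $\chain(\alpha)$ can remain below $\beta$ for all $k$. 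So the ``genuinely difficult sub-case'' is left unproved, and the sanity check via Lemma~\ref{lemma:min.im.rule} only yields $m_1(\alpha+\beta)>\alpha$, which does not separate $\alpha+\beta$ from $m_1(\alpha+\beta)$.

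The missing idea is the paper's complementation trick. Write $\alpha=(\bar\alpha,p)$, $\beta=(\bar\beta,s)$ with $|\bar\alpha|,|\bar\beta|<|\delta|$; by Corollary~\ref{cor:monotonicity} the chains of $\delta-\bar\alpha$ and $\delta-\bar\beta$ are \emph{decreasing}, and they have the same $M_1=\mu$. For decreasing chains the bound from Corollary~\ref{cor:max.im.rule} points the right way: assuming without loss of generality $\delta-\bar\alpha<\delta-\bar\beta$, one gets $M_1\big(2\delta-(\bar\alpha+\bar\beta)\big)\leq\mu<\delta-\bar\alpha<2\delta-(\bar\alpha+\bar\beta)$, so $\chain\big(2\delta-(\bar\alpha+\bar\beta)\big)$ is decreasing by Proposition~\ref{prop:monotonicity}, and flipping back with Corollary~\ref{cor:monotonicity} shows $\chain(\bar\alpha+\bar\beta)=\chain(\alpha+\beta)$ is increasing. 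Note also that your closing remark that the decreasing case ``proceeds by the same argument with inequalities reversed'' obscures the key asymmetry: in the decreasing case the equal-$M_1$ sub-case is easy, since $M_1(\alpha+\beta)\leq\mu=M_1(\alpha)<\alpha<\alpha+\beta$, and this is precisely why reducing the hard increasing sub-case to a decreasing one succeeds.
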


\begin{proof}
Let us first assume that $\chain(\alpha),\chain(\beta)$ are both increasing. If $M_1(\alpha) \neq M_1(\beta)$, then we have
$\alpha,\beta < \max\{M_1(\alpha),M_1(\beta)\}$ by Proposition~\ref{prop:monotonicity}. Assume without loss of generality
that $\alpha < \beta$. Then, combining Theorem~\ref{thm:convexity} and Corollary~\ref{cor:max.im.rule}, we get:
  $$\alpha < \alpha + \beta < \beta < \max\{M_1(\alpha),M_1(\beta)\}=M_1(\alpha + \beta).$$
Hence, $\chain(\alpha + \beta)$ is increasing by Proposition~\ref{prop:monotonicity}.

Let us now consider the case $M_1(\alpha) = M_1(\beta)$. Let $\alpha = (\bar{\alpha},p)$, $\beta = (\bar{\beta},s)$
with $|\bar{\alpha}|,|\bar{\beta}|<|\delta|$ and $p,s\in \BZ_{\geq 0}$. Assume without loss of generality that
$\delta - \bar{\alpha} < \delta - \bar{\beta}$. We note that $M_1(\alpha)=M_1(\bar{\alpha})=M_1(\delta-\bar{\alpha})$
and $M_1(\beta)=M_1(\bar{\beta})=M_1(\delta-\bar{\beta})$. Then both chains
$\chain(\delta - \bar{\alpha}),\chain(\delta - \bar{\beta})$ are decreasing by Corollary~\ref{cor:monotonicity}.
Combining Proposition~\ref{prop:monotonicity}, Theorem~\ref{thm:convexity}, and Corollary~\ref{cor:max.im.rule},
we thus obtain:
  $$ M_1(2\delta - (\bar{\alpha} + \bar{\beta})) \leq M_1(\alpha)=M_1(\beta) <
     \delta - \bar{\alpha} < 2\delta - (\bar{\alpha} + \bar{\beta}) < \delta - \bar{\beta}. $$
Then, the chain $\chain(2\delta - (\bar{\alpha} + \bar{\beta}))$ is decreasing by Proposition~\ref{prop:monotonicity},
and hence the chain $\chain(\bar{\alpha} + \bar{\beta})=\chain(\alpha + \beta)$ is increasing by evoking
Corollary~\ref{cor:monotonicity} once again.

The case when $\chain(\alpha),\chain(\beta)$ are both decreasing follows from above. Indeed, if
$\alpha = (\bar{\alpha},p), \beta = (\bar{\beta},s)$ with $|\bar{\alpha}|,|\bar{\beta}|<|\delta|$ and $p,s\in \BZ_{\geq 0}$,
then the chains $\chain(\delta - \bar{\alpha}), \chain(\delta - \bar{\beta})$ are both increasing by
Corollary~\ref{cor:monotonicity}. Hence, $\chain(2\delta - (\bar{\alpha}  + \bar{\beta}))$ is increasing by above,
and so $\chain(\bar{\alpha} + \bar{\beta})=\chain(\alpha + \beta)$ is decreasing by Corollary~\ref{cor:monotonicity}.
\end{proof}

For $\alpha \in \wDelta^{+,\re}$, the set $O(\alpha)$ provides an upper or lower bound on $\chain(\alpha)$, due to
Remark~\ref{rem:convexity-rephrased}. However, the following result often yields better bounds on $\chain(\alpha)$:

\begin{lemma}\label{lemma:parity.same.sum}
(a) If $\alpha,\beta,\alpha + \beta \in \wDelta^{+,\re}$, with both $\chain(\alpha), \chain(\beta)$ increasing, then:
\begin{equation*}
  \chain(\alpha + \beta) < \min\{m_k(\alpha),m_k(\beta)\} \qquad \forall\, k \in \BZ_{>0}.
\end{equation*}

\noindent
(b) If $\alpha,\beta,\alpha + \beta \in \wDelta^{+,\re}$, with both $\chain(\alpha), \chain(\beta)$ decreasing, then:
\begin{equation*}
  \chain(\alpha + \beta) > \max\{M_k(\alpha),M_k(\beta)\} \qquad \forall\, k \in \BZ_{>0}.
\end{equation*}
\end{lemma}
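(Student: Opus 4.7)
The plan is to prove part~(a); part~(b) follows by reversing all inequalities, swapping $\min$ with $\max$, and using Corollary~\ref{cor:max.im.rule} in place of Lemma~\ref{lemma:min.im.rule}. First, Lemma~\ref{lemma:chain.sum.monotone} ensures that $\chain(\alpha+\beta)$ is increasing. Fix $k \in \BZ_{>0}$; after interchanging $\alpha$ and $\beta$ if necessary, assume $m_k(\alpha) \leq m_k(\beta)$, so $\min\{m_k(\alpha), m_k(\beta)\} = m_k(\alpha)$. In the easier subcase $m_k(\alpha) < m_k(\beta)$, Lemma~\ref{lemma:min.im.rule} immediately yields $m_k(\alpha+\beta) = m_k(\alpha)$, and then Proposition~\ref{prop:monotonicity}(3), applied to every element of the increasing chain $\chain(\alpha+\beta)$, gives $\chain(\alpha+\beta) < m_k(\alpha+\beta) = m_k(\alpha)$.

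The harder subcase is $m_k(\alpha) = m_k(\beta)$: here Lemma~\ref{lemma:min.im.rule} yields only $m_k(\alpha+\beta) \geq m_k(\alpha)$, and the inequality can be strict when $h_\alpha t^k$ and $h_\beta t^k$ cancel in the flag (in which case $m_k(\alpha) \notin \convexsetre(\alpha+\beta)$), so bounding $\chain(\alpha+\beta)$ by $m_k(\alpha+\beta)$ is insufficient. The remedy is to decompose each $\gamma \in \chain(\alpha+\beta)$ as $\gamma = \alpha_1 + \beta_1$ with $\alpha_1 \in \chain(\alpha)$, $\beta_1 \in \chain(\beta)$, and then invoke convexity. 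Writing minimal representatives $\alpha = \alpha' + q_\alpha\delta$, $\beta = \beta' + q_\beta\delta$, and $\alpha'+\beta' = (\alpha+\beta)' + r\delta$, a height count forces $r \in \{0,1\}$. For $\gamma = (\alpha+\beta)' + p\delta$ with $p \geq r$, choose any $p_1, p_2 \geq 0$ with $p_1 + p_2 = p - r$, and set $\alpha_1 = \alpha' + p_1\delta$, $\beta_1 = \beta' + p_2\delta$; the assumption $\alpha+\beta \in \wDelta^{+,\re}$ forces $\ol{\alpha} \neq \ol{\beta}$ (as $2\ol{\alpha} \notin \Delta$ for simple $\fg$), so $\alpha' \neq \beta'$ and hence $\alpha_1 \neq \beta_1$. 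Applying the real-root convexity of Remark~\ref{rem:convexity-rephrased} to $\alpha_1, \beta_1, \gamma \in \wDelta^{+,\re}$ yields $\gamma < \max\{\alpha_1, \beta_1\}$, while Proposition~\ref{prop:monotonicity}(3) applied to $\alpha_1, \beta_1$ (whose ambient chains $\chain(\alpha), \chain(\beta)$ are increasing) gives $\alpha_1 < m_k(\alpha)$ and $\beta_1 < m_k(\beta)$. Combined with $m_k(\alpha) = m_k(\beta)$, this produces $\gamma < \max\{\alpha_1, \beta_1\} < \max\{m_k(\alpha), m_k(\beta)\} = m_k(\alpha)$. The case $p < r$ forces $p = 0$, $r = 1$, and follows from monotonicity: $\gamma_0 < \gamma_1 < m_k(\alpha)$.

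The principal obstacle lies precisely in this equality subcase, since the naive bound via $m_k(\alpha+\beta)$ is too weak when cancellation occurs; the decomposition-plus-convexity trick is essential. The coincidence $m_k(\alpha) = m_k(\beta)$ is what makes the argument go through, since it lets one bound $\max\{\alpha_1, \beta_1\}$ uniformly without having to control which of the two factors is larger; an analogous observation drives part~(b), with the pair $(M_k, \max)$ replacing $(m_k, \min)$.
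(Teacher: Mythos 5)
Your proof is correct and follows essentially the same route as the paper: Lemma~\ref{lemma:chain.sum.monotone} for monotonicity of the sum chain, Lemma~\ref{lemma:min.im.rule} (resp.\ Corollary~\ref{cor:max.im.rule}) in the case $m_k(\alpha)\neq m_k(\beta)$, and in the equality case a decomposition of each chain element as a sum of a member of $\chain(\alpha)$ and a member of $\chain(\beta)$, bounded via Theorem~\ref{thm:convexity} together with Proposition~\ref{prop:monotonicity}. The only (immaterial) difference is that the paper places all the extra copies of $\delta$ on the larger of $\alpha,\beta$ rather than splitting them arbitrarily between the two minimal representatives.
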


\begin{proof}
(a) According to Lemma~\ref{lemma:chain.sum.monotone}, the chain $\chain(\alpha + \beta)$ is increasing, hence,
$\chain(\alpha + \beta) < m_k(\alpha + \beta)$ by Proposition~\ref{prop:monotonicity}. If $m_k(\alpha) \ne m_k(\beta)$,
then $m_k(\alpha + \beta) = \min\{m_k(\alpha),m_k(\beta)\}$, and the result follows.
If $m_k(\alpha) = m_k(\beta)$, then $\chain(\alpha),\chain(\beta) < m_k(\alpha)$ by Proposition~\ref{prop:monotonicity}.
Pick any $\gamma \in \chain(\alpha + \beta)$ with $|\gamma| \geq |\alpha + \beta|$, so that $\gamma = \alpha + \beta + p\delta$
for some $p \in \BZ_{\geq 0}$. Assuming without loss of generality that $\alpha < \beta$, we then have
$\alpha < \beta \leq \beta + p\delta < m_k(\alpha)$ by Proposition~\ref{prop:monotonicity}, so that
$\gamma < \beta +p\delta < m_k(\alpha)$ by Theorem~\ref{thm:convexity}. The claim follows as
$\chain(\alpha + \beta)$ is increasing

(b) The chain $\chain(\alpha + \beta)$ is decreasing by Lemma~\ref{lemma:chain.sum.monotone}, hence,
$M_k(\alpha + \beta)<\chain(\alpha + \beta)$ by Proposition~\ref{prop:monotonicity}. If $M_k(\alpha) \ne M_k(\beta)$,
then $M_k(\alpha + \beta) = \max\{M_k(\alpha),M_k(\beta)\}$ by Corollary~\ref{cor:max.im.rule}, and the result follows.
If $M_k(\alpha) = M_k(\beta)$, then $M_k(\alpha) < \chain(\beta),\chain(\alpha)$ by Proposition~\ref{prop:monotonicity}.
Pick any $\gamma \in \chain(\alpha + \beta)$ with $|\gamma| \geq |\alpha + \beta|$, so that $\gamma = \alpha + \beta + p\delta$
for some $p \in \BZ_{\geq 0}$. Assuming without loss of generality that $\alpha < \beta$, we then have
$M_k(\alpha) < \alpha + p\delta \leq \alpha < \beta$ by Proposition~\ref{prop:monotonicity}, so that
$M_k(\alpha) < \alpha + p\delta < \gamma$ by Theorem~\ref{thm:convexity}. The claim follows as
$\chain(\alpha + \beta)$ is decreasing.
\end{proof}

The following result is crucial for the next section:

\begin{lemma}\label{lemma:left.standard.comp.imaginary}
For any $i \in \{1,2,\ldots,|I|-1\}$ and $k \in \BZ_{>0}$, we have:
\begin{equation}\label{eq:im-ls-bound}
  \SL_{i+1}(k\delta) < \SL^{ls}_i(k\delta).
\end{equation}
\end{lemma}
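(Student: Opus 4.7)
The plan is to argue by contradiction: set $w := \SL_{i+1}(k\delta)$, $u := \SL^{ls}_i(k\delta)$, $v := \SL^{rs}_i(k\delta)$, and assume $w \geq u$. Since $|u| < |k\delta| = |w|$, this inequality is strict, and either $u$ fails to be a prefix of $w$, or $u$ is a proper prefix of $w$. In the first situation, $w$ and $u$ first disagree at some position $p \leq |u|$, where $w$ has a strictly larger letter, which forces $w > u \cdot x$ for every word $x$. In particular, $w > uv = \SL_i(k\delta)$, contradicting $\SL_{i+1}(k\delta) < \SL_i(k\delta)$.

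Suppose instead that $u$ is a proper prefix of $w$, and consider the standard factorization $w = w^{ls} w^{rs}$, whose factors are standard Lyndon (cf.~Remark~\ref{rem:W-minmax}). Since $u$ is also a Lyndon prefix of $w$ and $w^{ls}$ is the longest such, we have $|w^{ls}| \geq |u|$, and hence $w^{ls}$ starts with $u$. If $w^{ls} = u$, then $w^{rs}$ is an SL word of degree $k\delta - \deg(u)$; by Corollary~\ref{cor:imaginary.no.im.splitting}, neither factor in the splitting $\SL_i(k\delta) = uv$ can be imaginary (since otherwise both would be, as their degrees sum to $k\delta$), so both $\deg(u)$ and $\deg(v) = k\delta - \deg(u)$ are real. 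Uniqueness of SL words in real degrees then forces $w^{rs} = v$, so $w = uv = \SL_i(k\delta)$, contradicting $w \neq \SL_i(k\delta)$.

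If instead $|w^{ls}| > |u|$, then $w^{ls}$ strictly extends $u$ as a prefix, so $w^{ls} > u$ lexicographically. Applying Lemma~\ref{lemma:equiv.to.standard.fac} to the Lyndon word $w^{ls}$ (of length strictly less than $|k\delta| = |uv|$) and the standard factorization $uv = \SL_i(k\delta)$ yields $w^{ls} > \SL_i(k\delta)$. However, $w^{ls}$ is a proper prefix of $w$, hence $w^{ls} < w < \SL_i(k\delta)$, the desired contradiction.

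The principal obstacle is that $w$ and $\SL_i(k\delta)$ have the same length $|k\delta|$, so Lemma~\ref{lemma:equiv.to.standard.fac} cannot be applied directly to them. The key device is to descend to the standard factorization of $w$ itself: its left factor $w^{ls}$ is strictly shorter than $|k\delta|$ and can therefore be compared with $\SL_i(k\delta)$ via Lemma~\ref{lemma:equiv.to.standard.fac}. Corollary~\ref{cor:imaginary.no.im.splitting} then cleanly handles the boundary case $w^{ls} = u$ by reducing it to the uniqueness of SL words in real degree.
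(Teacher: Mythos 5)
Your proof is correct, and it hinges on the same key tool as the paper's — Lemma~\ref{lemma:equiv.to.standard.fac} — while being organized rather differently. The paper applies that lemma twice: first to show the claim is equivalent to $\SL^{ls}_{i+1}(k\delta) < \SL^{ls}_i(k\delta)$, and then, assuming the contrary, to deduce $\SL_i(k\delta) < \SL^{ls}_{i+1}(k\delta) < \SL_{i+1}(k\delta)$. You instead unpack the first application into a direct case analysis on whether $u=\SL^{ls}_i(k\delta)$ is a prefix of $w=\SL_{i+1}(k\delta)$, invoking the lemma only once (in the sub-case $|w^{ls}|>|u|$) and handling the non-prefix case by hand via the lexicographic order; both routes terminate in the same contradiction with $\SL_{i+1}(k\delta)<\SL_i(k\delta)$. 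A genuine merit of your version is the sub-case $w^{ls}=u$: the paper simply asserts $\SL^{ls}_{i+1}(k\delta)\neq\SL^{ls}_i(k\delta)$ without justification, whereas you supply the missing argument — Corollary~\ref{cor:imaginary.no.im.splitting} forces both factors of the splitting of $\SL_i(k\delta)$ to have real degree, and uniqueness of standard Lyndon words in real degrees then gives $w^{rs}=\SL^{rs}_i(k\delta)$, hence $w=\SL_i(k\delta)$, which is absurd. All the results you cite (Lemma~\ref{lemma:equiv.to.standard.fac}, Remark~\ref{rem:W-minmax}, Corollary~\ref{cor:imaginary.no.im.splitting}) are established before this lemma, so there is no circularity.
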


\begin{proof}
As $|\SL^{ls}_i(k\delta)| < |k\delta|$, the inequality~\eqref{eq:im-ls-bound} is equivalent to
$\SL^{ls}_{i+1}(k\delta) < \SL^{ls}_i(k\delta)$ by Lemma~\ref{lemma:equiv.to.standard.fac}.
Assuming the contrary to the latter, we get $\SL^{ls}_{i+1}(k\delta) > \SL^{ls}_i(k\delta)$, since
$\SL^{ls}_{i+1}(k\delta) \ne \SL^{ls}_i(k\delta)$. Evoking Lemma~\ref{lemma:equiv.to.standard.fac} once again,
we then get $\SL_i(k\delta) < \SL^{ls}_{i+1}(k\delta) < \SL_{i+1}(k\delta)$, a contradiction.
This establishes~\eqref{eq:im-ls-bound}.
\end{proof}

We conclude this section with a couple of important observations:

\begin{lemma}
If $\alpha,\beta,\gamma \in \wDelta^{+,\re}$, $\SL(\alpha) = \SL(\gamma)\SL(\beta)$, and $\chain(\beta)$ is increasing,
then $\chain(\alpha)$ is increasing.
\end{lemma}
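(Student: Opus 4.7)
The plan is to use Proposition~\ref{prop:monotonicity} to reformulate everything in terms of $M_1$, and then split on whether $\chain(\gamma)$ is increasing or decreasing.

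First I would establish the basic inequalities $\gamma < \alpha < \beta$ in the order on $\wDelta^{+,\ext}$. Since $\SL(\alpha) = \SL(\gamma)\SL(\beta)$ is Lyndon, its proper suffix $\SL(\beta)$ satisfies $\SL(\alpha) < \SL(\beta)$, so $\alpha < \beta$; and since $\SL(\gamma)$ is a proper prefix of a Lyndon word, $\SL(\gamma) < \SL(\alpha)$, hence $\gamma < \alpha$. By Proposition~\ref{prop:monotonicity}, the hypothesis on $\chain(\beta)$ translates to $\beta < M_1(\beta)$, and the goal is to show $\alpha < M_1(\alpha)$.

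Next I would split into two cases according to the parity of $\chain(\gamma)$. If $\chain(\gamma)$ is increasing, then Lemma~\ref{lemma:chain.sum.monotone} applied to $\alpha = \gamma+\beta$ immediately gives that $\chain(\alpha)$ is increasing. The interesting case is when $\chain(\gamma)$ is decreasing, which by Proposition~\ref{prop:monotonicity} means $M_1(\gamma) < \gamma$. Combining with the known ordering, we obtain the chain
\begin{equation*}
  M_1(\gamma) < \gamma < \alpha < \beta < M_1(\beta).
\end{equation*}
Now I invoke Corollary~\ref{cor:max.im.rule} applied to $\gamma + \beta = \alpha$. The three subcases are:
\begin{itemize}[leftmargin=0.5cm]
\item $M_1(\gamma) = M_1(\beta)$: impossible, since this common value would have to be simultaneously less than $\gamma$ and greater than $\beta$, contradicting $\gamma < \beta$.
\item $M_1(\gamma) > M_1(\beta)$: Corollary~\ref{cor:max.im.rule} yields $M_1(\alpha) = M_1(\gamma)$, and then $M_1(\alpha) = M_1(\gamma) > M_1(\beta) > \beta > \alpha$.
\item $M_1(\gamma) < M_1(\beta)$: Corollary~\ref{cor:max.im.rule} yields $M_1(\alpha) = M_1(\beta) > \beta > \alpha$.
\end{itemize}
In both live subcases $\alpha < M_1(\alpha)$, so $\chain(\alpha)$ is increasing by Proposition~\ref{prop:monotonicity}.

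The main obstacle is ensuring the case analysis via Corollary~\ref{cor:max.im.rule} is exhaustive and that the equality case $M_1(\gamma) = M_1(\beta)$ is ruled out cleanly; this works precisely because we have genuine strict separation $M_1(\gamma) < \gamma < \beta < M_1(\beta)$ forced by the opposite monotonicities of $\chain(\gamma)$ and $\chain(\beta)$. No further induction or explicit formula is needed, since both ingredients (Lemma~\ref{lemma:chain.sum.monotone} and Corollary~\ref{cor:max.im.rule}) are already available.
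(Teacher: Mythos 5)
Your proof is correct, but it takes a genuinely different route from the paper's. The paper's argument is a direct two-line lexicographic computation: since $\gamma+(\beta+\delta)=\alpha+\delta$ and $\SL(\gamma)<\SL(\beta)<\SL(\beta+\delta)$, the concatenation $\SL(\gamma)\SL(\beta+\delta)$ is Lyndon and is a candidate in the generalized Leclerc algorithm, so
$\SL(\alpha+\delta)\geq \SL(\gamma)\SL(\beta+\delta)>\SL(\gamma)\SL(\beta)=\SL(\alpha)$, and the monotonicity of every chain (the ``in particular'' of Proposition~\ref{prop:monotonicity}) finishes the proof. You instead translate everything into the $M_1$-language via Proposition~\ref{prop:monotonicity} and split on the parity of $\chain(\gamma)$, using Lemma~\ref{lemma:chain.sum.monotone} in the increasing case and Corollary~\ref{cor:max.im.rule} in the decreasing case; all applications are legitimate (the roots $\gamma,\beta,\gamma+\beta=\alpha$ are all real, and these results precede the lemma, so there is no circularity), and the initial inequalities $\gamma<\alpha<\beta$ are correctly justified from the prefix/suffix properties of Lyndon words. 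One small remark: once you have $M_1(\gamma)<\gamma<\beta<M_1(\beta)$, the subcase $M_1(\gamma)>M_1(\beta)$ is vacuous, not merely ``live with a valid conclusion,'' so only the third subcase actually occurs. The trade-off is that your argument leans on the heavier $O$-set machinery of Section~\ref{sec:keyresults}, whereas the paper's one-liner needs only the Leclerc maximality and elementary lexicographic comparisons; on the other hand, your version makes explicit where $M_1(\alpha)$ ends up, which is occasionally useful information.
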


\begin{proof}
We have the following chain of inequalities
$\SL(\alpha + \delta) \geq \SL(\gamma)\SL(\beta + \delta) > \SL(\gamma)\SL(\beta) = \SL(\alpha)$ and
$\SL(\gamma) < \SL(\beta) < \SL(\beta + \delta)$. Hence $\chain(\alpha)$ increases.
\end{proof}

\begin{lemma}
For any $1\leq i \leq |I|$ and $k \in \BZ_{>0}$, let $\alpha = \deg(\SL_i^{ls}(k\delta))$ and pick any decomposition
$\alpha=\beta+\gamma$ with $\beta, \gamma \in \wDelta^{+,\re}$. We cannot have both $\chain(\beta), \chain(\gamma)$ decreasing.
Moreover, if $\chain(\beta), \chain(\gamma)$ are both increasing, then exactly one of $m_k(\beta),m_k(\gamma)$ is equal
to $m_k(\alpha)$, and the other must be larger than $m_k(\alpha)$.
\end{lemma}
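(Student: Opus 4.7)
The plan is to first show that $\chain(\alpha)$ is monotone increasing, and then to deduce both claims from this monotonicity together with the arithmetic of $m_k$ under addition (Lemmas~\ref{lemma:min.im.rule} and~\ref{lemma:chain.sum.monotone}).

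To see that $\chain(\alpha)$ is increasing, let $u=\SL_i^{ls}(k\delta)$ and $v=\SL_i^{rs}(k\delta)$, so that $\SL_i(k\delta)=uv$ is the standard factorization. Applying Corollary~\ref{cor:mk.im.factor} to this splitting gives $m_k(\alpha)=(k\delta,i)$. Since $u$ is a proper Lyndon prefix of the Lyndon word $\SL_i(k\delta)$, we have $\SL(\alpha)=u<\SL_i(k\delta)=\SL(m_k(\alpha))$, so $\alpha<m_k(\alpha)\in\convexsetre(\alpha)$. Proposition~\ref{prop:monotonicity} then forces $\chain(\alpha)$ to be monotone increasing. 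The first claim is now immediate: if both $\chain(\beta)$ and $\chain(\gamma)$ were decreasing, Lemma~\ref{lemma:chain.sum.monotone} would make $\chain(\alpha)=\chain(\beta+\gamma)$ also decreasing, contradicting what we just established.

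For the second claim, assume both $\chain(\beta),\chain(\gamma)$ are increasing. When $m_k(\beta)\neq m_k(\gamma)$, Lemma~\ref{lemma:min.im.rule} directly gives $m_k(\alpha)=\min\{m_k(\beta),m_k(\gamma)\}$: the smaller one equals $m_k(\alpha)$ and the other is strictly larger, which is exactly the desired conclusion.

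The main obstacle is therefore to rule out the remaining case $m_k(\beta)=m_k(\gamma)=(k\delta,j)$, because there Lemma~\ref{lemma:min.im.rule} only yields the weak inequality $m_k(\alpha)\geq(k\delta,j)$, which is compatible either with all three being equal (so both $m_k(\beta),m_k(\gamma)$ would coincide with $m_k(\alpha)$) or with $m_k(\alpha)>(k\delta,j)$ (so neither would). The plan for this obstacle is to exploit the specific identity $\SL(\alpha)=\SL_i^{ls}(k\delta)$: the relation $[\sb[\SL(\beta)],\sb[\SL(\gamma)]]\sim\sb[\SL(\alpha)]$ combined with the Jacobi identity expresses $h_\alpha t^k$ modulo $\spanset_{j-1}^k$ in terms of $h_\beta t^k$ and $h_\gamma t^k$, while Corollary~\ref{cor:rotate} allows us to rotate $\SL(\beta)\SL(\gamma)$ against the standard factorization $\SL_i(k\delta)=\SL(\alpha)\SL(k\delta-\alpha)$. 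The goal is to produce either a Lyndon concatenation of degree $\alpha$ that lexicographically exceeds $\SL_i^{ls}(k\delta)$ while retaining nonzero bracketing---contradicting $\SL(\alpha)=\SL_i^{ls}(k\delta)$ via the generalized Leclerc algorithm~\eqref{eq:generalized Leclerc}---or a new element of $\ol{W}_{(k\delta,j)}$ inconsistent with the flag~\eqref{eq:flag}. Pinning down this lexicographic/bracketing obstruction is the delicate step of the proof.
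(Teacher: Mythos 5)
Your treatment of the first claim and of the case $m_k(\beta)\neq m_k(\gamma)$ is correct and matches the paper: $m_k(\alpha)=(k\delta,i)$ by Corollary~\ref{cor:mk.im.factor}, $\alpha<m_k(\alpha)$ forces $\chain(\alpha)$ to increase via Proposition~\ref{prop:monotonicity}, Lemma~\ref{lemma:chain.sum.monotone} kills the doubly-decreasing case, and Lemma~\ref{lemma:min.im.rule} settles the unequal case. But the case $m_k(\beta)=m_k(\gamma)$ is left as a plan rather than a proof: you correctly identify the two bad sub-possibilities (both equal to $m_k(\alpha)$, or both strictly less in index so that neither equals $m_k(\alpha)$), propose to attack them with the Jacobi identity, Corollary~\ref{cor:rotate} and the generalized Leclerc algorithm, and then explicitly concede that ``pinning down this lexicographic/bracketing obstruction is the delicate step.'' That step is exactly the content of the lemma, so as written the argument has a genuine gap. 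Moreover, the proposed route is not the natural one: the question is about which piece of the flag $\spanset_\bullet^k$ the vectors $h_\beta t^k$, $h_\gamma t^k$ lie in, and the identity $h_\alpha=h_\beta+h_\gamma$ has already been fully exploited in Lemma~\ref{lemma:min.im.rule}; a further bracketing computation does not obviously produce new information.

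The paper closes the case $m_k(\beta)=m_k(\gamma)=(k\delta,j)$ with two short observations you did not use. First, Lemma~\ref{lemma:parity.same.sum}(a) gives $\chain(\alpha)<\min\{m_k(\beta),m_k(\gamma)\}=(k\delta,j)$; if $j>i$ this would yield $\alpha<(k\delta,i+1)$, contradicting Lemma~\ref{lemma:left.standard.comp.imaginary}, while Lemma~\ref{lemma:min.im.rule} gives $(k\delta,i)=m_k(\alpha)\geq(k\delta,j)$; together these force $j=i$, eliminating your second sub-possibility. Second, to eliminate the remaining sub-possibility $m_k(\beta)=m_k(\gamma)=(k\delta,i)=m_k(\alpha)$, take WLOG $\beta<\gamma$: convexity (Theorem~\ref{thm:convexity}) gives $\gamma>\beta+\gamma=\alpha=\deg(\SL_i^{ls}(k\delta))$, Lemma~\ref{lemma:equiv.to.standard.fac} upgrades this to $\gamma>(k\delta,i)=m_k(\gamma)$, and then Proposition~\ref{prop:monotonicity} says $\chain(\gamma)$ is decreasing, contradicting the standing hypothesis. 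You should replace your sketched Jacobi/rotation plan with these two arguments (or something equally complete) to finish the proof.
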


\begin{proof}
If both chains $\chain(\beta),\chain(\gamma)$ are decreasing, then so is $\chain(\alpha)$ by Lemma~\ref{lemma:chain.sum.monotone}.
On the other hand, $\alpha<(k\delta,i)=m_k(\alpha)$ (with the equality due to Corollary~\ref{cor:mk.im.factor}), which implies
that $\chain(\alpha)$ is increasing by Proposition~\ref{prop:monotonicity}, a contradiction.

Assume now that both chains $\chain(\beta),\chain(\gamma)$ are increasing. Then we have
$\chain(\alpha)\leq \min\{m_k(\beta),m_k(\gamma)\}$ by Lemma~\ref{lemma:parity.same.sum}.
If $\min\{m_k(\beta),m_k(\gamma)\} < (k\delta,i)$, then the above would yield $\alpha < (k\delta,i+1)$,
a contradiction with Lemma~\ref{lemma:left.standard.comp.imaginary}. According to Lemma~\ref{lemma:min.im.rule}
and Corollary~\ref{cor:mk.im.factor}, we have $(k\delta,i)=m_k(\alpha)\geq \min\{m_k(\beta),m_k(\gamma)\}$.
Therefore, it suffices to show that $m_k(\gamma) = m_k(\beta) = (k\delta,i)$ is not possible. Assume the
contrary, and without loss of generality we can also assume that $\beta<\gamma$. Due to Theorem~\ref{thm:convexity},
we then get $\gamma>\alpha=\deg(\SL_i^{ls}(k\delta))$, which implies $\gamma>(k\delta,i)$ by
Lemma~\ref{lemma:equiv.to.standard.fac}. But then $\chain(\gamma)$ is decreasing by Proposition~\ref{prop:monotonicity},
a contradiction.
%%%%%%%%%%%%%%%%%%%%%% Extra Comment %%%%%%%%%%%%%%%%%%%%%%%%%%%%
% \textcolor{orange}{(The above proof actually shows that $m_k(\gamma)>m_k(\alpha)$ is $\beta<\gamma$.)}
%%%%%%%%%%%%%%%%%%%%%%%%%%%%%%%%%%%%%%%%%%%%%%%%%%%%%%%%%%%%%%%%%
\end{proof}

   %%%%%%%%%%%%%%%%%%%%%%%%%%%%%%%%%%%%%%%%%%%%%%%%%%%%%%%%%%%%%%%%%%%%%%%%%%%%%%%
   %%%%%%%%%%%%%%%%%%%%%%%%%%%%%%%%%%%% Section 5 %%%%%%%%%%%%%%%%%%%%%%%%%%%%%%%%
   %%%%%%%%%%%%%%%%%%%%%%%%%%%%%%%%%%%%%%%%%%%%%%%%%%%%%%%%%%%%%%%%%%%%%%%%%%%%%%%

\section{Imaginary affine standard Lyndon words}\label{sec:imaginary.words}

In this section, we investigate the imaginary $\aslaws$ and relations among those.
Our analysis is crucially based on the results of Section~\ref{sec:keyresults}.

   %%%%%%%%%%%%%%%%%%%%%%%%%%%%%%%%%%%%%%%%%%%%%%%%%%%%%%%%%%%%%%%%%%%%%%%%%%%%%%%

\subsection{Compatibility of flags and basic inequalities}
\

We start with the important compatibility of flags $\{\spanset_{\bullet}^{k}\}_{k\geq 1}$ from~\eqref{eq:flag}:

\begin{proposition}\label{prop:spanset.equiv}
For any $i \in \{0,1,\ldots,|I|\}$ and $k \in \BZ_{>0}$, we have:
\begin{equation*}
  \spanset_{i}^{k+1} = \spanset_i^kt.
\end{equation*}
Additionally, if $i > 0$ and $w = \SL_i^{ls}((k+1)\delta)$, then $m_k(\deg(w)) = (k\delta,i)$.
\end{proposition}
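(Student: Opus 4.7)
The plan is to prove both assertions jointly by induction on $i$, for each fixed $k \in \BZ_{>0}$. The base case $i=0$ is trivial since $\spanset_0^{k+1}=\{0\}=\spanset_0^k t$. For the inductive step, assume $\spanset_j^{k+1} = \spanset_j^k t$ for all $j<i$. Both $\spanset_i^{k+1}$ and $\spanset_i^k t$ are $i$-dimensional subspaces of $\fh t^{k+1}$ sharing the common $(i-1)$-dimensional subspace $\spanset_{i-1}^{k+1}=\spanset_{i-1}^k t$, so it suffices to exhibit a single vector in $\spanset_i^{k+1}\setminus\spanset_{i-1}^{k+1}$ that also lies in $\spanset_i^k t$.

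To this end, take the standard factorization $\SL_i((k+1)\delta)=wv$ with $w=\SL_i^{ls}((k+1)\delta)$. By Corollary~\ref{cor:imaginary.no.im.splitting}, both $\deg(w),\deg(v)\in \wDelta^{+,\re}$, and the constraint $\deg(w)+\deg(v)=(k+1)\delta$ forces their $Q$-projections to be opposite; thus, using~\eqref{eq:loops} and the vanishing of the central term (as $k+1>0$), the bracket $[\sb[w],\sb[v]]$ is a nonzero scalar multiple of $h_{\deg(w)}t^{k+1}$. By Proposition~\ref{prop:general.bracketing}(b), this bracket lies in $\spanset_i^{k+1}\setminus\spanset_{i-1}^{k+1}$. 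Hence, via Lemma~\ref{lemma:mk} and the inductive hypothesis $\spanset_{i-1}^{k+1}=\spanset_{i-1}^k t$, the problem reduces to the single bound
\[
   m_k(\deg(w))\le (k\delta,i).
\]

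This bound is the main obstacle. On the optimistic side, Corollary~\ref{cor:mk.im.factor} applied at level $k+1$ already yields $m_{k+1}(\deg(w))=((k+1)\delta,i)$, and the Cartan element $h_{\deg(w)}\in\fh$ is independent of $t$-shifts of $\deg(w)$, so $m_k$ and $m_{k+1}$ detect membership of the \emph{same} element $h_{\deg(w)}$ in the two candidate flags; combined with the inductive hypothesis, this readily yields the matching lower bound $m_k(\deg(w))\ge (k\delta,i)$. The nontrivial upper bound I would approach by analyzing all decompositions $\deg(w)=\beta+\gamma$ with $\beta,\gamma\in\wDelta^{+,\re}$, using Proposition~\ref{prop:monotonicity}, Lemma~\ref{lemma:chain.sum.monotone}, and Lemma~\ref{lemma:parity.same.sum} to control the relevant chains and values of $m_k$, together with the convexity of Theorem~\ref{thm:convexity} to force $\deg(w)$ into the correct position relative to $(k\delta,i)$ and $(k\delta,i+1)$ (the latter bounded via Lemma~\ref{lemma:left.standard.comp.imaginary}).

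Finally, once the main equality at level $i$ is established, the additional claim is immediate: $h_{\deg(w)}t^{k+1}\in\spanset_i^{k+1}\setminus\spanset_{i-1}^{k+1}$ combined with $\spanset_j^{k+1}=\spanset_j^k t$ for $j\in\{i-1,i\}$ yields $h_{\deg(w)}t^k\in\spanset_i^k\setminus\spanset_{i-1}^k$, whence $m_k(\deg(w))=(k\delta,i)$ by Lemma~\ref{lemma:mk}.
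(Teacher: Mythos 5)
Your overall architecture coincides with the paper's: induction on $i$, the key vector is $h_{\deg(w)}t^{k+1}=[\sb[w],\sb[\SL((k+1)\delta-\deg(w))]]$ coming from the standard factorization of $\SL_i((k+1)\delta)$, and one half of the claim about $m_k(\deg(w))$ does follow exactly as you say from Corollary~\ref{cor:mk.im.factor} at level $k+1$ together with $\spanset_{i-1}^{k+1}=\spanset_{i-1}^{k}t$ (this rules out $h_{\deg(w)}t^k\in\spanset_{i-1}^k$, i.e.\ $m_k(\deg(w))=(k\delta,j)$ with $j<i$). One caveat: be explicit about which order on $\imx$ you are using, since in the paper's conventions $(k\delta,1)>(k\delta,2)>\cdots$, so what you call the ``upper bound'' $m_k(\deg(w))\le(k\delta,i)$ is, in the root order, the inequality $m_k(\deg(w))\ge(k\delta,i)$, i.e.\ $h_{\deg(w)}t^k\in\spanset_i^k$.

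The genuine gap is that this remaining half --- ruling out $m_k(\deg(w))=(k\delta,j)$ with $j>i$ --- is precisely the crux of the proposition, and you do not prove it: you only list lemmas and propose ``analyzing all decompositions $\deg(w)=\beta+\gamma$,'' which is not the mechanism that makes the statement true and gives no indication of how a contradiction would actually arise. The paper's argument is quite different and more targeted. Supposing $m_k(\deg(w))=(k\delta,j)$ with $j>i$, Proposition~\ref{prop:monotonicity} gives $w<\SL_j(k\delta)\le\SL_{i+1}(k\delta)$, and Lemma~\ref{lemma:left.standard.comp.imaginary} then gives $w<\SL_i^{ls}(k\delta)$. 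Setting $\alpha=\deg(\SL_i^{ls}(k\delta))$ and applying Lemma~\ref{lemma:equiv.to.standard.fac}, one gets $\SL(\alpha)>\SL_i((k+1)\delta)$, hence the Lyndon concatenation $\SL(\alpha)\SL((k+1)\delta-\alpha)$ exceeds $\SL_i((k+1)\delta)$; Corollary~\ref{cor:im.upper.limit} then forces $h_\alpha t^{k+1}\in\spanset_{i-1}^{k+1}=\spanset_{i-1}^k t$, contradicting $h_\alpha t^k\in\spanset_i^k\setminus\spanset_{i-1}^k$, which is known at level $k$ from Corollary~\ref{cor:mk.im.factor} and Lemma~\ref{lemma:mk}. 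The essential idea you are missing is this pivot from $w$ to the auxiliary root $\alpha=\deg(\SL_i^{ls}(k\delta))$ and the use of Corollary~\ref{cor:im.upper.limit} to transport the hypothetical failure into the $(i-1)$-st stage of the flag, where the inductive hypothesis produces the contradiction. Without some argument of this kind the proof is incomplete. (A minor additional remark: in the paper the identity $m_k(\deg(w))=(k\delta,i)$ is established \emph{before}, and is used to derive, the equality $\spanset_i^{k+1}=\spanset_i^k t$ at step $i$; your closing paragraph presents it as a consequence, which is harmless but obscures the logical order.)
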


\begin{proof}
The proof proceeds by induction on $i$. The base case $i=0$ is obvious. For the step of induction,
let us assume that both claims hold for all $j < i$. Let us first verify that $m_k(\deg(w)) = (k\delta,i)$.
If $m_k(\deg(w)) > (k\delta,i)$, then we actually have
$[\sb[w],\sb[\SL((k+1)\delta - \deg(w))]] \sim h_{w}t^{k+1} \in \spanset_{i-1}^{k}t$ by Lemma~\ref{lemma:mk}.
As $\spanset_{i-1}^{k}t =  \spanset_{i-1}^{k+1}$ by the inductive hypothesis, we get a contradiction with
Proposition~\ref{prop:general.bracketing}(b). If $m_k(\deg(w)) < (k\delta,i)$, then
$w < \SL(m_k(\deg(w))) \leq  \SL_{i+1}(k\delta)<\SL_{i}^{ls}(k\delta)$ by Proposition~\ref{prop:monotonicity}
and Lemma~\ref{lemma:left.standard.comp.imaginary}. Let $\alpha=\deg(\SL^{ls}_i(k\delta))$, so that
$\SL(\alpha)=\SL^{ls}_i(k\delta)>\SL_i((k+1)\delta)$ by Lemma~\ref{lemma:equiv.to.standard.fac}.
As $|\alpha|<|(k+1)\delta|$, we thus have $\SL(\alpha)\SL((k+1)\delta - \alpha) > \SL_i((k+1)\delta)$.
We also have $\alpha<(k\delta,i)<(k+1)\delta-\alpha$ by Proposition~\ref{prop:monotonicity}.
Then $[\sb[\SL(\alpha)],\sb[\SL((k+1)\delta - \alpha)]]\sim h_{\alpha}t^{k+1} \in \spanset_{i-1}^{k+1}$ by
Corollary~\ref{cor:im.upper.limit}. But this contradicts the inductive hypothesis as
$h_{\alpha}t^k \in \spanset_{i}^k\backslash\spanset_{i-1}^{k}$ by Corollary~\ref{cor:mk.im.factor} and Lemma~\ref{lemma:mk}.
This establishes $m_k(\deg(w)) = (k\delta,i)$.

We thus get $[\sb[w],\sb[\SL((k+1)\delta - \deg(w))]] \in \spanset_{i}^{k}t\backslash\spanset_{i-1}^kt$, due to Lemma~\ref{lemma:mk}.
On the other hand, $[\sb[w],\sb[\SL((k+1)\delta - \deg(w))]] \in \spanset_{i}^{k+1}\backslash\spanset_{i-1}^{k+1}$ by
Proposition~\ref{prop:general.bracketing}. As $\spanset_{i-1}^{k+1} = \spanset_{i-1}^{k}t$ by the inductive hypothesis,
and quotients $\spanset_{i}^{k}t/\spanset_{i-1}^kt, \spanset_{i}^{k+1}/\spanset_{i-1}^{k+1}$ are one-dimensional, we obtain
$\spanset_i^{k+1} = \spanset_i^kt$. This completes the inductive step.
\end{proof}

\begin{remark}
In $A$-type, we rather used explicit formulas for $\sb[\SL_i(k\delta)]$ of~\cite[(4.67, 4.73)]{AT}, which had a similar periodicity
for $k\geq 2$, but not for $k=1$. Thus, the above result simplifies several arguments from the proof of~\cite[Theorem 4.7]{AT}.
\end{remark}

\begin{corollary}\label{cor:equiv.mk.Mk}
For any $k,p \in \BZ_{>0}$ and $\alpha \in \wDelta^{+,\re}$, we have:
\begin{gather*}
  M_k(\alpha) = (k\delta,i) \iff M_p(\alpha) = (p\delta,i), \\
  m_k(\alpha) = (k\delta,i) \iff m_p(\alpha) = (p\delta,i).
\end{gather*}
\end{corollary}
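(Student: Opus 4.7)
The strategy is to extract from Proposition \ref{prop:spanset.equiv} a $k$-independent complete flag on $\fh$, then express both $m_k(\alpha)$ and $M_k(\alpha)$ purely in terms of this flag.

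Iterating the identity $\spanset_i^{k+1} = \spanset_i^k \cdot t$ of Proposition \ref{prop:spanset.equiv}, one has $\spanset_i^k = \spanset_i^1 \cdot t^{k-1}$ for all $k \in \BZ_{>0}$. Defining $V_i \subset \fh$ by $\spanset_i^1 = V_i \cdot t$, we obtain a complete flag
\begin{equation*}
  0 = V_0 \subset V_1 \subset \cdots \subset V_{|I|} = \fh
\end{equation*}
that does not depend on $k$, and satisfies $\spanset_i^k = V_i \cdot t^k$ for every $k$. In particular, each quotient $V_i/V_{i-1}$ is one-dimensional.

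For the $m_k$-equivalence, Lemma \ref{lemma:mk} characterizes $m_k(\alpha) = (k\delta, i)$ by the condition $h_\alpha t^k \in \spanset_i^k \setminus \spanset_{i-1}^k$, which under the identification $\spanset_j^k = V_j t^k$ reads $h_\alpha \in V_i \setminus V_{i-1}$. This condition is manifestly independent of $k$, yielding the desired equivalence.

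For the $M_k$-equivalence, write $\alpha = (\alpha', n)$ with $\alpha' \in \Delta$, and represent $\sb[\SL_j(k\delta)] = h_j^{(k)} t^k$ with $h_j^{(k)} \in V_j \setminus V_{j-1}$. Then $[\sb[\SL_j(k\delta)], \sb[\SL(\alpha)]] = \alpha'(h_j^{(k)}) \cdot e_{\alpha'} t^{k+n}$ vanishes iff $\alpha'(h_j^{(k)}) = 0$. Using one-dimensionality of $V_j/V_{j-1}$, the bracket vanishes for every $j' < j$ iff $V_{j-1} \subseteq \ker \alpha'$, and, given that containment, is nonzero at $j$ iff $V_j \not\subseteq \ker \alpha'$. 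Invoking Lemma \ref{lemma:Mk}, $M_k(\alpha) = (k\delta, i)$ thus translates to $i$ being the smallest index with $V_i \not\subseteq \ker \alpha'$, a condition involving only the fixed flag $V_\bullet$ and the linear functional $\alpha'$, hence independent of $k$. I anticipate no serious obstacle beyond carefully setting up the flag $V_\bullet$; both equivalences then reduce mechanically to the above $k$-free reformulations.
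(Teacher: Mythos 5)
Your proposal is correct and is essentially the paper's argument: the paper's proof consists of the single line that the corollary ``follows directly from repeated applications'' of Proposition~\ref{prop:spanset.equiv}, and your write-up simply makes explicit the $k$-independent flag $V_\bullet$ with $\spanset_i^k=V_i t^k$ and the resulting $k$-free reformulations of $m_k(\alpha)$ (via Lemma~\ref{lemma:mk}) and $M_k(\alpha)$ (via Lemma~\ref{lemma:Mk}). Both steps check out, so this is a faithful, more detailed rendering of the intended proof.
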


\begin{proof}
This follows directly from repeated applications of the above result.
\end{proof}

Combining this with Corollary~\ref{cor:mk.im.factor}, we obtain:

\begin{corollary}\label{cor:im.splitting.convex.set}
For any $k,p \in \BZ_{>0}$, $i \in \{1,2,\ldots,|I|\}$, and any splitting $\SL_i(k\delta) = uv$ with $u,v \in \SL$,
we have: $m_p(\deg(u)) = (p\delta,i)=m_p(\deg(v))$.
\end{corollary}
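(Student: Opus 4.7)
The plan is essentially to combine the two preceding corollaries in a single short argument, with no further work required.

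First, I would verify that the expressions $m_p(\deg(u))$ and $m_p(\deg(v))$ are well-defined, i.e., that $\deg(u),\deg(v)\in \wDelta^{+,\re}$. This is immediate from Corollary~\ref{cor:imaginary.no.im.splitting}, which rules out any factorization $\SL_i(k\delta) = uv$ with either factor imaginary.

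Next, I would apply Corollary~\ref{cor:mk.im.factor} to the given splitting. That corollary states precisely that $m_k(\deg(u)) = (k\delta,i) = m_k(\deg(v))$, so it handles the case $p=k$ for free.

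To upgrade this from $k$ to an arbitrary $p \in \BZ_{>0}$, I would apply Corollary~\ref{cor:equiv.mk.Mk} twice: once with $\alpha = \deg(u)$ (converting $m_k(\deg(u)) = (k\delta,i)$ into $m_p(\deg(u)) = (p\delta,i)$), and once with $\alpha = \deg(v)$ (converting $m_k(\deg(v)) = (k\delta,i)$ into $m_p(\deg(v)) = (p\delta,i)$). Combining these yields the desired equality $m_p(\deg(u)) = (p\delta,i) = m_p(\deg(v))$.

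Since both ingredients are already established, there is really no obstacle here; the statement is a clean corollary whose proof amounts to chaining the two equivalences. The only conceptual content is that the ``imaginary level'' $i$ assigned to a real root $\alpha$ via its $m_\bullet$-invariant is independent of the grading level $p$, which is precisely what Proposition~\ref{prop:spanset.equiv} built into Corollary~\ref{cor:equiv.mk.Mk}.
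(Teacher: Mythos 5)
Your proof is correct and matches the paper exactly: the paper derives this corollary by combining Corollary~\ref{cor:mk.im.factor} (the case $p=k$) with Corollary~\ref{cor:equiv.mk.Mk} (transferring the level from $k$ to arbitrary $p$), which is precisely your chain of reasoning. The preliminary check that $\deg(u),\deg(v)$ are real via Corollary~\ref{cor:imaginary.no.im.splitting} is a sensible addition that the paper leaves implicit.
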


\begin{lemma}\label{lemma:lifting.imaginary.left.standard}
For any $i \in \{1,2,\ldots,|I|\}$ and $k\in \BZ_{>0}$, let $\alpha = \deg(\SL^{ls}_i(k\delta))$. Then:
\begin{gather*}
  \SL(\alpha + \delta) \leq \SL^{ls}_i((k+1)\delta).
\end{gather*}
\end{lemma}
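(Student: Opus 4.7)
The plan is to deduce the bound by first pinning down the direction of $\chain(\alpha)$, and then invoking the monotonicity of Proposition~\ref{prop:monotonicity} to bound $\alpha+\delta$ strictly below $((k+1)\delta,i)$. The key observation, which drives the whole proof, is that the mere fact that $\SL^{ls}_i(k\delta)$ is a proper Lyndon \emph{prefix} of $\SL_i(k\delta)$ forces $\chain(\alpha)$ to be monotone increasing.

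The first step is to apply Corollary~\ref{cor:im.splitting.convex.set} to the standard factorization $\SL_i(k\delta)=\SL^{ls}_i(k\delta)\cdot \SL^{rs}_i(k\delta)$, yielding $m_p(\alpha)=(p\delta,i)$ for every $p\geq 1$; in particular $(k\delta,i),((k+1)\delta,i)\in \convexsetre(\alpha)=\convexsetre(\alpha+\delta)$. Next, since $\SL(\alpha)=\SL^{ls}_i(k\delta)$ is a proper Lyndon prefix of $\SL_i(k\delta)$, it is lexicographically smaller, so that $\alpha<m_k(\alpha)=(k\delta,i)$ in the root order. If $\chain(\alpha)$ were monotone decreasing, Proposition~\ref{prop:monotonicity}(3) would force $\alpha$ to exceed every element of $\convexsetre(\alpha)$, contradicting $\alpha<(k\delta,i)$. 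Hence $\chain(\alpha)=\chain(\alpha+\delta)$ is monotone increasing.

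To conclude, Proposition~\ref{prop:monotonicity}(3) applied to the increasing chain $\chain(\alpha+\delta)$ gives $\alpha+\delta<((k+1)\delta,i)$, i.e.\ $\SL(\alpha+\delta)<\SL_i((k+1)\delta)$. Since $|\SL(\alpha+\delta)|=|\alpha|+|\delta|<(k+1)|\delta|=|\SL_i((k+1)\delta)|$, Lemma~\ref{lemma:equiv.to.standard.fac} applied with $w=\SL(\alpha+\delta)$ and the standard factorization $\SL_i((k+1)\delta)=\SL^{ls}_i((k+1)\delta)\cdot \SL^{rs}_i((k+1)\delta)$ upgrades this to $\SL(\alpha+\delta)\leq \SL^{ls}_i((k+1)\delta)$. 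The only conceptual step is the chain-direction argument above; everything else is a direct invocation of the machinery from Sections~\ref{sec:flag} and~\ref{sec:keyresults}. A naive attempt through Corollary~\ref{cor:im.upper.limit} applied to the pair $(\SL(\alpha+\delta),\SL(k\delta-\alpha))$ is the main temptation one should resist: under the negation of the claim, Theorem~\ref{thm:convexity} puts this pair in the ``wrong'' order (the larger word first), so the corollary fails to fire directly, and routing through monotonicity is cleaner.
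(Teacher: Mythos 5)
Your proof is correct, but it is not the argument the paper gives; the two routes are worth contrasting. The paper argues by contradiction: assuming $\SL(\alpha+\delta)>\SL^{ls}_i((k+1)\delta)$, it upgrades this to $\SL(\alpha+\delta)>\SL_i((k+1)\delta)$ via Lemma~\ref{lemma:equiv.to.standard.fac}, observes $\alpha+\delta<M_1(\alpha)<k\delta-\alpha$ (so that $\SL(\alpha+\delta)\SL(k\delta-\alpha)$ is Lyndon), places $[\sb[\SL(\alpha+\delta)],\sb[\SL(k\delta-\alpha)]]$ in $\spanset_i^{k+1}\setminus\spanset_{i-1}^{k+1}$ via Lemma~\ref{lemma:mk} and Proposition~\ref{prop:spanset.equiv}, and contradicts Corollary~\ref{cor:im.upper.limit} --- i.e.\ exactly the route you dismiss in your closing remark. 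Your claim that the pair comes out in the ``wrong'' order there is mistaken: the inequality $\alpha+\delta<M_1(\alpha)<k\delta-\alpha$ holds unconditionally once $\chain(\alpha)$ is known to increase (via Corollary~\ref{cor:monotonicity}), so the corollary does fire and the paper's contradiction goes through. Your own direct route --- $m_p(\alpha)=(p\delta,i)$ for all $p$ from Corollary~\ref{cor:im.splitting.convex.set}, the chain-direction argument from $\alpha<(k\delta,i)$, then Proposition~\ref{prop:monotonicity}(3) applied to $((k+1)\delta,i)=m_{k+1}(\alpha+\delta)\in\convexsetre(\alpha+\delta)$, followed by Lemma~\ref{lemma:equiv.to.standard.fac} --- is valid and arguably cleaner, since it avoids constructing the auxiliary Lyndon word and the contradiction altogether. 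Note that both arguments ultimately rest on the same input, the flag compatibility of Proposition~\ref{prop:spanset.equiv} (you consume it through Corollary~\ref{cor:equiv.mk.Mk} inside Corollary~\ref{cor:im.splitting.convex.set}; the paper invokes it directly), and both need the $\delta$-periodicity $\convexsetre(\alpha)=\convexsetre(\alpha+\delta)$, which you correctly use and which follows from the definition of $\convexsetim$ depending only on $\beta$ modulo $\delta$.
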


\begin{proof}
Assume by contradiction that $\SL(\alpha + \delta) > \SL^{ls}_i((k+1)\delta)$. Then $\SL(\alpha + \delta) > \SL_i((k+1)\delta)$
by Lemma~\ref{lemma:equiv.to.standard.fac}. Combining Proposition~\ref{prop:monotonicity} and Corollary~\ref{cor:monotonicity},
we get $\alpha + \delta < M_1(\alpha) < k\delta-\alpha$, so that $\SL(\alpha + \delta)\SL(k\delta - \alpha)$ is Lyndon by
Lemma~\ref{lemma:lyndon}. We also note that
  $[\sb[\SL(\alpha + \delta)],\sb[\SL(k\delta - \alpha)]] \in \spanset_{i}^{k+1} \backslash\spanset_{i-1}^{k+1}$
by Lemma~\ref{lemma:mk} and Proposition~\ref{prop:spanset.equiv}. But this contradicts Corollary~\ref{cor:im.upper.limit},
as $\SL_i((k+1)\delta)<\SL(\alpha + \delta)<\SL(\alpha + \delta)\SL(k\delta - \alpha)$. This completes the proof.
\end{proof}

The following result generalizes~\cite[Remark 5.6]{AT} to all types:

\begin{lemma}\label{cor:imaginary.words.decreasing}
For any $i \in \{1,2,\ldots,|I|\}$, we have:
\begin{equation*}
  \SL_i(\delta) > \SL_i(2\delta) > \SL_i(3\delta) > \dots
\end{equation*}
\end{lemma}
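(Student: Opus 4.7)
The plan is to fix $i\in\{1,\dots,|I|\}$ and $k\in\BZ_{>0}$ and prove $\SL_i(k\delta)>\SL_i((k+1)\delta)$ via a case analysis on chain monotonicity. First I would form the standard factorization $\SL_i((k+1)\delta)=uv$ with $u=\SL_i^{ls}((k+1)\delta)$ and $v=\SL_i^{rs}((k+1)\delta)$, and apply Lemma~\ref{lemma:equiv.to.standard.fac} (valid since $|\SL_i(k\delta)|=k|\delta|<(k+1)|\delta|=|\SL_i((k+1)\delta)|$) to reduce the claim to $\SL_i(k\delta)>u$. Writing $\alpha=\deg(u)$ and $\beta=\deg(v)$, the identity $\alpha+\beta=(k+1)\delta$ forces both $\alpha,\beta\in\wDelta^{+,\re}$: if one were imaginary then the other would also be imaginary, contradicting Corollary~\ref{cor:imaginary.no.im.splitting}.

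Next, Proposition~\ref{prop:spanset.equiv} gives $m_k(\alpha)=(k\delta,i)$, while Corollary~\ref{cor:mk.im.factor} applied to the splitting $\SL_i((k+1)\delta)=uv$ yields $m_{k+1}(\beta)=((k+1)\delta,i)$, which upgrades to $m_k(\beta)=(k\delta,i)$ via Corollary~\ref{cor:equiv.mk.Mk}. In particular $(k\delta,i)\in\convexsetre(\alpha)\cap\convexsetre(\beta)$, so Proposition~\ref{prop:monotonicity} can be applied to both chains.

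The decisive case split comes last. By Proposition~\ref{prop:monotonicity}, $\chain(\alpha)$ is monotone; if it is increasing then $\alpha<(k\delta,i)$, giving $u<\SL_i(k\delta)$ and finishing the proof. I expect the main obstacle to be ruling out the decreasing case. Here I would decompose $\alpha=\bar\alpha+p\delta$ with $\bar\alpha\in\wDelta^{+,\re}$, $|\bar\alpha|<|\delta|$, and $p\in\BZ_{\geq 0}$, which together with $\alpha+\beta=(k+1)\delta$ yields $\beta=(\delta-\bar\alpha)+(k-p)\delta$ with $k-p\geq 0$. Since $\chain(\alpha)$ shares monotonicity with $\chain(\bar\alpha)$ and $\chain(\beta)$ shares monotonicity with $\chain(\delta-\bar\alpha)$, Corollary~\ref{cor:monotonicity} implies these two chains have opposite monotonicity. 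Hence $\chain(\beta)$ is increasing, so applying Proposition~\ref{prop:monotonicity} to both chains gives $\alpha>(k\delta,i)>\beta$, i.e.\ $u>\SL_i(k\delta)>v$. But $uv$ being the standard factorization of the Lyndon word $\SL_i((k+1)\delta)$ forces $u<v$, a contradiction. Thus only the increasing case can occur, completing the proof.
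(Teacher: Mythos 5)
Your proof is correct, and its skeleton matches the paper's: reduce to comparing $\SL_i(k\delta)$ with the prefix $u=\SL_i^{ls}((k+1)\delta)$ via Lemma~\ref{lemma:equiv.to.standard.fac}, establish $m_k(\deg(u))=(k\delta,i)$, and conclude via Proposition~\ref{prop:monotonicity}. The one place you diverge is in pinning down the direction of $\chain(\deg(u))$: you run a case analysis and kill the decreasing case by passing to the right factor $v$, using Corollary~\ref{cor:mk.im.factor} for $m_k(\deg(v))$, Corollary~\ref{cor:monotonicity} to flip the monotonicity, and the inequality $u<v$ forced by the standard factorization. The paper gets this in one line: since $u$ is a proper prefix of $\SL_i((k+1)\delta)=\SL\bigl(m_{k+1}(\deg(u))\bigr)$, one has $\deg(u)<m_{k+1}(\deg(u))\in\convexsetre(\deg(u))$, which by Proposition~\ref{prop:monotonicity} already forces $\deg(u)<m_k(\deg(u))=(k\delta,i)$ with no case split and no appeal to $v$ or to Corollary~\ref{cor:monotonicity}. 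Your detour is sound (all the cited statements are proved before this lemma, so there is no circularity, and $k-p\geq 0$ does hold since $|\deg(u)|<|(k+1)\delta|$), just longer than necessary.
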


\begin{proof}
Pick any $k \in \BZ_{>0}$.
We have $m_{k+1}(\deg(\SL_i^{ls}((k+1)\delta))) = ((k+1)\delta,i)$ by Corollary~\ref{cor:mk.im.factor},
so that $m_k(\deg(\SL_i^{ls}((k+1)\delta))) = (k\delta,i)$ by Corollary~\ref{cor:equiv.mk.Mk}.
As $\SL_i^{ls}((k+1)\delta) < \SL_i((k+1)\delta)=\SL(m_{k+1}(\deg(\SL_i^{ls}((k+1)\delta))))$, we have
$\SL_i^{ls}((k+1)\delta) < \SL_i(k\delta)=\SL(m_k(\deg(\SL_i^{ls}((k+1)\delta))))$ due to Proposition~\ref{prop:monotonicity}.
We thus conclude that $\SL_{i}((k+1)\delta) < \SL_i(k\delta)$ due to Lemma~\ref{lemma:equiv.to.standard.fac}.
\end{proof}

The following result is important for the rest of this section:

\begin{lemma}\label{lemma:length.standfac.imaginary}
For any $i \in \{1,2,\ldots,|I|\}$ and $k \in \BZ_{>0}$, let $\beta = \deg(\SL^{ls}_i(k\delta))$. Then $|\beta| > |(k-1)\delta|$.
\end{lemma}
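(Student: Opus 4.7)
The plan is a proof by contradiction using a ``$p\delta$-swap'' between the left and right factors of the standard factorization. Set $\gamma := k\delta - \beta = \deg(\SL^{rs}_i(k\delta))$. By Corollary~\ref{cor:imaginary.no.im.splitting} both $\beta,\gamma \in \wDelta^{+,\re}$, and since heights of real affine roots are never multiples of $|\delta|$, the negation $|\beta| \leq (k-1)|\delta|$ is equivalent to $|\gamma| > |\delta|$. Write $\gamma = \bar{\gamma} + p\delta$ with $\bar{\gamma} \in \wDelta^{+,\re}$, $0 < |\bar{\gamma}| < |\delta|$, $p \geq 1$, and set $\beta' := \beta + p\delta = k\delta - \bar{\gamma} \in \wDelta^{+,\re}$. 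The goal is to produce the Lyndon word $\SL(\beta')\SL(\bar{\gamma})$ of degree $k\delta$ and contradict Corollary~\ref{cor:im.upper.limit}.

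First I would pin down the chain parities. Since $\SL_i(k\delta) = \SL(\beta)\SL(\gamma)$ is the standard factorization, Corollary~\ref{cor:im.splitting.convex.set} yields $m_k(\beta) = (k\delta,i) = m_k(\gamma)$. The Lyndon property of $\SL_i(k\delta)$ gives $\SL^{ls}_i(k\delta) < \SL_i(k\delta) < \SL^{rs}_i(k\delta)$, i.e.\ $\beta < (k\delta,i) < \gamma$; as each chain is monotonous by Proposition~\ref{prop:monotonicity}, $\chain(\beta)$ must be increasing and $\chain(\gamma)$ decreasing. Propagating along the chains then gives $\beta' = \beta + p\delta < (k\delta,i) < \gamma - p\delta = \bar{\gamma}$, so $\SL(\beta') < \SL(\bar{\gamma})$ lexicographically, and Lemma~\ref{lemma:lyndon} makes $\SL(\beta')\SL(\bar{\gamma})$ Lyndon of degree $k\delta$.

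Next I would locate its bracket in the flag. Because $\beta' + \bar{\gamma} = k\delta$ with opposite finite parts and $h_{\beta'} = h_{\beta}$, we have $[\sb[\SL(\beta')], \sb[\SL(\bar{\gamma})]] \sim h_{\beta}t^k$, and Lemma~\ref{lemma:mk} together with $m_k(\beta)=(k\delta,i)$ places this bracket in $\spanset^k_i \setminus \spanset^k_{i-1}$. The contradiction then comes from a lex comparison: since $\beta' > \beta$ we get $\SL(\beta') > \SL(\beta) = \SL^{ls}_i(k\delta)$, and since $|\beta'| = |\beta| + p|\delta| < k|\delta|$ (using $|\bar{\gamma}| > 0$), Lemma~\ref{lemma:equiv.to.standard.fac} applied to $w = \SL(\beta')$ and the standard factorization of $\SL_i(k\delta)$ upgrades this to $\SL(\beta') > \SL_i(k\delta)$; appending $\SL(\bar{\gamma})$ gives $\SL(\beta')\SL(\bar{\gamma}) > \SL(\beta') > \SL_i(k\delta)$. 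Corollary~\ref{cor:im.upper.limit} then forces $[\sb[\SL(\beta')], \sb[\SL(\bar{\gamma})]] \in \spanset^k_{i-1}$, contradicting the previous paragraph.

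The hardest step is isolating the right auxiliary word. A naive attempt such as $\SL(\beta)\SL(\bar{\gamma}+(p-1)\delta)$ of degree $(k-1)\delta$ is Lyndon but not evidently lex-larger than $\SL_i((k-1)\delta)$, so the bracket argument stalls. The ``$p\delta$-swap'' to $(\beta',\bar{\gamma})$ is the decisive move: the new pair straddles $(k\delta,i)$ from the opposite sides of the original pair $(\beta,\gamma)$, the bracket stays at level $i$ in the flag, and Lemma~\ref{lemma:equiv.to.standard.fac} promotes the easy inequality $\beta' > \beta$ to $\SL(\beta') > \SL_i(k\delta)$, which is the structural ingredient that makes the whole argument click.
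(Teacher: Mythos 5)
Your proof is correct, but it takes a genuinely different route from the paper's. The paper descends one level: it sandwiches $\SL(\beta)$ strictly between $\SL^{ls}_i((k-1)\delta)$ and $\SL_i((k-1)\delta)$ using Lemma~\ref{lemma:lifting.imaginary.left.standard} together with the monotonicity of $\chain(\beta)$, and then reads off $|\beta|\geq|(k-1)\delta|$ from Lemma~\ref{lemma:equiv.to.standard.fac} (with the equality case excluded by Corollary~\ref{cor:imaginary.no.im.splitting}). You instead stay entirely at level $k$ and perform a $p\delta$-swap on the standard factorization $\SL_i(k\delta)=\SL(\beta)\SL(\gamma)$; the key inputs are $m_k(\beta)=m_k(\gamma)=(k\delta,i)$ (Corollary~\ref{cor:mk.im.factor} already suffices here), Proposition~\ref{prop:monotonicity}, and Lemma~\ref{lemma:equiv.to.standard.fac}, and you avoid Lemma~\ref{lemma:lifting.imaginary.left.standard} altogether. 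Both arguments are sound and of comparable length; yours has the advantage of not invoking the inductive comparison between levels $k-1$ and $k$.

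One structural remark: your second paragraph is redundant. In the first paragraph you establish $\beta'<(k\delta,i)$, i.e.\ $\SL(\beta')<\SL_i(k\delta)$, from $\chain(\beta')=\chain(\beta)$ being increasing and $m_k(\beta')=m_k(\beta)=(k\delta,i)$ via Proposition~\ref{prop:monotonicity}; in the second paragraph you establish $\SL(\beta')>\SL_i(k\delta)$ from $\beta'>\beta$ and Lemma~\ref{lemma:equiv.to.standard.fac}. These two facts already contradict each other, so the entire detour through $h_\beta t^k\in\spanset_i^k\setminus\spanset_{i-1}^k$, the Lyndon word $\SL(\beta')\SL(\bar{\gamma})$, and Corollary~\ref{cor:im.upper.limit} can be deleted. (That detour is itself internally consistent only because it is downstream of a false hypothesis: it simultaneously uses $\SL(\beta')<\SL(\bar{\gamma})$, which comes from $\beta'<(k\delta,i)$, and $\SL(\beta')\SL(\bar{\gamma})>\SL_i(k\delta)$, which comes from $\beta'>(k\delta,i)$.) Trimmed to its first paragraph plus the single application of Lemma~\ref{lemma:equiv.to.standard.fac}, your argument is a clean and slightly more economical alternative to the paper's proof.
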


\begin{proof}
It suffices to consider the case $k > 1$. Let $\alpha = \SL^{ls}_i((k-1)\delta)$, so that
$\SL(\alpha + \delta) \leq \SL(\beta)$ by Lemma~\ref{lemma:lifting.imaginary.left.standard}.
We also note that $\SL(\beta) < \SL_i((k-1)\delta)$ due to Proposition~\ref{prop:monotonicity},
since $\SL(\beta) < \SL_i(k\delta)=\SL(m_k(\beta))$ and $\SL_i((k-1)\delta)=\SL(m_{k-1}(\beta))$
by Corollary~\ref{cor:equiv.mk.Mk}. Evoking Proposition~\ref{prop:monotonicity} again, we get
$\alpha < \alpha + \delta$. Combining all the above, we obtain:
  $$\SL^{ls}_i((k-1)\delta)=\SL(\alpha) < \SL(\alpha+\delta) \leq \SL(\beta) < \SL_i((k-1)\delta).$$
This implies $|\beta| \geq |(k-1)\delta|$ by Lemma~\ref{lemma:equiv.to.standard.fac}, and
$|\beta| \ne |(k-1)\delta|$ by Corollary~\ref{cor:imaginary.no.im.splitting}.
\end{proof}

We conclude this subsection with the following interesting observation:

\begin{corollary}
For any standard Lyndon word $w$ and any splitting $w=uv$, we cannot have
$u = \SL_i(k\delta)$ or $v = \SL_i(k\delta)$ for some $i$ and $k > 1$.
\end{corollary}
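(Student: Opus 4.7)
The plan is to derive a contradiction by exhibiting a generalized Leclerc candidate for $\SL(\deg(w))$ that exceeds $w$ lexicographically, contradicting $w = \SL(\deg(w))$. I would treat the two cases $u = \SL_i(k\delta)$ (Case 1) and $v = \SL_i(k\delta)$ (Case 2) separately. In both, Corollary~\ref{cor:imaginary.no.im.splitting} forces the other factor to have real degree, whence $\deg(w) \in \wDelta^{+,\re}$, Corollary~\ref{cor:imaginary.suffix.prefix} gives $M_k(\deg(w)) = (k\delta,i)$, and Corollary~\ref{cor:equiv.mk.Mk} promotes this to $M_1(\deg(w)) = (\delta,i)$. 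The Lyndon property of $w$ yields $u < w < v$; combined with Proposition~\ref{prop:monotonicity}, this forces $\chain(\deg(v))$ to be decreasing in Case 1 and $\chain(\deg(u))$ to be increasing in Case 2.

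The key arithmetic observation is that by Lemma~\ref{cor:imaginary.words.decreasing}, $\SL_i(\delta) > \SL_i(k\delta)$ strictly; since $|\SL_i(\delta)| < |\SL_i(k\delta)|$, these words must differ at some position $p \leq |\delta|$ with $\SL_i(\delta)$ carrying the larger letter, so $\SL_i(\delta) \cdot X > \SL_i(k\delta) \cdot Y$ lexicographically for arbitrary words $X,Y$. Setting $\alpha := \deg(w) - \delta \in \wDelta^{+,\re}$ (whose chain shares the direction of the real factor), Proposition~\ref{prop:monotonicity} gives $\SL(\alpha) > \SL_i(\delta)$ in Case 1 and $\SL(\alpha) < \SL_i(\delta)$ in Case 2. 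I would then take the Leclerc candidate $\SL_i(\delta) \cdot \SL(\alpha)$ in Case 1 and $\SL(\alpha) \cdot \SL_i(\delta)$ in Case 2, whose validity follows from $M_1(\alpha) = (\delta,i)$ via Lemma~\ref{lemma:Mk}. The leading-letter observation gives the strict inequality $>w$ in Case 1 immediately, and in Case 2 whenever $u$ is not a prefix of $\SL(\alpha)$ (the first differing letter lies within the common $u$-region with $\SL(\alpha)$ carrying the larger letter, since $\SL(\alpha) > u$).

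The \textbf{main obstacle} is the residual subcase in Case 2 where $u$ is a proper prefix of $\SL(\alpha)$, so $\SL(\alpha) = u \cdot R$ with $|R| = (k-1)|\delta|$; after cancelling the common $u$, one is reduced to comparing $R \cdot \SL_i(\delta)$ with $\SL_i(k\delta)$ (both of length $k|\delta|$). To resolve this, I would apply the generalized Leclerc algorithm once more, now to $\SL(\alpha)$ itself, using the candidate $u \cdot \SL_i((k-1)\delta)$. Its validity follows from $u < w < v = \SL_i(k\delta) < \SL_i((k-1)\delta)$ --- the last inequality again by Lemma~\ref{cor:imaginary.words.decreasing} --- together with $M_{k-1}(\deg(u)) = ((k-1)\delta,i)$ via Corollary~\ref{cor:equiv.mk.Mk}, and correct degree sum. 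Hence $u \cdot R = \SL(\alpha) \geq u \cdot \SL_i((k-1)\delta)$, forcing $R \geq \SL_i((k-1)\delta) > \SL_i(k\delta)$ (strict by Lemma~\ref{cor:imaginary.words.decreasing}). Since $|R| < |\SL_i(k\delta)|$ while $R > \SL_i(k\delta)$, they differ at some position $\leq |R|$ with $R$ carrying the larger letter, yielding $R \cdot \SL_i(\delta) > \SL_i(k\delta)$ and hence $\SL(\alpha) \cdot \SL_i(\delta) > w$, completing the contradiction.
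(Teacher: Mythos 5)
Your argument has one genuine gap: it silently assumes that the complementary factor in the splitting $w=uv$ is itself a standard Lyndon word (and, in particular, that its degree is a positive root). The statement allows an \emph{arbitrary} splitting, so if, say, $v=\SL_i(k\delta)$, the prefix $u$ need not be Lyndon and $\deg(u)=\deg(w)-k\delta$ need not even lie in $\wDelta^{+}$; yet your proof invokes Corollary~\ref{cor:imaginary.suffix.prefix} (stated only for splittings into two words of $\SL$), speaks of $\chain(\deg(u))$ and $M_{k-1}(\deg(u))$, and uses $u$ as a factor in a Leclerc candidate --- all of which presuppose $u\in\SL$. The paper opens its proof with exactly this reduction: if the complementary factor is not Lyndon, one passes to a shorter standard Lyndon prefix $w'=uv'$ (via Lemma~\ref{lemma:seq.right.word.Lyndon}) or suffix $w'=u'v$ (via Lemma~\ref{lemma:seq.left.word.Lyndon}), whose factors lie in $\SL$ as factors of $w\in\SL$. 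You need to add this step; once both factors are in $\SL$, every tool you use becomes legitimate.

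Granting that reduction, your argument is correct. Your Case 1 ($u=\SL_i(k\delta)$) coincides with the paper's: both exhibit the Leclerc candidate $\SL_i(\delta)\,\SL(\deg(w)-\delta)$ and beat $w$ via $\SL_i(\delta)>\SL_i(k\delta)$ together with $|\SL_i(\delta)|<|\SL_i(k\delta)|$. For the case $v=\SL_i(k\delta)$ the paper is shorter: it descends to the root $\deg(u)+\delta$, where the candidate $u\,\SL_i(\delta)$ exceeds $w=u\,\SL_i(k\delta)$, contradicting $\SL(\deg(u)+\delta)<w$ from the increasing chain (Proposition~\ref{prop:monotonicity}). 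Your version stays at $\deg(w)$ with the candidate $\SL(\deg(w)-\delta)\,\SL_i(\delta)$ and then needs the extra sub-argument for the prefix subcase, resolved by the second Leclerc bound $\SL(\deg(w)-\delta)\geq u\,\SL_i((k-1)\delta)$ and Lemma~\ref{cor:imaginary.words.decreasing}; that sub-argument is sound but can be bypassed entirely by the paper's descent to $\deg(u)+\delta$.
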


\begin{proof}
First, we claim that it suffices to assume that both $u,v \in \SL$. Indeed, if $u = \SL_i(k\delta)$ but $v\notin \LL$,
then there is a prefix $w'=uv'$ of $uv$ with $w',v'\in \LL$ by Lemma~\ref{lemma:seq.right.word.Lyndon}. Similarly,
if $v=\SL_i(k\delta)$ but $u\notin \LL$, then there is a suffix $w'=u'v$ of $uv$ with $w',u'\in \LL$ by
Lemma~\ref{lemma:seq.left.word.Lyndon}. In fact, $w',u',v'\in \SL$ as subwords of $w\in \SL$.

The result follows from Corollary~\ref{cor:imaginary.no.im.splitting} if $\deg(w) \in \imx$. Let us now assume that
$\deg(w) \in \wDelta^{+,\re}$. We have $M_k(\alpha) < M_1(\alpha)$ for any $\alpha\in \wDelta^{+,\re}$ and $k>1$, due
to Corollary~\ref{cor:equiv.mk.Mk} and Lemma~\ref{cor:imaginary.words.decreasing}. Assume first that $v=\SL_i(k\delta)$
for some $i$ and $k>1$. Then $v = \SL(M_k(\deg(u)))$ by Corollary~\ref{cor:imaginary.suffix.prefix}. Thus
$\chain(\deg(u)) < M_1(\deg(u))$ and so $\SL(\deg(w) - (k-1)\delta) \geq u\SL(M_1(\deg(u)))$ by the generalized Leclerc
algorithm. Then $\SL(\deg(w) - (k-1)\delta) \geq u\SL(M_1(\deg(u))) > u\SL(M_k(\deg(u))) = w$, which contradicts to
Proposition~\ref{prop:monotonicity} as $u<w<v=\SL(M_k(\deg(u)))=\SL(M_k(\deg(w)))$ by Theorem~\ref{thm:convexity}.
The case $u=\SL_i(k\delta)$ for some $i$ and $k>1$ is treated similarly. First, we note that $u = \SL(M_k(\deg(v)))$
by Corollary~\ref{cor:imaginary.suffix.prefix}. Then, we have $\SL(\deg(uv) - \delta) > uv > \SL(M_1(\deg(v)))=\SL(M_1(\deg(uv)))$,
in accordance with Proposition~\ref{prop:monotonicity}. As $[\sb[\SL(M_1(\deg(v)))],\sb[\SL(\deg(uv)-\delta)]] \neq 0$,
we have $\SL(M_1(\deg(v)))\SL(\deg(uv)-\delta)\leq uv$ by the generalized Leclerc algorithm. This contradicts
the above inequality $u=\SL(M_k(\deg(v)))<\SL(M_1(\deg(v)))$.
\end{proof}

   %%%%%%%%%%%%%%%%%%%%%%%%%%%%%%%%%%%%%%%%%%%%%%%%%%%%%%%%%%%%%%%%%%%%%%%%%%%%%%%

\subsection{Special orders}
\

In this subsection, we obtain explicit formulas for all imaginary words $\SL_i(k\delta)$ when
$\delta$ contains only one instance of the smallest simple root, denoted by $\alpha_\varepsilon$.

\begin{remark}
This applies to any order in type $A$, as well as to an arbitrary type and the orders on the alphabet $\wI$ with
$0\in \wI$ being the smallest letter.
\end{remark}

We start with the following simple observation:

\begin{lemma}\label{lemma:delta.stand.one.letter}
If $\alpha_\varepsilon$ occurs once in $\delta$, then for any $i \in \{1,2,\ldots,|I|\}$, we have
$|\SL^{rs}_i(\delta)| = 1$ and all $\SL_i(\delta)$ end with different letters.
\end{lemma}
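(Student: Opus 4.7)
The plan is to combine two observations: a Lyndon word whose smallest letter appears exactly once is automatically ``prefix-rich'', and standard Lyndon words of real degree are uniquely determined by that degree.

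For the first assertion $|\SL^{rs}_i(\delta)|=1$, I will use that $\SL_i(\delta)$ is Lyndon of degree $\delta$, so it begins with its smallest letter $\varepsilon$, and since $\alpha_\varepsilon$ occurs in $\delta$ with multiplicity one, the letter $\varepsilon$ appears exactly once in $\SL_i(\delta)$. Thus $\SL_i(\delta)=\varepsilon w_i$ with $w_i\in (\wI\setminus\{\varepsilon\})^*$. The key combinatorial observation is that any word of the form $\varepsilon w'$ with $w'\in(\wI\setminus\{\varepsilon\})^*$ is automatically Lyndon, since every nontrivial cyclic rotation starts with a letter strictly greater than $\varepsilon$ and is hence lexicographically larger. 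Applying this to every proper prefix of $\SL_i(\delta)$ shows that all of them are Lyndon, so the longest Lyndon proper prefix $\SL^{ls}_i(\delta)$ must be the prefix of length $|\delta|-1$, forcing $|\SL^{rs}_i(\delta)|=1$.

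For the second assertion, I will write $x_i\in\wI$ for the single-letter word $\SL^{rs}_i(\delta)$. Then $\SL^{ls}_i(\delta)$ is a standard Lyndon word of degree $\delta-\alpha_{x_i}$. In the untwisted affine setting this is always a positive real root (equal to $\theta$ when $x_i=0$, and to $-\alpha_{x_i}+\delta\in\wDelta^{+,\re}$ for $x_i\in I$), as recorded in~\eqref{eq:affine-roots}. By the bijection~\eqref{eqn:associated word affine} restricted to real roots, there is a unique standard Lyndon word in each real degree, which forces $\SL^{ls}_i(\delta)=\SL(\delta-\alpha_{x_i})$. Hence $\SL_i(\delta)$ is completely determined by its last letter $x_i$, and since the words $\SL_1(\delta),\ldots,\SL_{|I|}(\delta)$ are pairwise distinct, so are the letters $x_1,\ldots,x_{|I|}$.

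There is no serious obstacle in this argument: the first part is purely combinatorial from the definition of Lyndon, and the second is an immediate consequence of real-root uniqueness. The only point worth double-checking is that $\delta-\alpha_x\in\wDelta^{+,\re}$ for every $x\in\wI$, which follows directly from the explicit description of $\wDelta^+$ in the untwisted affine setting.
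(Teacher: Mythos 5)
Your proof is correct. The paper states this lemma as a ``simple observation'' and gives no proof, so there is nothing to compare against; your argument is a complete justification. Both halves check out: every proper prefix of $\SL_i(\delta)$ has the form $\varepsilon w'$ with $w'$ avoiding the minimal letter $\varepsilon$ (which occurs exactly once since its coefficient in $\delta$ is $1$), hence is Lyndon, forcing $|\SL^{rs}_i(\delta)|=1$; and since $\delta-\alpha_x\in\wDelta^{+,\re}$ for every $x\in\wI$ by~\eqref{eq:affine-roots}, the uniqueness of the standard Lyndon word in each real degree (together with the fact that the Lyndon prefix $\SL^{ls}_i(\delta)$ of a standard word is itself standard) shows $\SL_i(\delta)$ is determined by its last letter, so the $|I|$ distinct words must end in distinct letters.
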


We also have a simple criteria for chains to be increasing rather than decreasing:

\begin{lemma}\label{lemma:monotonicity.smallest.once}
If $\alpha_\varepsilon$ occurs once in $\delta$, then for any $\alpha \in \wDelta^{+,\re}$ with $|\alpha| < |\delta|$:
  $$ \chain(\alpha) \ \textit{increases} \ \Longleftrightarrow  \ \alpha \ \textit{contains\ } \alpha_\varepsilon. $$
\end{lemma}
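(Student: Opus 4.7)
The plan is to combine Proposition~\ref{prop:monotonicity} and Corollary~\ref{cor:monotonicity} with a direct lexicographic argument exploiting that $\alpha_\varepsilon$ is the smallest letter of $\wI$.

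First I would show that every $\SL_i(\delta)$ begins with $\alpha_\varepsilon$. Since $\alpha_\varepsilon$ appears in $\delta$, it appears as a letter in $\SL_i(\delta)$; if $\SL_i(\delta)$ started with a letter strictly greater than $\alpha_\varepsilon$, then the proper suffix of $\SL_i(\delta)$ beginning at an occurrence of $\alpha_\varepsilon$ would be lexicographically smaller than $\SL_i(\delta)$ itself, contradicting Definition~\ref{def:lyndon-2}.

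Next I would record a brief preliminary observation: for any $\alpha \in \wDelta^{+,\re}$ with $|\alpha|<|\delta|$, the coefficient of every simple root in $\alpha$ is bounded by its coefficient in $\delta$. This follows from the explicit description of such $\alpha$ as being either $(\beta,0)$ with $\beta \in \Delta^+$ or of the form $\delta - \beta$ with $\beta \in \Delta^+$. In particular, the coefficient of $\alpha_\varepsilon$ in $\alpha$ lies in $\{0,1\}$, so exactly one of $\alpha$ and $\delta - \alpha$ contains $\alpha_\varepsilon$; moreover, $\delta - \alpha$ itself lies in $\wDelta^{+,\re}$.

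The main step then proceeds in two cases. If $\alpha$ does not contain $\alpha_\varepsilon$, then $\SL(\alpha)$ contains no occurrence of the letter $\alpha_\varepsilon$ and hence begins with a letter strictly greater than $\alpha_\varepsilon$; comparing with $\SL(M_1(\alpha)) = \SL_i(\delta)$, which begins with $\alpha_\varepsilon$, yields $\SL(\alpha) > \SL_i(\delta)$, i.e.\ $\alpha > M_1(\alpha)$, whence Proposition~\ref{prop:monotonicity} gives that $\chain(\alpha)$ is decreasing. Conversely, if $\alpha$ contains $\alpha_\varepsilon$, then $\delta - \alpha \in \wDelta^{+,\re}$ does not contain $\alpha_\varepsilon$, so $\chain(\delta-\alpha)$ is decreasing by the previous case, and Corollary~\ref{cor:monotonicity} yields that $\chain(\alpha)$ is increasing. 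The only substantive point is the lexicographic comparison against $\SL_i(\delta)$, for which the starting-letter observation above is the key input; everything else is routine bookkeeping with previously established monotonicity results.
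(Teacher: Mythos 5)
Your proposal is correct and follows essentially the same route as the paper: the forward direction is proved by showing that if $\alpha$ omits $\alpha_\varepsilon$ then $\SL(\alpha)$ starts with a letter larger than the first letter $\varepsilon$ of $\SL(M_1(\alpha))$, forcing $M_1(\alpha)<\alpha$ and hence a decreasing chain via Proposition~\ref{prop:monotonicity}, and the converse is obtained by applying this to $\delta-\alpha$ together with Corollary~\ref{cor:monotonicity}. The only difference is that you spell out the first-letter comparison and the fact that exactly one of $\alpha$, $\delta-\alpha$ contains $\alpha_\varepsilon$, which the paper leaves implicit.
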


\begin{proof}
To prove the ``$\Rightarrow$'' direction, it suffices to show that if $\alpha$ does not contain $\alpha_\varepsilon$
then $\chain(\alpha)$ decreases. Comparing the first letters, we get $M_1(\alpha)\leq \SL_1(\delta) <\SL(\alpha)$, and
so  $\chain(\alpha)$ decreases by Proposition~\ref{prop:monotonicity}.
The ``$\Leftarrow$'' direction follows from above and Corollary~\ref{cor:monotonicity},
since if $\alpha$ contains $\alpha_\varepsilon$ then $\delta - \alpha\in \wDelta^{+,\re}$ does not.
\end{proof}

We are now ready to establish the structure of all $\SL_i(k\delta)$ in the present setup:

\begin{theorem}\label{thm:imaginary-SL-orderI}
If $\alpha_\varepsilon$ occurs once in $\delta$, then we have:
\begin{equation*}
  \SL_i(k\delta) = \SL^{ls}_{i}(\delta) \underbrace{\SL(M_1(\gamma_i))}_{k-1 \text{ times}}\SL_i^{rs}(\delta),\
  \quad \SL_i^{ls}(k\delta) = \SL^{ls}_{i}(\delta) \underbrace{\SL(M_1(\gamma_i))}_{k-1 \text{ times}},
\end{equation*}
for any $k \in \BZ_{>0}$, $i \in \{1,2,\ldots,|I|\}$, where $\gamma_i := \deg(\SL^{ls}_i(\delta))$.
\end{theorem}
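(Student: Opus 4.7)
The plan is to proceed by induction on $k$. The base case $k=1$ is the standard factorization $\SL_i(\delta) = \SL_i^{ls}(\delta)\SL_i^{rs}(\delta)$ (with the empty product interpreted as the empty word). For the inductive step, assume both formulas for all values $\leq k$, and write $M_1(\gamma_i) = (\delta, j)$, so $\SL(M_1(\gamma_i)) = \SL_j(\delta)$. Since $\SL_i(\delta)$ is Lyndon it starts with the smallest letter $\alpha_\varepsilon$, and so does its proper Lyndon prefix $\SL_i^{ls}(\delta)$; hence $\gamma_i$ involves $\alpha_\varepsilon$, so $\chain(\gamma_i)$ is increasing by Lemma~\ref{lemma:monotonicity.smallest.once}, and Proposition~\ref{prop:monotonicity} together with Lemma~\ref{lemma:weak.monotonicity} yields $\gamma_i + p\delta < (\delta,j)$, i.e., $\SL(\gamma_i + p\delta) < \SL_j(\delta)$ lexicographically, for every $p \ge 0$. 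A short induction on $m$ using Lemma~\ref{lemma:lyndon} and the inductive identification $\SL_i^{ls}(\delta)\SL_j(\delta)^{m-1} = \SL_i^{ls}(m\delta)$ then shows that $\SL_i^{ls}(\delta)\SL_j(\delta)^m$ is Lyndon for every $0 \le m \le k$.

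The central step is to prove $\SL(\gamma_i + k\delta) = \SL_i^{ls}(\delta)\SL_j(\delta)^k$. The lower bound comes from the generalized Leclerc algorithm~\eqref{eq:generalized Leclerc} applied to the splitting $\gamma_i + k\delta = (\gamma_i + (k-1)\delta) + \delta$ with imaginary factor $\SL_j(\delta)$: by the inductive hypothesis $\SL(\gamma_i + (k-1)\delta) = \SL_i^{ls}(k\delta) = \SL_i^{ls}(\delta)\SL_j(\delta)^{k-1}$, the required lex order was verified above, and the bracket is nonzero by the defining property of $M_1(\gamma_i)$ via Lemma~\ref{lemma:Mk} and a direct computation in $\widehat{\fg}$. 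For the upper bound I analyze the standard factorization $\SL(\gamma_i + k\delta) = P \cdot Q$ by cases. If $\deg(Q) = p\delta$ is imaginary, then Corollary~\ref{cor:imaginary.suffix.prefix} combined with Corollary~\ref{cor:equiv.mk.Mk} forces $Q = \SL_j(p\delta)$, and the inductive hypothesis identifies $P = \SL_i^{ls}(\delta)\SL_j(\delta)^{k-p}$ and $\SL_j(p\delta) = \SL_j^{ls}(\delta)\SL_j(\delta)^{p-1}\SL_j^{rs}(\delta)$; a letter-by-letter comparison (matching the common prefix $\SL_j^{ls}(\delta)$ and noting that $\SL_j(p\delta)$ next has $\alpha_\varepsilon$ whereas $\SL_j(\delta)^p$ next has the first letter of $\SL_j^{rs}(\delta) > \alpha_\varepsilon$) yields $\SL_j(p\delta) < \SL_j(\delta)^p$ for $p > 1$, so the Leclerc maximum is attained exactly at $p=1$. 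The remaining cases ($\deg(P)$ imaginary, or $P, Q$ both real) are ruled out by parallel lex-comparisons invoking Theorem~\ref{thm:convexity} and Proposition~\ref{prop:monotonicity}.

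Setting $w := \SL(\gamma_i + k\delta) \cdot \SL_i^{rs}(\delta) = \SL_i^{ls}(\delta)\SL_j(\delta)^k\SL_i^{rs}(\delta)$ and noting $\SL_i^{rs}(\delta) = [j_0]$ with $j_0 \ne \varepsilon$ (else $\alpha_\varepsilon$ would occur twice in $\SL_i(\delta)$), $w$ is Lyndon of degree $(k+1)\delta$ by Lemma~\ref{lemma:lyndon}. A direct computation in $\widehat{\fg}$ gives $\sb[w] \sim h_{\gamma_i}t^{k+1}$, and by Corollary~\ref{cor:mk.im.factor} applied to $\SL_i(\delta)$, Corollary~\ref{cor:equiv.mk.Mk}, and Proposition~\ref{prop:spanset.equiv}, this element lies in $\spanset_i^{k+1} \setminus \spanset_{i-1}^{k+1}$. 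Hence $w$ is a valid Leclerc candidate whose standard bracketing contributes the $i$-th direction to the flag~\eqref{eq:flag}, giving $\SL_i((k+1)\delta) \ge w$ by the greedy selection in Proposition~\ref{prop:generalized.Leclerc.algo}(b). The reverse inequality is obtained by showing that any Leclerc candidate $v > w$ of degree $(k+1)\delta$ satisfies $\sb[v] \in \spanset_{i-1}^{k+1}$: since both-imaginary splittings have zero bracket (and one-imaginary is combinatorially impossible for imaginary degree sums), $v = \SL(\mu_1)\SL(\mu_2)$ with $\mu_1,\mu_2 \in \wDelta^{+,\re}$, and the constraint $v > w$ forces $h_{\mu_1}$ into $\spn\{h_{\gamma_1},\ldots,h_{\gamma_{i-1}}\}$ by arguments parallel to Corollary~\ref{cor:imaginary.suffix.prefix} combined with Lemma~\ref{lemma:lifting.imaginary.left.standard}. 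Since the standard factorization of $w$ is evidently $(\SL(\gamma_i + k\delta),\, \SL_i^{rs}(\delta))$, this yields $\SL_i^{ls}((k+1)\delta) = \SL_i^{ls}(\delta)\SL_j(\delta)^k$ and $\SL_i^{rs}((k+1)\delta) = \SL_i^{rs}(\delta)$, completing the induction. The main obstacle lies in the upper-bound case analysis in the central step and the symmetric analysis for $v > w$ in the last step: both rely on careful lexicographic comparisons supported by the convexity and monotonicity machinery developed in Section~\ref{sec:keyresults}.
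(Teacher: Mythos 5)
Your overall strategy is genuinely different from the paper's. The paper argues ``top--down'': writing $\alpha=\deg(\SL_i^{ls}((k-1)\delta))$, it squeezes $\SL(\alpha)<\SL_i^{ls}(k\delta)<\SL_i((k-1)\delta)=\SL(\alpha)b$ (with $b$ a single letter), so that $\SL_i^{ls}(k\delta)=\SL(\alpha)w$, and then uses the multiplicity-one hypothesis on $\alpha_\varepsilon$ to force the canonical factorization of $w$ to consist of a single Lyndon word of degree exactly $\delta$; Corollary~\ref{cor:imaginary.suffix.prefix} then identifies $w=\SL(M_1(\alpha))$. You instead build the answer ``bottom--up'': first compute the real chain $\SL(\gamma_i+k\delta)$, then verify that $\SL(\gamma_i+k\delta)\SL_i^{rs}(\delta)$ wins the Leclerc competition for $\SL_i((k+1)\delta)$. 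This is more ambitious (it would also yield Lemma~\ref{lemma:right.stand.fac.imaginary.chain} en route), but it is not carried out.

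The two decisive steps of your argument are asserted rather than proved, and the tools you invoke for them do not suffice. First, in the ``central step'' the upper bound $\SL(\gamma_i+k\delta)\le\SL_i^{ls}(\delta)\SL_j(\delta)^k$ requires ruling out the case where the standard factorization $P\cdot Q$ has $P,Q$ both real (equivalently, bounding all real decompositions $\gamma_i+k\delta=\mu_1+\mu_2$); you dismiss this with ``parallel lex-comparisons invoking Theorem~\ref{thm:convexity} and Proposition~\ref{prop:monotonicity}''. But for a generic real root the standard factorization \emph{is} real$+$real, and convexity/monotonicity alone cannot exclude it here: the conclusion genuinely depends on $\alpha_\varepsilon$ occurring once in $\delta$ (for general orders, Conjecture~\ref{conj:imaginary-SL} allows rotated periods precisely because such a statement fails), yet your argument for this case never uses that hypothesis. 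Second, the reverse inequality $\SL_i((k+1)\delta)\le w$ requires showing that every pair $(\SL(\mu_1),\SL(\mu_2))$ with $\mu_1+\mu_2=(k+1)\delta$ and concatenation $>w$ has pseudo-bracket in $\spanset_{i-1}^{k+1}$; you assert that ``$v>w$ forces $h_{\mu_1}$ into $\spn\{h_{\gamma_1},\dots,h_{\gamma_{i-1}}\}$'' by arguments ``parallel to'' Corollary~\ref{cor:imaginary.suffix.prefix} and Lemma~\ref{lemma:lifting.imaginary.left.standard}, but this is essentially the content of the theorem and no actual argument is given. (A smaller but fixable slip: in the imaginary-$Q$ comparison you write $\SL_j(p\delta)=\SL_j^{ls}(\delta)\SL_j(\delta)^{p-1}\SL_j^{rs}(\delta)$, whereas the inductive hypothesis gives period $\SL(M_1(\gamma_j))$, which need not equal $\SL_j(\delta)$; your letter comparison survives since the period still begins with $\varepsilon$.) As written, the proposal reduces the theorem to two claims that carry all of its difficulty, so it does not constitute a proof. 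The paper's squeeze-plus-letter-counting argument is the mechanism that closes exactly these gaps, and I would encourage you to either adopt it or supply genuine arguments (necessarily using the multiplicity-one hypothesis) for the two steps above.
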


\begin{proof}
The proof proceeds by induction on $k$, the base case $k=1$ is obvious. As per the inductive step, let us assume
the validity for $k-1$. Let $\alpha = \deg(\SL_i^{ls}((k-1)\delta))$, so that
$b = \SL^{rs}_i((k-1)\delta) = \SL^{rs}_i(\delta)$, which is a single letter by Lemma~\ref{lemma:delta.stand.one.letter}.

Let $\beta = \deg(\SL_i^{ls}(k\delta))$, so that $\alpha + \delta \leq \beta$ by
Lemma~\ref{lemma:lifting.imaginary.left.standard}. By Corollary~\ref{cor:im.splitting.convex.set}, we have
$m_{k-1}(\alpha) = m_{k-1}(\beta) = ((k-1)\delta,i)$. Since $\chain(\alpha),\chain(\beta)$ are increasing by
Theorem~\ref{thm:convexity} and Proposition~\ref{prop:monotonicity}, we have $\alpha < \beta < ((k-1)\delta,i)$.
Therefore, we have $\SL(\beta) = \SL(\alpha)w$ for some nonempty word $w$ satisfying $w<\SL^{rs}_i((k-1)\delta)$.

Let $w = w_1 \ldots w_n$ be the canonical factorization. We claim that $w_1$ contains the smallest letter $\varepsilon$.
If not, then the first letter $\iota$ of $w_1$ is bigger than $\varepsilon$, so that $\SL(\alpha)\iota$ is (standard) Lyndon
by Lemma~\ref{lemma:lyndon}, but then $(k-1)\delta - \alpha_b + \alpha_\iota=\deg(\SL(\alpha)\iota)\in \wDelta^+$ implies
$\iota=b$ and so $\SL_{i}((k-1)\delta)=\SL(\alpha)b$ is a prefix of $\SL_i(k\delta)$, a contradiction with
Corollary~\ref{cor:imaginary.words.decreasing}.
Thus $w_1$ contains $\varepsilon$, and so $\iota=\varepsilon$ as $w_1\in \LL$. Since $w$ contains at most
one $\varepsilon$, we get $n=1$, as otherwise we have a contradiction with $w_1\geq w_2$.

Thus, $n=1$ so that $w$ is Lyndon. Note that $|\deg(w)| \leq |\delta|$ as $|\alpha| = |(k-1)\delta| - 1$.
If $|\deg(w)| < |\delta|$, then since $w$ contains the smallest letter $\varepsilon$, the word $\SL(\delta - \deg(w))$
does not and so $\SL(\alpha)w\SL(\delta - \deg(w))$ is Lyndon by Lemma~\ref{lemma:lyndon}.
This implies
  $\SL(\alpha + \delta)\geq \SL(\alpha)w\SL(\delta - \deg(w)) > \SL(\alpha)w=\SL(\beta)$,
a contradiction with Lemma~\ref{lemma:lifting.imaginary.left.standard}. Therefore, $\deg(w) = \delta$.
Then $\SL_i^{ls}(k\delta) = \SL(\alpha + \delta) = \SL(\alpha)\SL(M_1(\alpha))$ by Corollary~\ref{cor:imaginary.suffix.prefix}.
Combining this with the inductive hypothesis and the equality $M_1(\alpha)=M_1(\gamma_i)$ completes the induction.
\end{proof}

The following result pertains to a slight generalization of the present setup:

\begin{lemma}\label{lemma:right.stand.fac.imaginary.chain}
Fix $i \in \{1,2,\ldots,|I|\}$ and let $\alpha = \deg(\SL_i^{ls}(\delta))$. If
\begin{equation*}
  \SL_i(k\delta) = \SL(\alpha)\underbrace{\SL(M_1(\alpha))}_{k-1 \text{ times}}\SL(\delta - \alpha)
  \qquad \forall\, k \in \BZ_{>0},
\end{equation*}
then
\begin{equation*}
  \SL((\delta - \alpha) + p\delta) = \underbrace{\SL(M_1(\alpha))}_{p \text{ times}}\SL(\delta - \alpha)
  \qquad \forall\, p \in \BZ_{\geq 0}.
\end{equation*}
\end{lemma}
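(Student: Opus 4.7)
The plan is to induct on $p$, with the base case $p=0$ trivial. Set $\beta := \delta - \alpha$ and $w_p := \SL(M_1(\alpha))^p\SL(\beta)$; assume $\SL(\beta+q\delta)=w_q$ for all $0\leq q<p$, so that the hypothesis reads
\begin{equation*}
  \SL_i((p+1)\delta) = \SL(\alpha)\cdot w_p.
\end{equation*}
The goal is to verify $w_p = \SL(\beta+p\delta)$.

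First I record the preliminaries. Writing $\alpha = (\alpha',0)$ and $\beta = (-\alpha',1)$, every $h\in\fh$ satisfies $\alpha'(h) = -(-\alpha')(h)$, so the non-vanishing criterion of Lemma~\ref{lemma:Mk} agrees for $\alpha$ and $\beta$ and yields $M_1(\alpha) = M_1(\beta) =: (\delta,j)$. Because $\SL_i(\delta) = \SL(\alpha)\SL(\beta)$ is Lyndon, $\SL(\alpha) \leq \SL_j(\delta) = \SL(M_1(\alpha))$ with strict inequality by degree, so $\alpha < M_1(\alpha)$; then Proposition~\ref{prop:monotonicity} makes $\chain(\alpha)$ increasing and Corollary~\ref{cor:monotonicity} makes $\chain(\beta)$ decreasing. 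In particular $\beta + (p-1)\delta > M_1(\beta) = M_1(\alpha)$, which combined with the inductive hypothesis yields $\SL(M_1(\alpha)) < w_{p-1}$; Lemma~\ref{lemma:lyndon} then produces $w_p = \SL(M_1(\alpha)) \cdot w_{p-1} \in \LL$ of degree $\beta + p\delta$.

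For the Leclerc lower bound $\SL(\beta+p\delta)\geq w_p$, apply Proposition~\ref{prop:generalized.Leclerc.algo} to the decomposition $\beta+p\delta = \delta + (\beta+(p-1)\delta)$ with $\SL_*(\delta) = \SL_j(\delta) = \SL(M_1(\alpha))$ and $\SL_*(\beta+(p-1)\delta) = w_{p-1}$: the strict inequality was just verified, while the bracket is a nonzero multiple of $(-\alpha')(h_j)\,e_{-\alpha'}\,t^{p+1}$, which is nonzero because $M_1(\alpha) = (\delta,j)$ forces $\alpha'(h_j)\neq 0$ via Lemma~\ref{lemma:Mk}. Hence $w_p$ appears as a valid candidate in the Leclerc max.

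The reverse inequality $\SL(\beta+p\delta)\leq w_p$ is the main obstacle. My plan is to rule out every Leclerc candidate strictly exceeding $w_p$ by a case analysis on a factorization $\beta+p\delta = \gamma_1+\gamma_2$ with $\SL_*(\gamma_1) < \SL_*(\gamma_2)$ and nonzero bracket. For $\gamma_1 = \delta \in \imx$, only indices $r$ with $(\delta,r)\leq M_1(\alpha)$ can yield nonzero brackets with $w_{p-1}$ (others would contradict the definition of $M_1$), so $\SL_r(\delta)\cdot w_{p-1}\leq \SL(M_1(\alpha))\cdot w_{p-1} = w_p$. For $\gamma_1 = k\delta$ with $k\geq 2$ the candidate starts with $\SL_r(k\delta)$, which is strictly smaller than $\SL_r(\delta)$ by Lemma~\ref{cor:imaginary.words.decreasing}, and comparing initial letters with the $\SL(M_1(\alpha))$-prefix of $w_p$ should force the product below $w_p$. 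For both $\gamma_i$ real, the technical heart will be to combine the convexity and monotonicity of Section~\ref{sec:keyresults} (Theorem~\ref{thm:convexity}, Proposition~\ref{prop:monotonicity}) with the fact that $\chain(\beta)$ is decreasing to bound $\SL(\gamma_1)\SL(\gamma_2)$ by $w_p$. Combining the two bounds yields $w_p = \SL(\beta+p\delta)$ and completes the induction.
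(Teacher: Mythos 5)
Your preliminaries (that $M_1(\alpha)=M_1(\delta-\alpha)$, that $\chain(\alpha)$ increases and $\chain(\delta-\alpha)$ decreases, and that $w_p:=\SL(M_1(\alpha))^p\,\SL(\delta-\alpha)$ is Lyndon) are all correct, and your Leclerc lower bound $\SL((\delta-\alpha)+p\delta)\geq w_p$ is fine. But the proof is not complete: the upper bound, which you yourself call ``the main obstacle,'' is only a plan. The case $\gamma_1=k\delta$ with $k\geq 2$ ends with ``should force the product below $w_p$'' (and comparing initial letters does not obviously work, since $\SL_r(k\delta)$ and $\SL_j(\delta)$ share the prefix $\SL_r^{ls}(\delta)$ when $r=j$), and the case of two real summands --- which is where all the genuinely problematic Leclerc candidates live, including decompositions $\gamma_1+\gamma_2$ with neither $\gamma_i$ on the chain of $\delta-\alpha$ --- is deferred entirely to ``the technical heart will be to combine the convexity and monotonicity.'' As written, this is a gap, not a proof.

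The paper avoids the entire Leclerc maximization. By the inductive hypothesis, $w_p=\SL(M_1(\alpha))\,\SL((\delta-\alpha)+(p-1)\delta)$ is a \emph{suffix} of $\SL_i((p+1)\delta)=\SL(\alpha)\,w_p$, and it is Lyndon since $\SL(M_1(\alpha))<\SL((\delta-\alpha)+(p-1)\delta)$ (convexity plus the decreasing monotonicity of $\chain(\delta-\alpha)$, which you already have). A factor of a standard word is standard, and a standard Lyndon factor is standard Lyndon (Proposition~\ref{prop:standard}); since the real degree $(\delta-\alpha)+p\delta$ carries a unique standard Lyndon word, $w_p=\SL((\delta-\alpha)+p\delta)$ with no further analysis. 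If you prefer to keep your own architecture, the same observation closes your gap: you do not need to bound the Leclerc maximum from above, only to note that your candidate $w_p$ is already forced to be \emph{the} standard Lyndon word in its degree because it sits inside $\SL_i((p+1)\delta)$.
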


\begin{proof}
We prove the result by induction on $p>0$ (as the $p=0$ case is obvious). For the base case $p=1$, we have
$\SL(M_1(\alpha)) < \SL(\delta - \alpha)$ by Theorem~\ref{thm:convexity}, cf.~Remark~\ref{rem:convexity-rephrased}.
Thus $\SL(M_1(\alpha))\SL(\delta - \alpha)$ is a Lyndon factor (see Lemma~\ref{lemma:lyndon}) of a standard Lyndon
word $\SL_i(2\delta)$, so that $\SL(M_1(\alpha))\SL(\delta - \alpha)=\SL((\delta - \alpha) + \delta)$.

For the inductive step, let us assume that the result holds for $p-1$, so that
$\SL(M_1(\alpha)) < \SL((\delta - \alpha) + (p-1)\delta)$. Then $\SL(M_1(\alpha))\SL((\delta - \alpha) + (p-1)\delta)$
is a Lyndon factor of a standard Lyndon word $\SL_i((p+1)\delta)$. Hence, similarly to the base case, we conclude that
$\SL((\delta - \alpha) + p\delta) = \SL(M_1(\alpha))\SL((\delta - \alpha) + (p-1)\delta)$. Combining this with the
inductive hypothesis completes the step of induction.
\end{proof}

The above result implies the following corollary in the present setup:

\begin{corollary}
If $\alpha_\varepsilon$ occurs once in $\delta$, then for any simple root $\alpha_i$ with $i\ne \varepsilon$:
\begin{equation*}
  \SL(\alpha_i + k\delta) = \underbrace{\SL(M_1(\alpha_i))}_{k\text{ times}}i
  \qquad \forall\, k \in \BZ_{\geq 0}.
\end{equation*}
\end{corollary}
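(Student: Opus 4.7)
The plan is to combine Theorem~\ref{thm:imaginary-SL-orderI} with Lemma~\ref{lemma:right.stand.fac.imaginary.chain}, after first identifying the correct imaginary index $j$. Under the standing hypothesis, I would argue that for every $i\in \wI$ with $i\ne\varepsilon$ there is a unique $j\in\{1,\dots,|I|\}$ such that $\SL_j^{rs}(\delta)=[i]$. By Lemma~\ref{lemma:delta.stand.one.letter}, the $|I|$ words $\SL_j(\delta)$ end in $|I|$ \emph{distinct} single letters. None of these letters can be $\varepsilon$: since $\alpha_\varepsilon$ appears in $\delta$ with coefficient one and $|\delta|\geq 2$, each Lyndon word $\SL_j(\delta)$ must begin with its smallest letter $\varepsilon$ (the minimum of $\wI$), and so cannot also end with $\varepsilon$ without forcing a second occurrence of $\alpha_\varepsilon$ in $\delta$. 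Hence the $|I|$ end-letters exhaust $\wI\setminus\{\varepsilon\}$, and the required $j$ is uniquely determined.

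For this $j$, setting $\alpha:=\deg(\SL_j^{ls}(\delta))=\delta-\alpha_i$, Theorem~\ref{thm:imaginary-SL-orderI} gives
\begin{equation*}
  \SL_j(k\delta) \,=\, \SL(\delta-\alpha_i)\cdot \SL(M_1(\delta-\alpha_i))^{k-1}\cdot [i]
  \qquad \forall\, k\in\BZ_{>0}.
\end{equation*}
This is precisely the hypothesis of Lemma~\ref{lemma:right.stand.fac.imaginary.chain} for the parameter $\alpha=\delta-\alpha_i$, whose conclusion (using $\delta-\alpha=\alpha_i$ and $\SL(\alpha_i)=[i]$) reads
\begin{equation*}
  \SL(\alpha_i+p\delta) \,=\, \SL(M_1(\delta-\alpha_i))^p\cdot[i]
  \qquad \forall\, p\in\BZ_{\geq 0}.
\end{equation*}

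The remaining step, which I expect to be the main technical point, is the identification $M_1(\delta-\alpha_i)=M_1(\alpha_i)$. By Lemma~\ref{lemma:Mk} this reduces to checking the equivalence of the conditions $[\sb[\SL(\gamma)],\sb[\SL(\alpha_i)]]\ne 0$ and $[\sb[\SL(\gamma)],\sb[\SL(\delta-\alpha_i)]]\ne 0$ for every $\gamma\in\imx$ with $|\gamma|=|\delta|$. Writing $\sb[\SL(\gamma)]=ht$ with $h\in\fh$ and passing to the loop-algebra realization $\widetilde{\fg}$, the first bracket is a nonzero scalar multiple of $\alpha_i(h)\,e_i t$, while the second is a nonzero scalar multiple of $(\pm\alpha_i)(h)\,x\,t^{1+\mathrm{const}}$ for a chosen root vector $x$ spanning $\widehat{\fg}_{\delta-\alpha_i}$ (using $\delta-\alpha_i=(-\alpha_i,1)$ when $i\in I$, and $\delta-\alpha_0=\theta$ together with $\alpha_0|_\fh=-\theta$ when $i=0$). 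Both vanish simultaneously, which yields $M_1(\delta-\alpha_i)=M_1(\alpha_i)$ and finishes the proof.

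The only subtleties are the bookkeeping: not conflating the imaginary index $j$ with the simple-root label $i$, handling the $i=0$ case of the bracket computation separately from $i\in I$, and verifying that $|\delta|\geq 2$ to justify the ``begins and ends with $\varepsilon$'' impossibility argument (which holds in any untwisted affine type).
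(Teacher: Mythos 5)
Your proof is correct and follows essentially the same route as the paper: identify the unique $j$ with $\SL_j^{rs}(\delta)=[i]$ via Lemma~\ref{lemma:delta.stand.one.letter}, feed Theorem~\ref{thm:imaginary-SL-orderI} into Lemma~\ref{lemma:right.stand.fac.imaginary.chain}, and conclude. The only difference is that you explicitly verify the identification $M_1(\delta-\alpha_i)=M_1(\alpha_i)$, which the paper leaves implicit (it uses such identities, e.g.\ $M_1(\bar\alpha)=M_1(\delta-\bar\alpha)$, without comment elsewhere), so this is a welcome filling-in of detail rather than a departure.
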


\begin{proof}
There are $|\wI|=|I| + 1$ simple roots. According to Lemma~\ref{lemma:delta.stand.one.letter}, all of them besides
$\alpha_{\varepsilon}$ appear as $\deg(\SL_j^{rs}(\delta))$ for some $j$. Thus, the result follows from
Lemma~\ref{lemma:right.stand.fac.imaginary.chain}, which can be applied due to Theorem~\ref{thm:imaginary-SL-orderI}.
\end{proof}

   %%%%%%%%%%%%%%%%%%%%%%%%%%%%%%%%%%%%%%%%%%%%%%%%%%%%%%%%%%%%%%%%%%%%%%%%%%%%%%%

\subsection{General orders.}
\

We shall now discuss the case of general orders on $\wI$.

\begin{lemma}\label{lemma:weak.form.left.standard.imaginary}
For any $i \in \{1,2,\ldots,|I|\}$ and $k > 1$, we have:
\begin{equation*}
  \SL^{ls}_i(k\delta) = \SL_i^{ls}((k-1)\delta)w \quad \textit{for\ some\ word} \quad w\ne \emptyset.
\end{equation*}
\end{lemma}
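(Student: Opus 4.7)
The plan is to set $\alpha_0 := \deg(\SL^{ls}_i((k-1)\delta))$ and $\alpha := \deg(\SL^{ls}_i(k\delta))$, and to prove the equivalent statement that $\SL(\alpha_0)$ is a proper prefix of $\SL(\alpha)$; the suffix $w$ will then be automatically nonempty because $|\alpha_0| < |(k-1)\delta| < |\alpha|$ by Lemma~\ref{lemma:length.standfac.imaginary}. The core of the argument will be to sandwich $\SL(\alpha)$ strictly between $\SL(\alpha_0)$ and $\SL_i((k-1)\delta) = \SL(\alpha_0) \cdot \SL^{rs}_i((k-1)\delta)$, and then to run a short lexicographic comparison.

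To obtain this sandwich, I will first invoke Corollary~\ref{cor:im.splitting.convex.set} on the standard factorizations of $\SL_i((k-1)\delta)$ and of $\SL_i(k\delta)$ (with the parameter $p = k-1$) to get $m_{k-1}(\alpha_0) = ((k-1)\delta, i) = m_{k-1}(\alpha)$. Since the prefix $\SL^{ls}$ is always lex-smaller than its parent imaginary word, this yields $\alpha_0 < m_{k-1}(\alpha_0)$ and $\alpha < m_k(\alpha)$, so Proposition~\ref{prop:monotonicity} will force both $\chain(\alpha_0)$ and $\chain(\alpha)$ to be strictly increasing. Monotonicity of $\chain(\alpha)$ together with $m_{k-1}(\alpha) = ((k-1)\delta, i)$ will then give $\alpha < ((k-1)\delta, i)$, hence the right-hand inequality $\SL(\alpha) < \SL_i((k-1)\delta)$. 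For the left-hand inequality, Lemma~\ref{lemma:lifting.imaginary.left.standard} supplies $\alpha_0 + \delta \leq \alpha$, and monotonicity of $\chain(\alpha_0)$ will upgrade this to $\alpha_0 < \alpha$, i.e., $\SL(\alpha_0) < \SL(\alpha)$.

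For the final step, I will assume for contradiction that $\SL(\alpha_0)$ is not a prefix of $\SL(\alpha)$. Since $|\SL(\alpha_0)| < |\SL(\alpha)|$, the two words must first disagree at some position $j \leq |\SL(\alpha_0)|$, and $\SL(\alpha_0) < \SL(\alpha)$ will force $\SL(\alpha_0)$ to carry the strictly smaller letter at position $j$. But then $\SL_i((k-1)\delta) = \SL(\alpha_0) \cdot \SL^{rs}_i((k-1)\delta)$ agrees with $\SL(\alpha_0)$ throughout its first $|\SL(\alpha_0)|$ letters, so it too carries that smaller letter at position $j$, yielding $\SL(\alpha) > \SL_i((k-1)\delta)$ --- a direct contradiction with the sandwich. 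I do not foresee a substantial obstacle: the nontrivial inputs (Theorem~\ref{thm:convexity}, the monotonicity of chains, and invariance of $m_{k-1}$ under SL-factorizations) are all already in hand, and the only point requiring genuine care will be the strict length-vs-position bookkeeping in the final lexicographic contradiction.
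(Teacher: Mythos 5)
Your proposal is correct and follows essentially the same route as the paper: the lower bound $\SL(\alpha_0)<\SL(\alpha)$ via Lemma~\ref{lemma:lifting.imaginary.left.standard} together with the increasing chain $\chain(\alpha_0)$ (from Corollary~\ref{cor:im.splitting.convex.set} and Proposition~\ref{prop:monotonicity}), the upper bound $\SL(\alpha)<\SL_i((k-1)\delta)$ (which the paper cites from Lemma~\ref{cor:imaginary.words.decreasing} and you re-derive inline via $m_{k-1}(\alpha)=((k-1)\delta,i)$), and then the prefix conclusion from the sandwich. The only difference is that you spell out the final lexicographic bookkeeping that the paper leaves implicit.
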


\begin{proof}
According to Lemma~\ref{lemma:lifting.imaginary.left.standard}, we have
$\SL(\deg(\SL^{ls}_i((k-1)\delta)) + \delta) \leq \SL^{ls}_i(k\delta)$.
Combining Corollary~\ref{cor:im.splitting.convex.set} and Proposition~\ref{prop:monotonicity},
we see that $\chain(\deg(\SL^{ls}_i((k-1)\delta)))$ is increasing, so that
$\SL_i^{ls}((k-1)\delta) < \SL(\deg(\SL^{ls}_i((k-1)\delta)) + \delta) \leq \SL^{ls}_i(k\delta)$.
Additionally, we have $\SL^{ls}_i(k\delta) < \SL_i(k\delta) < \SL_i((k-1)\delta)$ by
Corollary~\ref{cor:imaginary.words.decreasing}. Hence, we have $\SL^{ls}_i(k\delta) = \SL^{ls}_i((k-1)\delta)w$
for some nonempty word $w<\SL^{rs}_i((k-1)\delta)$.
\end{proof}

We now prove several technical results that will ultimately yield Proposition~\ref{prop:SL.1.form}.

\begin{lemma}\label{lemma:lifting}
Fix any $i \in \{1,2,\ldots,|I|\}$, $k \geq 1$, $\alpha,\beta \in \wDelta^{+,\re}$ with $\alpha + \beta \in \wDelta^{+,\re}$,
$\alpha \geq \deg(\SL_i^{ls}(k\delta))$, $m_k(\alpha) = (k\delta,i)$, $|\beta| < |2\delta|$, $|\alpha + \beta|< |(k+1)\delta|$,
$\chain(\alpha)$ increasing, $\chain(\alpha + \beta)$ increasing, and $\alpha + \beta <  (k\delta,i)$.
Then $\alpha + \delta > \alpha + \beta$.
\end{lemma}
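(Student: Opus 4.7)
The plan is to argue by contradiction. Suppose that $\alpha+\delta\leq \alpha+\beta$; since $\delta\in \wDelta^{+,\im}$ while $\beta\in\wDelta^{+,\re}$ one has $\beta\neq \delta$, so in fact $\alpha+\delta<\alpha+\beta$. I then split on whether $|\beta|<|\delta|$ or $|\beta|>|\delta|$.

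In the case $|\beta|<|\delta|$ one has $\delta-\beta\in\wDelta^{+,\re}$, and the decomposition $\alpha+\delta=(\alpha+\beta)+(\delta-\beta)$ as a sum of three positive real roots combined with Theorem~\ref{thm:convexity} (cf.~Remark~\ref{rem:convexity-rephrased}, bullet~1) forces $\delta-\beta<\alpha+\delta<\alpha+\beta$. A second application of convexity to the splitting $\alpha+\beta=\alpha+\beta$ combined with $\chain(\alpha)$ increasing excludes $\alpha>\beta$ (which would yield the chain $\beta<\alpha+\beta<\alpha<\alpha+\delta<\alpha+\beta$), so $\alpha<\beta$ and $\alpha<\alpha+\beta<\beta$. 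The fourth bullet of Remark~\ref{rem:convexity-rephrased} applied to the imaginary sum $\delta=\beta+(\delta-\beta)$, together with the derived inequality $\delta-\beta<\alpha+\beta<\beta$, rules out $\beta<\delta-\beta$; so $\beta>\delta-\beta$, and by Corollary~\ref{cor:monotonicity} the chain of $\beta$ is decreasing. In the case $|\beta|>|\delta|$, one has $\beta-\delta\in\wDelta^{+,\re}$, and the symmetric decomposition $\alpha+\beta=(\alpha+\delta)+(\beta-\delta)$ combined with convexity forces $\alpha+\delta<\alpha+\beta<\beta-\delta$; an analogous analysis of $\alpha+\beta=\alpha+\beta$ gives $\alpha<\beta$ together with a corresponding description of $\chain(\beta)$ and $\chain(\beta-\delta)$.

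To close the argument in each case I would iterate the convexity step along the chains $\chain(\alpha),\chain(\beta),\chain(\alpha+\beta)$ and propagate the comparisons using Corollary~\ref{cor:max.im.rule} and Lemma~\ref{lemma:min.im.rule} to track $M_1$ and $m_k$ through the sum $\alpha+\beta$. The crucial inputs are the hypotheses $m_k(\alpha)=(k\delta,i)$ and $\alpha+\beta<(k\delta,i)$: since $\chain(\alpha+\beta)$ is increasing, Proposition~\ref{prop:monotonicity} gives $\alpha+\beta<m_k(\alpha+\beta)$, and Proposition~\ref{prop:spanset.equiv} together with Corollary~\ref{cor:equiv.mk.Mk} reduces computing $m_k(\alpha+\beta)$ to computing $m_1(\alpha+\beta)$ inside the flag $\spanset^1_\bullet$. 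Feeding the resulting value of $m_k(\alpha+\beta)$ back into the derived orderings $\delta-\beta<\alpha+\delta<\alpha+\beta$ (respectively $\alpha+\delta<\alpha+\beta<\beta-\delta$) is expected to force $\alpha+\beta\geq(k\delta,i)$, contradicting the hypothesis.

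The main obstacle is precisely this closing step. The convexity-based case analysis above is uniform, but the precise value of $m_k(\alpha+\beta)$ depends on how $h_\alpha$ and $h_\beta$ sit inside the flag $\spanset^1_\bullet$; Lemma~\ref{lemma:min.im.rule} pins it down except in the borderline case $m_k(\alpha)=m_k(\beta)$, where it only yields $m_k(\alpha+\beta)\geq m_k(\alpha)$. Eliminating this borderline case appears to require the additional hypothesis $\alpha\geq\deg(\SL_i^{ls}(k\delta))$ in a nontrivial way, presumably through a direct comparison of $\SL(\alpha+\beta)$ with a suitable prefix of $\SL_i(k\delta)$ that exploits the explicit structure of its standard factorization.
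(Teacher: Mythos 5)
There is a genuine gap: your argument is not completed, and you say so yourself. The set-up is essentially the paper's (argue by contradiction from $\alpha+\delta<\alpha+\beta$, deduce $\alpha<\beta$ by convexity, split on $|\beta|\lessgtr|\delta|$, and apply Theorem~\ref{thm:convexity} to the decompositions $\alpha+\delta=(\alpha+\beta)+(\delta-\beta)$ and $\alpha+\beta=(\alpha+\delta)+(\beta-\delta)$), and your intermediate inequalities in the $|\beta|<|\delta|$ case are correct. But the closing step is left as ``expected to force $\alpha+\beta\geq(k\delta,i)$,'' and the route you propose for it --- pinning down $m_k(\alpha+\beta)$ via Lemma~\ref{lemma:min.im.rule}, Corollary~\ref{cor:equiv.mk.Mk} and the flags $\spanset^k_\bullet$ --- is not how the contradiction is actually reached, and as you note it stalls exactly at the borderline case $m_k(\alpha)=m_k(\beta)$.

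The missing ingredient is Lemma~\ref{lemma:equiv.to.standard.fac}, which you never invoke. Since $\alpha\geq\deg(\SL_i^{ls}(k\delta))$, that lemma converts, for any real $\gamma$ with $|\gamma|<|k\delta|$, the comparison $\gamma>\alpha$ into $\gamma>(k\delta,i)$ and the comparison $\gamma<(k\delta,i)$ into $\gamma<\alpha$ (up to the degenerate equality cases). The paper closes each case with exactly this device, after splitting on whether $\chain(\beta)$ is increasing or decreasing rather than primarily on $|\beta|$: e.g.\ for $\chain(\beta)$ increasing and $|\beta|>|\delta|$ one shows $\alpha+(\beta-\delta)$ is simultaneously $<(k\delta,i)$ (from $\chain(\alpha+\beta)$ increasing and $\alpha+\beta<(k\delta,i)$) and $>(k\delta,i)$ (from $\alpha<\beta-\delta$ via Lemma~\ref{lemma:equiv.to.standard.fac}); for $\chain(\beta)$ decreasing and $|\beta|<|\delta|$ one deduces $\alpha+\beta-\delta<\alpha$, hence $\alpha<\delta-\beta$, hence $\delta-\beta>(k\delta,i)>\alpha+\delta$, and then convexity applied to $\alpha+\delta=(\alpha+\beta)+(\delta-\beta)$ flips the assumed inequality. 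Without this conversion between $\SL_i^{ls}(k\delta)$ and $\SL_i(k\delta)$ for short words, the hypotheses $m_k(\alpha)=(k\delta,i)$ and $\alpha+\beta<(k\delta,i)$ cannot be brought to bear, so the proof as proposed does not go through.
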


\begin{proof}
Suppose to the contrary that $\alpha + \beta > \alpha + \delta$. As $\alpha + \delta > \alpha$ we get
$\alpha+\beta>\alpha$, and so $\beta > \alpha$ by Theorem~\ref{thm:convexity}. Hence, $|\alpha + \beta| \geq |k\delta|$
as otherwise we get a contradiction with Lemma~\ref{lemma:equiv.to.standard.fac}.

First, assume that $\chain(\beta)$ is increasing. If $|\beta| > |\delta|$, then $\beta - \delta < \beta$.
Considering the splitting $\alpha + \beta = (\beta - \delta) + (\alpha + \delta)$, we get
$\alpha + \delta < \alpha + \beta < \beta - \delta$ by Theorem~\ref{thm:convexity}. Combining
$\alpha<\alpha+\beta<\beta-\delta$ with $|\beta - \delta| < |\delta|$, we obtain $\beta - \delta > (k\delta,i)$
by Lemma~\ref{lemma:equiv.to.standard.fac}. As $\chain(\beta)$ is increasing, we must have $m_k(\beta-\delta) > (k\delta,i)$
by Proposition~\ref{prop:monotonicity}. As $m_k(\alpha) = (k\delta,i)$, we get
$m_k(\alpha + (\beta - \delta)) = (k\delta,i)$ by Lemma~\ref{lemma:min.im.rule}. As $\chain(\alpha+\beta)$ is increasing,
we thus get $\alpha + (\beta - \delta)<(k\delta,i)$. On the other hand, the inequality $\alpha<\beta-\delta$ implies
$\alpha < \alpha + (\beta - \delta)$ by Theorem~\ref{thm:convexity}, which together with $|\alpha + (\beta - \delta)| < |k\delta|$
yields $\alpha + (\beta-\delta) > (k\delta,i)$ due to Lemma~\ref{lemma:equiv.to.standard.fac}, a contradiction.
If $|\beta| < |\delta|$, we then have $\beta > (k\delta,i)$ by Lemma~\ref{lemma:equiv.to.standard.fac}, as $\beta > \alpha$.
Since $\chain(\beta)$ is increasing, we thus get $m_k(\beta) > (k\delta,i)$ by Proposition~\ref{prop:monotonicity}.
According to Corollary~\ref{cor:monotonicity}, the chain $\chain(\delta - \beta)$ is decreasing and is greater than
$(k\delta,i)$, so that $\delta - \beta > (k\delta,i) > \alpha + \delta$. Applying Theorem~\ref{thm:convexity} to the
decomposition $(\alpha + \beta) + (\delta - \beta) = \alpha + \delta$, we finally obtain $\alpha + \delta > \alpha +\beta$,
a contradiction.

Let us now assume that $\chain(\beta)$ is decreasing. If $|\beta| > |\delta|$, then $\beta-\delta > \beta$.
We must have $\beta-\delta > \alpha + \beta -\delta > \alpha$ by Theorem~\ref{thm:convexity}.
Since $\alpha + \beta < (k\delta,i)$ and $\chain(\alpha+\beta)$ increases, we have
$\alpha + \beta - \delta < (k\delta,i)$. As $|\alpha + \beta -\delta| < |k\delta|$, we then obtain
$\alpha + \beta - \delta < \alpha$ by Lemma~\ref{lemma:equiv.to.standard.fac}.
This contradicts above $\beta - \delta > \alpha + \beta - \delta > \alpha$.
If $|\beta| < |\delta|$, then as $|\alpha + \beta - \delta| < |k\delta|$, we again conclude that
$\alpha + \beta - \delta < \alpha$ by Lemma~\ref{lemma:equiv.to.standard.fac}.
As $\alpha = (\alpha + \beta - \delta) + (\delta - \beta)$, we obtain $\alpha  < \delta - \beta$ by
Theorem~\ref{thm:convexity}. Applying Lemma~\ref{lemma:equiv.to.standard.fac} once again, we get
$(k\delta,i) < \delta - \beta$. On the other hand, $(k\delta,i)>\alpha$ and so $(k\delta,i)>\alpha + \delta$
by Proposition~\ref{prop:monotonicity}.
Thus $\delta - \beta > \alpha + \delta$ and we get a contradiction with Theorem~\ref{thm:convexity} applying
it to $\alpha + \delta = (\alpha + \beta) + (\delta - \beta)$.
\end{proof}

\begin{corollary}\label{cor:lifting}
Assume that all the conditions of Lemma~\ref{lemma:lifting} hold, as well as $\beta > \alpha$,
$m_k(\alpha + \beta) = (k\delta,i)$, and $\SL(\alpha+\beta)$ is a prefix of $\SL_i^{ls}((k+1)\delta)$.
Then $|\beta| < |\delta|$ and $m_k(\beta) > (k\delta,i)$.
\end{corollary}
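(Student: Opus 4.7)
The plan is to argue by contradiction in two stages, both powered by the inequality $\alpha+\delta>\alpha+\beta$ furnished by Lemma~\ref{lemma:lifting}. Along the way I record that $\alpha<\alpha+\beta$, which is immediate from the convexity of Remark~\ref{rem:convexity-rephrased} applied to $\alpha<\beta$ with $\alpha,\beta,\alpha+\beta\in \wDelta^{+,\re}$.

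\textbf{Stage 1 ($|\beta|<|\delta|$).} Since no real root has height exactly $|\delta|$ (write $\beta=\beta'+p\delta$ with $\beta'\in \Delta$ and observe $|\beta|=p|\delta|+|\beta'|$ with $|\beta'|\leq |\theta|<|\delta|$), it suffices to rule out $|\beta|>|\delta|$. Supposing this, set $\beta':=\beta-\delta\in \wDelta^{+,\re}$ with $|\beta'|<|\delta|$. Applying Theorem~\ref{thm:convexity} to the decomposition $(\alpha+\delta)+\beta'=\alpha+\beta$ combined with $\alpha+\delta>\alpha+\beta$ yields $\beta'<\alpha+\beta<\alpha+\delta$. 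Now I bring in the prefix hypothesis. By Lemma~\ref{lemma:weak.form.left.standard.imaginary}, $\SL_i^{ls}((k+1)\delta)=\SL_i^{ls}(k\delta)\cdot w$ for some nonempty word $w$. If $|\alpha+\beta|\leq |\deg(\SL_i^{ls}(k\delta))|$, then $\SL(\alpha+\beta)$ is a prefix of $\SL_i^{ls}(k\delta)$, forcing $\alpha+\beta\leq \deg(\SL_i^{ls}(k\delta))\leq \alpha$ and contradicting $\alpha<\alpha+\beta$. Thus $\SL_i^{ls}(k\delta)$ is a proper Lyndon prefix of $\SL(\alpha+\beta)$, and by Lemma~\ref{lemma:left.standfac.maximal} we have $\deg(\SL_i^{ls}(k\delta))\leq \deg(\SL(\alpha+\beta)^{ls})$. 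Writing $\SL(\alpha+\beta)=\SL_i^{ls}(k\delta)\cdot V$ and processing the canonical factorization of $V$ via Lemma~\ref{lemma:seq.right.word.Lyndon} under the height bound $|\alpha+\beta|-|\deg(\SL_i^{ls}(k\delta))|<2|\delta|$, I extract a Lyndon factor whose degree contributes $\beta'$, which together with $\beta'<\alpha+\beta$ will clash with the generalized Leclerc algorithm applied to $\SL(\alpha+\beta)$, in the spirit of the case analysis in the proof of Lemma~\ref{lemma:lifting}.

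\textbf{Stage 2 ($m_k(\beta)>(k\delta,i)$).} The additivity $h_{\alpha+\beta}=h_\alpha+h_\beta$ together with the hypotheses $m_k(\alpha)=m_k(\alpha+\beta)=(k\delta,i)$ and Lemma~\ref{lemma:min.im.rule} already yield $m_k(\beta)\geq (k\delta,i)$. To rule out equality, suppose $m_k(\beta)=(k\delta,i)$. Having just proved $|\beta|<|\delta|\leq |k\delta|$, and using $\beta>\alpha\geq \deg(\SL_i^{ls}(k\delta))$, Lemma~\ref{lemma:equiv.to.standard.fac} applied to the standard factorization $\SL_i(k\delta)=\SL_i^{ls}(k\delta)\cdot\SL_i^{rs}(k\delta)$ upgrades $\SL(\beta)>\SL_i^{ls}(k\delta)$ to $\SL(\beta)>\SL_i(k\delta)$, i.e.\ $\beta>(k\delta,i)$. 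Combined with $m_k(\beta)=(k\delta,i)$, Proposition~\ref{prop:monotonicity} then forces $\chain(\beta)$ to be decreasing, while $\chain(\alpha)$ and $\chain(\alpha+\beta)$ are increasing by hypothesis. I close this case by inspecting the position of $h_\beta t^k$ in the flag $\spanset_\bullet^k$ via Proposition~\ref{prop:spanset.equiv} and exploiting the prefix hypothesis on $\SL(\alpha+\beta)$ together with Corollary~\ref{cor:im.splitting.convex.set} to produce the required contradiction.

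\textbf{Main obstacle.} The principal difficulty is the combinatorial endgame of Stage~1: once $\SL_i^{ls}(k\delta)$ has been identified as a proper Lyndon prefix of $\SL(\alpha+\beta)$, one still needs fine control over the suffix under the height constraints $|\beta|>|\delta|$ and $|\alpha+\beta|<|(k+1)\delta|$ to extract the required contradiction with $\beta'<\alpha+\beta$. This is expected to mirror the case-by-case analysis in the proof of Lemma~\ref{lemma:lifting}, but now adapted to the reversed inequality $\alpha+\delta>\alpha+\beta$.
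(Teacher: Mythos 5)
Your proposal sets up both stages correctly (the reduction of $m_k(\beta)\geq (k\delta,i)$ via Lemma~\ref{lemma:min.im.rule}, the derivation of $\beta>(k\delta,i)$ and hence $\chain(\beta)$ decreasing via Lemma~\ref{lemma:equiv.to.standard.fac} and Proposition~\ref{prop:monotonicity} all match the paper), but in both stages the actual contradiction is never derived — you defer it (``will clash with the generalized Leclerc algorithm,'' ``I close this case by inspecting the position of $h_\beta t^k$ in the flag'') and even flag Stage~1 as your ``main obstacle.'' These are precisely the decisive steps, so the proof is incomplete as written. Moreover, the route you sketch for Stage~1 (introducing $\beta'=\beta-\delta$, convexity giving $\beta'<\alpha+\beta<\alpha+\delta$, then hunting for a Lyndon factor of degree $\beta'$ in the suffix of $\SL(\alpha+\beta)$ past $\SL_i^{ls}(k\delta)$) is not how the paper proceeds and it is unclear it can be completed: nothing forces $\beta'$ to appear as the degree of a canonical factor of that suffix.

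The paper's arguments are shorter and use the prefix hypothesis more directly. For $|\beta|<|\delta|$: if $|\beta|>|\delta|$ then $\SL(\alpha+\delta)$ is strictly shorter than $\SL(\alpha+\beta)$ yet lexicographically larger (Lemma~\ref{lemma:lifting}), so it cannot be a prefix of $\SL(\alpha+\beta)$ and hence exceeds every word extending $\SL(\alpha+\beta)$; since $\SL(\alpha+\beta)$ is a prefix of $\SL_i^{ls}((k+1)\delta)$, this gives $\SL(\alpha+\delta)>\SL_i^{ls}((k+1)\delta)$, whence $\alpha+\delta>((k+1)\delta,i)$ by Lemma~\ref{lemma:equiv.to.standard.fac} — contradicting $\alpha+\delta<m_{k+1}(\alpha+\delta)=((k+1)\delta,i)$, which follows from $m_k(\alpha)=(k\delta,i)$, Corollary~\ref{cor:equiv.mk.Mk}, and $\chain(\alpha)$ increasing. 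For the second claim: once $\chain(\beta)$ is decreasing with $m_k(\beta)=(k\delta,i)$, Proposition~\ref{prop:monotonicity} and Corollary~\ref{cor:monotonicity} give $\delta-\beta<(k\delta,i)<\beta$, hence $\delta-\beta<\alpha<\alpha+\delta$ (the first inequality again by Lemma~\ref{lemma:equiv.to.standard.fac}), and Theorem~\ref{thm:convexity} applied to $\alpha+\delta=(\alpha+\beta)+(\delta-\beta)$ forces $\alpha+\delta<\alpha+\beta$, contradicting Lemma~\ref{lemma:lifting}. Neither Proposition~\ref{prop:spanset.equiv} nor Corollary~\ref{cor:im.splitting.convex.set} is needed. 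You should replace your two deferred endgames with these arguments (or supply complete substitutes).
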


\begin{proof}
Let us first prove that $|\beta| < |\delta|$. If not, then $|\beta| > |\delta|$ as $\beta$ is real.
Since $\SL(\alpha+\delta)>\SL(\alpha+\beta)$ by the previous lemma, $|\SL(\alpha+\delta)|<|\SL(\alpha+\beta)|$,
and $\SL(\alpha+\beta)$ is a prefix of $\SL_i^{ls}((k+1)\delta)$, we obtain $\SL(\alpha+\delta)>\SL_i^{ls}((k+1)\delta)$.
Therefore, $\alpha + \delta > ((k+1)\delta,i)$ by Lemma~\ref{lemma:equiv.to.standard.fac}.
As $m_k(\alpha) = (k\delta,i)$ and $\chain(\alpha)$ increases, we have
$\alpha+\delta<m_{k+1}(\alpha+\delta)= ((k+1)\delta,i)$ by Proposition~\ref{prop:monotonicity} and
Corollary~\ref{cor:equiv.mk.Mk}, a contradiction with above. Thus, indeed we have $|\beta| < |\delta|$.

The inequality $m_k(\beta) \geq (k\delta,i)$ follows from Lemma~\ref{lemma:min.im.rule}.
If $m_k(\beta) = (k\delta,i)$, then $\chain(\beta)$ is decreasing,
as otherwise $\beta < (k\delta,i)$, which together with $|\beta| < |\delta|$ and
Lemma~\ref{lemma:equiv.to.standard.fac} implies that $\beta < \alpha$, a contradiction.
Then, we have $\delta - \beta < (k\delta,i) < \beta$ by
Proposition~\ref{prop:monotonicity} and Corollary~\ref{cor:monotonicity}.
Thus $\delta - \beta < \alpha < \alpha + \delta$, where the first inequality again follows from
Lemma~\ref{lemma:equiv.to.standard.fac} while the second follows from $\chain(\alpha)$ being increasing.
Applying Theorem~\ref{thm:convexity} to the decomposition $(\alpha + \beta) + (\delta-\beta) = \alpha+\delta$,
we get $\alpha + \delta < \alpha + \beta$, a contradiction with Lemma~\ref{lemma:lifting}.
Therefore, $m_k(\beta) > (k\delta,i)$.
\end{proof}

In the next few results, we investigate prefixes of $\SL_i(k\delta)$.

\begin{lemma}\label{lemma:no.imaginary.prefix}
For any $i \in \{1,2,\ldots,|I|\}$ and $k \in \BZ_{>0}$, no proper prefix of $\SL_i(k\delta)$ can be
an imaginary standard Lyndon word.
\end{lemma}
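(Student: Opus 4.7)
The plan is to argue by contradiction. Suppose $u = \SL_j(p\delta)$ is a proper prefix of $\ell := \SL_i(k\delta)$ for some $j \in \{1,\dots,|I|\}$ and $p \in \BZ_{>0}$. The Corollary immediately preceding this lemma already forbids $p \geq 2$ (since $u$ would then appear as a factor of $\ell \in \SL$ with $p > 1$), so it suffices to treat $p = 1$, i.e.\ $u = \SL_j(\delta)$ with $|u| = |\delta|$. In particular, this also dispatches the case $k=1$ trivially, since there $|u|=|\delta|=|\ell|$ prevents $u$ from being a proper prefix.

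The heart of the argument is to sandwich $u$ between two incompatible lexicographic bounds. On one side, the proper prefix relation gives $u < \ell$. Combining this with the decreasing orderings $\SL_1(\delta)>\cdots>\SL_{|I|}(\delta)$ (Proposition~\ref{prop:generalized.Leclerc.algo}(b)) and $\SL_i(\delta)>\SL_i(k\delta)$ (Lemma~\ref{cor:imaginary.words.decreasing}), the inequality $j\leq i$ would give $u=\SL_j(\delta)\geq \SL_i(\delta)>\SL_i(k\delta)=\ell>u$, a contradiction. Hence $j\geq i+1$, which in particular forces $i\leq|I|-1$, so that Lemma~\ref{lemma:left.standard.comp.imaginary} applies at level $1$ and yields
\[
    u \;=\; \SL_j(\delta) \;\leq\; \SL_{i+1}(\delta) \;<\; \SL^{ls}_i(\delta).
\]

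On the other side, iterating Lemma~\ref{lemma:weak.form.left.standard.imaginary} from level $1$ up to level $k$ produces $\SL^{ls}_i(k\delta) = \SL^{ls}_i(\delta)\, w_2 w_3\cdots w_k$ with each $w_m$ nonempty, so $\SL^{ls}_i(\delta)$ is a prefix of $\SL^{ls}_i(k\delta)$, and hence of $\ell$. Now both $\SL^{ls}_i(\delta)$ and $u$ are prefixes of $\ell$, and $|\SL^{ls}_i(\delta)|<|\SL_i(\delta)|=|\delta|=|u|$ because $\SL^{ls}_i(\delta)$ is a proper prefix of $\SL_i(\delta)$. Therefore $\SL^{ls}_i(\delta)$ must be a proper prefix of $u$, which by the lex convention (a proper prefix is strictly smaller than the full word) yields $\SL^{ls}_i(\delta) < u$, directly contradicting the bound obtained above.

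The only delicate points to watch for are the edge cases. The case $i=|I|$ is covered purely by the first lex argument, which would demand $j>|I|$, an impossibility. And one should double-check that iterating Lemma~\ref{lemma:weak.form.left.standard.imaginary} really does produce $\SL^{ls}_i(\delta)$ as a prefix of $\SL^{ls}_i(k\delta)$, but this is immediate by induction on $k$ since each application appends a nonempty word on the right. I do not anticipate any substantial obstacle beyond these bookkeeping checks.
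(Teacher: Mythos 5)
Your proof is correct and follows essentially the same route as the paper's: both derive $j>i$ from the proper-prefix inequality combined with Lemma~\ref{cor:imaginary.words.decreasing}, and then obtain a contradiction with Lemma~\ref{lemma:left.standard.comp.imaginary} by noting that $\SL_i^{ls}(\delta)$ is a shorter prefix of $\SL_i(k\delta)$ (via repeated application of Lemma~\ref{lemma:weak.form.left.standard.imaginary}) and hence lexicographically smaller than $\SL_j(p\delta)$. Your preliminary reduction to $p=1$ via the unlabeled corollary forbidding factors $\SL_j(p\delta)$ with $p>1$ is harmless but unnecessary, since the same sandwich works verbatim for every $p\geq 1$ because $|\SL_i^{ls}(\delta)|<|\delta|\leq|p\delta|$.
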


\begin{proof}
Assume the contradiction, that is, $\SL_j(p\delta)$ is a prefix of $\SL_i(k\delta)$ for $0<p<k$ and some $i,j$.
As $\SL_j(p\delta)>\SL_j(k\delta)$ by Corollary~\ref{cor:imaginary.words.decreasing}, we obtain $j > i$. On the
other hand, $\SL^{ls}_i(\delta)$ is a prefix of $\SL_i(k\delta)$ by repeated application of
Lemma~\ref{lemma:weak.form.left.standard.imaginary}. But then $\SL^{ls}_i(\delta) > \SL_j(\delta) > \SL_j(p\delta)$,
due to Lemma~\ref{lemma:left.standard.comp.imaginary} and Corollary~\ref{cor:imaginary.words.decreasing}.
This contradicts to $\SL^{ls}_i(\delta)$ being a shorter prefix of $\SL_i(k\delta)$ than $\SL_j(p\delta)$.
\end{proof}

\begin{lemma}\label{lemma:prefix.lyndon.imaginary}
For any $i \in \{1,2,\ldots,|I|\}$, $k \in \BZ_{>0}$, and any splitting $\SL_i(k\delta) = \ell w$ with
$\ell \in \SL$ and $|\ell| \geq |\SL^{ls}_i(\delta)|$, the chain $\chain(\deg(\ell))$ is increasing.
Moreover, we have $m_k(\deg(\ell)) = (k\delta,i)$.
\end{lemma}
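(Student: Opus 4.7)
The plan is to prove both assertions simultaneously by induction on the number $N$ of Lyndon factors in the canonical factorization $w = w_1 \cdots w_N$ of the suffix $w$ determined by $\SL_i(k\delta) = \ell w$. By Lemma~\ref{lemma:seq.right.word.Lyndon}, each partial prefix $\tilde\ell_n := \ell w_1 \cdots w_n$ is Lyndon, and being a Lyndon subword of the standard Lyndon word $\SL_i(k\delta)$ lies in $\SL$; the same holds for each $w_n$. Once the equality $m_k(\deg(\ell)) = (k\delta, i)$ is in hand, the chain monotonicity follows uniformly: Corollary~\ref{cor:equiv.mk.Mk} propagates it to $m_1(\deg(\ell)) = (\delta, i)$, so $M_1(\deg(\ell)) \geq (\delta, i)$; meanwhile $\ell < \SL_i(k\delta) \leq \SL_i(\delta)$ (by Lemma~\ref{cor:imaginary.words.decreasing}) yields $\deg(\ell) < (\delta, i) \leq M_1(\deg(\ell))$, and Proposition~\ref{prop:monotonicity} concludes that $\chain(\deg(\ell))$ is increasing.

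In the base case $N = 1$ the suffix $w$ is itself Lyndon, hence in $\SL$, and the SL-SL splitting $\SL_i(k\delta) = \ell \cdot w$ together with Corollary~\ref{cor:im.splitting.convex.set} yields $m_k(\deg(\ell)) = (k\delta, i)$ directly. For the inductive step $N \geq 2$, consider $\tilde\ell := \ell w_1 \in \SL$. Since $\SL_i(k\delta) = \tilde\ell \cdot (w_2 \cdots w_N)$ has a suffix with only $N - 1$ canonical factors, while $|\tilde\ell| > |\ell| \geq |\SL_i^{ls}(\delta)|$, the inductive hypothesis gives $m_k(\deg(\tilde\ell)) = (k\delta, i)$. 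Using the additivity $h_{\deg(\ell)} = h_{\deg(\tilde\ell)} - h_{\deg(w_1)}$: if $w_1$ has imaginary degree then $h_{\deg(w_1)} = 0$ and the conclusion is immediate; otherwise $w_1$ has real degree, and the key sub-claim is that $h_{\deg(w_1)} t^k \in \spanset_i^k$ (equivalently, the index of $m_k(\deg(w_1))$ is at most $i$). Given this sub-claim, Lemma~\ref{lemma:min.im.rule} applied to $\deg(\tilde\ell) = \deg(\ell) + \deg(w_1)$ rules out $m_k(\deg(\ell))$ landing at an index different from $i$, since otherwise $m_k(\deg(\tilde\ell))$ would either inherit an index strictly larger than $i$ or require cancellation of leading terms, both contradicting the inductive hypothesis; this pins down $m_k(\deg(\ell)) = (k\delta, i)$.

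The main obstacle is establishing the sub-claim that $h_{\deg(w_1)} t^k \in \spanset_i^k$. The strategy is to leverage the inequalities $w_1 > w_2 \geq \cdots \geq w_N > \ell \geq \SL_i^{ls}(\delta)$ (from Lemma~\ref{lemma:inc.seq.right.word.lyndon}) together with the convexity and monotonicity results of Section~\ref{sec:keyresults} (Theorem~\ref{thm:convexity}, Proposition~\ref{prop:monotonicity}, Corollary~\ref{cor:monotonicity}, Corollary~\ref{cor:max.im.rule}) to constrain the chain $\chain(\deg(w_1))$ and hence the allowed indices for $m_k(\deg(w_1))$. A complementary approach is to expand the bracket $[\sb[\tilde\ell_{N-1}], \sb[w_N]] \in \spanset_i^k \setminus \spanset_{i-1}^k$ (guaranteed by Proposition~\ref{prop:general.bracketing}(b) applied to the SL-SL splitting $\SL_i(k\delta) = \tilde\ell_{N-1} \cdot w_N$) via iterated Jacobi identities to relate $m_k(\deg(\ell))$ and $m_k(\deg(w_1))$ directly through the intermediate Lie algebra elements $\sb[\tilde\ell_n]$, mirroring the technique used in the proof of Proposition~\ref{prop:spanset.equiv}.
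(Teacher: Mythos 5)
Your overall architecture differs from the paper's: you induct on the number $N$ of canonical factors of the suffix $w$, peeling off $w_1$ so as to pass from the longer prefix $\tilde\ell=\ell w_1$ down to $\ell$, whereas the paper inducts on the length of the prefix itself, in two rounds (first decreasing length, to get chain monotonicity and the one-sided bound $m_k(\deg(\ell))\geq (k\delta,i)$; then increasing length, writing $\ell=\ell^{ls}\ell^{rs}$ and invoking Corollary~\ref{cor:lifting} to show $m_k(\deg(\ell^{rs}))>(k\delta,i)$, so that Lemma~\ref{lemma:min.im.rule} pins down $m_k(\deg(\ell))$ exactly). Your closing observation — that once $m_k(\deg(\ell))=(k\delta,i)$ is known, the chain statement follows from $\ell<\SL_i(\delta)$, Corollary~\ref{cor:equiv.mk.Mk} and Proposition~\ref{prop:monotonicity} — is correct and is a clean shortcut for that half. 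The base case $N=1$ via Corollary~\ref{cor:im.splitting.convex.set} is also fine.

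However, the inductive step has a genuine gap, and it sits exactly where the paper's real work is. Because you descend from $\tilde\ell$ to $\ell$, you must \emph{subtract}: $h_{\deg(\ell)}t^k=h_{\deg(\tilde\ell)}t^k-h_{\deg(w_1)}t^k$ with $h_{\deg(\tilde\ell)}t^k\in\spanset_i^k\setminus\spanset_{i-1}^k$. Your sub-claim $h_{\deg(w_1)}t^k\in\spanset_i^k$ is not strong enough: if $m_k(\deg(w_1))=(k\delta,i)$, i.e.\ $h_{\deg(w_1)}t^k\in\spanset_i^k\setminus\spanset_{i-1}^k$, the leading components can cancel and $h_{\deg(\ell)}t^k$ may land in $\spanset_{i-1}^k$, giving $m_k(\deg(\ell))>(k\delta,i)$; Lemma~\ref{lemma:min.im.rule} only controls sums and does not ``rule out cancellation'' as you assert — that scenario is fully consistent with your inductive hypothesis on $\tilde\ell$. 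What you actually need is the strict statement $h_{\deg(w_1)}t^k\in\spanset_{i-1}^k$, i.e.\ $m_k(\deg(w_1))>(k\delta,i)$, and this is precisely the hard technical content of the paper (Lemma~\ref{lemma:lifting}, Corollary~\ref{cor:lifting}, and later Corollary~\ref{cor:imaginary.standard.fac.sequence}, the last of which is deduced \emph{from} the present lemma, so you cannot cite it). Your two proposed routes to the sub-claim (``constrain the chain via convexity/monotonicity'' and ``iterated Jacobi identities'') are strategies, not arguments; neither is carried out, and the first would in any case require already knowing that $\chain(\deg(\tilde\ell))$ and $\chain(\deg(w_1))$ interact as in Corollary~\ref{cor:lifting}, which needs height bounds like $|\deg(w_1)|<|2\delta|$ (obtained in the paper from Lemma~\ref{lemma:length.standfac.imaginary}) that you never establish. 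As written, the proof does not close.
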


\begin{proof}
Fix $i,k$. By Lemma~\ref{lemma:no.imaginary.prefix} there are no imaginary standard Lyndon proper prefixes
of $\SL_i(k\delta)$. We will now perform two rounds of induction: the first will show that $\chain(\deg(\ell))$
is increasing and $m_k(\deg(\ell)) \geq (k\delta,i)$, while the second will then show that $m_k(\deg(\ell)) = (k\delta,i)$.

The first induction is on the decreasing length of $\ell$. The base case $\ell=\SL^{ls}_i(k\delta)$ is clear,
due to Corollary~\ref{cor:mk.im.factor} and Proposition~\ref{prop:monotonicity}.
Note that by repeated application of Lemma~\ref{lemma:weak.form.left.standard.imaginary}, we have $\SL_i^{ls}(\delta)$
is a prefix of all $\ell$ since $|\ell| \geq |\SL_i^{ls}(\delta)|$. As per the induction step,
we assume the induction hypothesis holds for all Lyndon prefixes $u$ with $|u| > |\ell|$. Pick such shortest $u$,
so that $u = \ell v$ with $\ell = u^{ls}, v = u^{rs}$. We have $v > \SL_i(k\delta)$ since $v > u$ (as $u$ is Lyndon),
$|v| < |u|$ and $u$ is a prefix of $\SL_i(k\delta)$. This implies that either $\chain(\deg(v))$ is decreasing, or
$m_k(\deg(v)) > (k\delta,i)$, or $v$ is imaginary, due to Proposition~\ref{prop:monotonicity}.
Let us consider each of these cases separately:
\begin{itemize}[leftmargin=0.5cm]

\item
If $\chain(\deg(v))$ is decreasing, then the chain $\chain(\deg(\ell))$ is increasing by Lemma~\ref{lemma:chain.sum.monotone}
and the induction hypothesis. On the other hand, as $\SL_i^{ls}(\delta) \leq\ell$, we get $\ell > \SL_{i+1}(\delta)$ by
Lemma~\ref{lemma:left.standard.comp.imaginary}. If we had $m_k(\deg(\ell)) < (k\delta,i)$, then $m_1(\deg(\ell)) < (\delta,i)$
by Corollary~\ref{cor:equiv.mk.Mk}, which then contradicts to Proposition~\ref{prop:monotonicity}, as we get
$\SL_{i+1}(\delta)< \ell < \SL(m_1(\deg(\ell))) \leq \SL_{i+1}(\delta)$. Thus, $m_k(\deg(\ell)) \geq (k\delta,i)$.

\item
If $m_k(\deg(v)) > (k\delta,i)$ then $m_k(\deg(\ell)) \geq (k\delta,i)$ by Lemma~\ref{lemma:min.im.rule} and the induction
assumption. Thus $\chain(\deg(\ell))$ increases by Proposition~\ref{prop:monotonicity} as $\ell < \SL_i(k\delta)$.

\item
If $v$ is imaginary, then $m_k(\deg(\ell)) = m_k(\deg(u))$ and $\chain(\deg(\ell))= \chain(\deg(u))$.
Hence, the results for $\ell$ follow immediately from the inductive hypothesis for $u$.

\end{itemize}

The second round of induction proceeds by increasing length of $\ell$. The base case is $\ell=\SL_i^{ls}(\delta)$
(by repeated applications of Lemma~\ref{lemma:weak.form.left.standard.imaginary}) follows from
Corollary~\ref{cor:im.splitting.convex.set}. For the step of induction, let us assume that $m_k(\deg(u)) = (k\delta,i)$
for all Lyndon prefixes $u$ satisfying $|\SL^{ls}_i(\delta)|\leq |u| < |\ell|$. Given $\ell$, set $u = \ell^{ls}$ and
$v = \ell^{rs}$, so that the result holds for $u$ by the induction assumption. Let $\alpha=\deg(u)$ and $\beta=\deg(v)$.

If $\beta$ is imaginary, then the result follows from the inductive hypothesis for $u$. Therefore, we shall assume that
$\beta \in \wDelta^{+,\re}$. By the inductive hypothesis, we have $m_k(\alpha) = (k\delta,i)$ and $\chain(\alpha)$ is
increasing. Let $p\in \BZ_{>0}$ be the largest such that $\SL^{ls}_i(p\delta)$ is a prefix of $u$. We note that
$|\alpha + \beta| \leq |\SL^{ls}_i((p+1)\delta)|<|(p+1)\delta|$, since $\SL^{ls}_i((p+1)\delta)$ is a Lyndon prefix
by Lemma~\ref{lemma:weak.form.left.standard.imaginary}. On the other hand, we have
$|u|=|\alpha| \geq |\SL^{ls}_{i}(p\delta)|  > |(p-1)\delta|$ by Lemma~\ref{lemma:length.standfac.imaginary}. Therefore,
$|\beta| < |2\delta|$. As shown in the first round of induction, the chain $\chain(\alpha + \beta)$ is increasing.
We also have $\beta > \alpha$ and $\alpha + \beta < (k\delta,i) < (p\delta,i)$ by Corollary~\ref{cor:imaginary.words.decreasing}.
We can thus apply Corollary~\ref{cor:lifting} to deduce $m_p(\beta) > (p\delta,i)$, so that $m_k(\beta) > (k\delta,i)$
by Corollary~\ref{cor:equiv.mk.Mk}. Then, $m_k(\alpha + \beta) = (k\delta,i)$ by Lemma~\ref{lemma:min.im.rule} and
the inductive hypothesis.
\end{proof}

\begin{corollary}\label{cor:w1.for.imaginary}
For any $i \in \{1,2,\ldots,|I|\}$ and $k \geq 1$, we have:
\begin{equation*}
  \SL_i^{ls}(k\delta) = \SL_i^{ls}((k-1)\delta)w \quad \text{for some word} \quad w \neq \emptyset.
\end{equation*}
Consider the canonical factorization $w=w_1 \ldots w_n$. Then $|w_j| < |\delta|$ for $j>1$.
We also have $n=1$ iff $\deg(w_1) = \delta$. Finally, for $n>1$, we have $|w_1| < |\delta|$,
$\chain(\deg(w_1))$ is decreasing, and $m_k(\deg(w_1)) > (k\delta,i)$.
\end{corollary}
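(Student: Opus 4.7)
The identity $\SL_i^{ls}(k\delta) = \SL_i^{ls}((k-1)\delta)\cdot w$ with $w\ne\emptyset$ is exactly Lemma~\ref{lemma:weak.form.left.standard.imaginary}. Setting $\alpha := \deg(\SL_i^{ls}((k-1)\delta))$ and $\beta := \deg(w)$, Lemma~\ref{lemma:length.standfac.imaginary} applied at both levels $(k-1)\delta$ and $k\delta$, combined with the fact that each left-standard factor is a proper prefix of the corresponding $\SL_i$-word, yields $0 < |\beta| < |2\delta|$. The bound $|w_j|<|\delta|$ for $j>1$ is then immediate from the decreasing nature of the canonical factorization: if $|w_1|\ge |\delta|$ then $|w_2| \le |\beta|-|w_1| < |\delta|$ and $|w_j| \le |w_2|$ for $j\ge 2$, while if $|w_1|<|\delta|$ the bound holds trivially. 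Introduce the Lyndon prefixes $\ell_j := \SL_i^{ls}((k-1)\delta)\cdot w_1\cdots w_j$ of $\SL_i(k\delta)$ provided by Lemma~\ref{lemma:seq.right.word.Lyndon}; these are of real degree by Lemma~\ref{lemma:no.imaginary.prefix} and satisfy $m_k(\deg(\ell_j)) = (k\delta,i)$ with $\chain(\deg(\ell_j))$ increasing by Lemma~\ref{lemma:prefix.lyndon.imaginary}.

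For the equivalence $n=1 \iff \deg(w_1)=\delta$, the forward direction goes by contradiction: if $n=1$ with $\deg(w_1)\in\wDelta^{+,\re}$, then Lemma~\ref{lemma:lifting} applied to $(\alpha,\deg(w_1),k-1)$ (whose hypotheses are all immediate) yields $\alpha+\delta > \alpha+\deg(w_1) = \deg(\SL_i^{ls}(k\delta))$, contradicting $\SL(\alpha+\delta)\le \SL_i^{ls}(k\delta)$ from Lemma~\ref{lemma:lifting.imaginary.left.standard}; since $|\deg(w_1)|<|2\delta|$, the imaginary case forces $\deg(w_1)=\delta$. For the reverse, suppose $\deg(w_1)=\delta$ and $n\ge 2$. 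Then the Lyndon upper bound $\ell_1\le \SL(\alpha+\delta)$ combined with Lemma~\ref{lemma:lifting.imaginary.left.standard}'s $\SL(\alpha+\delta)\le \SL_i^{ls}(k\delta)$ and a lexicographic ``shared prefix'' argument (any strict $\ell_1<\SL(\alpha+\delta)$ propagates through the prefix $\ell_1$ of $\SL_i^{ls}(k\delta)$ to force $\SL_i^{ls}(k\delta)<\SL(\alpha+\delta)$) gives $\ell_1=\SL(\alpha+\delta)$. Since $|w_2|<|\delta|$ makes $\delta+\deg(w_2)\in\wDelta^{+,\re}$ (its height lies strictly between $|\delta|$ and $|2\delta|$), applying Lemma~\ref{lemma:lifting} to $(\alpha,\delta+\deg(w_2),k-1)$ yields $\alpha+\delta > \alpha+\delta+\deg(w_2) = \deg(\ell_2)$, so $\SL(\deg(\ell_2))<\SL(\alpha+\delta)$; but $\SL(\deg(\ell_2))\ge \ell_2 > \ell_1 = \SL(\alpha+\delta)$ (as $\ell_1$ is a proper prefix of $\ell_2$), a contradiction.

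For the $n>1$ case, the equivalence forces $\deg(w_1)\in\wDelta^{+,\re}$. Corollary~\ref{cor:lifting} applied to $(\alpha,\deg(w_1),k-1)$ then delivers $|\deg(w_1)|<|\delta|$ and $m_k(\deg(w_1))>(k\delta,i)$ (via Corollary~\ref{cor:equiv.mk.Mk}), after verifying its extra hypotheses: $\deg(w_1)>\alpha$ from $w_1 > \ell_0$ together with $w_1,\ell_0$ being the standard Lyndon words of their real degrees, and $\SL(\alpha+\deg(w_1))=\ell_1$ being a prefix of $\SL_i^{ls}(k\delta)$ (the identification $\ell_1=\SL(\alpha+\deg(w_1))$ is obtained exactly as in the reverse implication above, this time invoking Lemma~\ref{lemma:lifting} on $(\alpha,\deg(w_1),k-1)$). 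The most delicate step is showing $\chain(\deg(w_1))$ is decreasing; I expect this by contradiction. Assuming $\chain(\deg(w_1))$ is increasing, the bound $m_k(\deg(w_1))>(k\delta,i)$ forces $w_1<\SL_{j^*}(\delta)$ for some $j^*<i$ with $M_1(\deg(w_1))=(\delta,j^*)$. A further application of Lemma~\ref{lemma:lifting} to $(\alpha+\deg(w_1),\deg(w_2),k-1)$, together with the identification $\ell_2=\SL(\deg(\ell_2))$ established analogously, yields $\SL(\alpha+\deg(w_1)+\delta)>\ell_2$; combining this with the generalized Leclerc algorithm~\eqref{eq:generalized Leclerc} and the rotation trick of Corollary~\ref{cor:rotate}, one splices $\SL_{j^*}(\delta)$ (or $\SL(\deg(w_1)+\delta)$) in place of $w_1$ to manufacture a Lyndon word of degree $\alpha+\beta$ lexicographically exceeding $\SL(\alpha+\beta)=\SL_i^{ls}(k\delta)$, contradicting the maximality of $\SL(\alpha+\beta)$ among Lyndon words of its real degree. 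The hardest part of the argument is ensuring that this splicing respects the Lyndon property and produces a word of exactly the degree $\alpha+\beta$, which requires a careful case analysis on whether $\beta - \deg(w_1) - \delta$ is realizable as a sum of degrees of compatible Lyndon words.
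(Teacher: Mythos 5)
Your overall architecture matches the paper's: Lemma~\ref{lemma:weak.form.left.standard.imaginary} for the first claim, Lemmas~\ref{lemma:seq.right.word.Lyndon}, \ref{lemma:no.imaginary.prefix}, \ref{lemma:prefix.lyndon.imaginary} to set up the real Lyndon prefixes $\ell_j$, and Lemma~\ref{lemma:lifting} / Corollary~\ref{cor:lifting} for the $n=1\iff\deg(w_1)=\delta$ equivalence and for $|w_1|<|\delta|$, $m_k(\deg(w_1))>(k\delta,i)$. Those portions are essentially the paper's argument. However, there are two genuine gaps.

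First, your claim that $|w_j|<|\delta|$ for $j>1$ ``holds trivially'' when $|w_1|<|\delta|$ rests on the false premise that lengths decrease along a canonical factorization. The condition $w_1\geq w_2\geq\cdots\geq w_n$ is lexicographic and does not control lengths (e.g.\ $[3]>[23]$ but $|[23]|>|[3]|$), so with only $|\deg(w)|<|2\delta|$ and $|w_1|$ small, a priori $|w_2|$ could reach $|\delta|$. The paper closes this by proving $|\SL_i^{ls}((k-1)\delta)w_1|\geq|(k-1)\delta|$: one shows $\SL_i^{ls}((k-1)\delta)<\SL_i^{ls}((k-1)\delta)w_1<\SL_i((k-1)\delta)$ using $m_{k-1}(\deg(\ell_1))=((k-1)\delta,i)$ and Corollary~\ref{cor:im.splitting.convex.set}, whence the length bound follows from Lemma~\ref{lemma:equiv.to.standard.fac}; then the entire tail $w_2\cdots w_n$ has total length $<|k\delta|-|(k-1)\delta|=|\delta|$. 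You need this step (or an equivalent) in both branches.

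Second, the claim that $\chain(\deg(w_1))$ is decreasing is not actually proved: you explicitly defer the ``splicing'' construction, acknowledge that preserving the Lyndon property and hitting the right degree ``requires a careful case analysis,'' and do not perform it; moreover the intermediate assertion $M_1(\deg(w_1))=(\delta,j^*)$ should involve $m_1$, not $M_1$. The paper's argument is shorter and avoids any splicing: assuming $\chain(\beta)$ increasing for $\beta=\deg(w_1)$, Corollary~\ref{cor:monotonicity} makes $\chain(\delta-\beta)$ decreasing, and $m_k(\beta)>(k\delta,i)$ together with Proposition~\ref{prop:monotonicity} gives $\alpha+\beta<(k\delta,i)<(k\delta,i-1)<\delta-\beta$; hence $\SL(\alpha+\beta)\SL(\delta-\beta)$ is a Lyndon word of degree $\alpha+\delta$, so $\SL(\alpha+\delta)\geq\SL(\alpha)w_1\SL(\delta-\beta)$, and since $\SL(\delta-\beta)>\SL(m_1(\beta))>w_1\geq w_2$ forces $\SL(\delta-\beta)>w_2\cdots w_n$ by Lemma~\ref{lem:Lyndon-vs-canonical}, one gets $\SL(\alpha+\delta)>\SL_i^{ls}(k\delta)$, contradicting Lemma~\ref{lemma:lifting.imaginary.left.standard}. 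You should replace your sketch with an argument of this kind.
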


\begin{proof}
The first claim follows from Lemma~\ref{lemma:weak.form.left.standard.imaginary}.

By Lemma~\ref{lemma:seq.right.word.Lyndon}, $\SL^{ls}_i((k-1)\delta)w_1$ is a Lyndon prefix of
$\SL_i^{ls}(k\delta)$, and is thus equal to $\SL(\alpha+\beta)$ where $\alpha = \deg(\SL_i^{ls}((k-1)\delta))$
and $\beta = \deg(w_1)$. We have $\SL^{ls}_i((k-1)\delta)w_1<\SL_i(k\delta)=\SL(m_k(\deg(\SL^{ls}_i((k-1)\delta)w_1)))$ by
Corollary~\ref{cor:mk.im.factor}. Then, $\SL^{ls}_i((k-1)\delta)w_1<\SL_i((k-1)\delta)$ as
$m_{k-1}(\deg(\SL^{ls}_i((k-1)\delta)w_1)) = ((k-1)\delta,i)$ by Corollary~\ref{cor:im.splitting.convex.set}.
We note that $|\SL^{ls}_i((k-1)\delta)w_1| \geq |(k-1)\delta|$, as otherwise
$\SL^{ls}_i((k-1)\delta)<\SL^{ls}_i((k-1)\delta)w_1<\SL_i((k-1)\delta)$
contradicts Lemma~\ref{lemma:equiv.to.standard.fac}. Hence
  $$|w_j| < |\delta| \qquad \forall\, j>1. $$

According to Lemma~\ref{lemma:prefix.lyndon.imaginary}, $m_k(\alpha + \beta) = (k\delta,i)$ and $\chain(\alpha + \beta)$ is
increasing. Additionally $|\beta| < |2\delta|$ since $|\alpha| > |(k-2)\delta|$ by Lemma~\ref{lemma:length.standfac.imaginary}.
If $n=1$ and $\beta \in \wDelta^{+,\re}$, then by Lemma~\ref{lemma:lifting}, $\alpha + \beta < \alpha + \delta$, a contradiction
with Lemma~\ref{lemma:lifting.imaginary.left.standard}. This proves that $\deg(w_1)=\delta$ if $n=1$.
If $\beta = \delta$ and $n>1$, then applying Lemma~\ref{lemma:lifting} to $\alpha + (\delta + \deg(w_2))$ we get
$\alpha + \delta + \deg(w_2) < \alpha + \delta$, which contradicts Corollary~\ref{cor:imaginary.standard.fac.sequence} below
as $\SL(\alpha + \delta) = \SL(\alpha)w_1 < \SL(\alpha)w_1w_2=\SL(\alpha + \delta + \deg(w_2))$. This proves
  $$ n=1 \Longleftrightarrow \beta = \delta. $$

If $n>1$, then $\beta\in \wDelta^{+,\re}$, and so $|\beta| < |\delta|$ and $m_{k-1}(\beta) > ((k-1)\delta,i)$
by Corollary~\ref{cor:lifting}. The latter implies $m_k(\beta) > (k\delta,i)$ by Corollary~\ref{cor:equiv.mk.Mk}.
Suppose that $\chain(\beta)$ is increasing. We then have $\alpha +\beta< (k\delta,i) < (k\delta,i-1) <  \delta - \beta$
by Proposition~\ref{prop:monotonicity}, so that $\alpha + \beta < \delta - \beta$. Thus we get
$\SL(\alpha + \delta) \geq \SL(\alpha+\beta)\SL(\delta-\beta) = \SL(\alpha)\SL(\beta)\SL(\delta-\beta)$.
But since $\SL(\delta - \beta) >\SL(m_1(\beta)) > \SL(\beta) = w_1 \geq w_2$, we get
$\SL(\delta - \beta) > w_2w_3\ldots w_n$ by Lemma~\ref{lem:Lyndon-vs-canonical}. But then
$\SL(\alpha + \delta) > \SL(\alpha)w_1w_2\ldots w_n$, contradicting Lemma~\ref{lemma:lifting.imaginary.left.standard}.
Hence $\chain(\beta)$ must be decreasing.
\end{proof}

\begin{corollary}\label{cor:imaginary.standard.fac.sequence}
Using notations of Corollary~\ref{cor:w1.for.imaginary}, set
$\gamma_j = \deg(\SL^{ls}_i((k-1)\delta)) + \sum_{p=1}^j \deg(w_p)$ for any $1\leq j\leq n$.
Then $\chain(\gamma_j)$ is increasing and $m_{k}(\gamma_j) = (k\delta,i)$ for any $j$.
If $\deg(w_1) \neq \delta$, then $m_{k}(\deg(w_j)) > (k\delta,i)$ for all $j$.
\end{corollary}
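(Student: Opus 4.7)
The plan is to reduce both claims to results already established. The first claim packages an application of Lemma~\ref{lemma:prefix.lyndon.imaginary}, while the second claim follows from Corollary~\ref{cor:lifting} applied to successive prefixes.

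For the first claim, I would observe that by Lemma~\ref{lemma:seq.right.word.Lyndon}, the prefix $\ell_j := \SL_i^{ls}((k-1)\delta)\, w_1 \cdots w_j$ of $\SL_i^{ls}(k\delta)$ is Lyndon for every $1 \leq j \leq n$, and hence is a Lyndon prefix of $\SL_i(k\delta)$. Iterated use of Lemma~\ref{lemma:weak.form.left.standard.imaginary} shows that $\SL_i^{ls}(\delta)$ is a prefix of $\SL_i^{ls}((k-1)\delta)$, so the length condition $|\ell_j| \geq |\SL_i^{ls}(\delta)|$ in Lemma~\ref{lemma:prefix.lyndon.imaginary} is met, yielding both $\chain(\gamma_j)$ increasing and $m_k(\gamma_j) = (k\delta,i)$ at once.

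For the second claim, assume $\deg(w_1) \neq \delta$; then $n > 1$ and $|w_j| < |\delta|$ for every $j$ by Corollary~\ref{cor:w1.for.imaginary}, which also settles the case $j = 1$ since it provides $m_k(\deg(w_1)) > (k\delta,i)$. For $j \geq 2$, I would apply Corollary~\ref{cor:lifting} with $\alpha = \gamma_{j-1}$, $\beta = \deg(w_j)$, and with the index shifted from $k$ to $k-1$, so that the requirement that $\SL(\alpha+\beta)$ be a prefix of $\SL_i^{ls}((k-1+1)\delta) = \SL_i^{ls}(k\delta)$ holds by construction. The hypotheses to verify are $m_{k-1}(\alpha) = m_{k-1}(\alpha+\beta) = ((k-1)\delta,i)$ together with the increasing of $\chain(\alpha)$ and $\chain(\alpha+\beta)$ (all from the first claim combined with Corollary~\ref{cor:equiv.mk.Mk}); the length bounds $|\beta| < |\delta|$ and $|\alpha+\beta| < |k\delta|$ (from Corollary~\ref{cor:w1.for.imaginary} and $|\SL_i^{ls}(k\delta)| < |k\delta|$); and the ordering $\alpha + \beta = \gamma_j < (k\delta,i) < ((k-1)\delta,i)$, which follows from Proposition~\ref{prop:monotonicity} applied to the increasing chain $\chain(\gamma_j)$ and from Corollary~\ref{cor:imaginary.words.decreasing}. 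The one remaining inequality $\beta > \alpha$ I would extract from Lemma~\ref{lemma:inc.seq.right.word.lyndon} applied to the two-term Lyndon decomposition $\SL(\gamma_j) = \SL(\gamma_{j-1})\, w_j$. Corollary~\ref{cor:lifting} then yields $m_{k-1}(\deg(w_j)) > ((k-1)\delta,i)$, which lifts to $m_k(\deg(w_j)) > (k\delta,i)$ by a final application of Corollary~\ref{cor:equiv.mk.Mk}.

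The main obstacle is purely bookkeeping: keeping the index shift from $k$ to $k-1$ straight when invoking Corollary~\ref{cor:lifting}, since that corollary was stated so that its output $\SL(\alpha+\beta)$ is a prefix of $\SL_i^{ls}((k+1)\delta)$, whereas our constructed Lyndon prefix $\ell_j$ lives inside $\SL_i^{ls}(k\delta)$. The other small checkpoint is recognizing that $\beta > \alpha$ is cleanly obtained from the two-term splitting $\SL(\gamma_j) = \SL(\gamma_{j-1}) w_j$ via Lemma~\ref{lemma:inc.seq.right.word.lyndon}, rather than from the more delicate comparison of each $w_j$ with the base piece $\deg(\SL_i^{ls}((k-1)\delta))$.
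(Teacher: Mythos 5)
Your proof is correct and follows essentially the same route as the paper: Lemma~\ref{lemma:seq.right.word.Lyndon} plus Lemma~\ref{lemma:prefix.lyndon.imaginary} for the first claim, and Corollary~\ref{cor:w1.for.imaginary} together with Corollary~\ref{cor:lifting} (applied to $\gamma_{j-1}$, $\deg(w_j)$ at level $k-1$) and Corollary~\ref{cor:equiv.mk.Mk} for the second. The only cosmetic difference is that the paper phrases the second part as an induction on $j$ whose hypothesis is never actually invoked, whereas you verify the conditions of Corollary~\ref{cor:lifting} directly (and more carefully, e.g.\ obtaining $\beta>\alpha$ from Lemma~\ref{lemma:inc.seq.right.word.lyndon}), which is equally valid.
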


\begin{proof}
According to Lemma~\ref{lemma:seq.right.word.Lyndon}, each $\SL^{ls}_i((k-1)\delta)w_1\dots w_j$ is a Lyndon prefix
of $\SL_i^{ls}(k\delta)$, and is thus equal to $\SL(\gamma_j)$. Note that each $\gamma_j$ is real by
Lemma~\ref{lemma:no.imaginary.prefix}. Moreover, $\chain(\gamma_j)$ is increasing and
$m_{k}(\gamma_j) = (k\delta,i)$ by Lemma~\ref{lemma:prefix.lyndon.imaginary}.

If $\deg(w_1) \neq \delta$, let us now show that $m_{k}(\deg(w_j)) > (k\delta,i)$ by induction on $j$.
The base case $j=1$ follows from Corollary~\ref{cor:w1.for.imaginary}. As per the step of induction, assume that
$m_{k}(\deg(w_p)) > (k\delta,p)$ for all $p<j$. Then $\gamma_{j-1}$ and $\deg(w_j)$ satisfy the requirements for
Corollary~\ref{cor:lifting}, hence we have $m_{k-1}(\deg(w_j)) > ((k-1)\delta,i)$, and so
$m_k(\deg(w_j)) > (k\delta,i)$ by Corollary~\ref{cor:equiv.mk.Mk}.
\end{proof}

\begin{corollary}
Fix any $i \in \{1,2,\ldots,|I|\}$, $k \in \BZ_{>0}$, and let $\alpha = \deg(\SL_i^{ls}(k\delta))$.
If $\SL(\alpha + \delta) = \SL(\alpha)\SL(M_1(\alpha))$, then:
\begin{equation*}
  \SL_i((k+1)\delta) = \SL(\alpha + \delta)\SL(k\delta - \alpha) ,\qquad
  \SL^{ls}_i((k+1)\delta) = \SL(\alpha + \delta).
\end{equation*}
\end{corollary}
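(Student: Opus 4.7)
The plan is to first establish $\SL^{ls}_i((k+1)\delta)=\SL(\alpha+\delta)$, from which the concatenation identity $\SL_i((k+1)\delta)=\SL(\alpha+\delta)\SL(k\delta-\alpha)$ will follow by identifying the standard right factor via its real degree. To start, I combine Lemma~\ref{lemma:lifting.imaginary.left.standard}, which supplies $\SL^{ls}_i((k+1)\delta)\geq \SL(\alpha+\delta)$, with Corollary~\ref{cor:w1.for.imaginary}, which writes $\SL^{ls}_i((k+1)\delta)=\SL(\alpha)w$ with canonical factorization $w=w_1\cdots w_n$ and dichotomizes: either $n=1$ with $\deg(w_1)=\delta$, or $n>1$ with $|w_1|<|\delta|$, $\chain(\deg(w_1))$ decreasing, and $m_k(\deg(w_1))>(k\delta,i)$.

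In the case $n=1$, the word $\SL(\alpha)w_1$ is standard Lyndon of real degree $\alpha+\delta$ by Corollary~\ref{cor:imaginary.standard.fac.sequence}, so by uniqueness it equals $\SL(\alpha+\delta)$, settling the case. The main obstacle is ruling out $n>1$, which I do by squeezing $w_1$ between contradictory lexicographic bounds relative to $\SL(M_1(\alpha))$. First, since $|w_1|<|\delta|=|\SL(M_1(\alpha))|$ precludes the equality $w=\SL(M_1(\alpha))$ (the latter being Lyndon and therefore self-canonical), the inequality $\SL^{ls}_i((k+1)\delta)\geq \SL(\alpha+\delta)$ becomes strict under the hypothesis $\SL(\alpha+\delta)=\SL(\alpha)\SL(M_1(\alpha))$. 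Stripping the common prefix $\SL(\alpha)$ yields $w>\SL(M_1(\alpha))$ lexicographically, whence Lemma~\ref{lem:Lyndon-vs-canonical} delivers $w_1>\SL(M_1(\alpha))$ (the length inequality rules out equality).

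For the opposite bound, I apply Lemma~\ref{lemma:lifting} to the pair $(\alpha,\deg(w_1))$. Its prerequisites---$\gamma_1:=\alpha+\deg(w_1)\in \wDelta^{+,\re}$, $\chain(\gamma_1)$ increasing, and $m_k(\gamma_1)=(k\delta,i)$---are all supplied by Corollary~\ref{cor:imaginary.standard.fac.sequence}; $m_k(\alpha)=(k\delta,i)$ and $\chain(\alpha)$ increasing follow from Corollary~\ref{cor:mk.im.factor}, Lemma~\ref{lemma:equiv.to.standard.fac}, and Proposition~\ref{prop:monotonicity}; and the remaining size constraints $|w_1|<|\delta|<|2\delta|$ and $|\gamma_1|<|(k+1)\delta|$ are immediate. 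The conclusion is $\gamma_1<\alpha+\delta$, giving $\SL(\alpha)w_1=\SL(\gamma_1)<\SL(\alpha+\delta)=\SL(\alpha)\SL(M_1(\alpha))$; stripping $\SL(\alpha)$ yields $w_1<\SL(M_1(\alpha))$, directly contradicting $w_1>\SL(M_1(\alpha))$ from the previous paragraph.

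Thus $n=1$ and $\SL^{ls}_i((k+1)\delta)=\SL(\alpha+\delta)$. The corresponding standard right factor $\SL^{rs}_i((k+1)\delta)$ has degree $(k+1)\delta-(\alpha+\delta)=k\delta-\alpha$, which is real because $|\alpha|>|(k-1)\delta|$ (Lemma~\ref{lemma:length.standfac.imaginary}) forces $|k\delta-\alpha|<|\delta|$; this right factor is standard Lyndon as a complementary factor in the standard factorization of the standard Lyndon word $\SL_i((k+1)\delta)$, and hence equals $\SL(k\delta-\alpha)$ by uniqueness of standard Lyndon words in each real degree. Combining, $\SL_i((k+1)\delta)=\SL(\alpha+\delta)\SL(k\delta-\alpha)$, as claimed.
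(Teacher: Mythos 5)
Your proposal is correct and follows essentially the same route as the paper: both reduce to the dichotomy of Corollary~\ref{cor:w1.for.imaginary} (with $k+1$ in place of $k$) and rule out $n>1$ by playing Lemma~\ref{lemma:lifting} against Lemma~\ref{lemma:lifting.imaginary.left.standard} via Lemma~\ref{lem:Lyndon-vs-canonical}. The only difference is the order in which the contradiction is assembled — you derive $w_1>\SL(M_1(\alpha))$ from the lower bound and contradict Lemma~\ref{lemma:lifting}, while the paper derives $\SL(M_1(\alpha))>w_1\dots w_n$ from Lemma~\ref{lemma:lifting} and contradicts the lower bound — which is a cosmetic rearrangement of the same inequalities.
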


\begin{proof}
Suppose that $n > 1$ as used in Corollary~\ref{cor:w1.for.imaginary} (utilizing $k+1$ instead of $k$).
We know from Lemma~\ref{lemma:lifting} that $\SL(\alpha + \delta) > \SL(\alpha+\deg(w_1))=\SL(\alpha)w_1$,
where the conditions are satisfied by Lemma~\ref{lemma:no.imaginary.prefix}, Lemma~\ref{lemma:prefix.lyndon.imaginary},
and Corollary~\ref{cor:w1.for.imaginary}. This implies that $\SL(\alpha + \delta) > \SL(\alpha)w_1$ and so $\SL(M_1(\alpha)) > w_1$.
By Lemma~\ref{lem:Lyndon-vs-canonical} we then get $\SL(M_1(\alpha))>w_1\dots w_n$.
Let $\beta=\deg(\SL_i^{ls}((k+1)\delta))$, so that:
  $$ \SL(\alpha + \delta) = \SL(\alpha)\SL(M_1(\alpha)) > \SL(\alpha)w_1 \dots w_n = \SL(\beta)=\SL^{ls}_i((k+1)\delta) ,$$
contradicting Lemma~\ref{lemma:lifting.imaginary.left.standard}.
Thus, $n=1$ and $\deg(w_1)=\delta$ by Corollary~\ref{cor:w1.for.imaginary}, so that
  $$ \SL(\beta)=\SL_i^{ls}((k+1)\delta) = \SL_i^{ls}(k\delta)w_1 = \SL(\alpha + \delta). $$
We then also have $\SL_i((k+1)\delta) = \SL(\alpha + \delta)\SL(k\delta - \alpha)$.
\end{proof}

We can now describe the biggest imaginary $\aslaws$:

\begin{proposition}\label{prop:SL.1.form}
For any $k \in \BZ_{>0}$, we have:
\begin{equation*}
  \SL_1(k\delta) = \SL^{ls}_1(\delta)\underbrace{\SL_1(\delta)}_{k-1 \text{ times }}\SL^{rs}_1(\delta), \qquad
  \SL^{ls}_1(k\delta) = \SL_1^{ls}(\delta)\underbrace{\SL_1(\delta)}_{k-1 \text{ times }}.
\end{equation*}
\end{proposition}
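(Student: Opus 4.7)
The plan is to proceed by induction on $k$, the base case $k=1$ being trivial. For the inductive step $k \to k+1$, I would first invoke Lemma~\ref{lemma:weak.form.left.standard.imaginary} to write $\SL_1^{ls}((k+1)\delta) = \SL_1^{ls}(k\delta) \cdot w$ for some nonempty word $w$, and let $w = w_1 \ldots w_n$ be its canonical factorization. The first key observation is that Corollary~\ref{cor:w1.for.imaginary} forces $n = 1$ with $\deg(w_1) = \delta$: indeed, if $n > 1$, that corollary would yield $m_k(\deg(w_1)) > (k\delta, 1)$, which is impossible because $(k\delta, 1)$ is already the maximum of $\imx$ at height $|k\delta|$ (since $\SL_1(k\delta) > \SL_2(k\delta) > \cdots$). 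Hence $w_1 = \SL_j(\delta)$ for some $j \in \{1, \ldots, |I|\}$, and the task reduces to showing $j = 1$.

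Next, I would set $\alpha = \deg(\SL_1^{ls}(k\delta))$ and apply Corollary~\ref{cor:im.splitting.convex.set} to the standard factorization $\SL_1(k\delta) = \SL_1^{ls}(k\delta) \cdot \SL_1^{rs}(k\delta)$ to get $m_k(\alpha) = (k\delta, 1)$. Corollary~\ref{cor:equiv.mk.Mk} lifts this to $m_1(\alpha) = (\delta, 1)$, and since $(\delta, 1)$ is the maximum of $\imx$ at height $|\delta|$, we must also have $M_1(\alpha) = (\delta, 1)$. By Proposition~\ref{prop:monotonicity}, the chain $\chain(\alpha)$ is then increasing with $\alpha < M_1(\alpha)$, so $\SL(\alpha) < \SL_1(\delta)$; combined with $[\sb[\SL(\alpha)], \sb[\SL_1(\delta)]] \neq 0$ from Lemma~\ref{lemma:Mk}, the decomposition $\alpha + \delta$ with $\SL_*$-terms $\SL(\alpha)$ and $\SL_1(\delta)$ is a valid candidate in the generalized Leclerc algorithm~\eqref{eq:generalized Leclerc}, producing the lower bound $\SL(\alpha + \delta) \geq \SL(\alpha) \cdot \SL_1(\delta)$.

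On the other hand, $\SL_1^{ls}((k+1)\delta) = \SL(\alpha) \cdot w_1$ is Lyndon of real degree $\alpha + \delta$, and, being a Lyndon prefix of the standard Lyndon word $\SL_1((k+1)\delta)$, is itself standard; uniqueness of standard Lyndon words in real degrees thus gives $\SL(\alpha + \delta) = \SL(\alpha) \cdot w_1$. Combining the two bounds yields $\SL(\alpha) \cdot w_1 \geq \SL(\alpha) \cdot \SL_1(\delta)$, so $w_1 \geq \SL_1(\delta)$, and since $w_1 = \SL_j(\delta) \leq \SL_1(\delta)$, we conclude $w_1 = \SL_1(\delta)$. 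The inductive hypothesis $\SL_1^{ls}(k\delta) = \SL_1^{ls}(\delta) \cdot \SL_1(\delta)^{k-1}$ then produces the desired formula for $\SL_1^{ls}((k+1)\delta)$, and since $\SL_1^{rs}((k+1)\delta)$ is a standard Lyndon word of real degree $\deg(\SL_1^{rs}(\delta))$, uniqueness again forces $\SL_1^{rs}((k+1)\delta) = \SL_1^{rs}(\delta)$, completing the induction.

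The main obstacle I anticipate is the reconciliation of two independent identifications of $\SL(\alpha + \delta)$: a lower bound $\SL(\alpha) \cdot \SL_1(\delta)$ from the generalized Leclerc algorithm, and the structural description $\SL(\alpha) \cdot w_1$ coming from Lemma~\ref{lemma:lifting.imaginary.left.standard} and Corollary~\ref{cor:w1.for.imaginary}. The $i = 1$ case is special precisely because the maximality of $(k\delta, 1)$ within $\imx$ at each height simultaneously collapses the canonical factorization to $n = 1$ and pins down $w_1 = \SL_1(\delta)$; for general $i$, this maximality is lost, which is why Conjecture~\ref{conj:imaginary-SL} requires additional hypotheses such as those in Theorem~\ref{thm:imaginary-SL-orderI}.
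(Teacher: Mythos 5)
Your proof is correct and follows essentially the same route as the paper's: induction on $k$, reduction to $n=1$ and $\deg(w_1)=\delta$ via Corollary~\ref{cor:w1.for.imaginary} (using that $m_k(\deg(w_1))>(k\delta,1)$ is vacuously impossible), and identification of $w_1$ with $\SL_1(\delta)$ from $M_1(\alpha)=(\delta,1)$. The only cosmetic difference is that where you re-derive $w_1=\SL_1(\delta)$ by combining the generalized Leclerc lower bound with the maximality of $\SL_1(\delta)$ among standard words of degree $\delta$, the paper simply invokes Corollary~\ref{cor:imaginary.suffix.prefix}, whose proof is that same argument.
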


\begin{proof}
The proof is by induction on $k$, the base case $k = 1$ being trivial. For the inductive step, assume the above
equalities hold for $k-1$. Suppose first that $n > 1$ as used in Corollary~\ref{cor:w1.for.imaginary}. According to
Corollary~\ref{cor:imaginary.standard.fac.sequence}, we have $m_k(\deg(w_1)) > (k\delta,1)$ which is impossible,
a contradiction. Thus $n=1$ and $\deg(w_1) = \delta$ by Corollary~\ref{cor:w1.for.imaginary}.
By Corollary~\ref{cor:imaginary.suffix.prefix} as
$M_1(\deg(\SL_1^{ls}((k-1)\delta)))=M_1(\deg(\SL_1^{ls}(\delta))) = (\delta,1)$, we get:
  $$ \SL_1^{ls}(k\delta) = \SL_i^{ls}((k-1)\delta)\SL_1(\delta). $$
Combining this with the inductive hypothesis completes the inductive step.
%  $$\SL_1^{ls}(k\delta) = \SL_1^{ls}(\delta)\ub{1}{k}$$
%  $$\SL_1(k\delta) = \SL_1^{ls}(\delta)\ub{1}{k}\SL_1^{rs}(\delta)$$
\end{proof}

We now propose the structure of all imaginary $\aslaws$
(verified on the computer in all types of rank $\leq 6$, all orders, and all $k\leq 8$):

\begin{conjecture}\label{conj:imaginary-SL}
For all $i \in \{1,2,\ldots,|I|\}$ and $k \in \BZ_{>0}$, we have:
\begin{equation*}
  \SL_i(k\delta) = \SL^{ls}_i(\delta)\underbrace{w}_{k-1 \text{ times}} \SL_i^{rs}(\delta), \qquad
  \SL^{ls}_i(k\delta) = \SL^{ls}_i(\delta)\underbrace{w}_{k-1 \text{ times}}
\end{equation*}
where $w$ is a cyclic permutation of some $\SL_j(\delta)$ for $j \leq i$.
\end{conjecture}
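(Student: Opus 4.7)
The plan is to prove Conjecture~\ref{conj:imaginary-SL} by a nested induction: an outer induction on the index $i$ and an inner induction on $k$. The outer base case $i=1$ is already done in Proposition~\ref{prop:SL.1.form}, and the inner base case $k=1$ is trivial. For the inductive step $k\to k+1$, Corollary~\ref{cor:w1.for.imaginary} provides the starting point: $\SL_i^{ls}((k+1)\delta) = \SL_i^{ls}(k\delta)\cdot u$ with canonical factorization $u=u_1\cdots u_n$. The inner induction hypothesis $\SL_i^{ls}(k\delta)=\SL_i^{ls}(\delta)\,w^{k-1}$ combined with Lemma~\ref{lemma:length.standfac.imaginary} and Lemma~\ref{lemma:no.imaginary.prefix} pins down $\deg(u)=\delta$.

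I would then split according to the two scenarios allowed by Corollary~\ref{cor:w1.for.imaginary}. In the subcase $n=1$, $u$ is itself an imaginary standard Lyndon word of degree $\delta$, so $u=\SL_j(\delta)$ for some $j$; applying Corollary~\ref{cor:imaginary.suffix.prefix} to the splitting $\SL_i^{ls}((k+1)\delta) = \SL_i^{ls}(k\delta)\cdot u$ identifies $u$ with $\SL(M_1(\deg(\SL_i^{ls}(k\delta))))$, while Corollary~\ref{cor:im.splitting.convex.set} gives $m_1(\deg(\SL_i^{ls}(k\delta)))=(\delta,i)$. Since $M_1\geq m_1$ in the $\wDelta^{+,\ext}$-ordering, $j\leq i$ as required. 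In the subcase $n>1$, Corollary~\ref{cor:imaginary.standard.fac.sequence} supplies the crucial inequalities $m_k(\deg(u_p))>(k\delta,i)$ for each $p$, together with the fact that each $\chain(\deg(u_p))$ is decreasing and $|u_p|<|\delta|$. These conditions force each $u_p$ to live ``strictly below'' the level $i$ in the flag, and we would invoke the outer induction hypothesis (for indices $j<i$) to match $u_1\cdots u_n$ with the canonical factorization of a cyclic shift of some $\SL_j(\delta)$ with $j<i$, using Lemma~\ref{lemma:word.to.Lyndon} to recognize the Lyndon cyclic rotation.

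Finally, one must verify that the word $u$ appearing at each step $k\to k+1$ does not depend on $k$, so that setting $w:=u$ yields $\SL_i^{ls}((k+1)\delta)=\SL_i^{ls}(\delta)\,w^{k}$. The engine here is Proposition~\ref{prop:spanset.equiv}, whose periodicity $\spanset_i^{k+1}=\spanset_i^kt$ implies that the quantities determining $u$ (namely $M_1$ of the relevant $\wDelta^{+,\re}$-class and the indices of imaginary partners entering the bracketings) depend only on $i$ and not on $k$. Concretely, once $\SL_i^{ls}(k\delta)$ has been written as a left-extension of $\SL_i^{ls}(\delta)$ by a periodic block, the ``next block'' is governed by the same flag data, hence is the same word.

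The main obstacle is the combinatorial identification in the subcase $n>1$: ruling out that the canonical factorization $u_1\cdots u_n$ is an exotic assembly of real Lyndon factors not originating from any cyclic rotation of an imaginary $\SL_j(\delta)$. The only leverage is the inequalities on $m_k(\deg(u_p))$, the decreasing property of each $\chain(\deg(u_p))$, and the constraint that the total degree equals $\delta$. The delicate step will be to promote these local constraints into a global statement that $u_1\cdots u_n$ admits a Lyndon cyclic rotation of the required form; this is precisely where the outer induction on $i$ is essential, since each $u_p$ should reflect a piece of the already-known structure of $\SL_{j}(k\delta)$ for some $j<i$.
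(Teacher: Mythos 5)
This statement is a \emph{conjecture} in the paper: the authors do not prove it in general, but only verify it by computer (rank $\leq 6$, $k\leq 8$, all orders) and establish the two special cases $i=1$ (Proposition~\ref{prop:SL.1.form}) and the case where the smallest simple root occurs once in $\delta$ (Theorem~\ref{thm:imaginary-SL-orderI}). Your proposal does not close this gap. The subcase $n=1$ of your argument is essentially sound and is exactly how Proposition~\ref{prop:SL.1.form} is proved (there, Corollary~\ref{cor:imaginary.standard.fac.sequence} forces $n=1$ outright because $m_k(\deg(w_1))>(k\delta,1)$ is impossible, and Corollary~\ref{cor:imaginary.suffix.prefix} then identifies the block). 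But for $i>1$ the case $n>1$ genuinely occurs, and your treatment of it is an acknowledgement of the difficulty rather than an argument: you list the available constraints ($m_k(\deg(u_p))>(k\delta,i)$, $|u_p|<|\delta|$, total degree considerations) and then assert that the outer induction on $i$ together with Lemma~\ref{lemma:word.to.Lyndon} will ``match'' $u_1\cdots u_n$ with a cyclic rotation of some $\SL_j(\delta)$, $j<i$. No mechanism is given for this matching, and you yourself flag it as ``the main obstacle'' and ``the delicate step.'' A proof cannot defer its only hard step.

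Two further points are asserted but not established. First, when $n>1$ the equality $\deg(u)=\delta$ is not ``pinned down'' by Lemmas~\ref{lemma:length.standfac.imaginary} and~\ref{lemma:no.imaginary.prefix}: those only give $|(k-1)\delta|<|\SL_i^{ls}(k\delta)|<|k\delta|$, hence $0<|u|<|2\delta|$, and Corollary~\ref{cor:w1.for.imaginary} bounds each $|u_p|$ by $|\delta|$ but not their number $n$ or their total degree. Second, the $k$-independence of the block $u$ is claimed to follow from the periodicity $\spanset_i^{k+1}=\spanset_i^k t$ of Proposition~\ref{prop:spanset.equiv} because ``the quantities determining $u$ depend only on $i$''; but you have not shown that $u$ is determined by the flag data at all --- that is precisely the content of the conjecture. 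In the two cases the paper does prove, this determination comes from an explicit identification $u=\SL(M_1(\gamma_i))$ via Corollary~\ref{cor:imaginary.suffix.prefix}, which is unavailable once $n>1$. As it stands, the proposal reduces the conjecture to itself in the only nontrivial regime.
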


\begin{remark}
Taking $w = \SL_1(\delta)$ if $i=1$ or $w = \SL(M_1(\deg(\SL_i^{ls}(\delta))))$ if the smallest simple root
occurs once in $\delta$, we obtain Proposition~\ref{prop:SL.1.form} and Theorem~\ref{thm:imaginary-SL-orderI}.
\end{remark}

   %%%%%%%%%%%%%%%%%%%%%%%%%%%%%%%%%%%%%%%%%%%%%%%%%%%%%%%%%%%%%%%%%%
   %%%%%%%%%%%%%%%%%%%%%% Appendix %%%%%%%%%%%%%%%%%%%%%%%%%%%%%%%%%%
   %%%%%%%%%%%%%%%%%%%%%%%%%%%%%%%%%%%%%%%%%%%%%%%%%%%%%%%%%%%%%%%%%%

\appendix

\section{Code}\label{sec:app_code}

In this section, we present the source code (written using Python):

$\qquad$ \href{https://github.com/corbyte/AffineStandardLyndonWords}{https://github.com/corbyte/AffineStandardLyndonWords}.

\noindent
Almost everything in the code is done through a rootSystem object and example of the initialization can be seen below:

\begin{lstlisting}[language=python,caption=rootSystem Initialization,captionpos=t]
"""
rootSystem(ordering,type:str):
Initialization of root system

ordering -- list of ordering for the rootsystem with ordering[0] < ordering[1] < and so on
type -- type of the rootsystem
"""

G2 = rootSystem([2,1,0],'G')

G2.delta
#[1,2,3]
G2.baseWeights
# Will return all roots in the root system with height \leq \delta
\end{lstlisting}

\medskip
\noindent
In addition to the rootSystem class another important class is the word class: with this class you can do comparison
and concatenation between words. The word class acts as a wrapper around a list of elements from the letter class:
\begin{lstlisting}[language=python,caption=word Class,captionpos=t]
#'b'<'a'<'c'
u #abc
v #bc
u < v
# False
u + v
# abcbc
print(u)
#a,b,c
u.no_commas()
#abc
\end{lstlisting}

\medskip
\noindent
Getting a standard Lyndon word for a given rootSystem and ordering is very quick, additionally one can quickly get
chains of standard Lyndon words:

\begin{lstlisting}[language=python,caption= Standard Lyndon Words,captionpos=t]
root_system # any rootSystem object
l = root_system.SL(degree) #Where degree is an element of the root system


#l will be an array of word objects, if degree is real there will only be one, but if the degree is imaginary there will be several

chain = root_system.chain(degree)

#chain is a list of all currenly generated standard Lyndon words with degree, degree + k\delta

root_system.periodicity(degree)

#Returns the periodicity of ch(degree)
\end{lstlisting}

\medskip
\noindent
The notation to use for degree is that the $(i+1)$-th element of the degree list you want corresponds to the multiplicity
of $\alpha_i$ in that degree. Additionally, words can be quickly parsed into the ``block format'':

\begin{lstlisting}[language=python,caption=Block Format,captionpos=t]
G2 = rootSystem([1,2,0],'G')

G2.SL(G2.delta*8 + [1,0,0])[0].no_commas()
#'1222101222102122210101222101222102122210101222102'

print(G2.parse_to_block_format(G2.SL(G2.delta*8 + [1,0,0])[0]))
#[im,1,2] 2 [im,1,1] 10 [im,1,2] 2 [im,1,1] 10 [im,1,1] 2
#[im,i,j] means that there is an \SL_i(\delta) j times in that spot

#There is an additional parameter which will have the code look for rotated imaginary words

C3 = rootSystem([1,3,0,2],'C')
C3.SL(C3.delta*3)[2].no_commas()
#'101231201231201232'
C3.SL(C3.delta)[0].no_commas()
#'123120'
print(C3.parse_to_block_format(C3.SL(C3.delta*3)[2]))
#1 [1,5,2] 01232

# Where [i,j,k] means that \SL_i(\delta) is rotated j-1 letters and repeated k times
\end{lstlisting}

\medskip
\noindent
There are some additional functions which will give useful information about standard Lyndon words and degrees:
\begin{lstlisting}[language=python,caption=Additional Functions,captionpos=t]
G2 = rootSystem([1,2,0],'G')
#get_monotonicity returns 1 if the chain is increasing and -1 if it is decreasing
G2.get_monotonicity([0,1,0])
#1
#rootSystem.M_k(degree) returns i where M_k(degree) = (k\delta,i)
G2.M_k([0,1,0])
#1
#rootSystem.m_k(degree) returns i where m_k(degree) = (k\delta,i)
G2.m_k([0,1,0])
#2

#mod_delta(\alpha+k\delta) will return (\alpha,k\delta)

G2.mod_delta(G2.delta*5 + [0,1,0])
#(array([0, 1, 0]), 5)

#generate_up_to_delta(k) will generate all standard Lyndon words upto height n\delta, results will be cached

G2.generate_up_to_delta(5)

#get_decompositions(\alpha) will return all possible \beta,\gamma \in \wDelta^{+} such that \beta + \gamma = \alpha
G2.get_decompositions(G2.delta)

#You can also get the standard and costandard factorization of words

l = G2.SL(G2.delta)[1]

print(*[i.no_commas()for i in G2.costfac(l)],sep=',')
#2,21210
print(*[i.no_commas()for i in G2.standfac(l)],sep=',')
#22121,0
\end{lstlisting}
\begin{lstlisting}[language=python,caption=W-set,label={lst:w.set},captionpos=t]
    F4 = rootSystem([3,4,0,2,1],"F")
    F4.text_W_set(1)
    #Prints the W_{\delta} set for F4 with ordering 3<4<0<2<1
    print(*[i.no_commas() for i in F4.SL(F4.delta)],sep="\n")
    #Prints imaginary SL words of height delta
    E6 = rootSystem([3,0,1,5,4,6,2],"E")
    E6.text_W_set(1)
    #Prints the W_{\delta} set for E6 with ordering 3<0<1<5<4<6<2
    print(*[i.no_commas() for i in E6.SL(E6.delta)],sep="\n")
    #Prints imaginary SL words of height delta
\end{lstlisting}

   %%%%%%%%%%%%%%%%%%%%%%%%%%%%%%%%%%%%%%%%%%%%%%%%%%%%%%%%%%%%%%%%%%
   %%%%%%%%%%%%%%%%%%%%%% Appendix B %%%%%%%%%%%%%%%%%%%%%%%%%%%%%%%%
   %%%%%%%%%%%%%%%%%%%%%%%%%%%%%%%%%%%%%%%%%%%%%%%%%%%%%%%%%%%%%%%%%%

\section{Explicit formulas for $G^{(1)}_2$}\label{sec:app-G}

In this appendix, we present the list of all $\aslaws$ in affine type $G^{(1)}_2$. These were derived
using the code of Appendix~\ref{sec:app_code}. We use the conventions that $\alpha_1$ is a long root and
$\alpha_2$ is a short root of $G_2$. We note that having these formulas at hand, one can directly verify
them by induction on the height of a root using the generalized Leclerc algorithm. While similar to type
$A^{(1)}_n$ of~\cite{AT}, the structure is more compelling as we get up to $5$ ``chunks''
with $\SL_1(\delta)$ for real roots.

   %%%%%%%%%%%%%%%%%%%%%%%%%%%%%%%%%%%%%%%%%%%%%%%%%%%%%%%%%%%%%%%%%%

\subsection{Order $0<1<2$}\hfill\\

\centering
\begin{tabular}{|c|c|}
    \hline
     & $\SL(\cdot)$ \\
     \hline
     $(\delta,1)$& 012221\\
     \hline
     $(\delta,2)$&012212\\
     \hline
\end{tabular}

\bigskip
\begin{tabular}{|>{\centering\arraybackslash}m{2.25cm}|c|}
    \hline
    \multicolumn{2} {|c|} {$\alpha_0 + k\delta$}\\
    \hline
     $k=0$ & $0$\\
     \hline
     $k=1$ & $0120122$\\
     \hline
     $k=2$ & $0122012201221$\\
     \hline
          $k \equiv 0 \mod 3$\newline $k\geq3$ & $01221 \ub{1}{k/3 - 2} 0122201221 \ub{1}{k/3 - 1} 01221 \ub{1}{k/3 - 1} 01222$\\
     \hline
     $k \equiv 1 \mod 3$\newline$k\geq 3$& $01221 \ub{1}{\lfloor k/3\rfloor - 1} 01221 \ub{1}{\lfloor k/3\rfloor - 1} 0122201221 \ub{1}{\lfloor k/3\rfloor - 1} 01222$\\
     \hline
     $k \equiv 2 \mod 3$\newline $k \geq 3$ & $01221 \ub{1}{\lfloor k/3\rfloor-1} 0122201221 \ub{1}{\lfloor k/3 \rfloor-1} 0122201221 \ub{1}{\lceil k/3 \rceil -1}$\\
     \hline
\end{tabular}

\bigskip
\begin{tabular}{|c|c|}
    \hline
    \multicolumn{2} {|c|} {$\alpha_1 + k\delta$}\\
    \hline
     $k$ & $\ub{1}{k}1$\\
    \hline
\end{tabular}
\begin{tabular}{|c|c|}
    \hline
    \multicolumn{2} {|c|} {$\alpha_2 + k\delta$}\\
    \hline
     $k$ & $\ub{1}{k}2$\\
    \hline
\end{tabular}

\bigskip
\begin{tabular}{|>{\centering\arraybackslash}m{2.25cm}|c|}
    \hline
    \multicolumn{2} {|c|} {$\alpha_1 + \alpha_2 + k\delta$}\\
    \hline
     $k\equiv 0 \mod 2$& $\ub{1}{k/2}1\ub{1}{k/2}2$ \\
     \hline
     $k \equiv 1 \mod 2$ & $\ub{1}{\lceil k/2 \rceil}2\ub{1}{\lfloor k/2 \rfloor}1$\\
    \hline
\end{tabular}

\bigskip
\begin{tabular}{|>{\centering\arraybackslash}m{2.25cm}|c|}
    \hline
    \multicolumn{2} {|c|} {$\alpha_0 + \alpha_1 + k\delta$}\\
    \hline
     $k=0$& $01$ \\
     \hline
    $k=1$ & 01201221\\
    \hline
    $k=2$ & $01220122101221$\\
    \hline
    $k\equiv 0 \mod 3$\newline $k\geq3$ & $01221\ub{1}{k/3-1} 01221 \ub{1}{k/3-1} 01221 \ub{1}{k/3-1} 01222$\\
    \hline
    $k \equiv 1 \mod 3$\newline$k\geq3$ & $01221 \ub{1}{\lfloor k/3 \rfloor -1} 01221 \ub{1}{\lfloor k/3 \rfloor -1} 0122201221 \ub{1}{\lceil k/3 \rceil -1}$\\
    \hline
    $k \equiv 2 \mod 3$\newline$k\geq3$ & $01221 \ub{1}{\lfloor k/3 \rfloor-1} 0122201221 \ub{1}{\lceil k/3 \rceil -1} 01221 \ub{1}{\lceil k/3 \rceil -1}$\\
    \hline
\end{tabular}

\bigskip
\begin{tabular}{|c|c|}
    \hline
    \multicolumn{2} {|c|} {$\alpha_1 +2\alpha_2 + k\delta$}\\
    \hline
    $k\equiv0 \mod 3$ & $\ub{1}{k/3} 1 \ub{1}{k/3} 2 \ub{1}{k/3} 2$\\
    \hline
    $k\equiv 1 \mod 3$ & $\ub{1}{\lceil k/3 \rceil} 2 \ub{1}{\lfloor k/3 \rfloor} 1 \ub{1}{\lfloor k/3 \rfloor} 2$\\
    \hline
    $k \equiv 2 \mod 3$ & $\ub{1}{\lceil k/3 \rceil} 2 \ub{1}{\lceil k/3 \rceil} 2 \ub{1}{\lfloor k/3 \rfloor} 1$\\
    \hline
\end{tabular}

\bigskip
\begin{tabular}{|>{\centering\arraybackslash}m{2.25cm}|c|}
    \hline
    \multicolumn{2} {|c|} {$\alpha_0 + \alpha_1 +\alpha_2 + k\delta$}\\
    \hline
    $k = 0$ & $012$\\
    \hline
    $k =1$ & $012201221$\\
    \hline
    $k \equiv 0 \mod 2$\newline $k\geq 2$ & $01221 \ub{1}{k/2-1} 01221 \ub{1}{k/2-1} 01222$\\
    \hline
    $k \equiv 1 \mod 2$\newline $k\geq2$ & $01221 \ub{1}{\lfloor k/2 \rfloor -1} 0122201221 \ub{1}{\lceil k/2 \rceil -1}$\\
    \hline
\end{tabular}

\bigskip
\begin{tabular}{|c|c|}
    \hline
    \multicolumn{2} {|c|} {$\alpha_1 +3\alpha_2 + k\delta$}\\
    \hline
    $k \equiv 0 \mod 4$ & $\ub{1}{k/4} 1 \ub{1}{k/4} 2 \ub{1}{k/4} 2 \ub{1}{k/4} 2$\\
    \hline
    $k \equiv 1 \mod 4$ & $\ub{1}{\lceil k/4 \rceil} 2 \ub{1}{\lfloor k/4 \rfloor} 1 \ub{1}{\lfloor k/4 \rfloor}2 \ub{1}{\lfloor k/4 \rfloor} 2$\\
    \hline
    $k \equiv 2 \mod 4$ & $\ub{1}{\lceil k/4 \rceil} 2 \ub{1}{\lceil k/4 \rceil}2 \ub{1}{\lfloor k/4 \rfloor} 1 \ub{1}{\lfloor k/4 \rfloor} 2$\\
    \hline
    $k \equiv 3 \mod 4$ &  $\ub{1}{\lceil k/4 \rceil} 2 \ub{1}{\lceil k/4 \rceil} 2 \ub{1}{\lceil k/4 \rceil} 2 \ub{1}{\lfloor k/4 \rfloor} 1$\\
    \hline
\end{tabular}

\bigskip
\begin{tabular}{|c|c|}
    \hline
    \multicolumn{2} {|c|} {$\alpha_0 + \alpha_1 +2\alpha_2 + k\delta$}\\
    \hline
    $k = 0$ & 0122\\
    \hline
    $k \geq 1$ & $01221 \ub{1}{k-1} 01222$\\
    \hline
\end{tabular}

\bigskip
\begin{tabular}{|c|c|}
    \hline
    \multicolumn{2} {|c|} {$2\alpha_1 +3\alpha_2 + k\delta$}\\
    \hline
    $k \equiv 0 \mod 5$ & $\ub{1}{k/5}1 \ub{1}{k/5} 2 \ub{1}{k/5} 1 \ub{1}{k/5} 2 \ub{1}{k/5}2$\\
    \hline
    $k \equiv 1 \mod 5$ & $\ub{1}{\lceil k/5 \rceil} 2 \ub{1}{\lfloor k/5 \rfloor} 1 \ub{1}{\lfloor k/5 \rfloor} 2 \ub{1}{\lfloor k/5 \rfloor} 2 \ub{1}{\lfloor k/5 \rfloor} 1$\\
    \hline
    $k \equiv 2 \mod 5$ & $\ub{1}{\lceil k/5 \rceil} 2 \ub{1}{\lfloor k/5 \rfloor} 1 \ub{1}{\lceil k/5 \rceil} 2 \ub{1}{\lfloor k/5 \rfloor} 1 \ub{1}{\lfloor k/5 \rfloor}2$\\
    \hline
    $k \equiv 3 \mod 5$ & $\ub{1}{\lceil k/5 \rceil} 2 \ub{1}{\lceil k/5 \rceil} 2 \ub{1}{\lfloor k/5 \rfloor} 1 \ub{1}{\lceil k/5 \rceil} 2 \ub{1}{\lfloor k/5 \rfloor} 1$\\
    \hline
    $k \equiv 4 \mod 5$ & $\ub{1}{\lceil k/5 \rceil} 1 \ub{1}{\lceil k/5 \rceil} 2 \ub{1}{\lceil k/5 \rceil} 2 \ub{1}{\lceil k/5 \rceil} 2 \ub{1}{\lfloor k/5 \rfloor} 1$\\
    \hline
\end{tabular}

\bigskip
\begin{tabular}{|c|c|}
    \hline
    \multicolumn{2} {|c|} {$\alpha_0 + \alpha_1 +3\alpha_2 + k\delta$}\\
    \hline
    $k$ & $01222 \ub{1}{k}$\\
    \hline
\end{tabular}
\begin{tabular}{|c|c|}
    \hline
    \multicolumn{2} {|c|} {$\alpha_0 + 2\alpha_1 +2\alpha_2 + k\delta$}\\
    \hline
    $k$ & $01221\ub{1}{k}$\\
    \hline
\end{tabular}

\bigskip
\begin{tabular}{|c|c|}
    \hline
    \multicolumn{2} {|c|} {$(k\delta,1)$}\\
     \hline
     $k>1$ & $01222 \ub{1}{k-1} 1$\\
     \hline
\end{tabular}
\begin{tabular}{|c|c|}
    \hline
    \multicolumn{2} {|c|} {$(k\delta,2)$}\\
     \hline
     $k>1$ & $01221 \ub{1}{k-1} 2$\\
    \hline
\end{tabular}

   %%%%%%%%%%%%%%%%%%%%%%%%%%%%%%%%%%%%%%%%%%%%%%%%%%%%%%%%%%%%%%%%%%

\raggedright
\medskip
\subsection{Order $0 < 2 < 1$}\hfill\\

\centering
\bigskip
\begin{tabular}{|c|c|}
    \hline
     & $\SL(\cdot)$ \\
     \hline
     $(\delta,1)$& 012212\\
     \hline
     $(\delta,2)$ & 012221\\
     \hline
\end{tabular}

\bigskip
\begin{tabular}{|>{\centering\arraybackslash}m{2.25cm}|c|}
    \hline
    \multicolumn{2} {|c|} {$\alpha_0 + k\delta$}\\
    \hline
     $k=0$ & $0$\\
     \hline
     $k=1$& 0120122\\
    \hline
     $k=2$ & 0122012201221\\
     \hline
     $k=3$ & 0122012210122101222\\
     \hline
     $k=4$ & 0122201221012220122101221\\
     \hline
     $k \equiv 0 \mod 2$\newline$k\geq 5$ & $01222 \ub{1}{k/2 -2} 0122101222 \ub{1}{k/2-2} 0122101221$\\
     \hline
     $k \equiv 1 \mod 2$\newline $k \geq 5$ & $01222 \ub{1}{\lfloor k/2\rfloor -2} 01221012210122101222 \ub{1}{\lceil k/2 \rceil -2}$\\
     \hline
\end{tabular}

\bigskip
\begin{tabular}{|c|c|}
    \hline
     \multicolumn{2}{|c|}{$\alpha_1 + k\delta$}\\
     \hline
     $k$ & $\ub{1}{k}$1 \\
     \hline
\end{tabular}
\begin{tabular}{|c|c|}
    \hline
     \multicolumn{2}{|c|}{$\alpha_2 + k\delta$}\\
     \hline
     $k$ & $\ub{1}{k}$2 \\
     \hline
\end{tabular}
\begin{tabular}{|c|c|}
    \hline
    \multicolumn{2}{|c|}{$\alpha_1 + \alpha_2 + k\delta$}\\
    \hline
    $k \equiv 0 \mod 2$ & $\ub{1}{k/2}2\ub{1}{k/2}1$\\
    \hline
    $k \equiv 1 \mod 2$ & $\ub{1}{\lceil k/2 \rceil}1\ub{1}{\lfloor k/2 \rfloor}2$\\
    \hline
\end{tabular}

\bigskip
\begin{tabular}{|>{\centering\arraybackslash}m{2.25cm}|c|}
    \hline
    \multicolumn{2} {|c|} {$\alpha_0 + \alpha_1 + k\delta$}\\
    \hline
     $k = 0$& 01\\
     \hline
     $k=1$ & 01201221\\
     \hline
     $k = 2$ & $01220122101221$\\
     \hline
     $k\geq 3$ & $01222 \ub{1}{k-3} 012210122101221$\\
     \hline
\end{tabular}

\bigskip
\begin{tabular}{|>{\centering\arraybackslash}m{2.25cm}|c|}
    \hline
    \multicolumn{2} {|c|} {$\alpha_1 + 2\alpha_2 + k\delta$}\\
    \hline
    $k \equiv 0 \mod 3$ & $\ub{1}{k/3}2\ub{1}{k/3}2\ub{1}{k/3}1$\\
    \hline
    $k \equiv 1 \mod 3$ & $\ub{1}{\lceil k/3 \rceil} 1 \ub{1}{\lfloor k/3 \rfloor} 2 \ub{1}{\lfloor k/3 \rfloor} 2$\\
    \hline
    $k \equiv 2 \mod 3$ & $\ub{1}{\lceil k/3\rceil} 2 \ub{1}{\lceil k/3 \rceil} 1 \ub{1}{\lfloor k/3 \rfloor} 2$\\
    \hline
\end{tabular}

\bigskip
\begin{tabular}{|>{\centering\arraybackslash}m{2.25cm}|c|}
    \hline
    \multicolumn{2} {|c|} {$\alpha_0 + \alpha_1 + \alpha_2 + k\delta$}\\
    \hline
    $k=0$ & 012\\
    \hline
    $k=1$ & 012201221\\
    \hline
    $k \geq 2$ & $01222 \ub{1}{k-2} 0122101221$\\
    \hline
\end{tabular}

\bigskip
\begin{tabular}{|>{\centering\arraybackslash}m{2.25cm}|c|}
    \hline
    \multicolumn{2} {|c|} {$\alpha_1 + 3\alpha_2 + k\delta$}\\
    \hline
    $k \equiv 0 \mod 4$ & $\ub{1}{k/4} 2 \ub{1}{k/4} 2 \ub{1}{k/4} 2 \ub{1}{k/4} 1$\\
    \hline
    $k \equiv 1 \mod 4$ & $\ub{1}{\lceil k/4 \rceil} 1 \ub{1}{\lfloor k/4 \rfloor} 2 \ub{1}{\lfloor k/4 \rfloor} 2 \ub{1}{\lfloor k/4 \rfloor} 2$\\
    \hline
    $k \equiv 2 \mod 4$ & $\ub{1}{\lceil k/4 \rceil} 2 \ub{1}{\lceil k/4 \rceil} 1 \ub{1}{\lfloor k/4 \rfloor} 2 \ub{1}{\lfloor k/4 \rfloor} 2$\\
    \hline
    $k \equiv 3 \mod 4$ & $\ub{1}{\lceil k/4 \rceil} 2 \ub{1}{\lceil k/4 \rceil} 2 \ub{1}{\lceil k/4 \rceil} 1 \ub{1}{\lfloor k/4 \rfloor} 2$\\
    \hline
\end{tabular}

\bigskip
\begin{tabular}{|c|c|}
    \hline
    \multicolumn{2} {|c|} {$\alpha_0 + \alpha_1 + 2\alpha_2 + k\delta$}\\
    \hline
     $k=0$& 0122\\
     \hline
     $k \geq 1$ & $01222 \ub{1}{k-1} 01221$\\
     \hline
\end{tabular}

\bigskip
\begin{tabular}{|>{\centering\arraybackslash}m{2.25cm}|c|}
    \hline
    \multicolumn{2} {|c|} {$2\alpha_1 + 3\alpha_2 + k\delta$}\\
    \hline
    $k \equiv 0 \mod 5$ & $\ub{1}{k/5} 2 \ub{1}{k/5} 2 \ub{1}{k/5} 1 \ub{1}{k/5} 2 \ub{1}{k/5} 1$\\
    \hline
    $k \equiv 1 \mod 5$& $\ub{1}{\lceil k/5 \rceil} 1 \ub{1}{\lfloor k/5 \rfloor} 2 \ub{1}{\lfloor k/5 \rfloor} 2 \ub{1}{\lfloor k/5 \rfloor} 2 \ub{1}{\lfloor k/5 \rfloor} 1$\\
    \hline
    $k \equiv 2 \mod 5$ & $\ub{1}{\lceil k/5 \rceil} 1 \ub{1}{\lfloor k/5 \rfloor} 2 \ub{1}{\lceil k/5 \rceil} 1 \ub{   1}{\lfloor k/5\rfloor} 2 \ub{1}{\lfloor k/5 \lfloor} 2$\\
    \hline
    $k \equiv 3 \mod 5$ & $\ub{1}{\lceil k/5 \rceil} 2 \ub{1}{\lceil k/5 \rceil} 1 \ub{1}{\lfloor k/5 \rfloor} 2 \ub{1}{\lfloor k/5 \rfloor} 2 \ub{1}{\lceil k/5 \rceil} 1$\\
    \hline
    $k \equiv 4 \mod 5$ & $\ub{1}{\lceil k/5 \rceil} 2 \ub{1}{\lceil k/5 \rceil} 1 \ub{1}{\lceil k/5 \rceil} 2 \ub{1}{\lceil k/5 \rceil} 1 \ub{1}{\lfloor k/5 \rfloor} 2$\\
    \hline
\end{tabular}

\bigskip
\begin{tabular}{|c|c|}
    \hline
    \multicolumn{2} {|c|} {$\alpha_0 + \alpha_1 + 3\alpha_2 + k\delta$}\\
    \hline
    $k$ & $01222\ub{1}{k}$\\
    \hline
\end{tabular}
\begin{tabular}{|c|c|}
    \hline
    \multicolumn{2} {|c|} {$\alpha_0 + 2\alpha_1 + 2\alpha_2 + k\delta$}\\
    \hline
    $k$ & $01221\ub{1}{k}$\\
    \hline
\end{tabular}

\bigskip
\begin{tabular}{|c|c|}
    \hline
    \multicolumn{2} {|c|} {$(k\delta,1)$}\\
    \hline
    $k>1$ & $01221 \ub{1}{k-1} 2$\\
    \hline
\end{tabular}
\begin{tabular}{|c|c|}
    \hline
    \multicolumn{2} {|c|} {$(k\delta,2)$}\\
    \hline
    $k>1$ & $01222 \ub{1}{k-1} 1$\\
    \hline
\end{tabular}

   %%%%%%%%%%%%%%%%%%%%%%%%%%%%%%%%%%%%%%%%%%%%%%%%%%%%%%%%%%%%%%%%%%

\medskip
\raggedright
\subsection{Order $1 < 0 < 2$}\hfill\\

\bigskip
\begin{center}
\begin{tabular}{|c|c|}
    \hline
     &  $\SL(\cdot)$\\
     \hline
     $(\delta,1)$& 120122\\
     \hline
     $(\delta,2)$ & 121220\\
     \hline
\end{tabular}
%\begin{tabular}{|>{\centering\arraybackslash}c|c|}    \hline
%     \multicolumn{2} {|c|} {$\alpha_0 + k\delta$}\\
%     \hline
%     $k$& $\ub{1}{k}0$ \\
%     \hline
%\end{tabular}

\bigskip
\begin{tabular}{|>{\centering\arraybackslash}m{2.25cm}|c|}
    \hline
    \multicolumn{2} {|c|} { $\alpha_1 + k\delta$}\\
    \hline
    $k=0$ & 1\\
    \hline
    $k=1$ & 1212120\\
    \hline
    $k=2$ & 1212012012122\\
    \hline
    $k \equiv 0 \mod 2$ \newline $k \geq 3$ & $12122 \ub{1}{k/2 - 2} 12012012012122 \ub{1}{k/2-1}$\\
    \hline
    $k \equiv 1 \mod 2$ \newline $k \geq 3$ & $12122 \ub{1}{\lfloor k/2 \rfloor -1} 12012122 \ub{1}{\lfloor k/2 \rfloor -1} 120120$\\
    \hline
\end{tabular}

\bigskip
\begin{tabular}{|>{\centering\arraybackslash}m{2.25cm}|c|}
    \hline
    \multicolumn{2} {|c|} {$\alpha_2 + k\delta$}\\
    \hline
    $k=0$ & 2\\
    \hline
    $k \equiv 0 \mod 3$ \newline $k \geq 1$ & $\ub{1}{k/3} 122 \ub{1}{k/3} 0 \ub{1}{k/3-1} 122$\\
    \hline
    $k \equiv 1 \mod 3$ \newline $k \geq 1$ & $\ub{1}{\lfloor k/3 \rfloor} 122\ub{1}{\lfloor k/3 \rfloor} 122 \ub{1}{\lfloor k/3 \rfloor} 0$\\
    \hline
   $k \equiv 2 \mod 3$ \newline $k \geq 1$ & $\ub{1}{\lceil k/3 \rfloor} 0 \ub{1}{\lfloor k/3 \rfloor} 122 \ub{1}{\lfloor k/3 \rfloor} 122$\\
    \hline
\end{tabular}

\bigskip
\begin{tabular}{|c|c|}
    \hline
    \multicolumn{2} {|c|} {$\alpha_0 + k\delta$}\\
    \hline
    $k$ & $\ub{1}{k}0$\\
    \hline
\end{tabular}
\begin{tabular}{|c|c|}
    \hline
    \multicolumn{2} {|c|} {$\alpha_1 + \alpha_2+ k\delta$}\\
    \hline
    $k=0$ & $12$\\
    \hline
    $k \geq 1$ & $12122 \ub{1}{k-1} 120$\\
    \hline
\end{tabular}

\bigskip
\begin{tabular}{|c|c|}
    \hline
    \multicolumn{2} {|c|} {$\alpha_0 + \alpha_1+ k\delta$}\\
    \hline
    $k=0$ & $10$\\
    \hline
    $k = 1$ & $12120120$\\
    \hline
    $k \geq 2$ & $12122 \ub{1}{k-2} 120120120$\\
    \hline
\end{tabular}
\begin{tabular}{|c|c|}
    \hline
    \multicolumn{2} {|c|} {$\alpha_1 + 2\alpha_2+ k\delta$}\\
    \hline
     $k$& $\ub{1}{k}122$\\
     \hline
\end{tabular}
\begin{tabular}{|c|c|}
    \hline
    \multicolumn{2} {|c|} {$\alpha_0 + \alpha_1 + \alpha_2+ k\delta$}\\
    \hline
     $k$& $120\ub{1}{k}$\\
     \hline
\end{tabular}

\bigskip
\begin{tabular}{|>{\centering\arraybackslash}m{2.25cm}|c|}
    \hline
    \multicolumn{2} {|c|} {$\alpha_1 + 3\alpha_2 + k\delta$}\\
    \hline
    $k = 1$ & $1222$\\
    \hline
    $k \equiv 0 \mod 4$ \newline $k \geq 2$ & $\ub{1}{k/4} 122 \ub{1}{k/4} 122 \ub{1}{k/4} 0 \ub{1}{k/4 - 1} 122$\\
    \hline
    $k \equiv 1 \mod 4$ \newline $k \geq 2$ & $\ub{1}{\lfloor k/4 \rfloor} 122 \ub{1}{\lfloor k/4 \rfloor} 122 \ub{1}{\lfloor k/4 \rfloor} 122 \ub{1}{\lfloor k/4 \rfloor} 0$\\
    \hline
    $k \equiv 2 \mod 4$ \newline $k \geq 2$ & $\ub{1}{\lceil k/4 \rceil} 0 \ub{1}{\lfloor k/4\rfloor} 122 \ub{1}{\lfloor k/4 \rfloor} 122 \ub{1}{\lfloor k/4 \rfloor} 122$\\
    \hline
    $k \equiv 3 \mod 4$ \newline $k \geq 2$ & $\ub{1}{\lceil k/4 \rceil} 122 \ub{1}{\lceil k/4 \rceil} 0 \ub{1}{\lfloor k/4 \rfloor} 122 \ub{1}{\lfloor k/4 \rfloor} 122$\\
    \hline
\end{tabular}

\bigskip
\begin{tabular}{|>{\centering\arraybackslash}m{2.25cm}|c|}
    \hline
    \multicolumn{2} {|c|} {$\alpha_0 + \alpha_1 + 2\alpha_2 + k\delta$}\\
    \hline
    $k \equiv 0 \mod 2$ & $\ub{1}{k/2}122\ub{1}{k/2}0$\\
    \hline
    $k \equiv 1 \mod 2$ & $\ub{1}{\lceil k/2 \rceil} 0 \ub{1}{\lfloor k/2 \rfloor} 122$\\
    \hline
\end{tabular}
\begin{tabular}{|c|c|}
    \hline
    \multicolumn{2} {|c|} {$2\alpha_1 + 3\alpha_2+ k\delta$}\\
    \hline
    $k$ & $12122\ub{1}{k}$\\
    \hline
\end{tabular}

\bigskip
\begin{tabular}{|>{\centering\arraybackslash}m{2.25cm}|c|}
    \hline
    \multicolumn{2} {|c|} {$\alpha_0 + \alpha_1 + 3\alpha_2 + k\delta$}\\
    \hline
    $k = 0$ & 12220\\
    \hline
    $k \equiv 0 \mod 5$\newline $k \geq 1$ & $\ub{1}{k/5} 122 \ub{1}{k/5} 0\ub{1}{k/5} 122 \ub{1}{k/5} 0 \ub{1}{k/5 - 1} 122$\\
    \hline
    $k \equiv 1\mod 5$ \newline $k \geq 1$& $\ub{1}{\lfloor k/5 \rfloor} 122 \ub{1}{\lfloor k/5 \rfloor} 122   \ub{1}{\lfloor k/5 \rfloor}0 \ub{1}{\lfloor k/5 \rfloor} 122 \ub{1}{\lfloor k/5 \rfloor} 0$\\
    \hline
    $k\equiv 2 \mod 5$ \newline $k \geq 1$ & $\ub{1}{\lceil k/5 \rceil} 0 \ub{1}{\lfloor k/5 \rfloor} 122 \ub{1}{\lfloor k/5 \rfloor} 122 \ub{b1}{\lfloor k/5 \rfloor} 122 \ub{1}{\lfloor k/5 \rfloor} 0$\\
    \hline
    $k \equiv 3\mod 5$ \newline $k \geq 1$ & $\ub{1}{\lceil k/5 \rceil} 0 \ub{1}{\lfloor k/5 \rfloor} 122 \ub{1}{\lceil k/5 \rceil} 0 \ub{1}{\lfloor k/5 \rfloor} 122 \ub{1}{\lfloor k/5 \rfloor} 122$\\
    \hline
    $k \equiv 4 \mod 5$ \newline $k \geq 1$ & $\ub{1}{\lceil k/5 \rceil} 122 \ub{1}{\lceil k/5 \rceil} 0 \ub{1}{\lfloor k/5 \rfloor} 122 \ub{1}{\lfloor k/5 \rfloor} 122 \ub{1}{\lceil k/5 \rceil} 0$ \\
    \hline
\end{tabular}

\bigskip
\begin{tabular}{|>{\centering\arraybackslash}m{2.25cm}|c|}
    \hline
    \multicolumn{2} {|c|} {$\alpha_0 + 2\alpha_1 + 2\alpha_2 + k\delta$}\\
    \hline
    $k = 0$ & 12120\\
    \hline
    $k \geq 1$ & $12122 \ub{1}{k-1} 120120$\\
    \hline
\end{tabular}

\bigskip
\begin{tabular}{|c|c|}
    \hline
    \multicolumn{2} {|c|} {$(k\delta,1)$}\\
    \hline
    $k>1$ & $120 \ub{1}{k-1} 122$\\
    \hline
\end{tabular}
\begin{tabular}{|c|c|}
    \hline
    \multicolumn{2} {|c|} {$(k\delta,2)$}\\
    \hline
    $k>1$ & $12122 \ub{1}{k-1} 0$\\
    \hline
\end{tabular}
\end{center}

   %%%%%%%%%%%%%%%%%%%%%%%%%%%%%%%%%%%%%%%%%%%%%%%%%%%%%%%%%%%%%%%%%%

\raggedright
\subsection{Order $1 < 2 < 0$}\hfill\\

\bigskip
\begin{center}

\begin{tabular}{|c|c|}
    \hline
    & $\SL(\cdot)$\\
    \hline
    $(\delta,1)$ & $122210$\\
    \hline
    $(\delta,2)$ & $122102$\\
    \hline
\end{tabular}

\bigskip
\begin{tabular}{|>{\centering\arraybackslash}m{2.25cm}|c|}
    \hline
    \multicolumn{2} {|c|} {$\alpha_0 + k\delta$}\\
    \hline
    $k = 0$ & $0$\\
    \hline
    $k \equiv 0 \mod 5$ \newline $k \geq 1$ & $\ub{1}{k/5} 10 \ub{1}{k/5} 2 \ub{1}{k/5} 2 \ub{1}{k/5} 2 \ub{1}{k/5-1} 10$\\
    \hline
    $k \equiv 1 \mod 5$\newline $k \geq 1$ & $\ub{1}{\lfloor k/5 \rfloor} 10 \ub{1}{\lfloor k/5 \rfloor} 2 \ub{1}{\lfloor k/5 \rfloor} 10 \ub{1}{\lfloor k/5 \rfloor} 2 \ub{1}{\lfloor k/5 \rfloor} 2$\\
    \hline
    $k \equiv 2 \mod 5$ \newline $k \geq 1$ & $\ub{1}{\lceil k/5 \rceil} 2 \ub{1}{\lfloor k/5 \rfloor} 10 \ub{1}{\lfloor k/5 \rfloor} 2 \ub{1}{\lfloor k/5 \rfloor} 2 \ub{1}{\lfloor k/5 \rfloor} 10$\\
    \hline
    $k \equiv 3 \mod 5$ \newline $k \geq 1$ & $\ub{1}{\lceil k/5 \rceil} 2 \ub{1}{\lfloor k/5 \rfloor} 10 \ub{1}{\lceil k/5 \rceil} 2 \ub{1}{\lfloor k/5 \rfloor} 10 \ub{1}{\lfloor k/5 \rfloor} 2$\\
    \hline
    $k \equiv 4 \mod 5$\newline $k \geq 1$ & $\ub{1}{\lceil k/5 \rceil} 2 \ub{1}{\lceil k/5 \rceil} 2 \ub{1}{\lfloor k/5 \rfloor} 10 \ub{1}{\lceil k/5 \rceil} 2 \ub{1}{\lfloor k/5 \rfloor} 10$\\
    \hline
\end{tabular}

\bigskip
\begin{tabular}{|>{\centering\arraybackslash}m{2.25cm}|c|}
    \hline
    \multicolumn{2} {|c|} {$\alpha_1 + k\delta$}\\
    \hline
    $k =0$ & 1\\
    \hline
    $k =1$ & $1212210$\\
    \hline
    $k = 2$ & $1221221012210$\\
    \hline
    $k \equiv 0 \mod 3$ \newline $k \geq 3$ & $12210 \ub{1}{k/3 - 1} 12210 \ub{1}{k/3 - 1} 12210 \ub{1}{k/3 - 1} 1222$\\
    \hline
    $k \equiv 1 \mod 3$ \newline $k \geq 3$ & $12210 \ub{1}{\lfloor k/3 \rfloor -1} 12210 \ub{1}{\lfloor k/3 \rfloor -1} 122212210 \ub{1}{\lceil k/3 \rceil -1}$\\
    \hline
    $ k \equiv 2 \mod 3$ \newline $k \geq 3$ & $12210 \ub{1}{\lfloor k/3 \rfloor -1} 122212210 \ub{1}{ \lceil k/3 \rceil -1} 12210 \ub{1}{\lceil k/3 \rceil -1}$\\
    \hline
\end{tabular}

\bigskip
\begin{tabular}{|c|c|}
    \hline
    \multicolumn{2} {|c|} {$\alpha_2 + k\delta$}\\
    \hline
     $k$& $\ub{1}{k}2$\\
     \hline
\end{tabular}
\begin{tabular}{|>{\centering\arraybackslash}m{2.25cm}|c|}
    \hline
    \multicolumn{2} {|c|} {$\alpha_1 + \alpha_2 + k\delta$}\\
    \hline
    $k = 0$ & 12\\
    \hline
    $k =1 $& 12212210\\
    \hline
    $k \equiv 0 \mod 2$\newline $k \geq 2$ & $12210 \ub{1}{k/2-1} 12210 \ub{1}{k/2 -1} 1222$\\
    \hline
    $k \equiv 1 \mod 2$ \newline $k \geq 2$ & $12210 \ub{1}{\lfloor k/2 \rfloor -1} 122212210 \ub{1}{\lceil k/2 \rceil -1}$\\
    \hline
\end{tabular}

\bigskip
\begin{tabular}{|c|c|}
    \hline
    \multicolumn{2} {|c|} {$\alpha_0 + \alpha_1 + k\delta$}\\
    \hline
     $k$& $\ub{1}{k}10$\\
     \hline
\end{tabular}
\begin{tabular}{|c|c|}
    \hline
    \multicolumn{2} {|c|} {$\alpha_1 + 2\alpha_2 + k\delta$}\\
    \hline
     $k = 1$& $122$\\
     \hline
     $k \geq 2$ & $12210 \ub{1}{k-1} 1222$\\
     \hline
\end{tabular}

\bigskip
\begin{tabular}{|c|c|}
    \hline
    \multicolumn{2} {|c|} {$\alpha_0 + \alpha_1 + \alpha_2 + k\delta$}\\
    \hline
    $k \equiv 0 \mod 2$ & $\ub{1}{k/2} 10 \ub{1}{k/2} 2$\\
    \hline
    $k \equiv 1 \mod 2$ & $\ub{1}{\lceil k/2 \rceil} 2 \ub{1}{\lfloor k/2 \rfloor} 10$\\
    \hline
\end{tabular}
\begin{tabular}{|c|c|}
    \hline
    \multicolumn{2} {|c|} {$\alpha_1 + 3\alpha_2 + k\delta$}\\
    \hline
    $k$ & $1222\ub{1}{k}$\\
    \hline
\end{tabular}

\bigskip
\begin{tabular}{|c|c|}
    \hline
    \multicolumn{2} {|c|} {$\alpha_0 + \alpha_1 + 2\alpha_2 + k\delta$}\\
    \hline
    $k \equiv 0 \mod 3$ & $\ub{1}{k/3} 10 \ub{1}{k/3} 2 \ub{1}{k/3} 2$\\
    \hline
    $k\equiv 1 \mod 3$ & $\ub{1}{\lceil k/3 \rceil} 2 \ub{1}{\lfloor k/3 \rfloor} 10 \ub{1}{\lfloor k/3 \rfloor} 2$\\
    \hline
    $k \equiv 2 \mod 3$ & $[\ub{1}{\lceil k/3 \rceil} 2 \ub{1}{\lceil k/3 \rceil} 2 \ub{1}{\lfloor k/3 \rfloor} 10$\\
    \hline
\end{tabular}

\bigskip
\begin{tabular}{|>{\centering\arraybackslash}m{2.25cm}|c|}
    \hline
    \multicolumn{2} {|c|} {$2\alpha_1 + 3\alpha_2 + k\delta$}\\
    \hline
     $k =0$&12122\\
     \hline
     $k = 1$ & 12212212210\\
     \hline
     $k = 2$ & $12212210122101222$\\
     \hline
     $k \equiv 0 \mod 3$ \newline $k \geq 3$ & $12210 \ub{1}{k/3-1} 12210 \ub{1}{k/3-1} 122212210 \ub{1}{k/3-1} 1222$\\
     \hline
     $k \equiv 1\mod 3$ & $12210 \ub{1}{\lfloor k/3 \rfloor -1}122212210 \ub{1}{\lfloor k/3 \rfloor - 1} 122212210 \ub{1}{\lceil k/3 \rceil -1}$\\
     \hline
     $k \equiv 2 \mod 3$ & $12210 \ub{1}{\lfloor k/3 \rfloor -1 }122212210 \ub{1}{\lceil k/3 \rceil - 1} 12210 \ub{1}{\lceil k/3 \rceil -1} 1222$\\
     \hline
\end{tabular}

\bigskip
\begin{tabular}{|c|c|}
    \hline
    \multicolumn{2} {|c|} {$\alpha_0 + \alpha_1 + 3\alpha_2 + k\delta$}\\
    \hline
    $k \equiv 0 \mod 4$ & $\ub{1}{k/4} 10 \ub{1}{k/4} 2 \ub{1}{k/4} 2 \ub{1}{k/4} 2$\\
    \hline
    $k \equiv 1 \mod 4$ & $\ub{1}{\lceil k/4 \rceil} 2 \ub{1}{\lfloor k/4 \rfloor} 10 \ub{1}{\lfloor k/4 \rfloor}  2 \ub{1}{\lfloor k/4 \rfloor} 2$\\
    \hline
    $k \equiv 2 \mod 4$ & $\ub{1}{\lceil k/4 \rceil} 2 \ub{1}{\lceil k/4 \rceil} 2 \ub{1}{\lfloor k/4 \rfloor}  10 \ub{1}{\lfloor k/4 \rfloor}  2$\\
    \hline
    $k \equiv 3 \mod 4$ & $\ub{1}{\lceil k/4 \rceil} 2 \ub{1}{\lceil k/4 \rceil} 2 \ub{1}{\lceil k/4 \rceil} 2 \ub{1}{\lfloor k/4 \rfloor}  10$\\
    \hline
\end{tabular}

\bigskip
\begin{tabular}{|c|c|}
    \hline
    \multicolumn{2} {|c|} {$\alpha_0 + 2\alpha_1 + 2\alpha_2 + k\delta$}\\
    \hline
    $k$ & $12210\ub{1}{k}$\\
    \hline
\end{tabular}
\begin{tabular}{|c|c|}
    \hline
    \multicolumn{2} {|c|} {$(k\delta,1)$}\\
    \hline
    $k>1$ & $1222 \ub{1}{k-1} 10$\\
    \hline
\end{tabular}
\begin{tabular}{|c|c|}
    \hline
    \multicolumn{2} {|c|} {$(k\delta,2)$}\\
    \hline
    $k>1$ & $12210 \ub{1}{k-1} 2$\\
    \hline
\end{tabular}
\end{center}

   %%%%%%%%%%%%%%%%%%%%%%%%%%%%%%%%%%%%%%%%%%%%%%%%%%%%%%%%%%%%%%%%%%

\raggedright
\subsection{Order $2 < 0 < 1$}\hfill\\

\bigskip
\begin{center}

\begin{tabular}{|c|c|}
    \hline
    & $\SL(\cdot)$\\
    \hline
    $(\delta,1)$ & $221021$\\
    \hline
    $(\delta,2)$ & $221210$\\
    \hline
\end{tabular}
\begin{tabular}{|c|c|}
    \hline
    \multicolumn{2} {|c|} {$\alpha_0 + k \delta$}\\
    \hline
    $k$ & $\ub{1}{k}0$\\
    \hline
\end{tabular}

\bigskip
\begin{tabular}{|>{\centering\arraybackslash}m{2.25cm}|c|}
    \hline
    \multicolumn{2} {|c|} {$\alpha_1 + k \delta$}\\
    \hline
     $k =0$ & $1$\\
     \hline
     $k\equiv 0 \mod 4$ \newline $k \geq 1$ & $\ub{1}{k/4} 21 \ub{1}{k/4} 21 \ub{1}{k/4} 0 \ub{1}{k/4 - 1} 21$\\
     \hline
     $k \equiv 1 \mod 4$ \newline $k \geq 1$ & $\ub{1}{\lfloor k/4 \rfloor} 21 \ub{1}{\lfloor k/4 \rfloor} 21 \ub{1}{\lfloor k/4 \rfloor} 21 \ub{1}{\lfloor k/4 \rfloor} 0$\\
     \hline
     $k \equiv 2 \mod 4$ \newline $k \geq 1$ & $\ub{1}{\lceil k/4 \rceil} 0 \ub{1}{\lfloor k/4 \rfloor} 21 \ub{1}{\lfloor k/4 \rfloor} 21 \ub{1}{\lfloor k/4 \rfloor} 21$\\
     \hline
     $k \equiv 3 \mod 4$ \newline $k \geq 1$ & $\ub{1}{\lceil k/4 \rceil} 21 \ub{1}{\lceil k/4 \rceil} 0 \ub{1}{\lfloor k/4 \rfloor} 21 \ub{1}{\lfloor k/4 \rfloor} 21$\\
     \hline
\end{tabular}

\bigskip
\begin{tabular}{|>{\centering\arraybackslash}m{2.25cm}|c|}
    \hline
    \multicolumn{2} {|c|} {$\alpha_2 + k \delta$}\\
    \hline
    $k=0$ & 2\\
    \hline
    $k =1$ & 2212210\\
    \hline
    $k \geq 2$ & $22121 \ub{1}{k-2} 22102210$\\
    \hline
\end{tabular}
\begin{tabular}{|c|c|}
    \hline
    \multicolumn{2} {|c|} {$\alpha_1  + \alpha_2 + k \delta$}\\
    \hline
    $k$ & $\ub{1}{k}21$\\
    \hline
\end{tabular}

\bigskip
\begin{tabular}{|>{\centering\arraybackslash}m{2.25cm}|c|}
    \hline
    \multicolumn{2} {|c|} {$\alpha_0 + \alpha_1 + k \delta$}\\
    \hline
    $k =0 $ & 01\\
    \hline
    $k \equiv 0 \mod 5$ \newline $k \geq 1$ & $\ub{1}{k/5} 21 \ub{1}{k/5} 0 \ub{1}{k/5} 21 \ub{1}{k/5} 0 \ub{1}{k/5 -1} 21$\\
    \hline
    $k \equiv 1 \mod 5$ \newline $k \geq 1$ & $\ub{1}{\lfloor k/5 \rfloor} 21 \ub{1}{\lfloor k/5 \rfloor} 21 \ub{1}{\lfloor k/5 \rfloor} 0 \ub{1}{\lfloor k/5 \rfloor} 21 \ub{1}{\lfloor k/5 \rfloor} 0$\\
    \hline
    $k \equiv 2 \mod 5$ \newline $k \geq 1$ & $\ub{1}{\lceil k/5 \rceil} 0 \ub{1}{\lfloor k/5 \rfloor} 21 \ub{1}{\lfloor k/5 \rfloor} 21 \ub{1}{\lfloor k/5 \rfloor} 21 \ub{1}{\lfloor k/5 \rfloor} 0$\\
    \hline
    $k \equiv 3 \mod 5$ \newline $k \geq 1$ & $\ub{1}{\lceil k/5 \rceil} 0 \ub{1}{\lfloor k/5 \rfloor} 21 \ub{1}{\lceil k/5 \rceil} 0 \ub{1}{\lfloor k/5 \rfloor} 21 \ub{1}{\lfloor k/5 \rfloor} 21$\\
    \hline
    $k \equiv 4 \mod 5$ \newline $k \geq 1$ & $\ub{1}{\lceil k/5 \rceil} 21 \ub{1}{\lceil k/5 \rceil} 0 \ub{1}{\lfloor k/5 \rfloor} 21 \ub{1}{\lfloor k/5 \rfloor} 21 \ub{1}{\lceil k/5 \rceil} 0$\\
    \hline
\end{tabular}

\bigskip
\begin{tabular}{|c|c|}
    \hline
    \multicolumn{2} {|c|} {$\alpha_1 + 2\alpha_2 + k \delta$}\\
    \hline
     $k=0$ & $221$\\
     \hline
     $k \geq 1$ & $22121 \ub{1}{k-1} 2210$\\
     \hline
\end{tabular}
\begin{tabular}{|c|c|}
    \hline
    \multicolumn{2} {|c|} {$\alpha_0 + \alpha_1 + \alpha_2 + k \delta$}\\
    \hline
     $k\equiv 0 \mod 2$ & $\ub{1}{k/2} 21 \ub{1}{k/2} 0$\\
     \hline
     $k \equiv 1 \mod 2$ & $\ub{1}{\lceil k/2 \rceil} 0 \ub{1}{\lfloor k/2 \rfloor} 21$\\
     \hline
\end{tabular}

\bigskip
\begin{tabular}{|>{\centering\arraybackslash}m{2.25cm}|c|}
    \hline
    \multicolumn{2} {|c|} {$\alpha_1 + 3\alpha_2 +  k \delta$}\\
    \hline
    $k = 0$ & $2221$\\
    \hline
    $k =1$ & 2212212210\\
    \hline
    $k=2$ & 2212210221022121\\
    \hline
    $k \equiv 0 \mod 2$\newline $k \geq 3$ & $22121 \ub{1}{k/2 - 2} 22102210221022121 \ub{1}{k/2 - 1}$\\
    \hline
    $k \equiv 1 \mod 2$ \newline $k \geq 3$ & $22121 \ub{1}{\lfloor k/2 \rfloor -1 } 221022121 \ub{1}{\lfloor k/2 \rfloor -1} 22102210$\\
    \hline
\end{tabular}

\bigskip
\begin{tabular}{|c|c|}
    \hline
    \multicolumn{2} {|c|} {$\alpha_0 + \alpha_1 + 2\alpha_2 + k \delta$}\\
    \hline
     $k$ & $2210\ub{1}{k}$\\
     \hline
\end{tabular}
\begin{tabular}{|c|c|}
    \hline
    \multicolumn{2} {|c|} {$2\alpha_1 + 3\alpha_2 + k \delta$}\\
    \hline
     $k$ & $22121\ub{1}{k}$\\
     \hline
\end{tabular}

\bigskip
\begin{tabular}{|c|c|}
    \hline
    \multicolumn{2} {|c|} {$\alpha_0 + \alpha_1 + 3\alpha_2 +  k \delta$}\\
    \hline
    $k = 0$ & 22210\\
    \hline
    $k=1$ & 22122102210\\
    \hline
    $k\geq 2$ & $22121 \ub{1}{k-2} 221022102210$\\
    \hline
\end{tabular}

\bigskip
\begin{tabular}{|c|c|}
    \hline
    \multicolumn{2} {|c|} {$\alpha_0 + 2\alpha_1 + 2\alpha_2 +  k \delta$}\\
    \hline
    $k\equiv 0 \mod 3$ & $\ub{1}{k/3} 21 \ub{1}{k/3} 21 \ub{1}{k/3} 0$\\
    \hline
    $k \equiv 1 \mod 3$ & $\ub{1}{\lceil k/3 \rceil} 0 \ub{1}{\lfloor k/3 \rfloor} 21 \ub{1}{\lfloor k/3 \rfloor} 21$\\
    \hline
    $k \equiv 2 \mod 3$ & $\ub{1}{\lceil k/3 \rceil} 21 \ub{1}{\lceil k/3 \rceil} 0 \ub{1}{\lfloor k/3 \rfloor} 21$\\
    \hline
\end{tabular}

\bigskip
\begin{tabular}{|c|c|}
    \hline
    \multicolumn{2} {|c|} {$(k\delta,1)$}\\
    \hline
    $k>1$ & $2210 \ub{1}{k-1} 21$\\
    \hline
\end{tabular}
\begin{tabular}{|c|c|}
    \hline
    \multicolumn{2} {|c|} {$(k\delta,2)$}\\
    \hline
    $k>1$ & $22121 \ub{1}{k-1} 0$\\
    \hline
\end{tabular}
\end{center}

   %%%%%%%%%%%%%%%%%%%%%%%%%%%%%%%%%%%%%%%%%%%%%%%%%%%%%%%%%%%%%%%%%%

\medskip
\raggedright
\subsection{Order $2 < 1 < 0$}\hfill\\

\bigskip
\begin{center}

\begin{tabular}{|c|c|}
    \hline
    & $\SL(\cdot)$\\
    \hline
    $(\delta,1)$ & $221021$\\
    \hline
    $(\delta,2)$ & $221210$\\
    \hline
\end{tabular}
\begin{tabular}{|c|c|}
    \hline
    \multicolumn{2} {|c|} {$\alpha_0 + k \delta$}\\
    \hline
    $k$ & $\ub{1}{k}0$\\
    \hline
\end{tabular}

\bigskip
\begin{tabular}{|>{\centering\arraybackslash}m{2.25cm}|c|}
    \hline
    \multicolumn{2} {|c|} {$\alpha_1 + k \delta$}\\
    \hline
     $k =0$ & $1$\\
     \hline
     $k\equiv 0 \mod 4$ \newline $k \geq 1$ & $\ub{1}{k/4} 21 \ub{1}{k/4} 21 \ub{1}{k/4} 0 \ub{1}{k/4 - 1} 21$\\
     \hline
     $k \equiv 1 \mod 4$ \newline $k \geq 1$ & $\ub{1}{\lfloor k/4 \rfloor} 21 \ub{1}{\lfloor k/4 \rfloor} 21 \ub{1}{\lfloor k/4 \rfloor} 21 \ub{1}{\lfloor k/4 \rfloor} 0$\\
     \hline
     $k \equiv 2 \mod 4$ \newline $k \geq 1$ & $\ub{1}{\lceil k/4 \rceil} 0 \ub{1}{\lfloor k/4 \rfloor} 21 \ub{1}{\lfloor k/4 \rfloor} 21 \ub{1}{\lfloor k/4 \rfloor} 21$\\
     \hline
     $k \equiv 3 \mod 4$ \newline $k \geq 1$ & $\ub{1}{\lceil k/4 \rceil} 21 \ub{1}{\lceil k/4 \rceil} 0 \ub{1}{\lfloor k/4 \rfloor} 21 \ub{1}{\lfloor k/4 \rfloor} 21$\\
     \hline
\end{tabular}

\bigskip
\begin{tabular}{|>{\centering\arraybackslash}m{2.25cm}|c|}
    \hline
    \multicolumn{2} {|c|} {$\alpha_2 + k \delta$}\\
    \hline
    $k=0$ & 2\\
    \hline
    $k =1$ & 2212210\\
    \hline
    $k \geq 2$ & $22121 \ub{1}{k-2} 22102210$\\
    \hline
\end{tabular}
\begin{tabular}{|c|c|}
    \hline
    \multicolumn{2} {|c|} {$\alpha_1  + \alpha_2 + k \delta$}\\
    \hline
    $k$ & $\ub{1}{k}21$\\
    \hline
\end{tabular}

\bigskip
\begin{tabular}{|>{\centering\arraybackslash}m{2.25cm}|c|}
    \hline
    \multicolumn{2} {|c|} {$\alpha_0 + \alpha_1 + k \delta$}\\
    \hline
    $k =0 $ & 10 \\
    \hline
    $k \equiv 0 \mod 5$ \newline $k \geq 1$ & $\ub{1}{k/5} 21 \ub{1}{k/5} 0 \ub{1}{k/5} 21 \ub{1}{k/5} 0 \ub{1}{k/5 -1} 21$\\
    \hline
    $k \equiv 1 \mod 5$ \newline $k \geq 1$ & $\ub{1}{\lfloor k/5 \rfloor} 21 \ub{1}{\lfloor k/5 \rfloor} 21 \ub{1}{\lfloor k/5 \rfloor} 0 \ub{1}{\lfloor k/5 \rfloor} 21 \ub{1}{\lfloor k/5 \rfloor} 0$\\
    \hline
    $k \equiv 2 \mod 5$ \newline $k \geq 1$ & $\ub{1}{\lceil k/5 \rceil} 0 \ub{1}{\lfloor k/5 \rfloor} 21 \ub{1}{\lfloor k/5 \rfloor} 21 \ub{1}{\lfloor k/5 \rfloor} 21 \ub{1}{\lfloor k/5 \rfloor} 0$\\
    \hline
    $k \equiv 3 \mod 5$ \newline $k \geq 1$ & $\ub{1}{\lceil k/5 \rceil} 0 \ub{1}{\lfloor k/5 \rfloor} 21 \ub{1}{\lceil k/5 \rceil} 0 \ub{1}{\lfloor k/5 \rfloor} 21 \ub{1}{\lfloor k/5 \rfloor} 21$\\
    \hline
    $k \equiv 4 \mod 5$ \newline $k \geq 1$ & $\ub{1}{\lceil k/5 \rceil} 21 \ub{1}{\lceil k/5 \rceil} 0 \ub{1}{\lfloor k/5 \rfloor} 21 \ub{1}{\lfloor k/5 \rfloor} 21 \ub{1}{\lceil k/5 \rceil} 0$\\
    \hline
\end{tabular}

\bigskip
\begin{tabular}{|c|c|}
    \hline
    \multicolumn{2} {|c|} {$\alpha_1 + 2\alpha_2 + k \delta$}\\
    \hline
     $k=0$ & $221$\\
     \hline
     $k \geq 1$ & $22121 \ub{1}{k-1} 2210$\\
     \hline
\end{tabular}
\begin{tabular}{|c|c|}
    \hline
    \multicolumn{2} {|c|} {$\alpha_0 + \alpha_1 + \alpha_2 + k \delta$}\\
    \hline
     $k\equiv 0 \mod 2$ & $\ub{1}{k/2} 21 \ub{1}{k/2} 0$\\
     \hline
     $k \equiv 1 \mod 2$ & $\ub{1}{\lceil k/2 \rceil} 0 \ub{1}{\lfloor k/2 \rfloor} 21$\\
     \hline
\end{tabular}

\bigskip
\begin{tabular}{|>{\centering\arraybackslash}m{2.25cm}|c|}
    \hline
    \multicolumn{2} {|c|} {$\alpha_1 + 3\alpha_2 +  k \delta$}\\
    \hline
    $k = 0$ & $2221$\\
    \hline
    $k =1$ & 2212212210\\
    \hline
    $k=2$ & 2212210221022121\\
    \hline
    $k \equiv 0 \mod 2$\newline $k \geq 3$ & $22121 \ub{1}{k/2 - 2} 22102210221022121 \ub{1}{k/2 - 1}$\\
    \hline
    $k \equiv 1 \mod 2$ \newline $k \geq 3$ & $22121 \ub{1}{\lfloor k/2 \rfloor -1 } 221022121 \ub{1}{\lfloor k/2 \rfloor -1} 22102210$\\
    \hline
\end{tabular}

\bigskip
\begin{tabular}{|c|c|}
    \hline
    \multicolumn{2} {|c|} {$\alpha_0 + \alpha_1 + 2\alpha_2 + k \delta$}\\
    \hline
     $k$ & $2210\ub{1}{k}$\\
     \hline
\end{tabular}
\begin{tabular}{|c|c|}
    \hline
    \multicolumn{2} {|c|} {$2\alpha_1 + 3\alpha_2 + k \delta$}\\
    \hline
     $k$ & $22121\ub{1}{k}$\\
     \hline
\end{tabular}
%
%\bigskip
\begin{tabular}{|c|c|}
    \hline
    \multicolumn{2} {|c|} {$\alpha_0 + \alpha_1 + 3\alpha_2 +  k \delta$}\\
    \hline
    $k = 0$ & 22210\\
    \hline
    $k=1$ & 22122102210\\
    \hline
    $k\geq 2$ & $22121 \ub{1}{k-2} 221022102210$\\
    \hline
\end{tabular}

\bigskip
\begin{tabular}{|c|c|}
    \hline
    \multicolumn{2} {|c|} {$\alpha_0 + 2\alpha_1 + 2\alpha_2 +  k \delta$}\\
    \hline
    $k\equiv 0 \mod 3$ & $\ub{1}{k/3} 21 \ub{1}{k/3} 21 \ub{1}{k/3} 0$\\
    \hline
    $k \equiv 1 \mod 3$ & $\ub{1}{\lceil k/3 \rceil} 0 \ub{1}{\lfloor k/3 \rfloor} 21 \ub{1}{\lfloor k/3 \rfloor} 21$\\
    \hline
    $k \equiv 2 \mod 3$ & $\ub{1}{\lceil k/3 \rceil} 21 \ub{1}{\lceil k/3 \rceil} 0 \ub{1}{\lfloor k/3 \rfloor} 21$\\
    \hline
\end{tabular}

\bigskip
\begin{tabular}{|c|c|}
    \hline
    \multicolumn{2} {|c|} {$(k\delta,1)$}\\
    \hline
    $k>1$ & $2210 \ub{1}{k-1} 21$\\
    \hline
\end{tabular}
\begin{tabular}{|c|c|}
    \hline
    \multicolumn{2} {|c|} {$(k\delta,2)$}\\
    \hline
    $k>1$ & $22121 \ub{1}{k-1} 0$\\
    \hline
\end{tabular}
\end{center}

\raggedright

   %%%%%%%%%%%%%%%%%%%%%%%%%%%%%%%%%%%%%%%%%%%%%%%%%%%%%%%%%%%%%%%%%%%%%%%%%%%%%%%%%%%%%
   %%%%%%%%%%%%%%%%%%%%%%%%%%%%%%%%%% Bibliography %%%%%%%%%%%%%%%%%%%%%%%%%%%%%%%%%%%%%
   %%%%%%%%%%%%%%%%%%%%%%%%%%%%%%%%%%%%%%%%%%%%%%%%%%%%%%%%%%%%%%%%%%%%%%%%%%%%%%%%%%%%%

\bigskip

\end{document}